\renewcommand*\backref[1]{\ifx#1\relax \else (Cited on p. #1) \fi}
\newcommand{\ak}[1]{\todo[color=pink!200,size=\footnotesize]{ \textbf{AK:}  #1}}
\newcommand{\pca}[1]{\todo[color=violet!30,size=\footnotesize]{ \textbf{PCA:}  #1}}
\DeclareMathOperator*{\argmin}{argmin}
\newcommand{\cF}{\mathcal{F}}
\newcommand{\cG}{\mathcal{G}}
\newcommand{\cV}{\mathcal{V}}
\newcommand{\cH}{\mathcal{H}}
\newcommand{\cN}{\mathcal{N}}
\newcommand{\J}{\mathrm{J}}
\newcommand{\kl}{\mathrm{KL}}
\newcommand{\cW}{\mathrm{W}}
\newcommand{\cP}{\mathcal{P}}
\newcommand{\cPa}{\mathcal{P}_{2,\mathrm{ac}}(\R^d)}
\newcommand{\ps}[1]{\langle #1 \rangle}
\newcommand{\id}{\mathrm{Id}}
\newcommand{\T}{\mathrm{T}}
\newcommand{\U}{\mathrm{U}}
\newcommand{\sS}{\mathrm{S}}
\newcommand{\R}{\mathbb{R}}
\newcommand{\N}{\mathbb{N}}
\newcommand{\diff}{\mathrm{d}}
\newcommand{\D}{\mathrm{d}}
\newcommand{\Dr}{\mathrm{d}}
\newcommand{\kp}{h^*}
\newcommand{\gW}{\nabla_{\cW_2}}
\newcommand{\tF}{\Tilde{\cF}}
\newcommand{\tG}{\Tilde{\cG}}
\newcommand{\tT}{\Tilde{\T}}
\newcommand{\tH}{\Tilde{\cH}}
\newcommand{\hess}{\mathrm{H}}
\DeclareMathOperator{\KL}{KL}
\newcommand{\stc}{\alpha}
\newcommand{\sm}{\beta}
\newtheorem{definition}{Definition} 
\newtheorem{theorem}{Theorem}
\newtheorem{proposition}[theorem]{Proposition}
\newtheorem{example}{Example}
\newtheorem{remark}{Remark}
\newtheorem{lemma}[theorem]{Lemma}
\newtheorem{assumption}{Assumption}
\let\oldqedhere\qedhere
\renewcommand{\qedhere}{\pushQED{\qed}\oldqedhere}
\title{Mirror and Preconditioned Gradient Descent in Wasserstein Space}
\author{%
  Clément Bonet \\
  CREST, ENSAE, 
  IP Paris \\
  \texttt{clement.bonet@ensae.fr} \\
  \And
  Théo Uscidda \\
    CREST, ENSAE, 
  IP Paris \\
  \texttt{theo.uscidda@ensae.fr}\\
  \\ %
  \And
  Adam David \\
  Institute of Mathematics\\ 
  Technische Universität Berlin\\
  \texttt{david@math.tu-berlin.de}\\
  \And
  Pierre-Cyril Aubin-Frankowski \\
  TU Wien\\
  \texttt{pierre-cyril.aubin@tuwien.ac.at}\\
  \And
  Anna Korba \\
  CREST, ENSAE, 
  IP Paris \\
  \texttt{anna.korba@ensae.fr} \\
}
\begin{document}

\addtocontents{toc}{\protect\setcounter{tocdepth}{0}}

{
\hypersetup{linkcolor=black}
}

\maketitle

\begin{abstract}
    As the problem of minimizing functionals on the Wasserstein space encompasses many applications in machine learning, different optimization algorithms on $\mathbb{R}^d$ have received their counterpart analog on the Wasserstein space. We focus here on lifting two explicit algorithms: mirror descent and preconditioned gradient descent. These algorithms have been introduced to better capture the geometry of the function to minimize and are provably convergent under appropriate (namely relative) smoothness and convexity conditions. Adapting these notions to the Wasserstein space, we prove guarantees of convergence of some Wasserstein-gradient-based discrete-time schemes for new pairings of objective functionals and regularizers. The difficulty here is to carefully select along which curves the functionals should be smooth and convex. We illustrate the advantages of adapting the geometry induced by the regularizer on ill-conditioned optimization tasks, and showcase the improvement of choosing different discrepancies and geometries in a computational biology task of aligning single-cells.
\end{abstract}

\section{Introduction}\label{sec:intro}

Minimizing functionals on the space of probability distributions has become ubiquitous in machine learning for \emph{e.g.} sampling \citep{blei2017variational,wibisono2018sampling}, generative modeling \citep{liutkus2019sliced,franceschi2024unifying}, learning neural networks \citep{chizat2018global, mei2018mean, rotskoff2018neural}, dataset transformation~\citep{alvarez2021dataset,hua2023dynamic}, or modeling population dynamics~\citep{bunne2022proximal,terpin2024learning}. It is a challenging task as it is an infinite-dimensional problem. Wasserstein gradient flows \citep{ambrosio2005gradient} 
provide an elegant way to solve such problems on the Wasserstein space, \emph{i.e.}, the space of probability distributions with bounded second moment, equipped with the Wasserstein-2 distance from optimal transport (OT). These flows provide continuous paths of distributions decreasing the objective functional and can be seen as analog to Euclidean gradient flows \citep{santambrogio2017euclidean}. 
Their implicit time discretization, referred to as the JKO scheme \citep{jordan1998variational}, has been studied in depth \citep{otto1996double, agueh2002existence, carillo2011global, santambrogio2017euclidean}. In contrast, explicit schemes, despite being easier to implement, have been less investigated. 
Most previous works focus on the optimization of a specific objective functional with a time-discretation of its gradient flow with the Wasserstein-2 metrics. For instance, the forward Euler discretization leads to the Wasserstein gradient descent. The latter takes the form of gradient descent (GD) on the position of particles for functionals with a closed-form over discrete measures, \emph{e.g.} Maximum Mean Discrepancy (MMD), which can be of interest to train neural networks \citep{arbel2019maximum,chen2024regularized}. For objectives involving absolutely continuous measures, such as the Kullback-Leibler (KL) divergence for sampling, other discretizations can be easily computed such as the Unadjusted Langevin Algorithm (ULA) \citep{roberts1996exponential}.
This leaves the question open of assessing the theoretical and empirical performance of other optimization algorithms relying on alternative geometries and time-discretizations.%

In the optimization community, a recent line of works has focused on extending the methods and convergence theory beyond the Euclidean setting by using more general costs for the gradient descent scheme \citep{leger2023gradient}.
For instance, mirror descent (MD), originally introduced by \citet{nemirovskij1983problem} to solve constrained convex problems, uses a  cost that is a divergence defined by a Bregman potential \citep{beck2003mirror}. Mirror descent benefits from convergence guarantees for objective functions that are relatively smooth in the geometry induced by the (Bregman) divergence \citep{lu2018relatively}, even if they do not have a Lipschitz gradient, \emph{i.e.}, are not smooth in the Euclidean sense. More recently, a closely related scheme, namely preconditioned gradient descent, was introduced in \citep{maddison2021dual}. It can be seen as a dual version of the mirror descent algorithm, where the role of the objective function and Bregman potential are exchanged. In particular, its convergence guarantees can be obtained under relative smoothness and convexity of the Fenchel transform of the potential, with respect to the objective. This algorithm appears more efficient to minimize the gradient magnitude than mirror descent \citep{kim2023mirror}. The flexible choice of the Bregman divergence used by these two schemes enables to design or discover geometries that are potentially more efficient.

Mirror descent has already attracted attention in the sampling community, and some popular algorithms have been extended in this direction. For instance, ULA was adapted into the Mirror Langevin algorithm \citep{hsieh2018mirrored, zhang2020wasserstein, chewi2020exponential, jiang2021mirror, ahn2021efficient, li2022mirror, tzen2023variational}. Other sampling algorithms have received their counterpart mirror versions such as  the Metropolis Adjusted Langevin Algorithm \citep{srinivasan2023fast}, diffusion models \citep{liu2023mirror}, Stein Variational Gradient Descent (SVGD) \citep{shi2022sampling}, or even Wasserstein gradient descent \citep{sharrock2024learning}. Preconditioned Wasserstein gradient descent has been also recently proposed for specific geometries in \citep{dong2023particlebased, cheng2023particle} to minimize the KL in a more efficient way, but without an analysis in discrete time. All the previous references focus on optimizing the KL as an objective, while Wasserstein gradient flows have been studied in machine learning for different functionals such as more general $f$-divergences \citep{ansari2021refining, neumayer2024wasserstein}, interaction energies \citep{li2023sampling,boufadene2023global}, MMDs \citep{arbel2019maximum, chen2024regularized,korba2021kernel, hertrich2024generative, hertrich2024steepest} or Sliced-Wasserstein (SW) distances \citep{bonnotte2013unidimensional, liutkus2019sliced, du2023nonparametric, bonet2024sliced}. In this work, we propose to bridge this gap by providing a general convergence theory of both mirror and preconditioned gradient descent schemes for general target functionals, and investigate as well empirical benefits of alternative transport geometries for optimizing functionals on the Wasserstein space. We emphasize that the latter is different from \citep{aubin2022mirror, karimi2023sinkhorn}, wherein mirror descent is defined in the Radon space of probability distributions, using the flat geometry defined by TV or $L^2$ norms on measures, see \Cref{sec:related_work} for more details.

\paragraph{Contributions.} We are interested in minimizing a functional $\cF:\cP_2(\R^d)\to\R\cup \{+\infty\}$ over probability distributions, through schemes of the form, for $\tau>0$, $k\ge 0$,%
\begin{equation}\label{eq:main_scheme}
    \T_{k+1} = \argmin_{\T\in L^2(\mu_k)}\ \langle \gW\mathcal{F}(\mu_k), \T-\id\rangle_{L^2(\mu_k)}+\frac{1}{\tau} \D(\T,\id) , \quad \mu_{k+1} = (\T_{k+1})_\#\mu_k,
\end{equation}
with different costs $\D:L^2(\mu_k)\times L^2(\mu_k) \to \R_+$, and in providing  convergence conditions. While we can recover a map $\bar \T=\T_k \circ \T_{k-1} \dots \circ \T_1$ such that $\mu_k=\bar \T_\#\mu_0$, the scheme \eqref{eq:main_scheme} proceeds by 
successive regularized linearizations retaining the Wasserstein structure, since the tangent space to $\cP_2(\R^d)$ at $\mu$ is a subset of $L^2(\mu)$ \cite{otto2001geometry}. 
This paper is organized as follows. In \Cref{section:background}, we provide some background on Bregman divergences, as well as on differentiability and convexity over the Wasserstein space. In \Cref{section:mirror_descent}, we consider Bregman divergences on $L^2(\mu)$ for the cost in \eqref{eq:main_scheme}, generalizing the mirror descent scheme to the Wasserstein space. We study this new scheme by discussing its implementation, and proving its convergence under relative smoothness and convexity assumptions. In \Cref{section:pgd}, we consider alternative costs in \eqref{eq:main_scheme}, that are analogous to OT distances with translation-invariant cost, %
extending the dual space preconditioning scheme to the latter space.
Finally, in \Cref{section:xps}, we apply the two schemes to different objective functionals, including standard free energy functionals such as interaction energies and KL divergence, but also to Sinkhorn divergences \citep{feydy2018interpolating} or SW \citep{rabin2012wasserstein,bonneel_sliced_2015} with polynomial preconditioners on single-cell datasets.

\textbf{Notations.} %
Consider the set $\cP_2(\R^d )$ of probability measures $\mu$ on $\R^d $ with finite second moment and $\cPa\subset \cP_2(\R^d)$ its subset of absolutely continuous probability measures with respect to the Lebesgue measure. %
For any $\mu \in \cP_2(\R^d )$, we denote by $L^2(\mu)$ the Hilbert space of functions $f : \R^d \to \R^d$ such that $\int \|f\|^2 \mathrm{d}\mu < \infty$ equipped with the norm $\Vert \cdot \Vert_{L^2(\mu)}$ and inner product $\ps{\cdot,\cdot}_{L^2(\mu)}$. For a Hilbert space $X$, the Fenchel transform of $f:X\to \R$ is $f^*(y)=\sup_{x\in X}\ \langle x,y\rangle - f(x)$.
Given a measurable map $\T:\R^d \to \R^d $ and $\mu\in \cP_2(\R^d )$, $\T_{\#}\mu$ is the pushforward measure of $\mu$ by $\T$; and $\T\star \mu = \int \T(\cdot-x)\diff\mu(x)$.
For $\mu,\nu \in \cP_2(\R^d )$, the Wasserstein-2 distance is $\cW_2^2 (\mu, \nu) = \inf_{\gamma \in \Pi(\mu,\nu)} \int \|x-y\|^2\ \diff \gamma(x,y)$, where $\Pi(\mu,\nu)=\{\gamma\in\cP(\R^d\times\R^d),\ \pi^1_\#\gamma=\mu,\ \pi^2_\#\gamma=\nu\}$ with $\pi^i(x_1,x_2)=x_i$, is the set of couplings between $\mu$ and $\nu$, and we denote by $\Pi_o(\mu,\nu)$ the set of optimal couplings. When the optimal coupling is of the form $\gamma=(\id, \T_\mu^\nu)_\#\mu$ with $\id:x\mapsto x$ and $\T_\mu^\nu\in L^2(\mu)$ satisfying $(\T_\mu^\nu)_\#\mu=\nu$, we call $\T_\mu^\nu$ the OT map. We refer to the metric space $(\cP_2(\R^d ),\cW_2)$ as the Wasserstein space. We note $S_d^{++}(\R)$ the space of symmetric positive definite matrices, and for $x\in \R^d$, $\Sigma\in S_d^{++}(\R)$, $\|x\|_\Sigma^2 = x^T \Sigma x$.

\section{Background} 
\label{section:background}

In this section, we fix $\mu\in \cP_2(\R^d)$ and introduce first the Bregman divergence on $L^2(\mu)$ along with the notions of relative convexity and smoothness that will be crucial in the analysis of the optimization schemes. Then, we introduce the differential structure and computation rules for differentiating a functional $\cF:\cP_2(\R^d)\to \R$ along 
curves 
and discuss 
notions of convexity on $\cP_2(\R^d)$. We refer the reader to  \Cref{appendix:l2} and \Cref{appendix:wasserstein} for more details 
on $L^2(\mu)$ and the Wasserstein space respectively. Finally, we introduce the mirror descent and preconditioned gradient descent on $\R^d$.

\paragraph{Bregman divergence on $L^2(\mu)$.}
\looseness=-1 \citet[Definition 2.1]{frigyik2008functional} defined the Bregman divergence of Fréchet differentiable functionals. In our case, we only need Gâteaux differentiability. In this paper, $\nabla$ refers to the Gâteaux differential, which coincides with the Fréchet derivative if the latter exists.
\begin{definition}\label{def:bregman_div}
    Let $\phi_\mu:L^2(\mu)\to \mathbb{R}$ be convex and continuously Gâteaux differentiable. The Bregman divergence is defined for all $\T,\sS\in L^2(\mu)$ as $\D_{\phi_\mu}(\T,\sS) = \phi_\mu(\T) - \phi_\mu(\sS) - \langle \nabla\phi_\mu(\sS),\T-\sS\rangle_{L^2(\mu)}.$ 
\end{definition}
We use the same definition on $\R^d$. The map $\phi_\mu$ (respectively $\nabla \phi_\mu$) in the definition of $\D_{\phi_\mu}$ above is referred to as the Bregman potential (respectively mirror map). If $\phi_\mu$ is strictly convex, then $\D_{\phi_\mu}$ is a valid Bregman divergence, \emph{i.e.} it is positive and separates maps $\mu$-almost everywhere (a.e.). 
In particular, for $\phi_\mu(\T)=\frac12\|\T\|_{L^2(\mu)}^2$, we recover the $L^2$ norm  as a divergence $\D_{\phi_\mu}(\T,\sS) = \frac12\|\T-\sS\|_{L^2(\mu)}^2$. Bregman divergences have received a lot of attention as they allow to define provably convergent schemes for functions which %
are not smooth in the standard (\emph{e.g.} Euclidean) sense \citep{lu2018relatively, bauschke2017descent}, and thus for which gradient descent is not appropriate. These guarantees rely on the notion of relative smoothness and relative convexity \citep{lu2018relatively, maddison2021dual}, which we introduce now on $L^2(\mu)$.

\begin{definition}[Relative smoothness and convexity] \label{def:smoothness_convexity}
    Let $\psi_\mu, \phi_\mu:L^2(\mu)\to\mathbb{R}$ be convex and continuously Gâteaux differentiable. We say that $\psi_\mu$ is $\sm$-smooth (respectively $\stc$-convex) relative to $\phi_\mu$ if and only if for all $\T,\sS\in L^2(\mu),\ \D_{\psi_\mu}(\T,\sS) \le \sm \D_{\phi_\mu}(\T,\sS)$ (respectively $\D_{\psi_\mu}(\T,\sS) \ge \stc \D_{\phi_\mu}(\T,\sS)$).
\end{definition}
\looseness=-1 Similarly to the Euclidean case \citep{lu2018relatively}, relative smoothness and convexity are equivalent to respectively $\sm \phi_\mu -\psi_\mu$ and $\psi_\mu-\stc\phi_\mu$ being convex (see \Cref{sec:convexity_l2}). Yet, proving the convergence of \eqref{eq:main_scheme} requires only that these properties hold at specific functions (directions), a fact we will soon exploit.

In some situations, we need the $L^2$ Fenchel transform $\phi_\mu^*$ of $\phi_\mu$ to be differentiable, \emph{e.g.} to compute its Bregman divergence $\D_{\phi_\mu^*}$. We show in \Cref{lemma:fenchel_transform_invertible} that a sufficient condition to satisfy this property is for $\phi_\mu$ to be strictly convex, lower semicontinuous and superlinear, \emph{i.e.} $\lim_{\|\T\|\to \infty}\phi_\mu(\T)/\|\T\|_{L^2(\mu)}=+\infty$. Moreover, in this case, $(\nabla\phi_\mu)^{-1} = \nabla\phi_\mu^*$. When needed, we will suppose that $\phi_\mu$ satisfies this assumption.

\paragraph{Differentiability on $(\cP_2(\R^d ),\cW_2)$.}

\looseness=-1 Let $\cF:\cP_2(\R^d)\to \R\cup\{+\infty\}$, and denote $D(\cF)=\{\mu\in\mathcal{P}_2(\R^d),\ \cF(\mu)<+\infty\}$ the domain of $\cF$ and $D(\Tilde{\cF}_\mu) = \{\T\in L^2(\mu),\ \T_\#\mu\in D(\cF)\}$ the domain of $\Tilde{\cF}_\mu$ defined as $\tF_\mu(\T):=\cF(\T_\#\mu)$ for all $\T\in L^2(\mu)$. %
In the following, we use the differential structure of $(\cP_2(\R^d), \cW_2)$ introduced in \citep[Definition 2.8]{bonnet2019pontryagin}, and we say that $\gW\mathcal{F}(\mu)$ is a Wasserstein gradient of $\cF$ at $\mu\in D(\cF)$ if for any $\nu\in \cP_2(\R^d)$ and any optimal coupling $\gamma\in\Pi_o(\mu,\nu)$,
\begin{equation}\label{eq:wass_diff}
    \cF(\nu) = \cF(\mu) + \int \langle \gW\cF(\mu)(x), y-x\rangle\ \diff\gamma(x,y) + o\big(\cW_2(\mu,\nu)\big).
\end{equation}

If such a gradient exists, then we say that $\cF$ is Wasserstein differentiable at $\mu$ \citep{bonnet2019pontryagin, lanzetti2022first}. Moreover there is a unique gradient belonging to the tangent space of $\cP_2(\R^d)$ verifying $\eqref{eq:wass_diff}$ \citep[Proposition 2.5]{lanzetti2022first}, and we will always restrict ourselves without loss of generality to this particular gradient, see \Cref{sec:wass_diff}. %
The differentiability of $\cF$ and $\tF_\mu$ are very related, as described in the following proposition.

\begin{proposition} \label{prop:frechet_derivative}    
    Let $\cF:\cP_2(\R^d)\to\R\cup \{+\infty\}$ be a Wasserstein differentiable functional on $D(\cF)$. Let $\mu\in\cP_2(\R^d)$ and $\tF_\mu(\T) = \cF(\T_\#\mu)$ for all $\T\in D(\tF_\mu)$.    Then, $\tF_\mu$ is Fréchet differentiable, and for all $\sS\in D(\tF_\mu)$, $\nabla\tF_\mu(\sS)=\gW\cF(\sS_\#\mu)\circ \sS$.
\end{proposition}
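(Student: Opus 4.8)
The plan is to verify the Fréchet expansion of $\tF_\mu$ at a fixed $\sS\in D(\tF_\mu)$ directly, by transporting the Wasserstein differentiability of $\cF$ at the base point $\rho:=\sS_\#\mu$ through a carefully chosen coupling. Fix $\sS\in D(\tF_\mu)$, so that $\rho\in D(\cF)$ and $\cF$ is Wasserstein differentiable at $\rho$. Given a perturbation direction $h\in L^2(\mu)$, the central object is the coupling $\gamma_h := (\sS,\,\sS+h)_\#\mu$, whose first marginal is $\sS_\#\mu=\rho$ and whose second marginal is $(\sS+h)_\#\mu$; hence $\gamma_h\in\Pi\big(\rho,(\sS+h)_\#\mu\big)$. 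Crucially, $\gamma_h$ is in general \emph{not} an optimal coupling, so the defining property of $\gW\cF$ (which is stated along optimal plans $\Pi_o$) does not suffice: I would instead invoke the strong differentiability of \Cref{prop:strong_diff_w}, which grants the first-order expansion along \emph{any} coupling.

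Plugging $\gamma_h$ into \Cref{prop:strong_diff_w} and performing the change of variables $x=\sS(z)$, $y=\sS(z)+h(z)$, the linear term simplifies to
\begin{equation*}
    \int \langle \gW\cF(\rho)(x), y-x\rangle\ \mathrm{d}\gamma_h(x,y) = \int \langle \gW\cF(\rho)\big(\sS(z)\big), h(z)\rangle\ \mathrm{d}\mu(z) = \langle \gW\cF(\rho)\circ\sS,\, h\rangle_{L^2(\mu)},
\end{equation*}
while the transport cost governing the remainder is exactly $\int\|x-y\|^2\ \mathrm{d}\gamma_h = \|h\|_{L^2(\mu)}^2$. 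Therefore \Cref{prop:strong_diff_w} yields
\begin{equation*}
    \tF_\mu(\sS+h) = \cF\big((\sS+h)_\#\mu\big) = \cF(\rho) + \langle \gW\cF(\rho)\circ\sS,\, h\rangle_{L^2(\mu)} + o\big(\|h\|_{L^2(\mu)}\big),
\end{equation*}
which is precisely the Fréchet expansion of $\tF_\mu$ at $\sS$ in the direction $h$, with candidate gradient $\gW\cF(\rho)\circ\sS$.

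To conclude I would check that $L(h):=\langle \gW\cF(\rho)\circ\sS,\, h\rangle_{L^2(\mu)}$ is a bounded linear functional on $L^2(\mu)$, i.e. that $\gW\cF(\rho)\circ\sS\in L^2(\mu)$. This is again a change of variables: since the tangent-space gradient satisfies $\gW\cF(\rho)\in L^2(\rho)$, one has $\int \|\gW\cF(\rho)\circ\sS\|^2\ \mathrm{d}\mu = \int \|\gW\cF(\rho)\|^2\ \mathrm{d}\rho < \infty$. By the Riesz representation recalled in \Cref{sec:diff_calculus}, $L$ is represented by $\gW\cF(\rho)\circ\sS$, so $\tF_\mu$ is Fréchet differentiable at $\sS$ with $\nabla\tF_\mu(\sS)=\gW\cF(\sS_\#\mu)\circ\sS$, as claimed.

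The main obstacle is exactly the passage from differentiability along \emph{optimal} couplings (the definition of the Wasserstein gradient) to a genuine Fréchet statement in $L^2(\mu)$: a generic perturbation $\sS+h$ does not move mass out of $\rho$ optimally, so the expansion must hold along the suboptimal plan $\gamma_h$. This is the content of the strong-differentiability \Cref{prop:strong_diff_w}, and it is the indispensable ingredient; relying only on the plain definition would yield a weaker, Gâteaux-type statement restricted to optimal directions. A minor secondary point to monitor is that $(\sS+h)_\#\mu$ stays in the domain where the expansion is meaningful, which is ensured by applying \Cref{prop:strong_diff_w} at the point $\rho\in D(\cF)$ at which $\cF$ is differentiable.
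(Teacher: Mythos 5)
Your proof is correct and follows essentially the same route as the paper: both invoke the strong differentiability of \Cref{prop:strong_diff_w} at $\sS_\#\mu$ along the (generally suboptimal) coupling $(\sS,\sS+h)_\#\mu$ and perform the same change of variables, the paper merely writing the perturbation as $h=\epsilon(\T-\sS)$. Your explicit check that $\gW\cF(\sS_\#\mu)\circ\sS\in L^2(\mu)$ via the pushforward identity just spells out the paper's brief closing remark that $\gW\cF\in L^2(\mu)$.
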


The Wasserstein differentiable functionals include $c$-Wasserstein costs on $\cPa$ \citep[Proposition 2.10 and 2.11]{lanzetti2022first}, potential energies $\cV(\mu) = \int V\mathrm{d}\mu$ or interaction energies $\mathcal{W}(\mu) = \frac12\iint W(x-y)\ \mathrm{d}\mu(x)\mathrm{d}\mu(y)$ for $V:\R^d\to\R$ and $W:\R^d\to\R$ differentiable and with bounded Hessian %
\citep[Section 2.4]{lanzetti2022first}. In particular, their Wasserstein gradients read as $\gW\cV(\mu)=\nabla V$ and $\gW\mathcal{W}(\mu) = \nabla W\star \mu$. However, entropy functionals, \emph{e.g.} the negative entropy defined as $\cH(\mu)=\int \log\big(\rho(x)\big)\mathrm{d}\mu(x)$ for distributions $\mu$ admitting a density $\rho$ \emph{w.r.t.} the Lebesgue measure, are not Wasserstein differentiable. In this case,  we can consider subgradients $\gW\cF(\mu)$ at $\mu$ for which \eqref{eq:wass_diff} becomes an inequality. 
To guarantee that the Wasserstein subgradient is not empty, we need $\rho$ to satisfy some Sobolev regularity, see \emph{e.g.} \citep[Theorem 10.4.13]{ambrosio2005gradient} or \citep{salim2020primal}. 
Then, if $\nabla\log \rho \in L^2(\mu)$, the only subgradient of $\cH$ in the tangent space is $\gW\cH(\mu)=\nabla\log\rho$, see \citep[Theorem 10.4.17]{ambrosio2005gradient} and \citep[Proposition 4.3]{Erbar2010}. 
Then, free energies are functionals that write as sums of potential, interaction and entropy terms \citep[Chapter 7]{santambrogio2015optimal}. 
It is notably the case for the KL to a fixed target distribution, that is the sum of a potential and entropy term \citep{wibisono2018sampling}, or the MMD as a sum of a potential and interaction term \citep{arbel2019maximum}. %

\paragraph{Examples of functionals.} \looseness=-1
The definitions of Bregman divergences on $L^2(\mu)$ and of Wasserstein differentiability enable us to consider alternative Bregman potentials than the $L^2(\mu)$-norm mentioned above.
For instance, for $V$ convex, differentiable and $L$-smooth, we can use potential energies $\phi_\mu^V(\T):=\cV(\T_\#\mu)$, for which $\D_{\phi_\mu^V}(\T,\sS)=\int \D_V\big(\T(x),\sS(x)\big)\mathrm{d}\mu(x)$ where $\D_V$ is the Bregman divergence of $V$ on $\R^d$. Notice that  $\phi_\mu(\T)=\frac{1}{2}\|\T\|^2_{L^2(\mu)}$ is a specific example of a potential energy where $V=\frac{1}{2}\|\cdot\|^2$. Moreover, we will consider interaction energies $\phi_\mu^W(\T):= \mathcal{W}(\T_\#\mu)$ with $W$ convex, differentiable, $L$-smooth, and satisfying $W(-x)=W(x)$; for which $\D_{\phi_\mu^W}(\T,\sS) = \frac12 \iint \D_{W}\big(\T(x)-\T(x'), \sS(x)-\sS(x')\big)\mathrm{d}\mu(x)\mathrm{d}\mu(x')$ (see \Cref{section:computation_bregman}). We will also use $\phi_\mu^\cH(\T)=\cH(\T_\#\mu)$ with $\cH$ the negative entropy. Note that Bregman divergences on the Wasserstein space using these functionals were proposed by \citet{li2021transport}, but only for $\sS=\id$ and optimal transport maps $\T$.

\paragraph{Convexity and smoothness in $(\cP_2(\R^d ),\cW_2)$.} %
In order to study the convergence of gradient flows and their discrete-time counterparts, it is important to have suitable notions of convexity and smoothness. On $(\cP_2(\R^d ),\cW_2)$, different such notions have been proposed based on specific choices of curves. The most popular one is to require the functional $\cF$ to be 
$\stc$-convex along geodesics (see \Cref{def:lambda_convex}), which are of the form $\mu_t = \big((1-t)\id + t \T_{\mu_0}^{\mu_1}\big)_\#\mu_0$ if $\mu_0\in\cPa$ and $\mu_1\in\cP_2(\R^d)$, with $\T_{\mu_0}^{\mu_1}$ the OT map between them.
In that setting, 
\begin{equation} \label{eq:geod_convexity}
    \frac{\stc}{2}\cW_2^2(\mu_0,\mu_1) = \frac{\stc}{2} \|\T_{\mu_0}^{\mu_1}-\id\|_{L^2(\mu_0)}^2 \le \cF(\mu_1)-\cF(\mu_0)-\langle \gW\cF(\mu_0), \T_{\mu_0}^{\mu_1}-\id\rangle_{L^2(\mu_0)}.
\end{equation}
For instance, free energies such as potential or interaction energies with convex $V$ or $W$, or the negative entropy, are convex along geodesics \citep[Section 7.3]{santambrogio2015optimal}. 
However, some popular functionals, such as the Wasserstein-2 distance $\mu\mapsto \frac12\cW_2^2(\mu,\eta)$ itself, for a given $\eta\in\cP_2(\R^d)$, are not convex along geodesics. Instead \citet[Theorem 4.0.4]{ambrosio2005gradient} showed that it was sufficient for the convergence of the gradient flow 
to be convex along other curves, \emph{e.g.} along particular generalized geodesics for the Wasserstein-2 distance \citep[Lemma 9.2.7]{ambrosio2005gradient}, which, for $\mu,\nu\in\cP_2(\R^d)$, are of the form $\mu_t = \big((1-t)\T_\eta^\mu + t \T_\eta^\nu\big)_\#\eta$ for $\T_\eta^\mu$, $T_\eta^\nu$ OT maps from $\eta$ to $\mu$ and $\nu$.  %
 Observing that for $\phi_\mu(\T) = \frac12 \|\T\|_{L^2(\mu)}^2$, 
we can rewrite \eqref{eq:geod_convexity} as $\stc \D_{\phi_{\mu_0}}(\T_{\mu_0}^{\mu_1},\id) \le \D_{\tF_{\mu_0}}(\T_{\mu_0}^{\mu_1}, \id)$ and see that being convex along geodesics boils down to being convex in the $L^2$ sense for  $\sS=\id$ and $\T$ chosen as an OT map. %
This observation motivates us to consider a more refined notion of convexity along curves. %

\vspace{2mm}

\begin{definition}\label{def:rel_convexity_smoothness}
    Let $\mu\in\cP_2(\R^d)$, $\T,\sS\in L^2(\mu)$ and for all $t\in [0,1]$, $\mu_t = (\T_t)_\#\mu$ with $\T_t = (1-t)\sS+t\T$. We say that $\cF:\cP_2(\R^d)\to\R$ is $\stc$-convex (resp. $\sm$-smooth) relative to $\cG:\cP_2(\R^d)\to\R$ along $t\mapsto \mu_t$ if for all $s,t\in [0,1]$, $\D_{\tF_\mu}(\T_s,\T_t)\ge \stc \D_{\tG_\mu}(\T_s, \T_t)$ (resp. $\D_{\tF_\mu}(\T_s,\T_t) \le \sm \D_{\tG_\mu}(\T_s,\T_t)$).
\end{definition}

\looseness=-1 Notice that in contrast with \Cref{def:smoothness_convexity}, \Cref{def:rel_convexity_smoothness} is stated for a fixed distribution $\mu$ and directions ($\sS,\T$), and involves comparisons between Bregman divergences depending on $\mu$ and curves $(\T_s)_{s\in[0,1]}$ depending on $\sS,\T$. 
The larger family of $\sS$ and $\T$ for which \Cref{def:rel_convexity_smoothness} holds, the more restricted is the notion of convexity of $\cF-\stc\cG$ (resp.\ of $\sm\cG-\cF$) on $\cP_2(\R^d)$. For instance, Wasserstein-2 generalized geodesics with anchor $\eta\in \cP_2(\R^d)$ correspond to considering  $\sS,\T$ as all the OT maps originating from $\eta$, among which geodesics are particular cases when taking $\eta=\mu$ (hence $\sS=\id$). If we furthermore ask for $\stc$-convexity to hold for all $\mu\in \cP_2(\R^d)$ and $\T,\sS\in L^2(\mu)$ (\emph{i.e.}, not only OT maps), then we recover the convexity along acceleration free-curves as introduced in \citep{tanaka2023accelerated, parker2023some, cavagnari2023lagrangian}. 
Our motivation behind introducing \Cref{def:rel_convexity_smoothness} is that the convergence proofs of MD and preconditioned GD require relative smoothness and convexity properties to hold only along specific curves. %

\paragraph{Mirror descent and preconditioned gradient descent on $\R^d$.} \looseness=-1 These schemes read respectively as $\nabla \phi (x_{k+1})-\nabla \phi (x_{k})=-\tau\nabla f(x_k)$ \citep{beck2003mirror} and $y_{k+1}-  y_{k}=-\tau\nabla h^* \big(\nabla g(y_k)\big)$ \citep{maddison2021dual}, where the objectives $f,g$ and the regularizers $h,\phi$ are convex $C^1$ functions from $\R^d$ to $\R$. The algorithms are closely related since, using the Fenchel transform and setting $g=\phi^*$ and $h^*=f$, we see that, for $y=\nabla \phi (x)$, the two schemes are equivalent when permuting the roles of the objective and of the regularizer. For MD, convergence of $f$ is ensured if $f$ is both $\nicefrac{1}{\tau}$-smooth and $\stc$-convex relative to $\phi$ \citep[Theorem 3.1]{lu2018relatively}. Concerning preconditioned GD, assuming that $h,g$ are Legendre, $\big(g(y_k)\big)_k$ converges to the minimum of $g$ if $h^*$ is both $\nicefrac{1}{\tau}$-smooth and $\stc$-convex relative to $g^*$ with $\stc>0$ \citep[Theorem 3.9]{maddison2021dual}.

\section{Mirror descent} \label{section:mirror_descent}

For every $\mu\in\cP_2(\R^d)$, let $\phi_\mu:L^2(\mu)\to\mathbb{R}$ be strictly convex, proper and differentiable and assume that the (sub)gradient $\gW\cF(\mu)\in L^2(\mu)$ exists. 
In this section, we are interested in analyzing the scheme \eqref{eq:main_scheme} where the cost $\D$ is chosen as a Bregman divergence, \emph{i.e.} $\D_{\phi_{\mu}}$ as defined in \Cref{def:bregman_div}. This corresponds to a mirror descent scheme in $\cP_2(\R^d)$.  For $\tau>0$ and $k\ge 0$, it writes:
\begin{equation} \label{eq:md}
    \T_{k+1} = \argmin_{\T\in L^2(\mu_k)}\ \D_{\phi_{\mu_k}}(\T,\id) + \tau \langle \gW\mathcal{F}(\mu_k), \T-\id\rangle_{L^2(\mu_k)}, \quad \mu_{k+1} = (\T_{k+1})_\#\mu_k.
\end{equation}

\paragraph{Iterates of mirror descent.} In all that follows, we assume that the iterates \eqref{eq:md} exist, which is true \emph{e.g.} for a superlinear $\phi_{\mu_k}$, %
since the objective is a sum of linear functions and of the continuous $\phi_{\mu_k}$.  
In the previous section, we have seen that the second term in the proximal scheme \eqref{eq:md} can be interpreted as a linearization of the functional $\cF$ at $\mu_k$ for Wasserstein (sub)differentiable functionals. 
Now define for all $\T\in L^2(\mu_k)$, $\J(\T) = \D_{\phi_{\mu_k}}(\T,\id) + \tau \langle \gW\cF(\mu_k), \T-\id\rangle_{L^2(\mu_k)}$. 
Then, deriving the first order conditions of \eqref{eq:md} as  $\nabla\J(\T_{k+1})=0$, we obtain $\mu_k$-a.e.,
\begin{equation} \label{eq:md_pushforward_compatible}
        \nabla\phi_{\mu_k}(\T_{k+1}) = \nabla\phi_{\mu_k}(\id) - \tau \gW\cF(\mu_k) 
        \Longleftrightarrow \T_{k+1} = \nabla\phi_{\mu_k}^*\big(\nabla\phi_{\mu_k}(\id) - \tau\gW\cF(\mu_k)\big).
\end{equation} 
Note that for $\phi_{\mu}(\T)=\frac12 \|\T\|^2_{L^2(\mu)}$, the update \eqref{eq:md_pushforward_compatible} translates as $\T_{k+1}= \id - \tau \gW\cF(\mu_k)$, and our scheme recovers Wasserstein gradient descent \citep{chizat2022sparse,monmarche2024local}. This is analogous to mirror descent recovering GD when the Bregman potential is chosen as the Euclidean squared norm in  $\R^d$ \citep{beck2003mirror}. We discuss in \Cref{sec:continuous_formulation_md} the continuous formulation of \eqref{eq:md}, showing it coincides with the gradient flow of the mirror Langevin \citep{ahn2021efficient, wibisono2019proximal}, the limit of the JKO scheme with Bregman groundcosts \citep{rankin2024jko}, Information Newton's flows \citep{wang2020information}, or Sinkhorn's flow \citep{deb2023wasserstein} for specific choices of $\phi$ and $\cF$. %

Our proof of convergence of the mirror descent algorithm will require the Bregman divergence to satisfy the following property, which is reminiscent of conditions of optimality for couplings in OT.
\begin{assumption}\label{assumption:min} For $\mu,\rho \in \cPa$ and $\nu \in \cP_2(\R^d)$, setting $\T_{\phi_{\mu}}^{\mu,\nu}=\argmin_{\T_\#\mu=\nu}\ \D_{\phi_\mu}(\T,\id)$, $\U_{\phi_{\rho}}^{\rho,\nu}=\argmin_{\U_\#\rho =\nu}\ \D_{\phi_\rho }(\U,\id)$, 
the functional $\phi_\mu$ is such that, for any $\sS\in L^2(\mu)$ satisfying $\sS_\#\mu=\rho $,  we have
        $\D_{\phi_\mu}(\T_{\phi_{\mu}}^{\mu,\nu},\sS) \ge \D_{\phi_\rho }(\U_{\phi_{\rho}}^{\rho,\nu}, \id)$.
\end{assumption}
The inequality in \Cref{assumption:min} can be interpreted as follows: the ``distance'' between $\rho$ and $\nu$ is greater when observed from an anchor $\mu$ that differs from $\rho$ and $\nu$. We demonstrate that Bregman divergences satisfy this assumption under the following conditions on the Bregman potential $\phi$. 
\begin{proposition} \label{prop:pushforward_compatible}
    Let $\mu,\rho\in\cPa$ and $\nu \in \cP_2(\R^d)$. Let $\phi_\mu$ be a pushforward compatible functional, \emph{i.e.} there exists $\phi:\cP_2(\R^d)\to\R$ such that for all $\T\in L^2(\mu)$, $\phi_\mu(\T)=\phi(\T_\#\mu)$. Assume furthermore  $\gW\phi(\mu)$ and $\gW\phi(\rho)$ invertible (on $\R^d$).  
    Then, $\phi_\mu$ satisfies \Cref{assumption:min}.
\end{proposition}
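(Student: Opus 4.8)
The plan is to unfold both Bregman divergences via pushforward compatibility, cancel the common terms, and reduce the claimed inequality to the optimality of a single Kantorovich coupling. Throughout I write $b_\mu := \gW\phi(\mu)$ and $b_\rho := \gW\phi(\rho)$, both invertible by hypothesis, and I use the chain rule $\nabla\phi_\mu(\sS) = \gW\phi(\sS_\#\mu)\circ\sS$ recalled in \Cref{sec:wass_diff}. Since $\phi_\mu(\T)=\phi(\T_\#\mu)$ and $\sS_\#\mu=\rho$, \Cref{def:bregman_div} gives $\D_{\phi_\mu}(\T_{\phi_\mu}^{\mu,\nu},\sS) = \phi(\nu)-\phi(\rho)-\langle b_\rho\circ\sS,\ \T_{\phi_\mu}^{\mu,\nu}-\sS\rangle_{L^2(\mu)}$, using $\nabla\phi_\mu(\sS)=b_\rho\circ\sS$; similarly $\D_{\phi_\rho}(\U_{\phi_\rho}^{\rho,\nu},\id)=\phi(\nu)-\phi(\rho)-\langle b_\rho,\ \U_{\phi_\rho}^{\rho,\nu}-\id\rangle_{L^2(\rho)}$. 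The constants $\phi(\nu)-\phi(\rho)$ cancel, and the change of variables $y=\sS(x)$ (licit as $\sS_\#\mu=\rho$) shows $\langle b_\rho\circ\sS,\sS\rangle_{L^2(\mu)}=\langle b_\rho,\id\rangle_{L^2(\rho)}$, so these diagonal terms cancel too. The target inequality thus reduces to $\langle b_\rho,\ \U_{\phi_\rho}^{\rho,\nu}\rangle_{L^2(\rho)} \ge \langle b_\rho\circ\sS,\ \T_{\phi_\mu}^{\mu,\nu}\rangle_{L^2(\mu)}$.

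Next I would read both sides as integrals of the linear cost $(y,z)\mapsto\langle b_\rho(y),z\rangle$ against couplings of $(\rho,\nu)$: the left side equals $\int\langle b_\rho(y),z\rangle\diff\gamma^*$ with $\gamma^*=(\id,\U_{\phi_\rho}^{\rho,\nu})_\#\rho$, and the right side equals $\int\langle b_\rho(y),z\rangle\diff\gamma$ with $\gamma=(\sS,\T_{\phi_\mu}^{\mu,\nu})_\#\mu$. Both lie in $\Pi(\rho,\nu)$: the marginals of $\gamma^*$ are $\rho$ and $(\U_{\phi_\rho}^{\rho,\nu})_\#\rho=\nu$, and those of $\gamma$ are $\sS_\#\mu=\rho$ and $(\T_{\phi_\mu}^{\mu,\nu})_\#\mu=\nu$; note that only the pushforward constraint on $\T_{\phi_\mu}^{\mu,\nu}$, not its optimality, is used. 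It therefore suffices to show that $\gamma^*$ maximizes $\gamma\mapsto\int\langle b_\rho(y),z\rangle\diff\gamma$ over all of $\Pi(\rho,\nu)$.

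This last step is where invertibility is essential and is the main obstacle. Because $\phi(\U_\#\rho)$ is constant on $\{\U:\U_\#\rho=\nu\}$, the definition of $\U_{\phi_\rho}^{\rho,\nu}$ as a Bregman projection reduces (again through \Cref{def:bregman_div}) to maximizing $\U\mapsto\int\langle b_\rho(y),\U(y)\rangle\diff\rho(y)$ over maps $\U$ with $\U_\#\rho=\nu$, a maximal-correlation Monge problem. I would substitute $w=b_\rho(y)$, licit since $b_\rho$ is invertible, turning this into the standard quadratic-correlation problem between $\tilde\rho:=(b_\rho)_\#\rho$ and $\nu$; the induced map $\gamma\mapsto(b_\rho,\id)_\#\gamma$ is a bijection $\Pi(\rho,\nu)\to\Pi(\tilde\rho,\nu)$ preserving the objective. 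Provided $\tilde\rho$ is absolutely continuous, which I would deduce from $\rho\in\cPa$ together with the non-degeneracy encoded in the invertibility of $b_\rho$, Brenier's theorem makes the optimal coupling of $(\tilde\rho,\nu)$ deterministic and equates the Monge and Kantorovich values; pulling back, $\gamma^*$ attains the Kantorovich optimum over $\Pi(\rho,\nu)$.

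Putting these together yields $\int\langle b_\rho(y),z\rangle\diff\gamma^*\ge\int\langle b_\rho(y),z\rangle\diff\gamma$, which is exactly the reduced inequality, and hence \Cref{assumption:min}. The genuinely delicate points are the absolute continuity of $(b_\rho)_\#\rho$, needed precisely because the competitor $\gamma$ is in general not induced by a map from $\rho$, so one requires Monge$\,=\,$Kantorovich rather than a mere comparison of transport maps, together with the identification of $\T_{\phi_\mu}^{\mu,\nu}$ and $\U_{\phi_\rho}^{\rho,\nu}$ with the Brenier maps of the respective transformed problems; the rest is the bookkeeping of the cancellations above.
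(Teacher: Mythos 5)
Your proposal is correct and is essentially the paper's own argument: the paper likewise unfolds $\D_{\phi_\mu}(\T_{\phi_\mu}^{\mu,\nu},\sS)$ via pushforward compatibility, bounds it below by the Kantorovich value $\cW_\phi(\nu,\rho)$ of the linearized cost using only the feasibility of the coupling $(\T_{\phi_\mu}^{\mu,\nu},\sS)_\#\mu\in\Pi(\nu,\rho)$, and then identifies $\cW_\phi(\nu,\rho)=\D_{\phi_\rho}(\U_{\phi_\rho}^{\rho,\nu},\id)$ through exactly your substitution $w=\gW\phi(\rho)(y)$ and Brenier's theorem (\Cref{prop:ot_map}), including the same assertion that $\gW\phi(\rho)_\#\rho\in\cPa$ follows from $\rho\in\cPa$ and invertibility. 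Your explicit cancellation of the marginal-only terms and the max-correlation phrasing are a rewriting of that same reduction, not a different route.
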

All the maps $\phi_\mu^V$, $\phi_\mu^W$ and $\phi_\mu^\cH$ defined in \Cref{section:background} satisfy the assumptions of \Cref{prop:pushforward_compatible} under mild requirements, see \Cref{sec:ot_md}. The proof of  \Cref{prop:pushforward_compatible} is given in \Cref{proof:prop_pushforward_compatible}. It relies on the definition of an appropriate optimal transport problem
\begin{equation} %
    \cW_{\phi}(\nu,\mu) = \inf_{\gamma\in\Pi(\nu,\mu)}\ \phi(\nu)-\phi(\mu)-\int\langle \gW\phi(\mu)(y), x-y\rangle\ \diff\gamma(x,y),
\end{equation}
and on the proof of existence of OT maps for absolutely continuous measures (see \Cref{prop:ot_map}), which implies  $\cW_\phi(\nu,\mu) = \D_{\phi_\mu}(\T_{\phi_\mu}^{\mu,\nu}, \id)$ with $\T_{\phi_\mu}^{\mu,\nu}$ defined as in \Cref{assumption:min}. From there, we can conclude that $\phi_\mu$ satisfies \Cref{assumption:min}. We notice that the corresponding transport problem recovers previously considered objects such as OT problems with Bregman divergence costs \citep{carlier2007monge,rankin2023bregman}, but is strictly more general (as our results pertain to the existence of OT maps), as detailed in \Cref{sec:ot_md}. 

We now analyze the convergence of the MD scheme. Under a relative smoothness condition along curves generated by $\sS=\id$ and $\T=\T_{k+1}$ solutions of \eqref{eq:md} for all $k\ge 0$, we derive the following descent lemma, which ensures that $\big(\cF(\mu_k)\big)_{k}$ is non-increasing. Its proof can be found in \Cref{proof:prop_descent_mirror} and  relies on the three-point inequality \citep{chen1993convergence}, which we extended to $L^2(\mu)$ in \Cref{lemma:3pt_ineq}. %

\begin{proposition} \label{prop:descent_mirror}%
    Let $\sm>0$, $\tau \le \frac{1}{\sm}$. Assume for all $k\ge 0$, $\cF$ is $\sm$-smooth relative to $\phi$ along $t\mapsto \big((1-t)\id + t \T_{k+1}\big)_\#\mu_k$, which implies $\sm \D_{\phi_{\mu_k}}(\T_{k+1},\id) \ge \D_{\tF_{\mu_k}}(\T_{k+1}, \id)$. Then, for all $k\ge 0$,
    \begin{equation}
        \cF(\mu_{k+1}) \le \cF(\mu_k) - \frac{1}{\tau} \D_{\phi_{\mu_k}}(\id, \T_{k+1}).
    \end{equation}
\end{proposition}

Assuming additionally the convexity of $\cF$ along the curves $\mu_t=\big((1-t)\id + t \T_{\phi_\mu}^{\mu, \nu})_\#\mu$, $t\in [0,1]$ 
and that $\phi$ satisfies \Cref{assumption:min}, we can obtain global convergence. 

\begin{proposition} \label{prop:bound_md}
    Let $\nu\in\cP_2(\R^d)$, $\stc\ge 0$. Suppose \Cref{assumption:min} and the conditions of \Cref{prop:descent_mirror} hold, and that $\cF$ is $\stc$-convex relative to $\phi$ along the curves $t\mapsto \big((1-t)\id + t\T_{\phi_{\mu_k}}^{\mu_k,\nu})_\#\mu_k$.
    Then, for all $k\ge 1$,
    \begin{equation} \label{eq:bound_md}
        \cF(\mu_k) - \cF(\nu) \le \frac{\stc}{\left(1-\tau\stc\right)^{-k} - 1} \cW_\phi(\nu,\mu_0) \le \frac{1 - \stc\tau}{k\tau} \cW_{\phi}(\nu, \mu_0).
    \end{equation}
    Moreover,  if $\stc>0$, taking $\nu=\mu^*$ the minimizer of $\cF$, we obtain a linear rate: for all $k\ge 0$, $ \cW_\phi(\mu^*, \mu_k) \le \left(1-\tau\stc\right)^k \cW_\phi(\mu^*, \mu_0).$
\end{proposition}

The proof of \Cref{prop:bound_md} can be found in \Cref{proof:prop_bound_md}, and requires %
\Cref{assumption:min} to hold so that consecutive distances between iterates and the global minimizer telescope. This is not as direct as in the proofs of \cite{lu2018relatively} over $\R^d$, because the minimization problem of each iteration \eqref{eq:md} happens in a different space $L^2(\mu_k)$. We discuss in \Cref{section:xps} how to verify the relative smoothness and convexity on some examples. In particular, when both $\cF$ and $\phi$ are potential energies, it is inherited from the relative smoothness and convexity on $\R^d$, and the conditions are similar with those for MD on $\R^d$.
We also note that relative smoothness assumptions \emph{along descent directions} as stated in \Cref{prop:descent_mirror} and relative strong convexity \emph{along optimal curves between the iterates and a minimizer} as stated in \Cref{prop:bound_md} have been used already in the literature of optimization over measures in very specific cases, \emph{e.g.} for descent results for the KL along SVGD \citep{korba2020non} or for Sinkhorn convergence in \citep{aubin2022mirror}. 
We further analyze in \Cref{sec:prox_grad} the convergence of Bregman proximal gradient scheme \citep{bauschke2017descent, van2017forward} for objectives of the form $\cF(\mu)=\cG(\mu)+\cH(\mu)$ with $\cH$ non smooth; which includes the KL divergence decomposed as a potential energy plus the negative entropy.

\paragraph{Implementation.}
We now discuss the practical implementation of MD on $(\cP_2(\R^d),\cW_2)$ as written in \eqref{eq:md_pushforward_compatible}. 
If $\phi_\mu$ is pushforward compatible, we have $\nabla\phi_{\mu_k}(\T_{k+1}) = \gW\phi\big((\T_{k+1})_\#\mu_k\big)\circ \T_{k+1}$; but if $\nabla\phi_{\mu_k}^*$ is unknown, the scheme is implicit in $\T_{k+1}$. %
A possible solution is to rely on a root finding algorithm such as Newton's method to find the zero of $\nabla\J$ at each step, which we use in \Cref{section:xps} for $\phi_\mu^W$ as Bregman potential. However, this procedure may be computationally costly and scale badly \emph{w.r.t.} the dimension and the number of samples, see \Cref{appendix:newton_step}. Nonetheless, in the special case $\phi_\mu^V(\T)=\int V\circ \T\ \diff\mu$ with $V$ differentiable, strongly convex and $L$-smooth, since $\gW\cV(\mu) = \nabla V$ and $(\nabla V)^{-1}=\nabla V^*$, the scheme reads as%
\begin{equation} \label{eq:scheme_V}
    \forall k\ge 0,\ \T_{k+1} = \nabla V^*\circ\big(\nabla V - \tau \gW\mathcal{F}(\mu_k)\big),
\end{equation} %
and can be implemented on particles, \emph{i.e.} for $\hat{\mu}_k = \frac{1}{n}\sum_{i=1}^n \delta_{x_i^k}$, $x_i^{k+1} = \nabla V^*\big(\nabla V(x_i^k) - \tau \gW\cF(\hat{\mu}_k)(x_i^k)\big)$ for all $k\ge 0,\ i\in\{1,\dots,n\}$.
This scheme is analogous to MD in $\R^d$ \citep{beck2003mirror} and has been introduced as the mirror Wasserstein gradient descent \citep{sharrock2024learning}. Moreover, for $V=\frac12\|\cdot\|_2^2$, as observed earlier, we recover the usual Wasserstein gradient descent, \emph{i.e.} $\T_{k+1} = \id - \tau\gW\cF(\mu_k)$. The scheme can also be implemented for Bregman potentials that are not pushforward compatible. For specific $\phi$, it recovers notably  SVGD and its variants \citep{liu2016stein,liu2017stein, xu2022accurate, shi2022sampling} or the Kalman-Wasserstein gradient descent \citep{garbuno2020interacting}. We refer to \Cref{section:functionals_not_pc} for more details.

\section{Preconditioned gradient descent} \label{section:pgd}

As seen in \Cref{section:background}, preconditioned gradient descent on $\R^d$ has dual convergence conditions compared to mirror descent. Our goal is to extend these to \eqref{eq:main_scheme} and $\cP_2(\R^d)$. Let $\tau>0$, $\mu\in\cP_2(\R^d)$ and $h:\R^d\to \R$ proper and strictly convex on $\R^d$. We consider in this section $\phi_\mu^h(\T)=\int h\circ \T\ \mathrm{d}\mu$ and $\D(\T,\id) = \phi_{\mu_k}^h\big((\id - \T) /\tau\big)\tau = \int h\big((x-\T(x))/\tau\big)\tau\ \mathrm{d}\mu_k(x)$. This type of discrepancy is analogous to OT costs with translation-invariant ground cost $c(x,y)=h(x-y)$, which have been popular as they induce an OT map \citep[Box 1.12]{santambrogio2015optimal}. Such costs have been introduced \emph{e.g.} in \citep{cuturi2023monge, klein2023learning} to promote sparse transport maps. 
More generally, for $\phi_\mu$ strictly convex, proper, differentiable and superlinear, we have $(\nabla\phi_\mu)^{-1} = \nabla\phi_\mu^*$ and the following theory is still valid. For simplicity, we leave studying more general $\phi$ for future works. Here, the scheme \eqref{eq:main_scheme} results in:
\begin{equation} \label{eq:pwgf}
    \T_{k+1} = \argmin_{\T\in L^2(\mu_k)} \ \int h\left(\frac{x-\T(x)}{\tau}\right)\tau\ \mathrm{d}\mu_k(x) + \langle \gW\mathcal{F}(\mu_k), \T-\id\rangle_{L^2(\mu_k)}, \; \;\mu_{k+1} = (\T_{k+1})_\#\mu_k.
\end{equation}
Deriving the first order conditions similarly to \eqref{eq:md_pushforward_compatible} in \Cref{section:mirror_descent}, we obtain the following update:
\begin{equation} \label{eq:pgd}
    \forall k\ge 0,\ \T_{k+1} = \id - \tau (\nabla\phi_{\mu_k}^h)^{-1} \big( \gW\cF(\mu_k)\big) = \id - \tau \nabla h^*\circ \gW\mathcal{F}(\mu_k).
\end{equation}
Notice that for $h=\frac{1}{2}\|\cdot\|_2^2$ the squared Euclidean norm, $\phi_{\mu}^h$ and $\phi_{\mu}^{h^*}$ recover the squared $L^2(\mu)$ norm, and schemes \eqref{eq:md} and \eqref{eq:pwgf} coincide. The  scheme \eqref{eq:pwgf} is analogous to preconditioned gradient descent \citep{maddison2021dual, tarmoun2022gradient, kim2023mirror, laude2022anisotropic, laude2023dualities}, which provides a dual alternative to mirror descent. For the latter, the goal is to find a suitable preconditioner $h^*$ allowing to have convergence guarantees, or to speed-up the convergence for ill-conditioned problems. 
It was recently considered on the Wasserstein space by \citet{cheng2023particle} and \citet{dong2023particlebased} with a focus on the KL divergence as objective $\cF$ and for $h=\|\cdot\|_p^p$ with $p>1$ \citep{cheng2023particle} or $h$ quadratic \citep{dong2023particlebased}. Moreover, their theoretical analysis \citep{agueh2002existence} was mostly done using the continuous formulation. %
Instead we focus on deriving conditions for the convergence of the discrete-time scheme \eqref{eq:pgd} for more general functionals objectives.

\paragraph{Convergence guarantees.} %
Inspired by \citep{maddison2021dual}, we now provide a descent lemma on $\big(\phi_{\mu_k}^{\kp}(\gW\cF(\mu_k))\big)_k$ under a technical inequality between the Bregman divergences of $\phi_{\mu_k}^{\kp}$ and $\tF_{\mu_k}$ for all $k\ge 0$. Additionally, we also suppose that $\cF$ is convex along the curves generated by $\sS=\T_{k+1}$ and $\T=\id$. This last hypothesis ensures that $\D_{\tF_{\mu_k}}(\T_{k+1},\id)\ge 0$, and thus that $\big(\phi_{\mu_k}^{\kp}(\gW\cF(\mu_k))\big)_k$ is non-increasing. Analogously to the Euclidean case, $\phi_\mu^{\kp}$ quantifies the magnitude of the gradient, and provides a second quantifier of convergence leading to possibly different efficient methods compared to mirror descent \citep{kim2023mirror}. The proof relies mainly on the three-point identity (see \emph{e.g.} \citep[Appendix B.7]{frigyik2008functional} or \Cref{lemma:3pt_id}) and algebra with the definition of Bregman divergences.

\begin{proposition} \label{prop:descent_pgd}
    Let $\sm>0$. Assume $\tau\le\frac{1}{\sm}$, and for all $k\ge 0$, $\cF$ convex along $t\mapsto \big((1-t)\T_{k+1} + t\id\big)_\#\mu_k$ and $\D_{\phi_{\mu_k}^{\kp}}\big(\gW\cF(\mu_{k+1})\circ \T_{k+1}, \gW\cF(\mu_k)\big) \le \sm \D_{\tF_{\mu_k}}(\id, \T_{k+1})$. 
    Then, for all $k\ge 0$,
    \begin{equation}
        \phi_{\mu_{k+1}}^{\kp} \big(\gW\cF(\mu_{k+1})\big) \le \phi_{\mu_k}^{\kp}\big(\gW\cF(\mu_k)\big) - \frac{1}{\tau} \D_{\tF_{\mu_k}}(\T_{k+1},\id).
    \end{equation}
\end{proposition}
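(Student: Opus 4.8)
The plan is to transport every quantity into the single Hilbert space $L^2(\mu_k)$ and then reduce the statement to Bregman algebra driven by the scheme's first-order condition. First I would rewrite the left-hand side by a change of variables: since $\mu_{k+1}=(\T_{k+1})_\#\mu_k$ and $\phi^{\kp}_\mu$ is the integral functional $\T\mapsto\int \kp\circ\T\,\diff\mu$,
\[
\phi_{\mu_{k+1}}^{\kp}\big(\gW\cF(\mu_{k+1})\big)=\int \kp\big(\gW\cF(\mu_{k+1})(\T_{k+1}(x))\big)\,\diff\mu_k(x)=\phi_{\mu_k}^{\kp}\big(\gW\cF(\mu_{k+1})\circ\T_{k+1}\big).
\]
By the chain rule $\nabla\tF_{\mu_k}(\T_{k+1})=\gW\cF(\mu_{k+1})\circ\T_{k+1}$ recalled in \Cref{section:background}, the argument equals $\nabla\tF_{\mu_k}(\T_{k+1})$. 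Writing $a=\nabla\tF_{\mu_k}(\T_{k+1})$ and $b=\gW\cF(\mu_k)=\nabla\tF_{\mu_k}(\id)$, both in $L^2(\mu_k)$, the whole claim becomes an inequality between $\phi_{\mu_k}^{\kp}(a)$ and $\phi_{\mu_k}^{\kp}(b)$.

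Next I would expand $\phi_{\mu_k}^{\kp}(a)$ through the definition of the Bregman divergence of $\phi_{\mu_k}^{\kp}$, namely
\[
\phi_{\mu_k}^{\kp}(a)=\phi_{\mu_k}^{\kp}(b)+\big\langle\nabla\phi_{\mu_k}^{\kp}(b),a-b\big\rangle_{L^2(\mu_k)}+\D_{\phi_{\mu_k}^{\kp}}(a,b).
\]
The last term is controlled directly by the standing hypothesis $\D_{\phi_{\mu_k}^{\kp}}(a,b)\le\sm\,\D_{\tF_{\mu_k}}(\id,\T_{k+1})$. For the linear term, I would substitute the first-order optimality of the scheme \eqref{eq:pgd}: since $\nabla\phi_{\mu_k}^{\kp}(b)=\nabla\kp\circ\gW\cF(\mu_k)$ and $\T_{k+1}=\id-\tau\,\nabla\kp\circ\gW\cF(\mu_k)$, we get $\nabla\phi_{\mu_k}^{\kp}(b)=(\id-\T_{k+1})/\tau$. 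Then the three-point identity \Cref{lemma:3pt_id} applied to $\tF_{\mu_k}$ at $\id$ and $\T_{k+1}$ gives
\[
\big\langle \id-\T_{k+1},\,a-b\big\rangle_{L^2(\mu_k)}=-\big(\D_{\tF_{\mu_k}}(\T_{k+1},\id)+\D_{\tF_{\mu_k}}(\id,\T_{k+1})\big),
\]
so the linear term equals $-\tfrac1\tau\big(\D_{\tF_{\mu_k}}(\T_{k+1},\id)+\D_{\tF_{\mu_k}}(\id,\T_{k+1})\big)$.

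Combining the three pieces yields
\[
\phi_{\mu_k}^{\kp}(a)\le\phi_{\mu_k}^{\kp}(b)-\tfrac1\tau\D_{\tF_{\mu_k}}(\T_{k+1},\id)-\Big(\tfrac1\tau-\sm\Big)\D_{\tF_{\mu_k}}(\id,\T_{k+1}).
\]
To conclude, I would invoke $\tau\le 1/\sm$, so $\tfrac1\tau-\sm\ge0$, together with the convexity of $\cF$ along $t\mapsto((1-t)\T_{k+1}+t\,\id)_\#\mu_k$, which forces $\D_{\tF_{\mu_k}}(\id,\T_{k+1})\ge0$; dropping this nonnegative last term and translating $a,b$ back gives exactly the stated descent inequality. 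The only genuinely Wasserstein-specific steps, and hence the part I would treat most carefully, are the opening change of variables together with the chain rule that reinterprets $\phi_{\mu_{k+1}}^{\kp}(\gW\cF(\mu_{k+1}))$ as a functional on $L^2(\mu_k)$, and the substitution of the scheme's first-order condition to identify $\nabla\phi_{\mu_k}^{\kp}(b)=(\id-\T_{k+1})/\tau$; everything downstream is Bregman bookkeeping.
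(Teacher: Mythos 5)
Your proposal is correct and follows essentially the same route as the paper's proof: pushforward change of variables into $L^2(\mu_k)$, Bregman expansion of $\phi_{\mu_k}^{\kp}$ at $\gW\cF(\mu_k)$, the assumed divergence bound, substitution of the first-order condition $\nabla \kp\circ\gW\cF(\mu_k)=(\id-\T_{k+1})/\tau$, and discarding the nonnegative term using $\tau\le 1/\sm$ together with convexity along the stated curve. The only difference is cosmetic: the paper carries a free comparison map $\T$ through the three-point identity (so that the intermediate bound can be reused in the proof of \Cref{prop:bound_pgd}) and only sets $\T=\id$ at the end, whereas you specialize to $\T=\id$ from the outset, which collapses the three-point identity to the symmetrized two-point identity $\langle \nabla\tF_{\mu_k}(\T_{k+1})-\nabla\tF_{\mu_k}(\id),\,\T_{k+1}-\id\rangle_{L^2(\mu_k)}=\D_{\tF_{\mu_k}}(\T_{k+1},\id)+\D_{\tF_{\mu_k}}(\id,\T_{k+1})$.
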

Under an additional assumption of a reverse  inequality between the Bregman divergences of $\phi_{\mu_k}^{\kp}$ and $\tF_{\mu_k}$, and assuming that $\phi_{\mu}^{\kp}$ attains its minimum in 0, we can show the convergence of the gradient quantified by $\phi^{\kp}_\mu$ (see  \Cref{lem:inversion_cv_iterees_gradient}), and the convergence of $\big(\cF(\mu_k)\big)_k$ towards the minimum of $\cF$.

\begin{proposition} \label{prop:bound_pgd}
    Let $\stc\ge 0$ and $\mu^*\in\cP_2(\R^d)$ be the minimizer of $\cF$. Assume the conditions of \Cref{prop:descent_pgd} hold, and that for $\bar{\T}=\argmin_{\T, \T_\#\mu_k=\mu^*}\ \D_{\tF_{\mu_k}}(\id,\T),\;$ 
    $\stc \D_{\tF_{\mu_k}}(\id,\bar{\T})\le \D_{\phi_{\mu_k}^{\kp}}\big(\gW\cF(\bar{\T}_\#\mu_k)\circ\bar{\T}, \gW\cF(\mu_k)\big) $. Then, for all $k\ge 1$, since $\gW\cF(\mu^*)=0$ and $\phi_{\mu_k}^{\kp}(0)=\kp(0)$,
    \begin{equation} \phi_{\mu_{k}}^{\kp}\big(\gW\cF(\mu_{k})\big) - h^*(0)
        \le \frac{\stc}{\left(1-\tau\stc\right)^{-k} - 1}  \big(\cF(\mu_0)-\cF(\mu^*)\big)  \le \frac{1-\tau\stc}{\tau k} \big(\cF(\mu_0)-\cF(\mu^*)\big).
    \end{equation}%
    Moreover, assuming that ${\kp}$ attains its minimum at $0$ and $\stc>0$, $\cF$ converges towards its minimum at a linear rate, \emph{i.e.} for all $k\ge 0$, $\cF(\mu_k)-\cF(\mu^*) \le \left(1-\tau\stc\right)^k \big(\cF(\mu_0)-\cF(\mu^*)\big)$.
\end{proposition}
The proofs of \Cref{prop:descent_pgd} and \Cref{prop:bound_pgd} can be found respectively in \Cref{proof:prop_descent_pgd} and \Cref{proof:prop_bound_pgd}.

We now discuss sufficient conditions to obtain the inequalities between the Bregman divergences required in \Cref{prop:descent_pgd} and \Cref{prop:bound_pgd}. \citet{maddison2021dual} showed on $\R^d$ for a cost $h$ and an objective function $g$, that these conditions were equivalent to $\sm$-smoothness and $\stc$-convexity of the preconditioner $\kp$ (analogous to $\phi_\mu^*$) 
relative to the convex conjugate of the objective $g^*$ (analogous to $\tF_{\mu}^*$). 
To write the inequalities we assumed as a relative smoothness/convexity property of $\phi_{\mu_k}^{\kp}$ \emph{w.r.t.}\ $\tF_{\mu_k}^{*}$, we would need at least to ensure that $\tF_{\mu_k}^{*}$ is differentiable, in order to define its Bregman divergence according to \Cref{def:bregman_div}. This can be done \emph{e.g.} by assuming $\tF_{\mu_k}$ strictly convex and superlinear (see \Cref{lemma:fenchel_transform_invertible}). The latter is true for several examples of functionals $\cF$ we already mentioned, such as potential or interaction energies with strongly convex potentials.

Under this assumption, we can show that the inequalities in \Cref{prop:descent_pgd} and \Cref{prop:bound_pgd} are implied by relative smoothness and convexity along suitable curves.
\begin{proposition} \label{prop:sufficient_conditions_pgd}
    Let $\mu\in\cP_2(\R^d)$ and $\T\in L^2(\mu)$. Assume $\tF_{\mu}^*$ is Gâteaux differentiable and define $\cF_{\mu}^*$ on $t\mapsto\mu_t=(\T_t)_\#\mu$ as $\cF_{\mu}^*(\mu_t) = \tF_{\mu}^*(\T_t)$ for $\T_t=(1-t)\U+t\sS$ for all $t\in [0,1]$, $\sS,\U\in L^2(\mu)$. %
    
    If $\phi^{h^*}$ is $\sm$-smooth relative to $\cF_{\mu}^*$ along  $t\mapsto \big((1-t)\gW\cF(\mu) + t\gW\cF(\T_\#\mu)\circ \T\big)_\#\mu$. Then, 
    \begin{equation}
        \D_{\phi_{\mu}^{\kp}}\big(\gW\cF(\T_\#\mu)\circ \T, \gW\cF(\mu)\big) \le \sm \D_{\tF_{\mu}}(\id, \T).
    \end{equation}
    Likewise, if $\phi^{h^*}$ is $\stc$-convex relative to $\cF_{\mu}^*$ along $t\mapsto \big((1-t)\gW\cF(\mu) + t \gW\cF(\T_\#\mu)\circ \T\big)_\#\mu$, then
    \begin{equation}
        \stc \D_{\tF_{\mu}}(\id,\T)\le \D_{\phi_{\mu}^{\kp}}\big(\gW\cF(\T_\#\mu)\circ\T, \gW\cF(\mu)\big).
    \end{equation}
\end{proposition}
In particular, for $\cF$ a potential energy, the conditions coincide with those of \citep{maddison2021dual} in $\R^d$. We refer to \Cref{section:relative_cvx_fenchel} for more details.

\section{Applications and Experiments} \label{section:xps}

In this section, we first discuss how to verify the relative convexity and smoothness between functionals in practice. Then, we provide some examples of mirror descent and preconditioned gradient descent on different objectives. We refer to \Cref{appendix:xps} for more details on the experiments\footnote{The code is available at \url{https://github.com/clbonet/Mirror_and_Preconditioned_Gradient_Descent_in_Wasserstein_Space}.}.

\paragraph{Relative convexity of functionals.} To assess relative convexity or smoothness as stated in \Cref{def:rel_convexity_smoothness}, we need to compare the Bregman divergences along the right curves. 
When both functionals $\phi$ and $\cF$ are of the same type, for example potential (respectively interaction) energies, this property is lifted from the convexity and smoothness on $\R^d$ of the underlying potential functions (respectively interaction kernels) to $\cP_2(\R^d)$, see \Cref{section:relative_convexity} for more details. 
When both $\phi$ and $\cF$ are potential energies, the schemes \eqref{eq:md} and \eqref{eq:pwgf} are equivalent to parallel MD and preconditioned GD since there are no interactions between the particles. The conditions of convergence then coincide with the ones obtained for MD and preconditioned GD on $\R^d$ \citep{lu2018relatively, maddison2021dual}. In other cases, \eqref{eq:md} and \eqref{eq:pwgf} provide schemes that are novel to the best of our knowledge. 

For functionals which are not of the same type, it is less straightforward. Using equivalent notions of convexity (see \Cref{prop:equivalences_w_convex}), we may instead compare their Hessians along the right curves, see \Cref{section:relative_convexity} for an example between an interaction and a potential energy. We note also that for the particular case of a functional obtained as a sum $\cF=\cG + \cH$ with $\tG_\mu$ and $\tH_\mu$ convex, since $\D_{\tF_\mu} = \D_{\tG_\mu} + \D_{\tH_\mu}$, $\D_{\tF_\mu} \ge \max\{\D_{\tG_\mu}, \D_{\tH_\mu}\}$, and thus $\cF$ is 1-convex relative to $\cG$ and $\cH$. This includes \emph{e.g.} the KL divergence which is convex relative to the potential and the negative entropy.

\paragraph{MD on interaction energies.} We first focus on minimizing interaction energies $\mathcal{W}(\mu) = \frac12 \iint W(x-y)\ \mathrm{d}\mu(x)\mathrm{d}\mu(y)$ with kernel $W(z)=\frac14 \|z\|_{\Sigma^{-1}}^4 - \frac12 \|z\|_{\Sigma^{-1}}^2$, $\Sigma\in S_d^{++}(\R)$, whose minimizer is an ellipsoid \citep{carrillo2022primal}. Since the Hessian norm of $W$ can be bounded by a polynomial of degree 2, following \citep[Section 2]{lu2018relatively}, $W$ is $\sm$-smooth relative to $K_4(z)=\frac14 \|z\|_2^4 + \frac12 \|z\|_2^2$ with $\sm=4$, and $\mathcal{W}$ is $\sm$-smooth relative to $\phi_\mu(\T) = \frac12\iint K_4\big(\T(x)-\T(y)\big)\ \mathrm{d}\mu(x)\mathrm{d}\mu(y)$. Supposing additionally that the distributions are compactly supported, we can show that $\mathcal{W}$ is smooth relative to the interaction energy with $K_2(z)=\frac12\|z\|_2^2$. 
For ill-conditioned $\Sigma$, \emph{i.e.} for which the ratio between the largest and smallest eigenvalues is large, the convergence can be slow. Thus, we also propose to use $K_2^\Sigma(z) = \frac12 \|z\|_{\Sigma^{-1}}^2$ and $K_4^\Sigma(z) = \frac14 \|z\|_{\Sigma^{-1}}^4 +\frac12\|z\|_{\Sigma^{-1}}^2$.
We illustrate these mirror descent schemes on \Cref{fig:conv_ring} and observe the convergence we expect for the ones taking into account $\Sigma$. %
In practice, since $\nabla\phi_\mu(\T) = (\nabla K \star \T_\#\mu)\circ\T$, the scheme \eqref{eq:md_pushforward_compatible} needs to be approximated using Newton's algorithm which can be computationally heavy. Using $\phi_\mu^V(\T)=\int V\circ\T\ \mathrm{d}\mu$ with $V=K_2^\Sigma$, we obtain a more computationally friendly scheme with the same convergence, see \Cref{sec:details_xp_interaction}, but for which the smoothness is trickier to show.%

\paragraph{MD on KL.} 

We now focus on minimizing $\cF(\mu)=\int V\mathrm{d}\mu + \cH(\mu)$ for $V(x)=\frac12 x^T\Sigma^{-1} x$ with $\Sigma$ possibly ill-conditioned, whose minimizer is the Gaussian $\nu=\cN(0,\Sigma)$, and for which Wasserstein gradient descent is slow to converge. We study the MD scheme in \eqref{eq:md} with negative entropy $\cH$ as the Bregman potential (NEM), and compare it on \Cref{fig:bw_gaussians} with the Forward-Backward (FB) scheme studied in \citep{diao2023forward} and the ideally preconditioned Forward-Backward scheme (PFB) with Bregman potential $\phi_\mu^V$ (see \eqref{eq:closed_form_pfb} in \Cref{sec:prox_grad}). %
For computational purpose, we restrain the minimization in \eqref{eq:md} over affine maps, which can be seen as taking the gradient over the submanifold of Gaussians \citep{lambert2022variational, diao2023forward}. Starting from $\cN(0,\Sigma_0)$, the distributions stay Gaussian over the flow, and their closed-form is reported in \eqref{eq:nem} (\Cref{sec:mirror_gaussian}). We note that this might not be the case for the scheme \eqref{eq:md}, and thus that this scheme does not enter into the framework developed in the previous sections. Nonetheless, it demonstrates the benefits of using different Bregman potentials. We generate 20 Gaussian targets $\nu$ on $\R^{10}$ with $\Sigma=UDU^T$, $D$ diagonal and scaled in log space between 1 and 100, and $U$ a uniformly sampled orthogonal matrix, and we report the averaged KL over time. Surprisingly, NEM, which does not require an ideal (and not available in general) preconditioner, is almost as fast to converge as the ideal PFB, and much faster than the FB scheme.

\begin{figure}[t]
    \begin{minipage}{0.54\linewidth}
        \includegraphics[width=\linewidth]{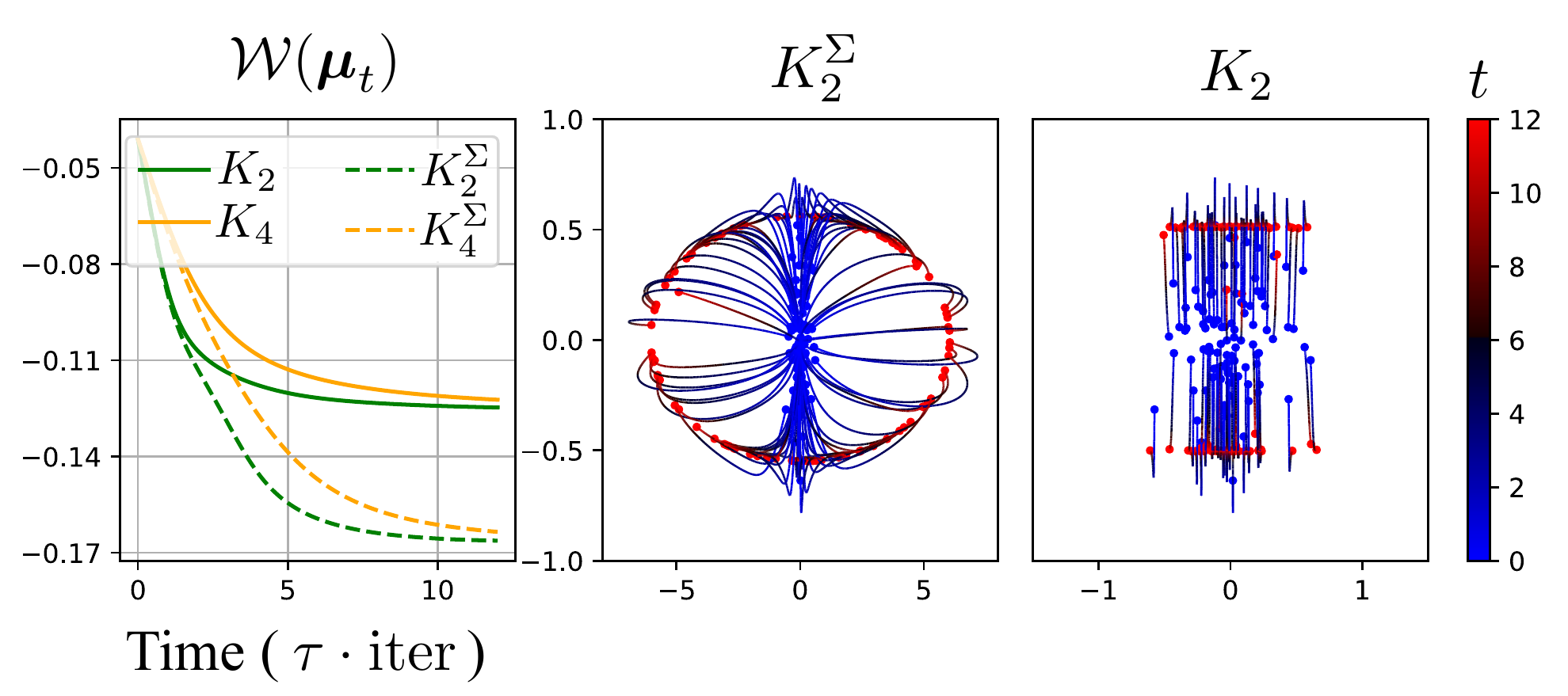}
        \caption{\textbf{(Left)} Value of $\mathcal{W}$ along the flow for two difference interaction Bregman potentials, \textbf{(Middle and Right)} Trajectories of particles to minimize $\mathcal{W}$.}
        \label{fig:conv_ring}
    \end{minipage}
    \hfill
    \begin{minipage}{0.44\linewidth}
        \includegraphics[width=\linewidth]{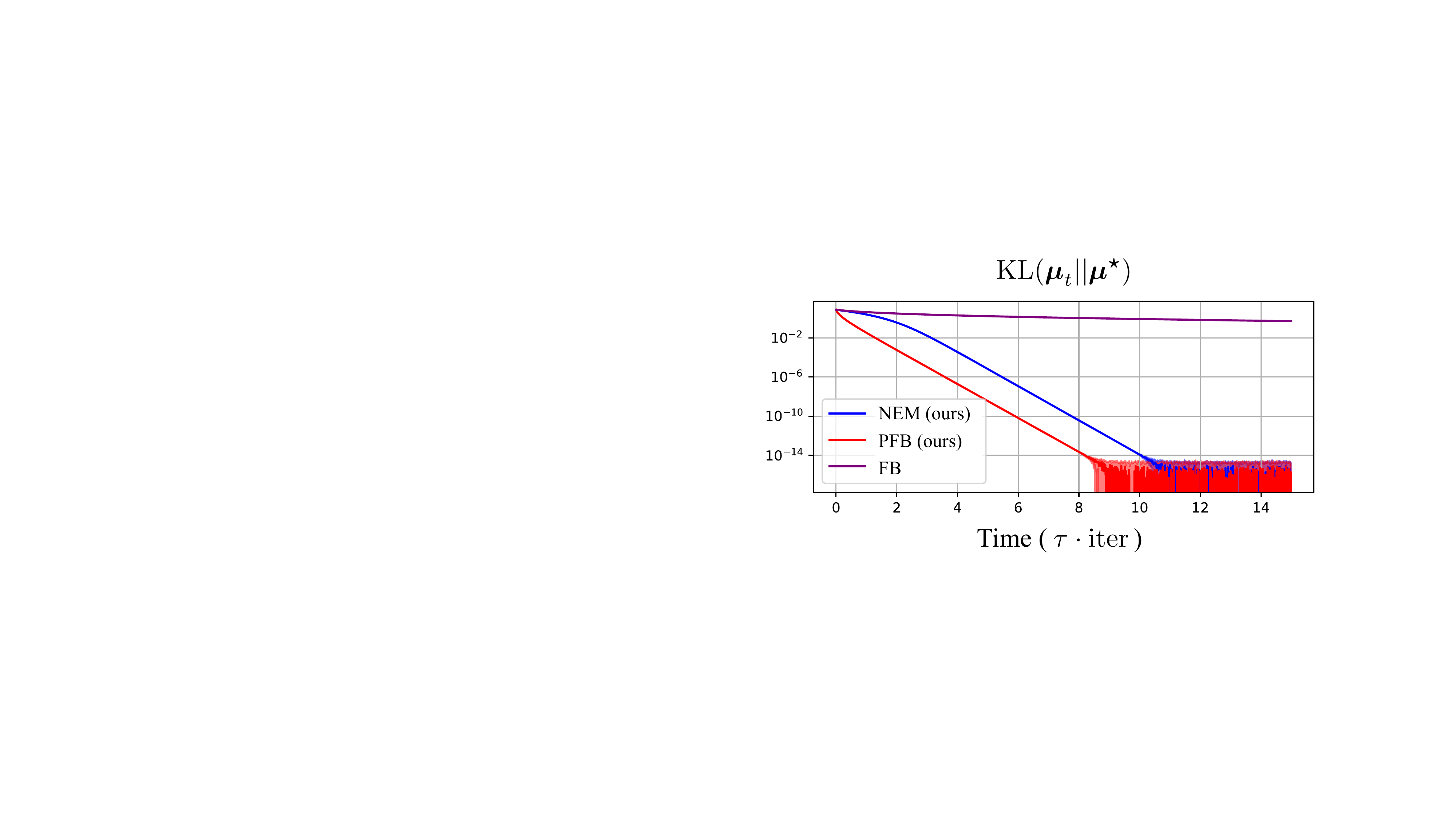}
        \caption{Convergence towards Gaussians $\cN(0,UDU^T)$ averaged over 20 covariances, with $U\sim\mathrm{Unif}(O_{10}(\R))$ and $D$ fixed.}
        \label{fig:bw_gaussians}
    \end{minipage}
    \vspace{-1mm}
\end{figure}

\paragraph{Preconditioned GD for single-cells.}  
Predicting the response of cells to a perturbation is a central question in biology. In this context, as the measuring process is destructive, feature descriptions of control and treated cells must be dealt with as (unpaired) source $\mu$ and target distributions $\nu$. Following~\citep{schiebinger2019optimal}, OT theory to recover a mapping $\T$ between these two populations has been used in~\citep{bunne2021learning,bunne2022proximal, bunne2023supervised,cuturi2023monge,eyring2024unbalancedness,uscidda2023monge,klein2024entropic}. Inspired by the recent success of iterative refinement in generative modeling, through diffusion~\citep{ho2020denoising,song2021scorebased} or flow-based models~\citep{liu2022flow,lipman2023flow}, our scheme \eqref{eq:main_scheme} follows the idea of transporting $\mu$ to $\nu$ via successive and dynamic displacements instead of, directly, with a static map $\bar \T$. 
We model the transition from unperturbed to perturbed states through the (preconditioned) gradient flow of a functional $\cF(\mu) = D(\mu, \nu)$ initialized at $\mu_{0} = \mu$, where $D$ is a distributional metric, and predict the perturbed population via $\hat{\mu} = \min_\mu\mathcal{F}(\mu)$. 
We focus on the datasets used in \citep{bunne2021learning}, consisting of cell lines analyzed using (i) 4i \citep{gut2018multiplexed}, and (ii) scRNA sequencing \citep{tang2009mrna}.
For each profiling technology, the response to respectively (i) 34 and (ii) 9 treatments are provided. As in \citep{bunne2021learning}, training is performed in  data space for the 4i data and in a latent space learned by the scGen autoencoder \citep{lotfollahi2019scgen} for the scRNA data.
We use three metrics: the Sliced-Wasserstein distance $\mathrm{SW}_2^2$~\citep{bonneel_sliced_2015}, the Sinkhorn divergence $\mathrm{S}_{\varepsilon,2}^2$~\citep{feydy2018interpolating} and the energy distance $\mathrm{ED}$~\citep{rizzo2016energy, hertrich2024generative, hertrich2024steepest}, and we compare the performances when minimizing this functional via preconditioned GD vs.\ (vanilla) GD. We measure the convergence speed when using a fixed relative tolerance $\textrm{tol}=10^{-3}$, as well as the attained optimal value $\mathcal{F}(\hat{\mu})$. Note that we follow~\citep{bunne2021learning} and additionally consider 40\% of unseen (test) target cells for evaluation, \emph{i.e.}, for computing $\mathcal{F}(\hat{\mu}) = D(\hat{\mu}, \nu)$. As preconditioner, we use the one induced by $h^*(\*x) = (\|x\|_2^a + 1)^{1/a}-1$ with $a > 0$, which is well suited to minimize functionals which grow in $\|x-x^*\|^{a/(a-1)}$ near their minimum \citep{tarmoun2022gradient}.
We set the step size $\tau = 1$ for all the experiments. Then, we tune the parameter $a$ very simply: for a given metric $D$ and a profiling technology, we pick a random treatment and select $a \in \{1.25, 1.5, 1.75\}$ by grid search, and we generalize the selected $a$ for \textit{all the other treatments}. Results are described in Figure~\ref{fig:single-cell}: Preconditioned GD significantly outperforms GD over the 43 datasets, in terms of convergence speed and optimal value $\mathcal{F}(\hat{\mu})$. For instance, for $D=\mathrm{S}_{2,\varepsilon}^2$, we converge in 10 times less iterations while providing, on average, a better estimate of the treated population. We also compare our iterative (non parametric) approach with the use of a static (non parametric) map in \Cref{sec:additional-single-cell}.

\begin{figure}[t]
    \includegraphics[width=1\linewidth]{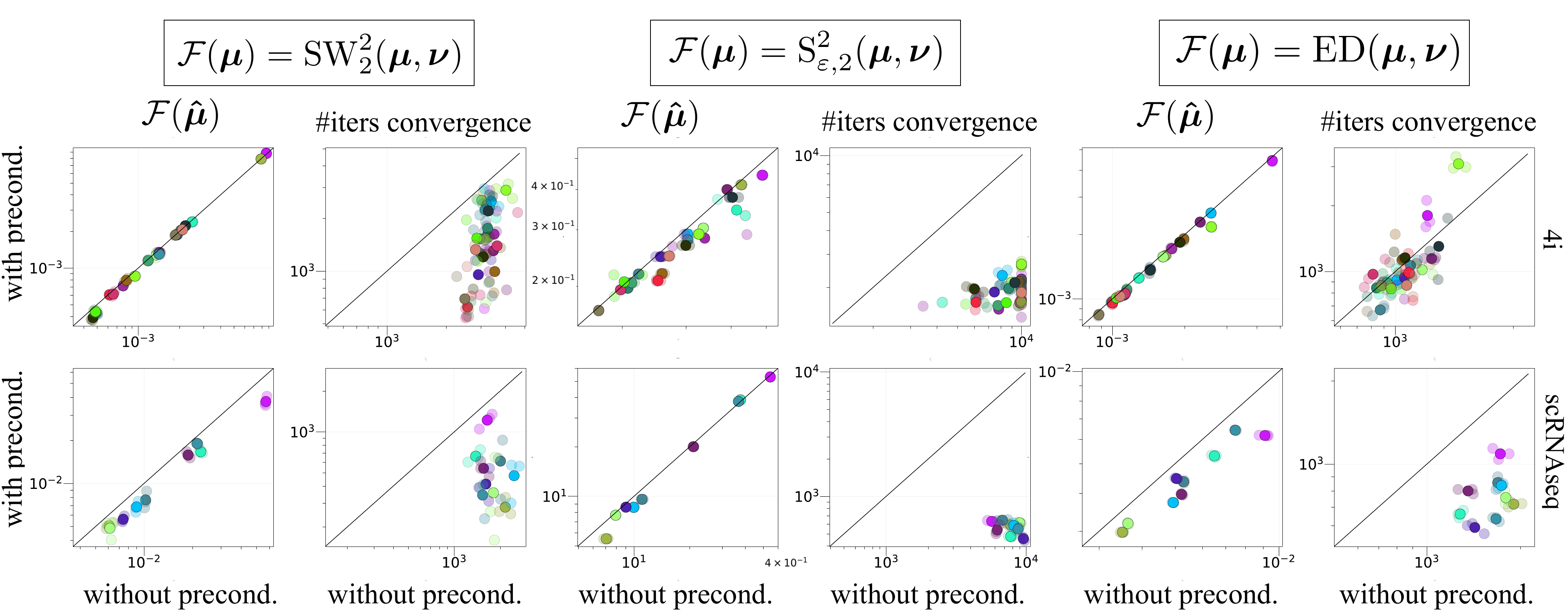}
    \caption{
    Preconditioned GD vs.\ (vanilla) GD to predict the responses of cell populations to cancer treatment on 4i \textbf{(Upper row)} and scRNAseq \textbf{(Lower row)} datasets. 
    For each treatment, starting from the untreated cells $\mu_i$, we minimize $\cF(\mu)=D(\mu,\nu_i)$ with $\nu_i$ the treated cells. %
    The plot is organized as pairs of columns, each corresponding to optimizing a specific metric, with two scatter plots displaying points $z_i = (x_i, y_i)$ where
    \textbf{(First column)} $y_i$ is the attained minima $\mathcal{F}(\hat{\mu}) = D(\hat{\mu}, \nu_i)$ with preconditioning and $x_i$ that without preconditioning, and \textbf{(Second column)} $y_i$ is the number of iterations to reach convergence with preconditioning and $x_i$ that without preconditioning. 
    A point below the diagonal $y=x$ then refers to an experiment in which preconditioning provides \textbf{(First column)} a better minima or \textbf{(Second column)} faster convergence. We assign a color to each treatment and plot three runs, obtained with three different initializations, along with their mean (brighter point).}
    \label{fig:single-cell}
    \vspace{-1mm}
\end{figure}

\section{Conclusion}

In this work, we extended two non-Euclidean optimization methods on $\R^d$ to the Wasserstein space, generalizing $\cW_2$-gradient descent to alternative geometries. We investigated the practical benefits of these schemes, and provided rates of convergences for pairs of objectives and Bregman potentials satisfying assumptions of relative smoothness and convexity along specific curves. While these assumptions can be easily checked is some cases (\emph{e.g.} potential or interaction energies) by comparing the Bregman divergences or Hessian operators in the Wasserstein geometry, they may be hard to verify in general. Different objectives such as the Sliced-Wasserstein distance or the Sinkhorn divergence, or alternative geometries to the Wasserstein-2 as studied in this work, require to derive specific computations on a case-by-case basis. We leave this investigation for future work.

\ifboolexpr{bool {preprint} or bool {final}}{
    \acksection{
        Clément Bonet acknowledges the support of the center Hi! PARIS and of ANR PEPR PDE-AI.
        Adam David gratefully acknowledges funding by the BMBF 01|S20053B project SALE.
        Pierre-Cyril Aubin-Frankowski was funded by the FWF project P 36344-N. Anna Korba acknowledges the support of ANR-22-CE23-0030.
    }
}{

}

\bibliographystyle{plainnat}
\bibliography{references}

\newpage
\appendix

\part*{Appendix}

The appendix is organized as follows. In \Cref{sec:related_work}, we discuss related works. In \Cref{appendix:l2} and \Cref{appendix:wasserstein}, we provide mathematical background respectively on $L^2$ and on the Wasserstein space. In \Cref{appendix:add_results_md}, we provide complementary results for the mirror descent scheme on the Wasserstein space. In \Cref{appendix:relative}, we discuss the relative convexity and smoothness between functionals. In \Cref{sec:prox_grad}, we study the Bregman proximal gradient scheme, deriving a convergence result and closed-form updates for the Gaussian case. In \Cref{appendix:xps}, we provide more details on the experiments. In \Cref{appendix:proofs}, we report the proofs of our results. Finally, auxiliary results are given in \Cref{sec:add_results}.

\addtocontents{toc}{\protect\setcounter{tocdepth}{2}}
{
    \hypersetup{linkcolor=black}
    \tableofcontents
}

\section{Related works}\label{sec:related_work}

\looseness=-1 \textbf{Mirror descent on $\R^d$.} Mirror descent has been introduced by \citet{nemirovskij1983problem} to solve convex optimization problems. Its convergence has been first studied under the assumption that the objective has a Lipschitz gradient, see \emph{e.g.} \citep{beck2003mirror}. More recently, \citet{bauschke2017descent,lu2018relatively} provided convergence guarantees by assuming relative smoothness and convexity.

\textbf{Preconditioned gradient descent on $\R^d$.} \looseness=-1 The preconditioned gradient descent has first been studied by \citet{maddison2021dual}, providing convergence guarantees under assumptions on the smoothness and convexity of the preconditioner relative to the Legendre transform of the objective. \citet{kim2023mirror} underlined connections with the mirror descent, and introduced an accelerated version of the preconditioned gradient descent. \citet{laude2022anisotropic, laude2023dualities} studied a generalized version of this algorithm by minimizing an anisotropic upper bound and supposing anisotropic smoothness of the objective. In particular, their analysis for the descent lemma is also valid for a non-convex smooth objective. \citet{tarmoun2022gradient} also studied preconditioned gradient descent for non-Lipschitz smooth non-convex problems.

\textbf{Wasserstein Gradient flows with respect to non-Euclidean geometries.} Several existing schemes are based on time-discretizations of gradient flows with respect to optimal transport metrics, but different than the Wasserstein-2 distance.

To simplify the computation of the backward scheme, \citet{peyre2015entropic} added an entropic regularization into the JKO scheme while \citet{bonet2022efficient} considered using the Sliced-Wasserstein distance instead. More recently, \citet{rankin2024jko} suggested using Bregman divergences \emph{e.g.} when geodesic distances are not known in closed-forms.

The most popular objective in Wasserstein gradient flows is the KL. However, this can be intricate to compute as it requires the evaluation of the density at each step, which is not known for particles, and thus requires approximations using kernel density estimators \citep{wang2022projected} or density ratio estimators \citep{ansari2021refining, wang2022optimal, feng2024relative}. Restricting the velocity field to a reproducing kernel Hilbert space (RKHS), an update in closed-form can be obtained, which is given by the SVGD algorithm \citep{liu2016stein, liu2017stein}. This algorithm can also be seen as using an alternative Wasserstein metric \citep{duncan2023geometry}. However, the restriction to RKHS can hinder the flexibility of the method. This motivated the introduction of new schemes based on using the Wasserstein distance with a convex translation invariant cost \citep{dong2023particlebased, cheng2023particle}. Particle systems preconditioned by they empirical covariance matrix have also been recently considered, and can be seen as discretization of the Kalman-Wasserstein or Covariance Modulated gradient flow \citep{garbuno2020interacting, burger2023covariance}. %

\paragraph{Mirror descent with flat geometry.} The space of probability distributions can be endowed with different metrics. When endowed with the Fisher-Rao metric instead of the Wasserstein distance, the geometry becomes very different. Notably, the shortest path between the two distributions is now a mixture between them. In this situation, the gradient is the first variation. \citet{aubin2022mirror} studied the mirror descent in this space and notably showed connections with the Sinkhorn algorithm when the mirror map and the optimized functionals  are KL divergences. \citet{karimi2023sinkhorn} extended the mirror descent algorithm for more general time steps, and notably recovered the ``Wasserstein Mirror Flow'' proposed by \citet{deb2023wasserstein} as a special case.

\paragraph{Bregman divergence on $\cP_2(\R^d)$.} Several works introduced Bregman divergences on $\cP_2(\R^d)$. \citet{carlier2007monge} first studied the existence of Monge maps for the OT problem with Bregman costs $c(x,y)=\Dr_V(x,y)$ and symmetrized Bregman costs $c(x,y)=\Dr_V(x,y)+\Dr_V(y,x)$. For Bregman costs, the resulting OT problem was named the Bregman-Wasserstein divergence and its properties were studied in \citep{guo2017ambiguity, cordero2017transport, rankin2023bregman}. The Bregman-Wasserstein divergence has also been used by \citet{ahn2021efficient} to show the convergence of the Mirror Langevin algorithm while \citet{rankin2024jko} studied its JKO scheme with KL objective. \citet{li2021transport} introduced the notion of Bregman divergence on Wasserstein space for a geodesically strictly convex $\cF:\cP_2(\R^d)\to \R$ as 
\begin{equation} \label{eq:bregman_li}
    \forall \mu,\nu\in\cP_2(\R^d),\ \D_{\cF}(\mu,\nu) = \cF(\mu)-\cF(\nu) - \langle \gW\cF(\nu), \T_\nu^\mu - \id\rangle_{L^2(\nu)},
\end{equation}
where $\T_\nu^\mu$ is the OT map between $\nu$ and $\mu$ \emph{w.r.t} $\cW_2$. The Bregman divergence used in our work and as defined in \Cref{def:bregman_div} is more general as it allows using more general maps and contains as special case \eqref{eq:bregman_li}. \citet{li2021transport} studied properties of this Bregman divergence for different functionals $\cF$ and provided closed-forms for one-dimensional distributions or Gaussian, but did not use it to define a mirror scheme.

\paragraph{Mirror descent on $\cP_2(\R^d)$.} \citet{deb2023wasserstein} defined a mirror flow by using the continuous formulation. They focused on KL objectives with Bregman potential $\phi(\mu)=\frac12\cW_2^2(\mu,\nu)$ with some reference measure $\nu\in\cP_2(\R^d)$, and defined the flow as the solution of
\begin{equation}
    \begin{cases}
        \varphi(\mu_t) = \gW\phi(\mu_t) \\
        \frac{\mathrm{d}}{\mathrm{d}t} \varphi(\mu_t) = - \gW\cF(\mu_t).
    \end{cases}
\end{equation}
We note that $\phi$ is pushforward compatible and hence enters our framework. Also related to our work, \citet{wang2020information} studied a Wasserstein Newton's flow, which, analogously to the relation between Newton's method and mirror descent \citep{chewi2020exponential}, is another discretization of our scheme for $\phi=\cF$.
We clarify the link with the Mirror Descent algorithm we define in this work with the previous continuous formulation above in \Cref{sec:continuous_formulation_md}.

\section{Background on $L^2(\mu)$} \label{appendix:l2}

\subsection{Differential calculus on $L^2(\mu)$} \label{sec:diff_calculus}

We recall some differentiability definitions on the Hilbert space $L^2(\mu)$ for $\mu \in \cP_2(\R^d)$.  Let $\phi:L^2(\mu)\to\mathbb{R}$. We start by recalling the notions of Gâteaux and Fréchet derivatives.

\begin{definition}
    A function $\phi:L^2(\mu)\to\mathbb{R}$ is said to be Gâteaux differentiable at $T$ if there exists an operator $\phi'(\T):L^2(\mu)\to \mathbb{R}$ such that for any direction $h\in L^2(\mu)$,
    \begin{equation}
        \phi'(\T)(h) = \lim_{t\to 0}\ \frac{\phi(\T+th) - \phi(\T)}{t},
    \end{equation}
    and $\phi'(\T)$ is a linear function. The operator $\phi'(\T)$ is called the Gâteaux derivative of $\phi$ at $\T$ and if it exists, it is unique.
\end{definition}

\begin{definition}
    The Fréchet derivative of $\phi$ at $\T\in L^2(\mu)$ in the direction $h\in L^2(\mu)$, denoted $\delta\phi(\T,h)$, is defined implicitly by
    \begin{equation}\label{eq:Fréchet_diff}
        \phi(\T+th) = \phi(\T) + t\delta \phi(\T,h) + t o(\|h\|).
    \end{equation}
\end{definition}

If $\phi$ is Fréchet differentiable, then it is also Gâteaux differentiable, and both derivatives agree, \emph{i.e.} for all $\T,h\in L^2(\mu)$, $\delta \phi(\T,h) = \phi'(\T)(h)$ \citep[Proposition 1.26]{peypouquet2015convex}.

Moreover, since $L^2(\mu)$ is a Hilbert space, and $\delta\phi(\T,\cdot)$ and $\phi'(\T)$ are linear    and continuous, if $\phi$ is Fréchet (resp. Gâteaux) differentiable, by the Riesz representation theorem, there exists $\nabla\phi \in L^2(\mu)$ such that for all $h\in L^2(\mu)$, $\delta\phi(\T,h) = \langle \nabla\phi(\T), h\rangle_{L^2(\mu)}$ (resp. $\phi'(\T)(h)=\langle \nabla\phi(\T),h\rangle_{L^2(\mu)}$).

As a brief comment on these notions in the context of convexity, if the subdifferential of a convex $f$ at $x$ contains a single element then it is the Gâteaux derivative and we have an inequality $f(y)\ge f(x)+\langle \nabla f(x),y-x \rangle$. Instead Fréchet différentiability gives an equality \eqref{eq:Fréchet_diff} corresponding to a series expansion.

\subsection{Convexity on $L^2(\mu)$} \label{sec:convexity_l2}

Let $\phi:L^2(\mu)\to \R$ be Gâteaux differentiable. We recall that $\phi$ is convex if for all $t\in [0,1]$, $\T,\sS\in L^2(\mu)$,
\begin{equation}
    \phi\big((1-t)\T+t\sS\big) \le (1-t) \phi(\T) + t\phi(\sS),
\end{equation}
which is equivalent by \citep[Proposition 3.10]{peypouquet2015convex} with
\begin{equation}
    \forall \T,\sS\in L^2(\mu),\ \phi(\T)\ge \phi(\sS) + \langle\nabla\phi(\sS), \T-\sS)\rangle_{L^2(\mu)} \iff \D_{\phi}(\T,\sS) \ge 0.
\end{equation}

We now present equivalent definitions of the relative smoothness and relative convexity, which is the equivalent of \citep[Proposition 1.1]{lu2018relatively}.

\begin{proposition}
    Let $\psi ,\phi : L^2(\mu)\to\mathbb{R}$ be convex and Gâteaux differentiable functions.
    The following conditions are equivalent:
    \begin{enumerate}[label=\textbf{(a\arabic*)}]
        \item $\psi$ is $\sm$-smooth relative to $\phi$ \label{equivalence:a1}
        \item $\sm\phi- \psi $ is a convex function on $L^2(\mu)$
        \label{equivalence:a2}
        \item If twice Gâteaux differentiable, $\langle \nabla^2 \psi (\T) \sS, \sS\rangle_{L^2(\mu)} \le \sm \langle \nabla^2\phi(\T) \sS, \sS\rangle_{L^2(\mu)}$ for all $\T,\sS\in L^2(\mu)$ \label{equivalence:a3}
        \item $\langle \nabla \psi (\T) - \nabla \psi (\sS), \T-\sS\rangle_{L^2(\mu)} \le \sm \langle \nabla \phi(\T)- \nabla \phi(\sS), \T-\sS\rangle_{L^2(\mu)}$ for all $\T,\sS\in L^2(\mu)$. \label{equivalence:a4}
    \end{enumerate}
    The following conditions are equivalent:
    \begin{enumerate}[label=\textbf{(b\arabic*)}]
        \item $ \psi $ is  $\stc$-convex relative to $\phi$
        \item $ \psi -\stc\phi$ is a convex function on $L^2(\mu)$
        \item If twice differentiable, $\langle \nabla^2 \psi (\T) \sS, \sS\rangle_{L^2(\mu)} \ge \stc \langle \nabla^2\phi(\T) \sS, \sS\rangle_{L^2(\mu)}$ for all $\T,\sS\in L^2(\mu)$
        \item $\langle \nabla \psi (\T) - \nabla \psi (\sS), \T-\sS\rangle_{L^2(\mu)} \ge \stc \langle \nabla \phi(\T)-\nabla\phi(\sS), \T-\sS\rangle_{L^2(\mu)}$ for all $\T,\sS\in L^2(\mu)$.
    \end{enumerate}
\end{proposition}

\begin{proof}
    We do it only for the smoothness. It holds likewise for the convexity.

    \ref{equivalence:a1}$\iff$\ref{equivalence:a2}: 
    \begin{equation}
        \begin{aligned}
            &\forall \T,\sS\in L^2(\mu),\ \D_\psi(\T,\sS) \le \sm \D_\phi(\T,\sS) \\
            &\iff \forall \T,\sS\in L^2(\mu),\ \psi(\T)-\psi(\sS) - \langle \nabla\psi(\sS), \T-\sS\rangle_{L^2(\mu)} \\ &\qquad\qquad\qquad\qquad\qquad\qquad\qquad\le \sm\big(\phi(\T)-\phi(\sS)-\langle \nabla\phi(\sS), \T-\sS\rangle_{L^2(\mu)}\big) \\
            &\iff \forall \T,\sS\in L^2(\mu),\ (\sm\phi - \psi)(\sS) - \langle \nabla(\sm\phi-\psi)(\sS), \T-\sS\rangle_{L^2(\mu)}\le (\sm\phi-\psi)(\T).
        \end{aligned}
    \end{equation}

    For the rest of the equivalences, we apply \citep[Proposition 3.10]{peypouquet2015convex}. Indeed, $\sm\phi-\psi$ convex is equivalent to 
    \begin{multline}
        \forall \T,\sS\in L^2(\mu),\ \langle \nabla(\sm\phi-\psi)(\T)-\nabla(\sm\phi-\psi)(\sS), \T-\sS\rangle_{L^2(\mu)} \ge 0 \\ \iff \forall \T,\sS\in L^2(\mu),\ \sm \langle \nabla\phi(\T)-\nabla\phi(\sS), \T-\sS\rangle_{L^2(\mu)} \ge \langle \nabla\psi(\T)-\nabla\psi(\sS),\T-\sS\rangle_{L^2(\mu)},
    \end{multline}
    which gives the equivalence between \ref{equivalence:a2} and \ref{equivalence:a4}. And if $\psi$ and $\phi$ are twice differentiables, it is also equivalent to
    \begin{multline}
        \forall \T,\sS\in L^2(\mu), \ \langle \nabla^2(\sm\phi-\psi)(\T) \sS, \sS\rangle_{L^2(\mu)} \ge 0 \\ \iff \forall \T,\sS\in L^2(\mu),\ \sm\langle\nabla^2\phi(\T)\sS,\sS\rangle_{L^2(\mu)} \ge \langle \nabla^2\psi(\T)\sS,\sS\rangle_{L^2(\mu)},
    \end{multline}
    which gives the equivalence between \ref{equivalence:a2} and \ref{equivalence:a3}.
\end{proof}

\section{Background on Wasserstein space} \label{appendix:wasserstein}

\subsection{Wasserstein differentials} \label{sec:wass_diff}

We recall the notion of Wasserstein differentiability introduced in \citep{bonnet2019pontryagin, lanzetti2022first, ambrosio2005gradient}. First, we introduce sub- and super-differentials. %

\begin{definition}[Wasserstein sub- and super-differential \citep{bonnet2019pontryagin,lanzetti2022first}]
    Let $\cF:\cP_2(\R^d)\to (-\infty, +\infty]$ lower semicontinuous and denote $D(\cF)=\{\mu\in\cP_2(\R^d),\ \cF(\mu)<\infty\}$. Let $\mu\in D(\cF)$. Then, a map $\xi\in L^2(\mu)$ belongs to the subdifferential $\partial^-\cF(\mu)$ of $\cF$ at $\mu$ if for all $\nu\in\cP_2(\R^d)$, 
    \begin{equation}
        \cF(\nu) \ge \cF(\mu) + \sup_{\gamma \in \Pi_o(\mu,\nu)} \int \langle \xi(x), y-x\rangle\ \mathrm{d}\gamma(x,y) + o\big(\cW_2(\mu,\nu)\big).
    \end{equation}
    Similarly, $\xi\in L^2(\mu)$ belongs to the superdifferential $\partial^+\cF(\mu)$ of $\cF$ at $\mu$ if $-\xi\in\partial^-(-\cF)(\mu)$.
\end{definition}

Then, we say that a functional is Wasserstein differentiable if it admits sub and super differentials which coincide.

\begin{definition}[Wasserstein differentiability, Definition 2.3 in \citep{lanzetti2022first}]
    A functional $\cF:\cP_2(\R^d)\to\R$ is Wasserstein differentiable at $\mu\in\cP_2(\R^d)$ if $\partial^-\cF(\mu)\cap \partial^+\cF(\mu) \neq \emptyset$. In this case, we say that $\gW\cF(\mu)\in \partial^-\cF(\mu)\cap \partial^+\cF(\mu)$ is a Wasserstein gradient of $\cF$ at $\mu$, satisfying for any $\nu\in\cP_2(\R^d)$, $\gamma\in \Pi_o(\mu,\nu)$,
    \begin{equation}
        \cF(\nu) = \cF(\mu) + \int \langle \gW\cF(\mu)(x), y-x\rangle\ \mathrm{d}\gamma(x,y) + o\big(\cW_2(\mu,\nu)\big).
    \end{equation}
\end{definition}

Recall that the tangent space of $\cP_2(\R^d)$ at $\mu \in \cP_2(\R^d)$ is defined as 
\begin{equation}
    \mathcal{T}_{\mu}\cP_2(\R^d) = \overline{\left\{\nabla \psi, \; \psi \in \mathcal{C}_c^{\infty}(\R^d) \right\} }\subset L^2(\mu),
\end{equation}
where the closure is taken in $L^2(\mu)$, see \citet[Definition 8.4.1]{ambrosio2005gradient}.
\citet[Proposition 2.5]{lanzetti2022first} showed that if $\cF$ is Wasserstein differentiable, then there is always a unique gradient living in the tangent space, and we can restrict ourselves without loss of generality to this gradient. 

\citet{lanzetti2022first} further showed that Wasserstein gradients provide linear approximations even if the perturbations are not induced by OT plans, \emph{i.e.} differentials are ``strong Fréchet differentials''.

\begin{proposition}[Proposition 2.6 in \citep{lanzetti2022first}] \label{prop:strong_diff_w}
    Let $\mu,\nu\in\cP_2(\R^d)$, $\gamma\in\Pi(\mu,\nu)$ any coupling and let $\cF:\cP_2(\mathbb{R}^d)\to\R$ be Wasserstein differentiable at $\mu$ with Wasserstein gradient $\gW\cF(\mu)\in \mathcal{T}_\mu\cP_2(\R^d)$. Then,
    \begin{equation}
        \cF(\nu) = \cF(\mu) + \int \langle \gW\cF(\mu)(x), y-x\rangle \ \mathrm{d}\gamma(x,y) + o\left(\sqrt{\int \|x-y\|_2^2\ \mathrm{d}\gamma(x,y)}\right).
    \end{equation}
\end{proposition}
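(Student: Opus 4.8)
The plan is to fix an arbitrary coupling $\gamma\in\Pi(\mu,\nu)$ together with an optimal one $\gamma_o\in\Pi_o(\mu,\nu)$, and to compare the two linear functionals $A(\beta):=\int\langle\gW\cF(\mu)(x),y-x\rangle\,\mathrm{d}\beta(x,y)$ evaluated at $\beta=\gamma$ and $\beta=\gamma_o$. Writing $r:=\sqrt{\int\|x-y\|_2^2\,\mathrm{d}\gamma}$, we have $\cW_2(\mu,\nu)\le r$. By Wasserstein differentiability at $\mu$ (the defining expansion, which holds for the optimal coupling), $\cF(\nu)=\cF(\mu)+A(\gamma_o)+o(\cW_2(\mu,\nu))$, and since $\cW_2(\mu,\nu)\le r$ the remainder is also $o(r)$ as $r\to0$. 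Hence the whole statement reduces to the single claim $A(\gamma)-A(\gamma_o)=o(r)$.

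The subtle point, and the heart of the proof, is that $A(\gamma)-A(\gamma_o)$ is a priori only $O(r)$: by Cauchy--Schwarz it is bounded by $\|\gW\cF(\mu)\|_{L^2(\mu)}$ times the $L^2$-size of the displacement, which is of order $r$. The first-order dependence on the coupling is killed precisely because $\gW\cF(\mu)$ lies in the tangent space $\mathcal{T}_\mu\cP_2(\R^d)=\overline{\{\nabla\psi:\psi\in\mathcal{C}_c^\infty(\R^d)\}}$, i.e.\ is a gradient field. I would first establish the claim for a smooth test gradient $\nabla\psi$, $\psi\in\mathcal{C}_c^\infty(\R^d)$, via a second-order Taylor expansion $\langle\nabla\psi(x),y-x\rangle=\psi(y)-\psi(x)-\tfrac12\langle\nabla^2\psi(\zeta_{x,y})(y-x),y-x\rangle$, with $\zeta_{x,y}$ on the segment $[x,y]$. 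Integrating against $\gamma$ and against $\gamma_o$, the term $\int(\psi(y)-\psi(x))\,\mathrm{d}\beta=\int\psi\,\mathrm{d}\nu-\int\psi\,\mathrm{d}\mu$ depends only on the marginals and therefore cancels in the difference, while the quadratic remainders are bounded by $\|\nabla^2\psi\|_\infty$ times $\int\|x-y\|_2^2\,\mathrm{d}\gamma=r^2$ and $\int\|x-y\|_2^2\,\mathrm{d}\gamma_o=\cW_2^2(\mu,\nu)\le r^2$ respectively. Thus for smooth gradients the difference is $O(r^2)=o(r)$.

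The remaining work is to pass from $\nabla\psi$ to the general gradient $\xi:=\gW\cF(\mu)$ by density. Given $\varepsilon>0$, I would pick $\psi\in\mathcal{C}_c^\infty(\R^d)$ with $\|\xi-\nabla\psi\|_{L^2(\mu)}\le\varepsilon/4$; the error incurred in $A(\cdot)$ by replacing $\xi$ with $\nabla\psi$ is controlled by Cauchy--Schwarz in $L^2(\gamma)$ (resp.\ $L^2(\gamma_o)$), using that $\int\|\xi-\nabla\psi\|_2^2\,\mathrm{d}\gamma=\|\xi-\nabla\psi\|_{L^2(\mu)}^2$ since the first marginal is $\mu$, which gives a bound $\le(\varepsilon/4)\,r$ on each side. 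Combining with the $C_\psi r^2$ bound from the smooth case, where $C_\psi=\|\nabla^2\psi\|_\infty$ is now a fixed constant, one obtains $|A(\gamma)-A(\gamma_o)|\le (\varepsilon/2)\,r+C_\psi r^2$, which is $\le\varepsilon r$ as soon as $r$ is small enough that $C_\psi r\le\varepsilon/2$. This yields $A(\gamma)-A(\gamma_o)=o(r)$ and closes the argument.

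I expect the main obstacle to be exactly the mechanism just described: recognizing that the naive Cauchy--Schwarz estimate is only $O(r)$, and that one must exploit the tangency hypothesis $\gW\cF(\mu)\in\mathcal{T}_\mu\cP_2(\R^d)$ to gain the extra order. The Taylor expansion makes this concrete by converting the coupling-dependent linear term into a marginal-only term (which cancels) plus a genuinely quadratic remainder. The only other point requiring care is organizing the density argument so that the $\psi$-dependent constant $C_\psi$ is frozen \emph{before} sending $r\to0$.
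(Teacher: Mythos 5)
Your proof is correct: the reduction to comparing $A(\gamma)$ with $A(\gamma_o)$ (valid since $\cW_2(\mu,\nu)\le r$ makes the differentiability remainder $o(r)$), the Taylor expansion with marginal cancellation for smooth gradients, and the density step with the constant $C_\psi$ frozen before sending $r\to 0$ are all sound. Note that the paper itself gives no proof of this statement — it imports it verbatim from \citet{lanzetti2022first} — and your argument is essentially the standard one behind that result: the tangency hypothesis $\gW\cF(\mu)\in\mathcal{T}_\mu\cP_2(\R^d)$ enters exactly through density of $\{\nabla\psi:\psi\in\mathcal{C}_c^\infty(\R^d)\}$ in $L^2(\mu)$, which converts the coupling-dependence of the linear term into a second-order quantity.
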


Under regularity assumptions, the Wasserstein gradient of $\cF$ can be computed in practice using the first variation $\frac{\delta\cF}{\delta\mu}$ \citep[Definition 7.12]{santambrogio2015optimal}, which is defined, if it exists, as the unique function (up to a constant) such that, for $\chi$ satisfying $\int\mathrm{d}\chi = 0$,
\begin{equation} \label{eq:1st_var}
    \frac{\diff}{\diff t}\cF(\mu+t\chi)\Big|_{t=0} = \lim_{t\to 0}\ \frac{\cF(\mu+t\chi) - \cF(\mu)}{t} = \int \frac{\delta\cF}{\delta\mu}(\mu)\ \diff\chi.
\end{equation}
Then the Wasserstein gradient can be computed as $\gW\cF(\mu) = \nabla\frac{\delta\cF}{\delta\mu}(\mu)$, see \emph{e.g.} \citep[Proposition 5.10]{chewi2024statistical}.

We now show that we can relate the Fréchet derivative of $\tF_\mu(\T):=\cF(\T_\#\mu)$ with the Wasserstein gradient of $\cF$ belonging to the tangent space of $\cP_2(\R^d)$ at $\mu$.

\begin{proposition} %
    Let $\cF:\cP_2(\R^d)\to\R\cup \{+\infty\}$ be a Wasserstein differentiable functional on $D(\cF)$. Let $\mu\in\cP_2(\R^d)$ and $\tF_\mu(\T) = \cF(\T_\#\mu)$ for all $\T\in D(\tF_\mu)$.    Then, $\tF_\mu$ is Fréchet differentiable, and for all $\sS\in D(\tF_\mu)$, $\nabla\tF_\mu(\sS)=\gW\cF(\sS_\#\mu)\circ \sS$.
\end{proposition}

\begin{proof}
    See \Cref{proof:prop_frechet_derivative}.
\end{proof}

\looseness=-1 A similar formula can be found in \citet[Corollary 3.22]{gangbo2019differentiability}, however the space $H$ used there is not $L^2(\mu)$ but a lifting $L^2(\Omega;\R^d)$ of measures on random variables. They should not be confused.

\subsection{Wasserstein Hessians} \label{sec:wass_hessians}

A natural object of interest in the context of optimization over the Wasserstein space is the Hessian of the objective $\cF$, which we define below, according to the original definitions of \citep[Section 3]{otto2000generalization} and \citep[Chapter 8]{villani2003topics}. This notion is usually defined along Wasserstein geodesics.

\begin{definition}[Chapter 8 in \citep{villani2003topics}]\label{def:wass_hessian}
    The Wasserstein Hessian of $\cF$, denoted $\hess\cF_{\mu}$ is an operator over $\mathcal{T}_{\mu}\mathcal{P}_2(\R^d)$ verifying $\ps{\hess\cF_{\mu}v_0, v_0}_{L^2(\mu)}=\frac{\mathrm{d}^2}{\mathrm{d}t^2}\mathcal{F}(\rho_t)\big|_{t=0}$ if $(\rho_t,v_t)_{t\in [0,1]}$ is a Wasserstein geodesic starting at $\mu$.
\end{definition}

If $\mu$ is absolutely continuous, Wassertein geodesics starting from $\mu$ are curves of the form $\rho_t=(\id +t \nabla\psi)_\#\mu$ for $\psi \in \mathcal{C}_c^\infty(\R^d)$. Using this fact, one can compute the Wasserstein Hessians of Kullback--Leibler divergence~\citep{otto2000generalization}, Maximum Mean Discrepancy~\citep{arbel2019maximum} or Kernel Stein Discrepancy~\citep{korba2021kernel} and many other functionals.

However in this work, we are interested in more general curves, which we call acceleration free, \emph{i.e.} $\mu_t = (\sS + t v)_\#\mu$ with $\sS,v\in L^2(\mu)$. Thus, we define analogously the Hessian  along such curves.
\begin{definition}\label{hess:wass_hessian_acc_free}
    We define the Hessian operator  $\mathrm{H}\cF_{\mu,t}:L^2(\mu)\to L^2(\mu)$ as the operator satisfying for all $t\in [0,1]$,
    \begin{equation}
        \frac{\mathrm{d}^2}{\mathrm{d}t^2}\cF(\mu_t) = \langle \mathrm{H}\cF_{\mu, t}v,v\rangle_{L^ 2(\mu)},
    \end{equation}
    where $t\mapsto \mu_t = (\sS + t v)_\#\mu$ for $\sS,v\in L^2(\mu)$.
\end{definition}
Note that the Hessian $\mathrm{H}\cF_{\mu, t}$ is taken at time $t$ but that the vector field $v\in L^2(\mu)$ is in the tangent space at $t=0$, hence the discrepancy with \Cref{def:wass_hessian} besides the fact that we can have $S\neq \id$.
 
\citet{wang2020information} derived a general closed form of the Wasserstein Hessian on tangent spaces through the first variation of $\cF$. Here, we extend their formula along any curve $\mu_t = (\sS + t v)_\#\mu$ with $\sS,v\in L^2(\mu)$. We first provide a lemma computing the derivative of the Wasserstein gradient.

\begin{lemma} \label{lemma:derivative_grad}
    Let $\cF:\cP_2(\R^d)\to \R$ be twice continuously differentiable and assume that $\frac{\delta}{\delta\mu}\nabla\frac{\delta\cF}{\delta\mu}(\mu) = \nabla\frac{\delta^2\cF}{\delta\mu^2}(\mu)$ for all $\mu\in\cP_2(\R^d)$. Let $\mu\in\cP_2(\R^d)$ and for all $t\in [0,1]$, $\mu_t = (\T_t)_\#\mu$ where $\T_t$ is differentiable \emph{w.r.t.} $t$ with $\frac{\mathrm{d}\T_t}{\mathrm{d}t}\in L^2(\mu)$. Then, 
    \begin{multline}
        \frac{\mathrm{d}}{\mathrm{d}t}(\gW\cF(\mu_t)\circ \T_t) (x) = \int \big[\nabla_y\nabla_x\frac{\delta^2\cF}{\delta\mu^2}\big((\T_t)_\#\mu\big)\big(\T_t(x), \T_t(y)\big) \frac{\mathrm{d}\T_t}{\mathrm{d}t}(y)\big]\  \mathrm{d}\mu(y) \\ + \nabla^2\frac{\delta\cF}{\delta\mu}\big((\T_t)_\#\mu\big)\big(\T_t(x)\big) \frac{\mathrm{d}\T_t}{\mathrm{d}t}(x).
    \end{multline}
\end{lemma}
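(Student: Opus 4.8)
The plan is to compute the time derivative of the map $x \mapsto \gW\cF(\mu_t)\big(\T_t(x)\big)$ directly from the chain rule, using the assumption that the Wasserstein gradient is the spatial gradient of the first variation, namely $\gW\cF(\mu_t) = \nabla \frac{\delta\cF}{\delta\mu}(\mu_t)$. Writing the composition as $G(t,x) := \nabla_x \frac{\delta\cF}{\delta\mu}(\mu_t)\big(\T_t(x)\big)$, I would differentiate in $t$ and split the result into two contributions by the multivariate chain rule: one term arising from the explicit dependence of $\mu_t$ on $t$ (holding the evaluation point fixed), and one term arising from the dependence of the evaluation point $\T_t(x)$ on $t$ (holding the measure fixed).

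For the second contribution, since only the argument $\T_t(x)$ varies, the chain rule gives $\nabla_x^2 \frac{\delta\cF}{\delta\mu}\big((\T_t)_\#\mu\big)\big(\T_t(x)\big)\, \frac{\mathrm{d}\T_t}{\mathrm{d}t}(x)$, which is exactly the last term in the claimed identity. For the first contribution, I would use the definition of the first variation to differentiate $\frac{\delta\cF}{\delta\mu}(\mu_t)$ along the curve $\mu_t$. Here the perturbation of the measure is $\frac{\mathrm{d}}{\mathrm{d}t}\mu_t$, which (since $\mu_t = (\T_t)_\#\mu$ and $\T_t$ moves with velocity $\frac{\mathrm{d}\T_t}{\mathrm{d}t}$) is the divergence-type perturbation induced by pushing mass along $\frac{\mathrm{d}\T_t}{\mathrm{d}t}$. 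Differentiating the first variation in the measure argument produces the second variation $\frac{\delta^2\cF}{\delta\mu^2}$, and after taking the spatial gradient $\nabla_x$ and invoking the commutation hypothesis $\frac{\delta}{\delta\mu}\nabla\frac{\delta\cF}{\delta\mu} = \nabla\frac{\delta^2\cF}{\delta\mu^2}$, this yields the integral term $\int \nabla_y\nabla_x \frac{\delta^2\cF}{\delta\mu^2}\big((\T_t)_\#\mu\big)\big(\T_t(x),\T_t(y)\big)\, \frac{\mathrm{d}\T_t}{\mathrm{d}t}(y)\ \mathrm{d}\mu(y)$, after a change of variables $z = \T_t(y)$ expressing the integral against $\mu$ rather than $\mu_t$.

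The main obstacle I expect is making the first contribution rigorous, in particular justifying the interchange of the time derivative, the spatial gradient $\nabla_x$, and the integral defining the first variation, and correctly identifying that perturbing $\mu_t$ by the displacement $\frac{\mathrm{d}\T_t}{\mathrm{d}t}$ produces precisely an integral of the second variation weighted by $\frac{\mathrm{d}\T_t}{\mathrm{d}t}(y)$ against $\mu(\mathrm{d}y)$. This requires the twice-continuous-differentiability hypothesis on $\cF$ (to ensure the second variation exists and the mixed derivatives are continuous, permitting the interchanges) together with $\frac{\mathrm{d}\T_t}{\mathrm{d}t}\in L^2(\mu)$ (to control the perturbation). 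The commutation assumption $\frac{\delta}{\delta\mu}\nabla\frac{\delta\cF}{\delta\mu} = \nabla\frac{\delta^2\cF}{\delta\mu^2}$ is exactly what lets me move the outer spatial gradient through the variation so that the two indices $x,y$ of the second variation are each differentiated spatially, giving the mixed Hessian $\nabla_y\nabla_x\frac{\delta^2\cF}{\delta\mu^2}$. Once these interchanges are granted, the remainder is bookkeeping: assemble the two contributions and perform the change of variables so that both terms are expressed as claimed.
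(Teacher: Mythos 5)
Your proposal is correct and takes essentially the same route as the paper: both arguments split $\frac{\mathrm{d}}{\mathrm{d}t}\big(\gW\cF(\mu_t)\circ\T_t\big)$ by the chain rule into a measure-dependence contribution and an evaluation-point contribution, obtain the $\nabla^2\frac{\delta\cF}{\delta\mu}$ term from the latter, and use the commutation hypothesis $\frac{\delta}{\delta\mu}\nabla\frac{\delta\cF}{\delta\mu}=\nabla\frac{\delta^2\cF}{\delta\mu^2}$ to convert the former into the mixed-Hessian integral against $\mu$. The only difference is presentational: the paper formalizes the measure-dependence term as the $L^2(\mu)$-Fréchet derivative of the pushforward-compatible functional $\T\mapsto\frac{\partial}{\partial x_i}\frac{\delta\cF}{\delta\mu}(\T_\#\mu)(x)$ via \Cref{prop:frechet_derivative}, whereas you differentiate the first variation directly against the distributional derivative $\partial_t\mu_t$ — the same computation in different clothing.
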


\begin{proof}
    See \Cref{proof:lemma_derivative_grad}.
\end{proof}

This allows us to define a closed-form for $\mathrm{H}\cF_{\mu, t}$.

\begin{proposition} \label{prop:hessian}
    Under the same assumptions as in \Cref{lemma:derivative_grad}, let $\mu_t = (\T_t)_\#\mu$ with $\T_t = \sS + t v$, $\sS,v\in L^2(\mu)$, then 
   $\mathrm{H}\cF_{\mu,t}:L^2(\mu)\to L^2(\mu)$ (as defined in \Cref{hess:wass_hessian_acc_free}) is defined  for all $v\in L^2(\mu)$, $x\in \mathbb{R}^d$ as:
    \begin{equation}
        \mathrm{H}\cF_{\mu,t}[v](x) = \int \nabla_y\nabla_x\frac{\delta^2\cF}{\delta\mu^2}\big((\T_t)_\#\mu\big)\big(\T_t(x), \T_t(y)\big) v(y)\ \mathrm{d}\mu(y) +  \nabla^2\frac{\delta\cF}{\delta\mu}\big((\T_t)_\#\mu\big)\big(\T_t(x)\big) v(x).
    \end{equation}
\end{proposition}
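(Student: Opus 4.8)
The plan is to reduce the second-order computation to a single application of Lemma~\ref{lemma:derivative_grad}, after first expressing the derivative of $t\mapsto\cF(\mu_t)$ as an $L^2(\mu)$ inner product via the chain rule. I would start by writing $\cF(\mu_t)=\tF_\mu(\T_t)$ and invoking Proposition~\ref{prop:frechet_derivative}, which asserts that $\tF_\mu$ is Fréchet differentiable with $\nabla\tF_\mu(\T_t)=\gW\cF(\mu_t)\circ\T_t$. Since the curve $\T_t=\sS+tv$ is affine in $t$ with constant velocity $\frac{\mathrm{d}\T_t}{\mathrm{d}t}=v$, the chain rule gives
\begin{equation}
    \frac{\mathrm{d}}{\mathrm{d}t}\cF(\mu_t) = \langle \nabla\tF_\mu(\T_t), v\rangle_{L^2(\mu)} = \langle \gW\cF(\mu_t)\circ\T_t, v\rangle_{L^2(\mu)}.
\end{equation}

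Next I would differentiate once more in $t$. Because $v$ does not depend on $t$, only the first argument of the inner product is differentiated, so that
\begin{equation}
    \frac{\mathrm{d}^2}{\mathrm{d}t^2}\cF(\mu_t) = \Big\langle \frac{\mathrm{d}}{\mathrm{d}t}\big(\gW\cF(\mu_t)\circ\T_t\big), v\Big\rangle_{L^2(\mu)}.
\end{equation}
At this stage Lemma~\ref{lemma:derivative_grad} applies verbatim with $\frac{\mathrm{d}\T_t}{\mathrm{d}t}=v$, producing a pointwise expression for $\frac{\mathrm{d}}{\mathrm{d}t}(\gW\cF(\mu_t)\circ\T_t)(x)$ as the sum of a nonlocal integral term against $\nabla_y\nabla_x\frac{\delta^2\cF}{\delta\mu^2}$ and a local term against $\nabla^2\frac{\delta\cF}{\delta\mu}$. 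Recognizing this expression as precisely the definition of $\mathrm{H}\cF_{\mu,t}[v](x)$ and substituting it back into the pairing yields $\frac{\mathrm{d}^2}{\mathrm{d}t^2}\cF(\mu_t) = \langle \mathrm{H}\cF_{\mu,t}v, v\rangle_{L^2(\mu)}$, which is the claim.

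The step I expect to require the most care is the justification of the two differentiations under the regularity hypotheses inherited from Lemma~\ref{lemma:derivative_grad}, namely that $\cF$ is twice continuously differentiable and that $\frac{\delta}{\delta\mu}\nabla\frac{\delta\cF}{\delta\mu}=\nabla\frac{\delta^2\cF}{\delta\mu^2}$. Concretely, one must verify that $\gW\cF(\mu_t)\circ\T_t\in L^2(\mu)$ so that both the inner product and its $t$-derivative are well-defined, and that the $t$-derivative commutes with the $L^2(\mu)$ pairing, \emph{i.e.}\ with the integral defining it; this follows from the assumed continuity of the second derivatives along the curve $t\mapsto\T_t$. I would also point out that $\mathrm{H}\cF_{\mu,t}$ genuinely maps $L^2(\mu)$ into $L^2(\mu)$, so the final pairing is legitimate, which is simply a matter of reading the operator off the pointwise formula. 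As the bulk of the analytic work is already absorbed into Lemma~\ref{lemma:derivative_grad}, the proposition itself amounts to a short chain-rule argument.
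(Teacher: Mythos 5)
Your proposal is correct and follows essentially the same route as the paper's proof: both express $\frac{\mathrm{d}}{\mathrm{d}t}\cF(\mu_t)=\langle \gW\cF(\mu_t)\circ\T_t, v\rangle_{L^2(\mu)}$ via the chain rule and \Cref{prop:frechet_derivative}, use that $\frac{\mathrm{d}^2\T_t}{\mathrm{d}t^2}=0$ so only the gradient factor is differentiated, and then invoke \Cref{lemma:derivative_grad} with $\frac{\mathrm{d}\T_t}{\mathrm{d}t}=v$ to identify the resulting expression with $\mathrm{H}\cF_{\mu,t}[v]$. Your added remarks on justifying the interchange of $t$-differentiation with the $L^2(\mu)$ pairing and on $\mathrm{H}\cF_{\mu,t}$ mapping $L^2(\mu)$ into itself are sensible care points that the paper leaves implicit, but they do not change the argument.
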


\begin{proof}
    See \Cref{proof:prop_hessian}.
\end{proof}

While $\mathrm{H}\cF_{\mu, t}$ and $\mathrm{H}\cF_{\mu_t}$ differ in general, in some simple cases their relation boils down to composition with a pushforward. For instance, if $\sS$ is invertible, we can write $\mu_t$ as a curve starting from $\sS_\#\mu$ with a velocity field $v\circ \sS^{-1}$, \emph{i.e.} $\mu_t = (\id + t v\circ \sS^{-1})_\#\sS_\#\mu$. Thus, we recover the original definition of the Wasserstein Hessian at $t=0$ as $\mathrm{H}\cF_{\mu,0}=\mathrm{H}\cF_{\sS_\#\mu}$. 
However, in general, this does not need to be the case.\ak{du coup là c'est un peu répétitif avec eq 33 non?}\pca{je suis d'accord mais en même temps dans un case c'est $\sS$ qui est inversible, dans l'autre $\T_t$ (et ce n'est pas équivalent).}

Similarly, if $\T_t$ is invertible for all $t$, setting $v_t=v\circ \T_t^{-1}$, we can write
\begin{equation}
    \begin{aligned}
        \frac{\mathrm{d}^2}{\mathrm{d}t^2}\cF(\mu_t) &= \langle \mathrm{H}\cF_{\mu, t} v, v\rangle_{L^2(\mu)} \\
        &= \int \langle \mathrm{H}\cF_{\mu,t}[v](x), v(x)\rangle\ \mathrm{d}\mu(x) \\
        &= \int \langle \mathrm{H}\cF_{\mu, t}[v]\big(\T_t^{-1}(x_t)\big), v_t(x_t)\rangle\ \mathrm{d}\mu_t(x_t) \\
        &=\langle \mathrm{H}\cF_{\mu_t} v_t, v_t\rangle_{L^2(\mu_t)}.
    \end{aligned}
\end{equation}
The last line is obtained by leveraging the closed form in \Cref{prop:hessian} and that $\mu_t=(\T_t)_\#\mu$, as for all $x\in\mathrm{supp}(\mu)$,
\begin{equation}
    \begin{aligned}
        \mathrm{H}\cF_{\mu, t}[v](x) &= \int \nabla_y\nabla_x\frac{\delta^2\cF}{\delta\mu^2}(\mu_t)\big(\T_t(x), \T_t(y)\big) v(y)\ \mathrm{d}\mu(y) + \nabla^2 \frac{\delta\cF}{\delta\mu}(\mu_t)\big(\T_t(x)\big) v(x) \\
        &= \int \nabla_y\nabla_x\frac{\delta^2\cF}{\delta\mu^2}(\mu_t)(\T_t(x), y_t) v_t(y_t)\ \mathrm{d}\mu_t(y) + \nabla^2 \frac{\delta\cF}{\delta\mu}(\mu_t)\big(\T_t(x)\big) v(x) \\
        &= \mathrm{H}\cF_{\mu_t}[v_t]\big(\T_t(x)\big),
    \end{aligned}
\end{equation}
and thus $\mathrm{H}\cF_{\mu,t}[v]\big(\T_t^{-1}(x)\big) = \mathrm{H}\cF_{\mu_t}[v_t](x)$.

Here are two examples of $\cF$ satisfying $\frac{\delta}{\delta\mu}\nabla\frac{\delta\cF}{\delta\mu} = \nabla\frac{\delta^2\cF}{\delta\mu^2}$ for which \Cref{prop:hessian} provides an expression of the Wasserstein Hessian.
\begin{example}[Potential energy] \label{example:hessian_potential}
    Let $\cV(\mu) = \int V\ \mathrm{d}\mu$ with $V$ twice differentiable with bounded Hessian. Then, we have $\frac{\delta\cV}{\delta\mu}(\mu) = V$ and $\frac{\delta^2\cV}{\delta\mu^2}(\mu) = 0$ (using \eqref{eq:1st_var}). Thus, applying \Cref{prop:hessian}, we recover for $\mu_t = (\T_t)_\#\mu$,
    \begin{equation}
        \frac{\diff^2}{\diff t^2}\cV(\mu_t) = \int \left\langle \nabla^2 V\big(\T_t(x)\big) v(x), v(x)\right\rangle\ \mathrm{d}\mu(x).
    \end{equation}
\end{example}

\begin{example}[Interaction energy] \label{example:hessian_interaction}
    Let $\mathcal{W}(\mu) = \frac12\iint W(x-y)\ \mathrm{d}\mu(x)\mathrm{d}\mu(y)$ with $W$ symmetric, twice differentiable and with bounded Hessian. Then, we have for all $x,y\in\R^d$, $\frac{\delta\cW}{\delta\mu}(\mu)(x) = (W \star \mu)(x)$ and $\frac{\delta^2\cW}{\delta\mu^2}(\mu)(x, y)= W(x-y)$ (see \emph{e.g.} \citep[Example 7]{wang2020information}), and thus applying \Cref{prop:hessian}, for $\mu_t=(\T_t)_\#\mu$, the operator is
    \begin{equation}
        \mathrm{H}\mathcal{W}_{\mu,t}[v](x) = -\int \nabla^2 W\big(\T_t(x)- \T_t(y)\big) v(y)\ \mathrm{d}\mu(y) + (\nabla^2 W \star (\T_t)_\#\mu)(\T_t(x)) v(x),
    \end{equation}
    and
    \begin{equation}
        \frac{\diff^2}{\diff t^2}\mathcal{W}(\mu_t) = \iint \langle \nabla^2 W\big(\T_t(x)-\T_t(y)\big) \big(v(x)-v(y)\big), v(x)\rangle\ \mathrm{d}\mu(y)\mathrm{d}\mu(x).
    \end{equation}
\end{example}

\subsection{Convexity in Wasserstein space}\label{section:app_conv_in_W_space}

We first recall the definition of $\stc$-convex functionals \citep[Definition 9.1.1]{ambrosio2005gradient}.

\begin{definition}\label{def:lambda_convex}
$\cF$ is $\stc$-convex along geodesics if for all $\mu_0,\mu_1 \in \cP_2(\R^d)$,
\begin{equation} \label{eq:geod_curve}
    \forall t\in [0,1],\ \cF(\mu_t) \le (1-t)\cF(\mu_0) +t\cF(\mu_1) - \stc \frac{t(1-t)}{2} \cW_2^2(\mu_0,\mu_1),
\end{equation}
where $(\mu_t)_{t\in [0,1]}$ is a Wasserstein geodesic between $\mu_0$ and $\mu_1$. 
\end{definition}

If we want to derive the minimal set of assumptions for the convergence of the gradient descent algorithms on Wasserstein space, we can actually restrict the smoothness and convexity to specific curves. In the next proposition, we characterize the convexity along one curve. The relative smoothness or convexity follows by considering the convexity of respectively $\sm \mathcal{G} - \mathcal{F}$ or $\mathcal{F}-\stc\mathcal{G}$.

\begin{proposition} \label{prop:equivalences_w_convex}
    Let $\cF:\cP_2(\R^d)\to\R$ be twice continuously differentiable.
    Let $\mu\in\cP_2(\R^d)$, $\T,\sS\in L^2(\mu)$, $\mu_t = \big(\T_t\big)_\#\mu$ for all $t\in [0,1]$ where $\T_t = (1-t)\sS+t\T$. Furthermore, denote for $t_1,t_2\in [0,1]$, $\Tilde{\mu}_t^{t_1\to t_2} = \big((1-t)\T_{t_1} + t \T_{t_2}\big)_\#\mu$.
    Then, the following statement are equivalent:
    \begin{enumerate}[label=\textbf{(c\arabic*)}]
        \item For all $t_1,t_2,t\in [0,1]$, $\mathcal{F}(\Tilde{\mu}_t^{t_1\to t_2}) \le (1-t)\mathcal{F}\big((\T_{t_1})_\#\mu) + t \mathcal{F}\big((\T_{t_2})_\#\mu\big)$, \emph{i.e.} $\mathcal{F}$ is convex along $t\mapsto \mu_t$. \label{equivalence:c1}
        \item For all $t_1,t_2\in [0,1]$, we have $\D_{\Tilde{\mathcal{F}}_\mu}(\T_{t_2}, \T_{t_1})\ge 0$, \emph{i.e.} \label{equivalence:c2}
        \begin{equation*}
            \cF\big((\T_{t_2})_\#\mu\big) - \cF\big((\T_{t_1})_\#\mu\big) - \langle \gW\cF\big((\T_{t_1})_\#\mu\big)\circ \T_{t_1}, \T_{t_2}-\T_{t_1}\rangle_{L^2(\mu)} \ge 0.
        \end{equation*}
        \item For all $t_1,t_2\in [0,1]$, \label{equivalence:c3}
        \begin{equation*}
            \langle\gW\cF\big((\T_{t_2})_\#\mu\big)\circ \T_{t_2} - \gW\cF\big((\T_{t_1})_\#\mu\big)\circ \T_{t_1}, \T_{t_2}-\T_{t_1}\rangle_{L^2(\mu)} \ge 0.
        \end{equation*}
        \item For all $s\in [0,1]$, $\frac{\mathrm{d}^2}{\mathrm{d}t^2}\mathcal{F}(\mu_t)\Big|_{t=s} \ge 0$. \label{equivalence:c4}
    \end{enumerate}
\end{proposition}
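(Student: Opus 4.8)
The plan is to reduce the entire statement to the elementary equivalence of the standard first- and second-order characterizations of convexity for a single real-variable $C^2$ function. The key observation is that, because $\T_t = (1-t)\sS + t\T$ traces an affine segment in the Hilbert space $L^2(\mu)$, the map $g(t) := \cF(\mu_t) = \tF_\mu(\T_t)$ is nothing but the restriction of the Fréchet-differentiable functional $\tF_\mu$ (see \Cref{prop:frechet_derivative}) to a line. I would first record the affine identities $\frac{\diff}{\diff t}\T_t = \T-\sS$, hence $\T_{t_2}-\T_{t_1} = (t_2-t_1)(\T-\sS)$, together with $(1-t)\T_{t_1}+t\T_{t_2} = \T_{(1-t)t_1+tt_2}$. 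In particular $\Tilde{\mu}_t^{t_1\to t_2} = \big(\T_{(1-t)t_1+tt_2}\big)_\#\mu = \mu_{(1-t)t_1+tt_2}$, so that $\cF(\Tilde{\mu}_t^{t_1\to t_2}) = g\big((1-t)t_1+tt_2\big)$.

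Next I would compute the first two derivatives of $g$. Using the chain rule together with \Cref{prop:frechet_derivative}, which gives $\nabla\tF_\mu(\T_t) = \gW\cF(\mu_t)\circ\T_t$, I obtain
\begin{equation*}
    g'(t) = \langle \nabla\tF_\mu(\T_t),\, \T-\sS\rangle_{L^2(\mu)} = \langle \gW\cF(\mu_t)\circ\T_t,\, \T-\sS\rangle_{L^2(\mu)},
\end{equation*}
and differentiating once more, i.e. applying \Cref{prop:hessian} with $v = \T-\sS$ (so that $\T_t = \sS + tv$), gives $g''(t) = \langle \mathrm{H}\cF_{\mu,t}(\T-\sS),\, \T-\sS\rangle_{L^2(\mu)} = \frac{\diff^2}{\diff t^2}\cF(\mu_t)$.

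It then remains to match each condition to a textbook characterization of convexity of $g$ on $[0,1]$. By the first displayed identity, condition (c1) reads $g\big((1-t)t_1+tt_2\big) \le (1-t)g(t_1)+tg(t_2)$, i.e. convexity of $g$. Expanding the Bregman divergence and using $\T_{t_2}-\T_{t_1}=(t_2-t_1)(\T-\sS)$ together with the formula for $g'$, condition (c2) becomes $g(t_2)\ge g(t_1)+(t_2-t_1)g'(t_1)$, the tangent-line (first-order) condition. Similarly (c3) becomes $(t_2-t_1)\big(g'(t_2)-g'(t_1)\big)\ge 0$, the monotonicity of $g'$, and (c4) is exactly $g''(s)\ge 0$. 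I would then close the loop by the classical equivalences for a $C^2$ function on an interval: convexity $\iff$ tangent lines lie below the graph $\iff$ monotone derivative $\iff$ nonnegative second derivative. I expect the only genuinely nonroutine point to be justifying the derivative formulas — in particular applying \Cref{prop:frechet_derivative} and \Cref{prop:hessian} to differentiate $t\mapsto\tF_\mu(\T_t)$ along the affine segment — after which every equivalence is the one-dimensional convexity dictionary applied verbatim.
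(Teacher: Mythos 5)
Your proof is correct, but it is organized quite differently from the paper's. The paper proves the cycle \ref{equivalence:c1}$\Rightarrow$\ref{equivalence:c2}$\Rightarrow$\ref{equivalence:c3}$\Rightarrow$\ref{equivalence:c4}$\Rightarrow$\ref{equivalence:c1} by redoing each one-dimensional argument directly in the Wasserstein setting: a difference-quotient limit using \Cref{prop:frechet_derivative} for \ref{equivalence:c1}$\Rightarrow$\ref{equivalence:c2}, summing two Bregman inequalities for \ref{equivalence:c2}$\Rightarrow$\ref{equivalence:c3}, integrating the second derivative along shrinking reparametrized segments with a dominated-convergence step for \ref{equivalence:c3}$\Rightarrow$\ref{equivalence:c4}, and the Green-function representation from Villani for \ref{equivalence:c4}$\Rightarrow$\ref{equivalence:c1}. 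You instead promote to an organizing principle an identity the paper only uses locally (in its \ref{equivalence:c3}$\Rightarrow$\ref{equivalence:c4} step), namely $(1-t)\T_{t_1}+t\T_{t_2}=\T_{(1-t)t_1+tt_2}$, hence $\Tilde{\mu}_t^{t_1\to t_2}=\mu_{(1-t)t_1+tt_2}$, so that all four conditions are statements about the single real function $g(t)=\tF_\mu(\T_t)$: \ref{equivalence:c1} is convexity of $g$, \ref{equivalence:c2} is the tangent-line condition $g(t_2)\ge g(t_1)+(t_2-t_1)g'(t_1)$, \ref{equivalence:c3} is monotonicity of $g'$, and \ref{equivalence:c4} is $g''\ge 0$; the classical one-variable dictionary then finishes. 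This buys a shorter, more transparent proof, while the paper's route keeps the ingredients (notably the Green-function identity and the limiting argument) in a form that survives when the curve is not an affine segment. One small caution: you do not need, and should not invoke, \Cref{prop:hessian} to identify $g''$ with $\frac{\mathrm{d}^2}{\mathrm{d}t^2}\cF(\mu_t)$ — that identity is definitional since $g(t)=\cF(\mu_t)$, whereas \Cref{prop:hessian} imports the extra assumption $\frac{\delta}{\delta\mu}\nabla\frac{\delta\cF}{\delta\mu}=\nabla\frac{\delta^2\cF}{\delta\mu^2}$, which is not among the hypotheses of the proposition; the standing assumption that $\cF$ is twice continuously differentiable (together with \Cref{prop:frechet_derivative} for the formula $g'(t)=\langle\gW\cF(\mu_t)\circ\T_t,\T-\sS\rangle_{L^2(\mu)}$) already gives you that $g$ is $C^2$, which is all the one-dimensional dictionary requires.
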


\begin{proof}
    See \Cref{proof:prop_equivalences_w_convex}. %
\end{proof}

\looseness=-1 As stated in \Cref{section:background}, if we require the convexity to hold along all curves with $\sS=\id$ and $\T$ the gradient of some convex function, \emph{i.e.} an OT map, then $\cF$ is convex along geodesics. Likewise, if the convexity holds for all $\sS,\T$ that are gradients of convex functions, then we obtain the convexity along generalized geodesics. 

If we require the convexity and the smoothness to hold along any curve of the form $\mu_t = \big((1-t)\sS + t\T)_\#\mu$ for $\mu\in\cP_2(\R^d)$ and $\T,\sS\in L^2(\mu)$, then it coincides with the transport convexity and smoothness recently introduced by \citet[Definitions 4.1 and 4.5]{tanaka2023accelerated}. As by \Cref{prop:frechet_derivative}, $\delta\tF_\mu(\sS,\T-\sS)=\langle\gW\cF(\sS_\#\mu)\circ \sS, \T-\sS\rangle_{L^2(\mu)}$, and thus the convexity of $\cF$ on such a curve reads as follows
\begin{equation}
    \D_{\tF_\mu}(\T,\sS) = \cF(\T_\#\mu) - \cF(\sS_\#\mu) - \langle \gW\cF(\sS_\#\mu) \circ \sS, \T-\sS\rangle_{L^2(\mu)} \ge 0.
\end{equation}
And for $\tG_\mu(\T)=\frac12 \|\T\|_{L^2(\mu)}$, the $\sm$-smoothness of $\cF$ relative to $\cG$ expresses as 
\begin{equation}
    \D_{\tF_\mu}(\T,\sS) = \cF(\T_\#\mu) - \cF(\sS_\#\mu) - \langle \gW\cF(\sS_\#\mu) \circ \sS, \T-\sS\rangle_{L^2(\mu)} \le \frac{\sm}{2} \| \T-\sS\|_{L^2(\mu)} = \sm \D_{\tG_\mu}(\T,\sS).
\end{equation}

\looseness=-1 This type of convexity is actually a particular case of the notion of convexity along acceleration-free curves introduced by \citet{parker2023some} (also introduced by \citet{cavagnari2023lagrangian} under the name of total convexity). The latter requires convexity to hold along any curve of the form $\mu_t=\big((1-t)\pi^1 + t\pi^2\big)_\#\gamma$ with $\gamma\in \Pi(\mu,\nu)$, $\mu,\nu\in \cP_2(\R^d)$ and $\pi^1(x,y)=x$, $\pi^2(x,y)=y$. The transport convexity of \citet{tanaka2023accelerated} is thus a particular case for couplings obtained through maps, \emph{i.e.} $\gamma=(\T,\sS)_\#\mu$. \citet{parker2023some} notably showed that this notion of convexity is equivalent to the geodesic convexity for Wasserstein differentiable functionals.

We can also define the strict convexity using strict inequalities in \Cref{prop:equivalences_w_convex}-\ref{equivalence:c1}-\ref{equivalence:c2}-\ref{equivalence:c3}, but not in \ref{equivalence:c4} as there are counter examples already for real functions. For instance, $f:\R\to\R$, defined as $f(x) = x^4$ for all $x\in\R$, is strictly convex but $f''(0)=0$. Thus, for $\cF(\mu) = \int f\ \mathrm{d}\mu$, choosing $\mu=\delta_0$ and $\T_0=\id$, by \Cref{example:hessian_potential}, we have that $\frac{\mathrm{d}^2}{\mathrm{d}t^2}\cF(\mu_t)\big|_{t=0} = f''(0) v(0)^2 = 0$. But $\cF(\mu_t) = f\big(t\T_1(0)\big)< tf\big(\T_1(0)\big)$ since $f$ is strictly convex and thus $\cF$ is well strictly convex.

Finally, as we defined the relative $\stc$-convexity and $\sm$-smoothness of $\cF$ relative to $\cG$ using Bregman divergences in \Cref{def:rel_convexity_smoothness}, we can show that it is equivalent to $\cF-\stc\cG$ and $\sm\cG-\cF$ being convex.

\begin{proposition} \label{prop:relative_convexity_wasserstein}
    Let $\cF,\cG:\cP_2(\R^d)\to\R$ be two differentiable functionals. Let $\mu\in\cP_2(\R^d)$, $\T,\sS\in L^2(\mu)$ and for all $t\in [0,1]$, $\mu_t = (\T_t)_\#\mu$ with $\T_t = (1-t)\sS+t\T$. Then, $\cF$ is $\stc$-convex (resp. $\sm$-smooth) relative to $\cG$ along $t\mapsto \mu_t$ if and only if $\cF-\stc\cG$ (resp. $\sm\cG-\cF$) is convex along $t\mapsto \mu_t$.
\end{proposition}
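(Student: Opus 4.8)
The plan is to reduce both equivalences to the characterization of convexity along a curve already established in \Cref{prop:equivalences_w_convex}, exploiting the fact that the Bregman divergence depends linearly on its generating potential. I would treat the convexity statement in detail; the smoothness statement follows by the same argument after replacing $\cF-\stc\cG$ with $\sm\cG-\cF$.

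First I would record the pushforward-compatibility identity for the lifted functionals. Writing $\cH = \cF-\stc\cG$, for every $\T\in L^2(\mu)$ one has $\tH_\mu(\T) = \cH(\T_\#\mu) = \cF(\T_\#\mu) - \stc\cG(\T_\#\mu) = \tF_\mu(\T) - \stc\tG_\mu(\T)$, so that $\tH_\mu = \tF_\mu - \stc\tG_\mu$ as functions on $L^2(\mu)$. By \Cref{prop:frechet_derivative} each of these lifted functionals is Fréchet differentiable with $\nabla\tF_\mu(\sS) = \gW\cF(\sS_\#\mu)\circ\sS$ and likewise for $\tG_\mu,\tH_\mu$, so the differential is linear in the functional, giving $\nabla\tH_\mu = \nabla\tF_\mu - \stc\nabla\tG_\mu$. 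Plugging this into the definition of the Bregman divergence (\Cref{def:bregman_div}) then yields the decomposition $\D_{\tH_\mu} = \D_{\tF_\mu} - \stc\D_{\tG_\mu}$.

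Next I would chain the definitions. By \Cref{def:rel_convexity_smoothness}, $\cF$ is $\stc$-convex relative to $\cG$ along $t\mapsto\mu_t$ exactly when $\D_{\tF_\mu}(\T_s,\T_t)\ge\stc\D_{\tG_\mu}(\T_s,\T_t)$ for all $s,t\in[0,1]$, i.e.\ (by the decomposition above) when $\D_{\tH_\mu}(\T_s,\T_t)\ge 0$ for all $s,t\in[0,1]$. Since both \Cref{def:rel_convexity_smoothness} and the condition \ref{equivalence:c2} of \Cref{prop:equivalences_w_convex} quantify over \emph{every} pair of parameters on the curve, the labeling of the two arguments is immaterial, so this last condition is precisely \ref{equivalence:c2} applied to $\cH$. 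Invoking the equivalence \ref{equivalence:c1}$\iff$\ref{equivalence:c2} there shows it holds if and only if $\cH = \cF-\stc\cG$ is convex along $t\mapsto\mu_t$, which is the claim.

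I do not expect a genuine obstacle: the argument is essentially a bookkeeping chain resting on two already-established facts. The one point needing care is the decomposition $\D_{\tH_\mu} = \D_{\tF_\mu}-\stc\D_{\tG_\mu}$, which is legitimate only because \Cref{prop:frechet_derivative} guarantees each lifted functional is differentiable, so that the Bregman divergence of the difference is well-defined and splits as the difference of Bregman divergences. For the smoothness statement the same three steps apply verbatim, using $\sm\tG_\mu-\tF_\mu = \widetilde{(\sm\cG-\cF)}_\mu$ in place of $\tH_\mu$ and the fact that $\D_{\tF_\mu}\le\sm\D_{\tG_\mu}$ is equivalent to $\D_{\sm\tG_\mu-\tF_\mu}\ge 0$.
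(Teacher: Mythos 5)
Your proof is correct and follows essentially the same route as the paper's: both rewrite the relative inequality as nonnegativity of the Bregman divergence of $\tF_\mu-\stc\tG_\mu$ (resp.\ $\sm\tG_\mu-\tF_\mu$) and then invoke the equivalence \ref{equivalence:c2}$\iff$\ref{equivalence:c1} of \Cref{prop:equivalences_w_convex}. The only difference is that you spell out the linearity step $\D_{\tF_\mu-\stc\tG_\mu}=\D_{\tF_\mu}-\stc\D_{\tG_\mu}$ via \Cref{prop:frechet_derivative}, which the paper leaves implicit — a welcome bit of extra care, not a divergence in method.
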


\begin{proof}
    By \Cref{def:rel_convexity_smoothness}, $\cF$ is $\stc$-convex relative to $\cG$ along $t\mapsto \mu_t$ if for all $s,t\in [0,1]$, $\D_{\tF_\mu}(\T_s,\T_t) \ge \stc \D_{\tG_\mu}(\T_s,\T_t)$. This is equivalent to $\D_{\tF_\mu - \stc \tG_\mu}(\T_s,\T_t) \ge 0$, which is equivalent by \Cref{prop:equivalences_w_convex} \ref{equivalence:c2}  (and using \Cref{prop:frechet_derivative}) to $\cF - \stc \cG$ convex along $t\mapsto \mu_t$. The result for the $\sm$-smoothness follows likewise.
\end{proof}

\section{Additional results on mirror descent} \label{appendix:add_results_md}

\subsection{Optimal transport maps for mirror descent} \label{sec:ot_md}

Let $\phi:\cP_2(\R^d)\to\R$ be a strictly convex functional along all acceleration-free curves $\mu_t = (\T_t)_\#\mu$, $t\in [0,1]$ with $\T_t = (1-t)\sS+t\T$ for $\T,\sS\in L^2(\mu)$. Denote for $\mu\in L^2(\mu)$, $\phi_\mu(\T)=\phi(\T_\#\mu)$. Since $\phi$ is strictly convex along all acceleration-free curves, by \Cref{prop:equivalences_w_convex}, for all $\T\neq \sS\in L^2(\mu)$, $\D_{\phi_\mu}(\T,\sS)>0$ and thus $\phi_\mu$ is strictly convex. Indeed, recall that
\begin{equation}
    \begin{aligned}
        \forall \T,\sS\in L^2(\mu),\ \D_{\phi_\mu}(\T,\sS) &= \phi_\mu(\T) - \phi_\mu(\sS) - \langle \nabla \phi_\mu(\sS),\T-\sS\rangle_{L^2(\mu)} \\
        &= \phi(\T_\#\mu) - \phi(\sS_\#\mu) - \langle \gW\phi(\sS_\#\mu)\circ \sS, \T-\sS\rangle_{L^2(\mu)},
    \end{aligned}
\end{equation}
where we used \Cref{prop:frechet_derivative} for the computation of the gradient.

Let us now define for all $\mu,\nu\in\cP_2(\R^d)$,
\begin{equation} \label{eq:optimal_bregman}
    \cW_{\phi}(\nu,\mu) = \inf_{\gamma\in\Pi(\nu,\mu)}\ \phi(\nu)-\phi(\mu)-\int\langle \gW\phi(\mu)(y), x-y\rangle\ \diff\gamma(x,y).
\end{equation}
This problem encompasses several previously considered objects, as discussed in more detail in \Cref{rk:related_work_bregman_ot}. Our motivation for introducing \Cref{eq:optimal_bregman} is to prove that for $\phi_{\mu}$ verifying the assumptions of \Cref{prop:pushforward_compatible}, its associated Bregman divergence $\D_{\phi_\mu}$ satisfies the property given in \Cref{assumption:min}. 
First, we can observe that as $\gamma=(\T,\sS)_\#\mu\in\Pi(\T_\#\mu, \sS_\#\mu)$, we have $\D_{\phi_\mu}(\T,\sS)\ge \cW_\phi(\T_\#\mu, \sS_\#\mu)$.
Then, for $\mu\in\cPa$, assuming that $\gW\phi(\mu)=\nabla\phi_\mu(\id)$ is invertible%
, we can leverage Brenier's theorem \citep{brenier1991polar}, and show in \Cref{prop:ot_map} that the optimal coupling of \Cref{eq:optimal_bregman} is of the form $(\T_{\phi_\mu}^{\mu,\nu},\id)_\#\mu$ with $\T_{\phi_{\mu}}^{\mu,\nu}=\argmin_{\T_\#\mu=\nu}\ \D_{\phi_\mu}(\T,\id)$. Moreover, if $\nu\in\cPa$, we also have that $\T_{\phi_\mu}^{\mu,\nu}$ is invertible with inverse $\Bar{\T}_{\phi_\nu}^{\nu,\mu} = \argmin_{\T_\#\nu=\mu} \D_{\phi_{\nu}}(\id,\T)$.

\begin{proposition}\label{prop:ot_map}
    Let $\mu\in\cPa$, $\nu\in\cP_2(\R^d)$ and assume $\gW\phi(\mu)$ invertible. Then, 
    \begin{enumerate}
        \item\label{it:ot_map_1} There exists a unique minimizer $\gamma$ of \eqref{eq:optimal_bregman}. %
        Besides, there exists a uniquely determined $\mu$-almost everywhere (a.e.) map $\T_{\phi_{\mu}}^{\mu,\nu}:\R^d\to \R^d$  such that $\gamma = (\T_{\phi_{\mu}}^{\mu,\nu},\id)_\#\mu$. Finally, there exists a convex function $u:\R^d \to \R$ such that $\T_{\phi_{\mu}}^{\mu,\nu} = \nabla u \circ \gW\phi(\mu)$ $\mu$-a.e.
        \item \label{it:ot_map_3}  %
        Assume further that $\nu\in\cPa$. Then there exists a uniquely determined $\nu$-a.e. map $\Bar{\T}_{\phi_\nu}^{\nu,\mu}:\R^d\to\R^d$ such that $\gamma=(\id, \Bar{\T}_{\phi_\nu}^{\nu,\mu})_\#\nu$. Moreover, there exists a convex function $v:\R^d\to \R$ such that $\Bar{\T}_{\phi_\nu}^{\nu,\mu} = \gW\phi(\mu)^{-1}\circ \nabla v$ $\nu$-a.e., and $\T_{\phi_\mu}^{\mu,\nu}\circ \Bar{\T}_{\phi_\nu}^{\nu,\mu} = \id$ $\nu$-a.e. and $\Bar{\T}_{\phi_\nu}^{\nu,\mu}\circ \T_{\phi_\mu}^{\mu,\nu} = \id$ $\mu$-a.e.
        \item As a corollary, $
        \cW_{\phi}(\nu,\mu) = \min_{\T_\#\mu=\nu}\ \D_{\phi_\mu}(\T,\id) = \min_{\T_\#\nu=\mu}\ \D_{\phi_\nu}(\id, \T).$
    \end{enumerate}
\end{proposition}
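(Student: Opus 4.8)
The plan is to reduce the minimization defining $\cW_\phi(\nu,\mu)$ to a standard quadratic optimal transport problem by exploiting the invertibility of $\gW\phi(\mu)$, and then to invoke Brenier's theorem. First I would observe that in the objective of \eqref{eq:optimal_bregman} the terms $\phi(\nu)$ and $\phi(\mu)$ are constant in $\gamma$, and that $\int \langle \gW\phi(\mu)(y), y\rangle\, \diff\gamma(x,y)$ depends only on the fixed second marginal $\mu$. Hence minimizing the objective is equivalent to maximizing $\int \langle \gW\phi(\mu)(y), x\rangle\, \diff\gamma(x,y)$ over $\gamma\in\Pi(\nu,\mu)$. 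Writing $T_0 := \gW\phi(\mu)$ and setting $\rho := (T_0)_\#\mu$ (which lies in $\cP_2(\R^d)$ since $T_0=\nabla\phi_\mu(\id)\in L^2(\mu)$), the map $\gamma\mapsto\tilde\gamma := (\pi^1, T_0\circ\pi^2)_\#\gamma$ is, by invertibility of $T_0$, a bijection between $\Pi(\nu,\mu)$ and $\Pi(\nu,\rho)$ under which the objective becomes the maximal-correlation functional $\int \langle x, z\rangle\, \diff\tilde\gamma(x,z)$, i.e.\ minus the quadratic OT cost between $\nu$ and $\rho$.

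The crux is to show $\rho\in\cPa$. This is where invertibility of $T_0$ must be used quantitatively: since $\mu$ is absolutely continuous, $\rho(A)=\mu(T_0^{-1}(A))$ vanishes on every Lebesgue-null $A$ provided $T_0^{-1}$ maps null sets to null sets (Luzin's property (N)), which holds as soon as $T_0^{-1}$ is, e.g., locally Lipschitz; for the pushforward-compatible examples of \Cref{section:background} this follows from the explicit form of $\gW\phi$ (such as $\nabla V$ with $V$ strongly convex and $L$-smooth). I expect this absolute-continuity step to be the main obstacle, as mere set-theoretic invertibility of $T_0$ does not by itself prevent $\rho$ from concentrating mass.

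Granting $\rho\in\cPa$, Brenier's theorem \citep{brenier1991polar} yields a unique optimal $\tilde\gamma\in\Pi(\nu,\rho)$ supported on the graph of $\nabla u$ for some convex $u$ with $(\nabla u)_\#\rho=\nu$, i.e.\ $\tilde\gamma=(\nabla u,\id)_\#\rho$. Transporting this back through the bijection gives $\gamma=(\nabla u\circ T_0,\id)_\#\mu$, so that $\T_{\phi_\mu}^{\mu,\nu}:=\nabla u\circ\gW\phi(\mu)$ pushes $\mu$ onto $\nu$ and $\gamma=(\T_{\phi_\mu}^{\mu,\nu},\id)_\#\mu$; uniqueness of $\gamma$ and $\mu$-a.e.\ uniqueness of the map transfer from Brenier through the bijection. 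For part \ref{it:ot_map_3}, adding $\nu\in\cPa$ makes both marginals absolutely continuous, so Brenier also provides the reverse map $\nabla v:=\nabla u^*$ with $(\nabla v)_\#\nu=\rho$, $\nabla u\circ\nabla v=\id$ $\nu$-a.e.\ and $\nabla v\circ\nabla u=\id$ $\rho$-a.e. Pulling back gives $\Bar{\T}_{\phi_\nu}^{\nu,\mu}=\gW\phi(\mu)^{-1}\circ\nabla v$ with $\gamma=(\id,\Bar{\T}_{\phi_\nu}^{\nu,\mu})_\#\nu$, and the two a.e.\ inverse identities $\T_{\phi_\mu}^{\mu,\nu}\circ\Bar{\T}_{\phi_\nu}^{\nu,\mu}=\id$ and $\Bar{\T}_{\phi_\nu}^{\nu,\mu}\circ\T_{\phi_\mu}^{\mu,\nu}=\id$ follow from those of $\nabla u,\nabla v$ together with $T_0^{-1}\circ T_0=\id$.

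Finally, for the corollary I would plug the graph couplings back into \eqref{eq:optimal_bregman}. With $\gamma=(\T,\id)_\#\mu$ and $\T_\#\mu=\nu$ one computes $\cW_\phi(\nu,\mu)=\phi_\mu(\T)-\phi_\mu(\id)-\langle\nabla\phi_\mu(\id),\T-\id\rangle_{L^2(\mu)}=\D_{\phi_\mu}(\T,\id)$, using $\nabla\phi_\mu(\id)=\gW\phi(\mu)$ from \Cref{prop:frechet_derivative}; symmetrically, $\gamma=(\id,\Bar{\T})_\#\nu$ gives the $\cW_\phi$ value as $\D_{\phi_\nu}(\id,\Bar{\T})$. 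Since every admissible transport map induces an admissible coupling, each such Bregman divergence is an upper bound for $\cW_\phi(\nu,\mu)$, so the optimal maps realize the infima, yielding $\cW_\phi(\nu,\mu)=\min_{\T_\#\mu=\nu}\D_{\phi_\mu}(\T,\id)=\min_{\T_\#\nu=\mu}\D_{\phi_\nu}(\id,\T)$.
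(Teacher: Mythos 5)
Your proposal is correct and follows essentially the same route as the paper's proof: rewrite the objective of \eqref{eq:optimal_bregman} (up to $\gamma$-independent terms) as the quadratic OT cost between $\nu$ and $\rho=\gW\phi(\mu)_\#\mu$, apply Brenier's theorem, pull the optimal plan and maps back through the mirror map, symmetrize for the case $\nu\in\cPa$, and obtain the corollary by evaluating the graph couplings, which yields $\D_{\phi_\mu}(\T,\id)$ and $\D_{\phi_\nu}(\id,\T)$. If anything, you are more careful than the paper on the one delicate step: the paper asserts $\rho\in\cPa$ directly from the invertibility of $\gW\phi(\mu)$, whereas, as you correctly note, set-theoretic invertibility alone does not prevent the pushforward from charging Lebesgue-null sets, and one needs the inverse to satisfy Luzin's property (N) (\emph{e.g.}, to be locally Lipschitz, which does hold for the paper's strongly monotone examples such as $\nabla V$ with $V$ strongly convex).
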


\begin{proof}
    \ref{it:ot_map_1}. Observe that problem \eqref{eq:optimal_bregman} is equivalent to
    \begin{equation}
        \inf_{\gamma\in\Pi(\nu,\mu)}\ \int \|x-\nabla_{\cW_2}\phi(\mu)(y)\|_2^2\ \diff\gamma(x,y).
    \end{equation}
    Then, since for any $\gamma\in\Pi(\nu,\mu)$, $\big(\id, \nabla_{\cW_2}\phi(\mu)\big)_\#\gamma\in \Pi\big(\nu,\nabla_{\cW_2}\phi(\mu)_\#\mu\big)$, we have
    \begin{equation}
        \inf_{\gamma\in\Pi(\nu,\mu)}\ \int \|x-\nabla_{\cW_2}\phi(\mu)(y)\|_2^2\ \diff\gamma(x,y) \ge \inf_{\Tilde{\gamma}\in\Pi\big(\nu,\nabla_{\cW_2}\phi(\mu)_\#\mu\big)}\ \int \|x-z\|_2^2\ \diff\Tilde{\gamma}(x,z).
    \end{equation}

    Let $\mu\in\cPa$. Since $\gW\phi(\mu)$ is invertible, $\gW\phi(\mu)_\#\mu \in \cPa$. By Brenier's theorem, there exists a convex function $u$ such that $(\nabla u)_\#(\nabla_{\cW_2}\phi(\mu))_\#\mu = \nu$ and the optimal coupling is of the form $\Tilde{\gamma}^* = (\nabla u,\id)_\#\nabla_{\cW_2}\phi(\mu)_\#\mu$. Let $\gamma=(\nabla u\circ \nabla_{\cW_2}\phi(\mu), \id)_\#\mu\in \Pi(\nu,\mu)$, then
    \begin{equation}
        \begin{aligned}
            \int \|z-\Tilde{y}\|_2^2\ \diff\Tilde{\gamma}^*(z, \Tilde{y}) &= \int \|\nabla u\big(\nabla_{\cW_2}\phi(\mu)(y)\big) - \nabla_{\cW_2}\phi(\mu)(y)\|_2^2\ \diff\mu(y) \\
            &= \int \|x-\nabla_{\cW_2}\phi(\mu)(y)\|_2^2\ \diff\gamma(x,y).
        \end{aligned}
    \end{equation}
    Thus, since $\gamma\in\Pi(\nu,\mu)$, $\gamma$ is an optimal coupling for \eqref{eq:optimal_bregman}.
    
    \ref{it:ot_map_3}. We symmetrize the arguments. Assuming $\nu\in\cPa$ and $\nabla\phi_\mu(\id)=\gW\phi(\mu)$ invertible, by Brenier's theorem, there exists a convex function $v$ such that $(\nabla v)_\#\nu = \nabla_{\cW_2}\phi(\mu)_\#\mu$ (and such that $\nabla u\circ \nabla v=\id\;$ $\nu$-a.e. and $\nabla v\circ \nabla u =\id\;$ $\gW\phi(\mu)_\#\mu$-a.e.) and the optimal coupling is of the form $\Tilde{\gamma}^* = (\id, \nabla v)_\#\nu$. Let $\gamma=(\id, \nabla_{\cW_2}\phi(\mu)^{-1}\circ \nabla v)_\#\nu \in \Pi(\nu,\mu)$, then
    \begin{equation}
        \begin{aligned}
            \int \|x-z\|_2^2\ \diff\Tilde{\gamma}^*(x,z) &= \int \|x-\nabla v(x)\|_2^2\ \diff\nu(x) \\
            &= \int \| x - \nabla_{\cW_2}\phi(\mu)\big((\nabla_{\cW_2}\phi(\mu))^{-1}(\nabla v(x))\big)   \|_2^2\ \diff\nu(x) \\
            &= \int \| x - \nabla_{\cW_2}\phi(\mu)(y)\|_2^2\ \diff\gamma(x,y).
        \end{aligned}
    \end{equation}
    Thus, since $\gamma\in\Pi(\nu,\mu)$, $\gamma$ is an optimal coupling for \eqref{eq:optimal_bregman}. 
    Moreover, noting $\T_{\phi_\mu}^{\mu,\nu}=\nabla u \circ \gW\phi(\mu)$ and $\Bar{\T}_{\phi_\nu}^{\nu,\mu} = \gW\phi(\mu)^{-1}\circ \nabla v$, we have $\mu$-a.e., $\Bar{\T}_{\phi_\nu}^{\nu,\mu}\circ \T_{\phi_\mu}^{\mu,\nu} = \gW\phi(\mu)^{-1}\circ \nabla v \circ \nabla u \circ \gW\phi(\mu) = \id$ and $\nu$-a.e., $\T_{\phi_\mu}^{\mu,\nu} \circ \Bar{\T}_{\phi_\nu}^{\nu,\mu} = \nabla u \circ \gW\phi(\mu) \circ \gW\phi(\mu)^{-1}\circ \nabla v = \id$ from the aforementioned consequences of Brenier's theorem.
\end{proof}

We continue this section with additional results relative to the invertibility of mirror maps, which are required in \Cref{prop:pushforward_compatible}. For a potential energy $\cV(\mu) = \int V\mathrm{d}\mu$, since $\gW\cV = \nabla V$, then $\gW\cV$ is invertible provided $\nabla V$ is. This is the case \emph{e.g.} for $V$ strictly convex. We now state in the two next lemmas conditions for an interaction energy  and for the negative entropy to satisfy the invertibility requirements.

\begin{lemma} \label{lem:grad_w_invertible}
    Let $\mu\in \cP_2(\R^d)$ and let $W:\R^d\to \R$ be even, $\epsilon$-strongly convex for $\epsilon>0$ and differentiable. Then, for $\mathcal{W}(\mu) = \iint W(x-y)\ \mathrm{d}\mu(x)\mathrm{d}\mu(y)$, $\gW\mathcal{W}(\mu)$ is invertible.
\end{lemma}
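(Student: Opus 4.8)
The plan is to recognize $\gW\mathcal{W}(\mu)$ as the gradient of an $\epsilon$-strongly convex potential on $\R^d$, and then invoke the elementary fact that the gradient of a differentiable strongly convex function is a bijection of $\R^d$. The evenness and strong convexity of $W$ enter at two different points: evenness guarantees the gradient formula, while strong convexity drives the monotonicity argument.

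First I would recall from \Cref{section:background} that $\gW\mathcal{W}(\mu) = \nabla W \star \mu$, that is $\gW\mathcal{W}(\mu)(x) = \int \nabla W(x-y)\, \diff\mu(y)$ for $x\in\R^d$. Writing $\Phi := W\star\mu$, i.e. $\Phi(x) = \int W(x-y)\, \diff\mu(y)$, one identifies $\nabla\Phi = \gW\mathcal{W}(\mu)$ by differentiating under the integral sign, which is licit here since the Wasserstein gradient $\gW\mathcal{W}(\mu)\in L^2(\mu)$ is assumed to exist. Since for each fixed $y$ the translate $x\mapsto W(x-y)$ inherits the $\epsilon$-strong convexity of $W$, and $\mu\in\cP_2(\R^d)$ is a probability measure, the average $\Phi$ is again $\epsilon$-strongly convex.

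Next I would record the key consequence, namely that $\nabla\Phi = \gW\mathcal{W}(\mu)$ is $\epsilon$-strongly monotone. Indeed, using that $\nabla W$ is $\epsilon$-strongly monotone (the gradient of an $\epsilon$-strongly convex function) together with the identity $(x_1-y)-(x_2-y) = x_1-x_2$,
\begin{equation*}
    \langle \gW\mathcal{W}(\mu)(x_1) - \gW\mathcal{W}(\mu)(x_2),\, x_1 - x_2\rangle = \int \langle \nabla W(x_1-y) - \nabla W(x_2-y),\, x_1-x_2\rangle\, \diff\mu(y) \ge \epsilon\|x_1-x_2\|^2,
\end{equation*}
where the last inequality follows by integrating the pointwise strong monotonicity bound against the probability measure $\mu$.

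Finally I would conclude invertibility. Strong monotonicity gives injectivity at once: if $\gW\mathcal{W}(\mu)(x_1)=\gW\mathcal{W}(\mu)(x_2)$, the displayed inequality forces $0\ge\epsilon\|x_1-x_2\|^2$, hence $x_1=x_2$. For surjectivity, given $p\in\R^d$, the map $x\mapsto\Phi(x)-\langle p,x\rangle$ is $\epsilon$-strongly convex and coercive, so it admits a unique minimizer $x^\ast$, whose first-order optimality condition reads $\nabla\Phi(x^\ast)=p$. Thus $\gW\mathcal{W}(\mu)=\nabla\Phi$ is a bijection of $\R^d$, with inverse $\nabla\Phi^\ast$. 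I expect the only genuinely delicate point to be the justification of differentiating under the integral (and continuity of the resulting map), but this is already subsumed in the standing hypothesis that $\gW\mathcal{W}(\mu)=\nabla W\star\mu$ exists and lies in $L^2(\mu)$; the evenness of $W$ serves only to produce this gradient formula and is not otherwise needed.
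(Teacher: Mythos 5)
Your proof is correct and follows essentially the same route as the paper's: identify $\gW\mathcal{W}(\mu)=\nabla W\star\mu$, transfer the $\epsilon$-strong monotonicity of $\nabla W$ to the convolution by integrating the pointwise inequality against $\mu$, and conclude invertibility. The only difference is the last step, where the paper simply invokes the fact that strongly monotone maps are invertible \citep{ahn2022invertible}, whereas you prove it directly by recognizing $\nabla W\star\mu$ as the gradient of the $\epsilon$-strongly convex potential $\Phi=W\star\mu$ and minimizing $\Phi-\langle p,\cdot\rangle$; this is a nice self-contained alternative, with the mild caveat that it additionally requires $W\star\mu$ to be finite (an integrability assumption parallel to the one both you and the paper already make implicitly in asserting that $\nabla W\star\mu$ is well defined).
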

\begin{proof}
    On one hand, $\gW\mathcal{W}(\mu) = \nabla W \star \mu$. Moreover, $W$ $\epsilon$-strongly convex is equivalent to
    \begin{equation}
        \forall x,y\in \R^d,\ x\neq y,\ \langle \nabla W(x) - \nabla W(y), x-y\rangle \ge \epsilon \|x-y\|_2^2,
    \end{equation}
    which implies for all $x,y,z\in \R^d$, $\langle \nabla W(x-z) - \nabla W(y-z), x-y\rangle \ge \epsilon \|x-y\|_2^2$. By integrating with respect to $\mu$, it implies
    \begin{equation}
      \langle (\nabla W \star \mu)(x) - (\nabla W \star \mu)(y), x-y\rangle=\int \langle \nabla W(x-z) - \nabla W(y-z), x-y\rangle \ \mathrm{d}\mu(z)\ge \epsilon \|x-y\|_2^2.
    \end{equation}
    Thus, $\nabla W \star \mu$ is $\epsilon$-strongly monotone, and in particular invertible \citep[Theorem 1]{ahn2022invertible}.
\end{proof}
    
\begin{lemma} \label{lem:grad_h_invertible}
    Let $\mu \in \cPa$ such that its density is of the form $\rho\propto e^{-V}$ with $V:\R^d\to \R$ $\epsilon$-strongly convex for $\epsilon>0$. Then, for $\cH(\mu) = \int \log \big(\rho(x)\big)\ \mathrm{d}\mu(x)$ with $\rho$ the density of $\mu$ \emph{w.r.t} the Lebesgue measure, $\gW\cH(\mu)$ is invertible.
\end{lemma}

\begin{proof}
    Let $\mu$ such distribution. Then, $\gW\cH(\mu) = \nabla\log\rho = -\nabla V$. Since $V$ is $\epsilon$-strongly convex, then $\nabla V$ is $\epsilon$-strongly monotone and in particular invertible \citep[Theorem 1]{ahn2022invertible}.
\end{proof}

We conclude this section with a discussion of \eqref{eq:optimal_bregman} with respect to related work.

\begin{remark}\label{rk:related_work_bregman_ot}
    The OT problem \eqref{eq:optimal_bregman} recovers other OT costs for specific choices of $\phi$. For instance, for $\phi_\mu(\T)=\frac12 \|\T\|_{L^2(\mu)}^2$, it coincides with the squared Wasserstein-2 distance. And more generally, for $\phi_\mu^V(\T)= \int V\circ\T\ \mathrm{d}\mu$, since by \Cref{lemma:d_v}, for all $\T,\sS\in L^2(\mu)$,
    \begin{equation}
            \D_{\phi_\mu^V}(\T,\sS) = \int \Dr_V\big(\T(x),\sS(x)\big)\ \diff\mu(x),
    \end{equation}
    where $\Dr_V$ is the Euclidean Bregman divergence, \emph{i.e.} for all $x,y\in\R^d$, $\Dr_V(x,y) = V(x)-V(y)-\langle \nabla V(y), x-y\rangle$, $\cW_\phi$ coincides with the Bregman-Wasserstein divergence \citep{rankin2023bregman}
    \begin{equation}
        \mathcal{B}_V(\mu,\nu) = \inf_{\gamma\in\Pi(\mu,\nu)} \int \Dr_V(x,y)\ \mathrm{d}\gamma(x,y).
    \end{equation}

\end{remark}

\subsection{Continuous formulation}\label{sec:continuous_formulation_md}

Let $\phi:L^2(\mu)\to\R$ be pushforward compatible and superlinear. Introducing the (mirror) map $\varphi(\mu)=\gW\phi(\mu)$, we can write informally the mirror descent scheme \eqref{eq:md} and its continuous-time counterpart when $\tau\to 0$ as
\begin{equation} \label{eq:mirror_flow}
    \begin{cases}
        \varphi(\mu_k) = \gW\phi(\mu_k) \\
        \varphi(\mu_{k+1})\circ \T_{k+1} = \varphi(\mu_k) - \tau\gW\cF(\mu_k)
    \end{cases}
    \quad \xrightarrow[\tau\to 0]{} \quad 
    \begin{cases}
        \varphi(\mu_t) = \gW\phi(\mu_t) \\
        \frac{\mathrm{d}}{\mathrm{d}t}\varphi(\mu_t) = - \gW\cF(\mu_t).
    \end{cases}
\end{equation}
\looseness=-1 However, $\frac{\mathrm{d}}{\mathrm{d}t}\varphi(\mu_t) = \frac{\mathrm{d}}{\mathrm{d}t}\gW\phi(\mu_t) = \hess\phi_{\mu_t}(v_t)$ where $\hess\phi_{\mu_t}:L^2(\mu_t)\to L^2(\mu_t)$ is the Hessian operator (defined in \Cref{sec:wass_hessians}) such that $\frac{\mathrm{d}^2}{\mathrm{d}t^2}\phi(\mu_t) = \langle \hess\phi_{\mu_t}(v_t), v_t\rangle_{L^2(\mu_t)}$ and $v_t\in L^2(\mu_t)$ is a velocity field satisfying $\partial_t\mu_t + \mathrm{div}(\mu_t v_t) = 0$. Thus, the continuity equation corresponding to the Mirror Flow is given by
\begin{equation}
    \partial_t \mu_t - \mathrm{div}\left(\mu_t (\mathrm{H}\phi_{\mu_t})^{-1}\gW\cF(\mu_t)\right) = 0.
\end{equation}
For $\phi_\mu^V$ as Bregman potential, since $\hess \phi_\mu^V(v) = (\nabla^2 V)v$ (see \Cref{sec:wass_hessians}), the flow is a solution of $\partial_t\mu_t - \mathrm{div}\big(\mu_t (\nabla^2V)^{-1}\gW\cF(\mu_t)\big) = 0$. For $\cF(\mu) = \kl(\mu||\nu)$ with $\nu \propto e^{-U}$, this coincides with the gradient flow of the mirror Langevin \citep{ahn2021efficient, wibisono2019proximal} and with the continuity equation obtained in \citep{rankin2024jko} as the limit of the JKO scheme with Bregman groundcosts. For $\phi=\cF$, this coincides with Information Newton's flows \citep{wang2020information}. Note also that \citet{deb2023wasserstein} defined mirror flows through the scheme $\tau\to 0$ of \eqref{eq:mirror_flow}, but focused on $\cF(\mu)=\kl(\mu||\nu)$ and $\phi(\mu)=\frac12\cW_2^2(\mu,\eta)$.

\subsection{Derivation in specific settings} \label{sec:mirror_gaussian}

In this section, we analyze several novel mirror schemes obtained through the use of different Bregman potential maps in \eqref{eq:md}, and used in various applications in \Cref{section:xps}. We start by discussing the scheme with an interaction energy as Bregman potential. Next, we study mirror descent with negative entropy or KL divergence as Bregman potential. For the last two, we derive closed-forms for the case where every distribution is Gaussian, which is equivalent to working on the Bures-Wasserstein space, and to use the gradient on the Bures-Wasserstein space \citep{diao2023forward}. In particular, this space is a submanifold of $\cPa$ and the tangent space is the space of affine maps with symmetric linear term, \emph{i.e.} of the form $T(x)=b + S(x-m)$ with $S\in S_d(\R)$.%

\paragraph{Interaction mirror scheme.} Consider as Bregman potential an interaction energy $\phi_\mu(\T)=\frac12\iint W\big((T(x)-T(x')\big)\ \mathrm{d}\mu(x)\mathrm{d}\mu(x')$. The mirror descent scheme \eqref{eq:md} is given by
\begin{equation}\label{eq:md_interaction}
    \forall k\ge 0,\ (\nabla W\star \mu_{k+1})\circ \T_{k+1} = \nabla W \star \mu_k - \tau \gW\cF(\mu_k).
\end{equation}

For the particular case $W(x)=\frac12 \|x\|_2^2$, the scheme can be made more explicit as $\nabla W \star \mu(x) = \int \nabla W(x-y)\ \mathrm{d}\mu(y) = \int (x-y)\ \mathrm{d}\mu(y) = x - m(\mu)$ with $m(\mu)=\int y\mathrm{d}\mu(y)$ the expectation, and thus \eqref{eq:md_interaction}  translates as
\begin{equation}\label{eq:md_interaction_bis}
    \forall k\ge 0,\ x_{k+1} - m(\mu_{k+1}) = x_k - m(\mu_k) - \tau\gW\cF(\mu_k), \; x_k \sim \mu_k.
\end{equation}

On one hand, recall from \Cref{example:hessian_interaction} that the Hessian of $\phi$ is given, for $\mu\in\cP_2(\R^d)$, $v\in L^2(\mu)$, by
\begin{equation}\label{eq:hessian_interaction_example}
    \forall x\in \R^d,\ \hess\phi_{\mu}[v](x) = - \int v(y)\ \mathrm{d}\mu(y) + v(x),
\end{equation}
since $\nabla^2 W = I_d$.
On the other hand, the mirror descent scheme \eqref{eq:md_interaction_bis} can be written as, for all $k\ge 0$,
\begin{equation}
             y_{k+1} = y_k - \tau \gW\cF(\mu_k)(x_k),\;     y_{k} = x_k - m(\mu_k), \; x_k \sim \mu_k.
\end{equation}

Passing to the limit $\tau\to 0$, we get
\begin{equation}
        \frac{\mathrm{d}y_t}{\mathrm{d}t} = -\gW\cF(\mu_t)(x_t),\;      y_t = x_t- m(\mu_t), \; x_t \sim \mu_t,
\end{equation}
where $\frac{\mathrm{d}y_t}{\mathrm{d}t} = \frac{\mathrm{d}x_t}{\mathrm{d}t} - \frac{\mathrm{d}m(\mu_t)}{\mathrm{d}t}$. Now, by setting $v_t(x) = \frac{\mathrm{d}x_t}{\mathrm{d}t}$, by integration by part, we have
\begin{equation}
    \frac{\mathrm{d}}{\mathrm{d}t}m(\mu_t) = \int x\ \partial_t\mu_t = - \int x\cdot \mathrm{div}(\mu_t v_t) = \int v_t(y)\ \mathrm{d}\mu_t(y).
\end{equation}
Combining the latter equation with \eqref{eq:hessian_interaction_example}, we obtain as expected that 
$\frac{\mathrm{d}y_t}{\mathrm{d}t} = \hess\phi_{\mu_t}[v_t](x)$.

\paragraph{Negative entropy mirror scheme.}
Consider the negative entropy  $\phi(\mu) = \int \log\big(\rho(x)\big)\ \mathrm{d}\mu(x)$ where $\mathrm{d}\mu(x) = \rho(x)\mathrm{d}x$ and $\phi_{\mu}(\T)=\phi(\T_{\#}\mu)$. For such Bregman potential, the mirror scheme \eqref{eq:md} can be written for all $k\ge 0$ as
\begin{equation}
    \nabla \log \rho_{k+1}\circ \T_{k+1} = \nabla \log \rho_k - \tau \gW\cF(\mu_k).
\end{equation}
In general, this scheme is not tractable. Nonetheless, supposing that $\mu_k=\mathcal{N}(m_k,\Sigma_k)$ for all $k\ge 0$, the scheme translates as
\begin{equation}
    -\Sigma_{k+1}^{-1}(\T_{k+1}(x_k) - m_{k+1}) = -\Sigma_k^{-1}(x_k-m_k) - \tau\gW\cF(\mu_k),\; x_k \sim \mu_k.
\end{equation}

$\bullet$ For an objective functional $\cF(\mu)=\cH(\mu)+\cV(\mu)$ with $V(x)=\frac12 x^T \Sigma^{-1} x$, the scheme is
\begin{equation} \label{eq:md_nem_bw}
    \begin{aligned}
        -\Sigma_{k+1}^{-1}(x_{k+1} - m_{k+1}) &= -\Sigma_k^{-1}(x_k-m_k) -\tau\big(-\Sigma_k^{-1} (x_k - m_k) + \Sigma^{-1} x_k\big) \\
        &= - (1-\tau)\Sigma_k^{-1}(x_k-m_k) - \tau \Sigma^{-1} x_k \\
        &= -\big((1-\tau)\Sigma_k^{-1} + \tau \Sigma^{-1}\big) x_k + (1-\tau)\Sigma_k^{-1} m_k.
    \end{aligned}
\end{equation}
Assuming $m_k=0$ for all $k$, we obtain the following update rule for the covariance matrices:
\begin{equation} \label{eq:nem}
    \Sigma_{k+1}^{-1} = \big((1-\tau)\Sigma_k^{-1} + \tau \Sigma^{-1}\big)^T \Sigma_k \big((1-\tau)\Sigma_k^{-1} + \tau \Sigma^{-1}\big).
\end{equation}
We illustrate this scheme in \Cref{fig:bw_gaussians}.

$\bullet$ For $\cF(\mu) = \cH(\mu)$, we obtain
\begin{equation}
    -\Sigma_{k+1}^{-1}(\T_{k+1}(x_k) - m_{k+1}) = -(1-\tau) \Sigma_k^{-1}(x_k-m_k), \; x_k\sim \mu_k.
\end{equation}
Assuming $m_k=0$ for all $k$, for $\tau<1$, we obtain the following update rule for the covariance matrices:
\begin{align}
\Sigma_{k+1}^{-1} &= (1-\tau)^2 \Sigma_k^{-1},\text{ i.e., }\\
\Sigma_{k+1} &= \frac{1}{(1-\tau)^2}\Sigma_k = \frac{1}{(1-\tau)^{2k}}\Sigma_0 \underset{\tau\to 0}{\sim}  e^{2\tau k}\Sigma_0.
\end{align}
The continuous time analog of this scheme  is thus $\mu_t: t\mapsto \cN(0,\Sigma_t)$ with $\Sigma_t = e^{2t}\Sigma_0$ and the negative entropy decreases along this curve as 
\begin{equation}
    \begin{aligned}
        \cH(\mu_t) &= \int \log \big(\rho_t(x)\big)\ \mathrm{d}\mu_t(x)  \\
        &= \int \log\left(\frac{1}{(2\pi)^{\frac{d}{2}} \sqrt{\det \Sigma_t}} e^{-\frac12 x^T \Sigma_t^{-1} x}\right)\ \mathrm{d}\mu_t(x) \\
        &= - \frac{d}{2} \log(2\pi) - \frac12 \log \det (e^{2t} \Sigma_0) - \frac12 \mathrm{Tr}\left(\Sigma_t^{-1} \int xx^T \mathrm{d}\mu_t(x) \right) \\
        &= -\frac{d}{2}\log(2\pi e) - dt - \frac12 \sum_{i=1}^d \log(\lambda_i),
    \end{aligned}
\end{equation}
where $(\lambda_i)_i$ denote the eigenvalues of $\Sigma_0$.
This is much faster than the heat flow for which the negative entropy decreases as \citep[Appendix E.2]{wibisono2018sampling}
\begin{equation}
    \cH(\rho_t) = -\frac{d}{2}\log(2\pi e) - \frac12 \sum_{i=1}^d \log(\lambda_i + 2t),
\end{equation}
with the scheme given by \citep[Example 6]{wibisono2018sampling}
\begin{equation}
    \forall k\ge 0,\ \begin{cases}
        m_{k+1} = m_0 \\
        \Sigma_{k+1} = \Sigma_k (I_d +\tau\Sigma_k^{-1})^2.
    \end{cases}
\end{equation}
With our notations, the heat flow is the continuous time limit of the scheme \eqref{eq:md} for the same objective $\mathcal{F}$ but for a quadratic Bregman potential $\phi_{\mu}(\T)=\frac12 \|\T\|_{L^2(\mu)}^2$ (which recovers the Wasserstein-2 geometry, hence Wasserstein-2 gradient flows).

\paragraph{KL mirror scheme.}
Suppose we want to optimize the KL divergence, \emph{i.e.} a functional of the form $\cF(\mu) = \cG(\mu)+\cH(\mu)$ where $\cG(\mu) = \int U\mathrm{d}\mu$. Then, a natural choice of Bregman potential is also a functional of the form $\phi(\mu) = \Psi(\mu) + \cH(\mu)$ with $\Psi(\mu) = \int V\mathrm{d}\mu$, 
with $U$ $\stc$-convex and $\sm$-smooth relative to $V$. 

In that case, we obtain the smoothness of $\cF$ relative to $\phi$. Recall we denote $\tF_{\mu}(\T)=\cF(\T_{\#}\mu)$ for $\T\in L^2(\mu)$. Then for all $\T,\sS\in L^2(\mu)$, we 
have
$ \alpha \D_{\Tilde{\Psi}_\mu} (\T,\sS) \le  \D_{\Tilde{\cG}_\mu}(\T,\sS) \le \beta \D_{\Tilde{\Psi}_\mu}(\T,\sS)$,  hence
\begin{equation}
    \begin{aligned}
        \D_{\tF_\mu}(\T, \sS) = \D_{\tH_\mu}(\T,\sS) + \D_{\tG_\mu}(\T,\sS) \le \D_{\tH_\mu}(\T, \sS) + \sm \D_{\Tilde{\Psi}_\mu}(\T,\sS) \le \mathrm{max}(1,\sm) \D_{\phi_\mu}(\T,\sS).
    \end{aligned}
\end{equation}
Similarly, $\D_{\tF_\mu}(\T,\sS) \ge \mathrm{min}(1,\stc) \D_{\phi_\mu}(\T,\sS)$.

We now focus on the case where all measures are Gaussian in order to be able to compute a closed-form, \emph{i.e.} $U(x) = \frac12 (x-m)^T \Sigma^{-1} (x-m)$, $V(x) =  \frac12 x^T \Lambda^{-1} x$ and for all $k\ge 0$, $\mu_k = \cN(m_k, \Sigma_k)$. In this case, recall that $\nabla \log \mu_k(x) = -\Sigma_k^{-1} (x-m_k)$. Then, at each step, the mirror descent scheme \eqref{eq:md} writes for
$x_k\sim \mu_k, \; k\ge 0$ as 
    \begin{align}
        &\nabla V(x_{k+1}) + \nabla \log\big(\mu_{k+1}(x_{k+1})\big) = \nabla V(x_k) + \nabla\log\big(\mu_k(x_k)\big) - \tau\big(\nabla U(x_k) + \nabla \log\big(\mu_k(x_k)\big)\nonumber \\
        &\iff \Lambda^{-1} x_{k+1} - \Sigma_{k+1}^{-1}(x_{k+1} - m_{k+1}) \nonumber\\ &\quad\quad\quad= \Lambda^{-1} x_k - \Sigma_k^{-1}(x_k-m_k) - \tau\big(\Sigma^{-1}(x_k-m) - \Sigma_k^{-1}(x_k-m_k)\big)\nonumber \\
        &\iff (\Lambda^{-1} - \Sigma_{k+1}^{-1}) x_{k+1} + \Sigma_{k+1}^{-1} m_{k+1} \nonumber\\ &\quad\quad\quad= \big(\Lambda^{-1} - (1-\tau)\Sigma_k^{-1} - \tau\Sigma^{-1}\big) x_k + (1-\tau)\Sigma_k^{-1}m_k + \tau\Sigma^{-1} m.
    \end{align}
Thus, we get for the expectation that
    \begin{align}
        &(\Lambda^{-1} - \Sigma_{k+1}^{-1}) m_{k+1} + \Sigma_{k+1}^{-1} m_{k+1} = \big(\Lambda^{-1} - (1-\tau)\Sigma_k^{-1} - \tau\Sigma^{-1}\big) m_k \nonumber
       (1-\tau)\Sigma_k^{-1}m_k + \tau\Sigma^{-1} m \nonumber \\
        &\iff \Lambda^{-1} m_{k+1} = (\Lambda^{-1} - \tau\Sigma^{-1}) m_k + \tau\Sigma^{-1}m  \nonumber\\
        &\iff m_{k+1} = (I_d - \tau\Lambda\Sigma^{-1})m_k + \tau\Lambda\Sigma^{-1}m.
    \end{align}
We note that the latter update on the means coincides with the forward Euler method in the forward-backward scheme, see \eqref{eq:closed_form_pfb} in \Cref{sec:prox_grad}, which uses as Bregman potential $\phi=\Psi$. Thus, the entropy does not affect the convergence towards the mean, which can be done simply by (preconditioned) gradient descent.

For the covariance part, we get
\begin{multline}
    (\Lambda^{-1}-\Sigma_{k+1}^{-1})^T\Sigma_{k+1}(\Lambda^{-1} - \Sigma_{k+1}^{-1}) \\ = \big(\Lambda^{-1}- \tau\Sigma^{-1} - (1-\tau)\Sigma_k^{-1}\big)^T \Sigma_k \big(\Lambda^{-1}- \tau\Sigma^{-1} - (1-\tau)\Sigma_k^{-1}\big).
\end{multline}
Now, supposing that all matrices commute, we get
    \begin{align}
        &\Lambda^{-2}\Sigma_{k+1} - 2 \Lambda^{-1} + \Sigma_{k+1}^{-1} = (\Lambda^{-1} - \tau\Sigma^{-1})^2 \Sigma_k - 2 (1-\tau) \Lambda^{-1} + 2 \tau(1-\tau)\Sigma^{-1} \nonumber \\ &\qquad\qquad\qquad\qquad\qquad\qquad + (1-\tau)^2\Sigma_k^{-1}  \\
        &\iff \Lambda^{-2} \Sigma_{k+1} + \Sigma_{k+1}^{-1} =  (\Lambda^{-1} - \tau\Sigma^{-1})^2 \Sigma_k + 2\tau \Lambda^{-1} + 2 \tau(1-\tau)\Sigma^{-1} + (1-\tau)^2\Sigma_k^{-1} \nonumber \\
        &\iff \Sigma_{k+1} + \Lambda^2 \Sigma_{k+1}^{-1} = (I_d - \tau\Lambda\Sigma^{-1})^2 \Sigma_k + 2\tau\Lambda + 2\tau(1-\tau)\Lambda^2 \Sigma^{-1} + (1-\tau)^2 \Lambda^2 \Sigma_k^{-1}. \nonumber
    \end{align}
Denoting
\begin{equation}
    C = (I_d - \tau\Lambda\Sigma^{-1})^2 \Sigma_k + 2\tau\Lambda + 2\tau(1-\tau)\Lambda^2 \Sigma^{-1} + (1-\tau)^2 \Lambda^2 \Sigma_k^{-1},
\end{equation}
the update on covariances  is equivalent to 
\begin{equation}
    \Sigma_{k+1}^2 - C\Sigma_{k+1} + \Lambda^2 = 0.
\end{equation}
Thus, $\Sigma_{k+1} = \frac12 \big(C \pm (C^2 - 4\Lambda^2)^{\frac12}\big)$.

\subsection{Mirror scheme with non-pushforward compatible Bregman potentials} \label{section:functionals_not_pc}

We study in this Section schemes for which the Bregman potential $\phi_{\mu}$ is not pushforward compatible, and thus for which we cannot apply \Cref{prop:pushforward_compatible} and thus \Cref{assumption:min} may not hold a priori. An example of such potential is $\phi_\mu(\T)=\langle \T, P_\mu\T\rangle_{L^2(\mu)}$ where $P_\mu:L^2(\mu)\to L^2(\mu)$ is a linear autoadjoint and invertible operator. Since $\nabla\phi_\mu(\T) = P_\mu\T$, taking the first order conditions, we obtain the following scheme:
\begin{equation} \label{eq:scheme_operators}
    \forall k\ge 0,\ \T_{k+1} = \id - P_{\mu_k}^{-1}\gW\cF(\mu_k).
\end{equation}
In particular, this includes SVGD \citep{liu2016stein, liu2017stein, korba2020non} if we pose $P_\mu^{-1} \T = \iota S_\mu\T $ with $S_\mu:L^2(\mu)\to \cH$ defined as $S_\mu\T = \int k(x,\cdot)\T(x)\diff\mu(x)$ which maps functions from $L^2(\mu)$ to the reproducing kernel Hilbert space $\cH$ with kernel $k$, and with $\iota:\cH\to L^2(\mu)$ the inclusion operator that is the adjoint of $S_{\mu}$ \citep{korba2020non}.  It also includes the Kalman-Wasserstein gradient descent \citep{garbuno2020interacting} for which $P_\mu^{-1} = \int \big(x-m(\mu)\big)\otimes\big(x-m(\mu)\big)\ \diff\mu(x)$ is the covariance matrix, where $m(\mu)=\int x\ \diff\mu(x)$.

More generally, for $\phi_\mu(\T)=\int P_\mu (V\circ \T)\mathrm{d}\mu$, we can recover their mirrored version, including mirrored SVGD \citep{shi2022sampling,sharrock2024learning}, \emph{i.e.} $\T_{k+1} = \nabla V^* \circ \big(\nabla V - \tau P_{\mu_k}^{-1}\gW\cF(\mu_k)\big)$.

\paragraph{Kalman-Wasserstein.} %
We focus now on a particular choice of linear operator $P_\mu$. Namely, we take $P_\mu \T = C(\mu)\T$ with $C(\mu) = \left( \int \big(x-m(\mu)\big)\otimes \big(x-m(\mu)\big)\ \mathrm{d}\mu(x)\right)^{-1}$ the inverse of the covariance matrix. In this case, \eqref{eq:scheme_operators} corresponds to the discretization of the Kalman-Wasserstein gradient flow \citep{garbuno2020interacting}. We now show that it satisfies \Cref{assumption:min}. First, let us compute the Bregman divergence associated to $\phi$:
\begin{equation}
    \begin{aligned}
        \forall \T,\sS\in L^2(\mu),\ \D_{\phi_\mu}(\T, \sS) &= \frac12 \langle \T, C(\mu) \T\rangle_{L^2(\mu)} + \frac12 \langle \sS, C(\mu)\sS\rangle_{L^2(\mu)} - \langle C(\mu) \sS, \T\rangle_{L^2(\mu)} \\
        &= \frac12 \big(\langle \T, C(\mu)(\T-\sS)\rangle_{L^2(\mu)} + \langle \sS-\T, C(\mu) \sS\rangle_{L^2(\mu)}\big) \\
        &= \frac12 \|C(\mu)^{\frac12} (\T-\sS)\|_{L^2(\mu)}^2.
    \end{aligned}
\end{equation}
For $\gamma = (\T, \sS)_\#\mu$, we can write
\begin{equation}
    \D_{\phi_\mu}(\T, \sS) = \frac12 \int \| C(\mu)^{\frac12} (x-y)\|_2^2\ \mathrm{d}\gamma(x,y).
\end{equation}
Moreover, the problem $\inf_{\gamma\in\Pi(\alpha,\beta)}\ \int \| C(\mu)^{\frac12}(x-y)\|_2^2\ \mathrm{d}\gamma(x,y)$ is equivalent to
\begin{equation}
    \inf_{\gamma\in\Pi(\alpha,\beta)}\ -\int x^T C(\mu) y\ \mathrm{d}\gamma(x,y),
\end{equation}
which is a squared OT problem. Thus, it admits an OT map if $C(\mu)$ is invertible and $\mu$ or $\nu$ is absolutely continuous.

\paragraph{Second point of view.}

Another point of view would be to use the linearization with the gradient corresponding to the associated generalized Wasserstein distance, which is of the form $\nabla_{\cW}\cF(\mu) = P_\mu^{-1}\gW\cF(\mu)$ \citep{garbuno2020interacting, duncan2023geometry}, \emph{i.e.} considering
\begin{equation}
    \T_{k+1} = \argmin_{\T\in L^2(\mu)}\ \D_{\phi_\mu}(\T,\id) + \langle \nabla_{\cW}\cF(\mu), \T-\id\rangle_{L^2(\mu)},
\end{equation}
where we assume that $\nabla_{\cW}\cF(\mu)\in L^2(\mu)$. In that case, using the first order conditions,
\begin{equation}
    \nabla\J(\T_{k+1})=0\iff \gW\phi\big((\T_{k+1})_\#\mu_k\big)\circ \T_{k+1} = \gW\phi(\mu_k) - \tau P_{\mu_k}^{-1}\gW\cF(\mu_k).
\end{equation}
Then, for $\phi_\mu$ satisfying \Cref{assumption:min}, the convergence will hold under relative smoothness and convexity assumptions similarly as for the analysis derived in \Cref{section:mirror_descent}.

\section{Relative convexity and smoothness} \label{appendix:relative}

\subsection{Relative convexity and smoothness between Fenchel transforms} \label{section:relative_cvx_fenchel}

In this Section, we show sufficient conditions to satisfy the inequalities assumed in \Cref{prop:descent_pgd} and \Cref{prop:bound_pgd} under the additional assumption that, for all $k\ge 0$, $\tF_{\mu_k}$ is superlinear, lower semicontinuous and strictly convex. In this case, we can show that $\tF_{\mu_k}^*$ is Gâteaux differentiable, and thus we can use the Bregman divergence of  $\tF_{\mu_k}^*$.

\begin{lemma} \label{lemma:fenchel_transform_invertible}
    Let $\phi:L^2(\mu)\to\R$ be a superlinear, lower semicontinuous and strictly convex function. Then, $\phi^*$ is Gâteaux differentiable.
\end{lemma}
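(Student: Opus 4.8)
The plan is to use the classical bridge between differentiability of a convex function and single-valuedness of its subdifferential: a convex function that is finite and continuous at a point is Gâteaux differentiable there if and only if its subdifferential at that point is a singleton (see e.g.\ Phelps, \emph{Convex Functions, Monotone Operators and Differentiability}, Prop.~2.8). Since $\phi^*$ is always convex and lower semicontinuous, being a supremum of the continuous affine maps $y\mapsto \langle x,y\rangle - \phi(x)$, it suffices to establish two things for every $y\in L^2(\mu)$: that $\phi^*$ is finite (hence continuous) near $y$, and that $\partial\phi^*(y)$ reduces to a single point.

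First I would use superlinearity to obtain finiteness and attainment. Fix $y$. Superlinearity gives $\langle x,y\rangle - \phi(x) \le \|x\|_{L^2(\mu)}\big(\|y\|_{L^2(\mu)} - \phi(x)/\|x\|_{L^2(\mu)}\big)\to -\infty$ as $\|x\|_{L^2(\mu)}\to\infty$, so the map $x\mapsto \phi(x)-\langle x,y\rangle$ is coercive and $\phi^*(y)<+\infty$. Because this map is convex and lower semicontinuous on the Hilbert (hence reflexive) space $L^2(\mu)$, it is weakly lower semicontinuous, and coercivity confines any minimizing sequence to a bounded, hence weakly relatively compact, set; the direct method then yields a minimizer $x_y$. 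Thus the supremum defining $\phi^*(y)$ is attained at $x_y$, and $\phi^*$ is finite everywhere. A proper, lower semicontinuous, everywhere-finite convex function on a Banach space is continuous on the interior of its domain (a Baire category argument), so $\phi^*$ is continuous on all of $L^2(\mu)$.

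Next I would pin down the subdifferential via Fenchel--Young. Since $\phi$ is proper, convex and lower semicontinuous, Fenchel--Moreau gives $\phi^{**}=\phi$, so $x\in\partial\phi^*(y)$ if and only if $\phi(x)+\phi^*(y)=\langle x,y\rangle$, i.e.\ if and only if $x$ attains the supremum in the definition of $\phi^*(y)$. These maximizers of $x\mapsto\langle x,y\rangle-\phi(x)$ are exactly the minimizers of the \emph{strictly} convex map $x\mapsto\phi(x)-\langle x,y\rangle$, of which there is at most one; combined with the existence of $x_y$ above, the maximizer is unique. Hence $\partial\phi^*(y)=\{x_y\}$ is a singleton for every $y$, and the differentiability criterion yields that $\phi^*$ is Gâteaux differentiable, with $\nabla\phi^*(y)=x_y$ characterized by $y\in\partial\phi(x_y)$ --- which is also what underlies the identity $(\nabla\phi)^{-1}=\nabla\phi^*$ invoked in the main text.

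The main obstacle is the attainment-and-continuity step in infinite dimensions: one must genuinely invoke reflexivity of $L^2(\mu)$ together with weak lower semicontinuity to guarantee that the supremum is reached, and the Baire-category fact that a finite lower semicontinuous convex function is automatically continuous --- neither of which appears in the finite-dimensional statements of \citet{lu2018relatively}. Strict convexity enters only at the very end, to upgrade existence of a maximizer to uniqueness.
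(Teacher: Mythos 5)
Your proof is correct and follows essentially the same route as the paper's: both reduce Gâteaux differentiability of $\phi^*$ at $y$ to single-valuedness of $\partial\phi^*(y)$, identify that subdifferential with the minimizers of $f\mapsto \phi(f)-\langle f,y\rangle_{L^2(\mu)}$, and obtain existence from coercivity (via superlinearity) plus lower semicontinuity and uniqueness from strict convexity. If anything, your write-up is slightly more complete, since you explicitly verify the finiteness and continuity of $\phi^*$ (and invoke Fenchel--Moreau for the subdifferential characterization), steps the paper's proof leaves implicit when passing from a singleton subdifferential to Gâteaux differentiability.
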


\begin{proof} 
    Fix $g\in L^2(\mu)$. Notice that 
    \begin{equation}
        \bar f\in \partial \phi^*(g) \iff \phi^*(g)=\langle \bar f,g \rangle - \phi(\bar f)=\sup_{f \in L^2(\mu)} \langle f,g \rangle - \phi(f).
    \end{equation}
    So to prove there is a unique element in $\partial \phi^*(g)$, we just need to show that, setting $\phi_g(f):=- \langle f,g \rangle + \phi(f)$, the problem $\inf_{f \in L^2(\mu)} \phi_g(f)$ has a unique solution. Under our  assumptions, $\phi_g$ is lower semicontinuous and strictly convex. Since $\phi$ is superlinear, $\phi_g$ is coercive, \emph{i.e.}\ $\lim_{\|f\|\to\infty} \phi_g(f)=+\infty$. There thus exists a solution \citep[Theorem 3.3.4]{attouch14variational}, which is unique by strict convexity. Hence $\partial \phi^*(g)$ is reduced to a point, which is necessarily the Gâteaux derivative of $\phi^*$ at $g$.
\end{proof}

This allows us to relate the Bregman divergence of $\phi^*$ to the Bregman divergence of $\phi$.

\begin{lemma} \label{lemma:link_bregman_div_grad}
    Let $\phi:L^2(\mu)\to \R$ be a proper superlinear and strictly convex differentiable function, then for all $\T,\sS\in L^2(\mu)$, $\D_{\phi^*}\big(\nabla\phi(\T),\nabla\phi(\sS)\big) = \D_{\phi}(\sS,\T)$. 
\end{lemma}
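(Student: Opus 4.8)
The plan is to reduce the identity to the Fenchel--Young relation combined with the inversion formula for the mirror map. The hypotheses on $\phi$ (proper, superlinear, lower semicontinuous and strictly convex) are exactly those of \Cref{lemma:fenchel_transform_invertible}, which guarantees that $\phi^*$ is Gâteaux differentiable; hence $\D_{\phi^*}$ is well defined. Moreover, as recorded just after \Cref{lemma:fenchel_transform_invertible}, one has $(\nabla\phi)^{-1}=\nabla\phi^*$, so that $\nabla\phi^*\big(\nabla\phi(\sS)\big)=\sS$ and $\nabla\phi^*\big(\nabla\phi(\T)\big)=\T$.

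First I would write out the left-hand side using \Cref{def:bregman_div} with base point $\nabla\phi(\sS)$:
\begin{equation*}
    \D_{\phi^*}\big(\nabla\phi(\T),\nabla\phi(\sS)\big) = \phi^*\big(\nabla\phi(\T)\big) - \phi^*\big(\nabla\phi(\sS)\big) - \big\langle \nabla\phi^*(\nabla\phi(\sS)),\, \nabla\phi(\T)-\nabla\phi(\sS)\big\rangle_{L^2(\mu)}.
\end{equation*}
By the inversion formula the last inner product becomes $\langle \sS,\,\nabla\phi(\T)-\nabla\phi(\sS)\rangle_{L^2(\mu)}$. The key substitution is the Fenchel--Young equality in its equality case: since $\nabla\phi(\T)$ and $\nabla\phi(\sS)$ are gradients of $\phi$, we have $\phi^*\big(\nabla\phi(\T)\big)=\langle \T,\nabla\phi(\T)\rangle_{L^2(\mu)}-\phi(\T)$ and likewise $\phi^*\big(\nabla\phi(\sS)\big)=\langle \sS,\nabla\phi(\sS)\rangle_{L^2(\mu)}-\phi(\sS)$. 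Substituting these two expressions eliminates $\phi^*$ entirely.

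After substitution, the two occurrences of $\langle \sS,\nabla\phi(\sS)\rangle_{L^2(\mu)}$ — one coming from $\phi^*(\nabla\phi(\sS))$ and one from the inner-product term — cancel, and the remaining terms regroup as
\begin{equation*}
    \phi(\sS) - \phi(\T) - \big\langle \nabla\phi(\T),\, \sS-\T\big\rangle_{L^2(\mu)},
\end{equation*}
which is precisely $\D_\phi(\sS,\T)$ by \Cref{def:bregman_div}. This gives the claimed identity.

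This argument is essentially a bookkeeping computation, so I do not anticipate a genuine obstacle; the only point requiring care is the justification of the Fenchel--Young equality in the \emph{equality} case, i.e.\ that $y=\nabla\phi(x)$ forces $\phi(x)+\phi^*(y)=\langle x,y\rangle_{L^2(\mu)}$. This follows from the inversion formula $\nabla\phi^*=(\nabla\phi)^{-1}$ already invoked: it yields $x\in\partial\phi^*(y)$ and $y\in\partial\phi(x)$ simultaneously, which is equivalent to equality in Fenchel--Young. Everything else is algebraic rearrangement of inner products in the Hilbert space $L^2(\mu)$.
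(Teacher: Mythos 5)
Your proposal is correct and follows essentially the same route as the paper's proof: the paper likewise invokes the Fenchel--Young equality $\phi^*\big(\nabla\phi(\T)\big)=\langle \T,\nabla\phi(\T)\rangle_{L^2(\mu)}-\phi(\T)$ (via \citep[Corollary 3.44]{peypouquet2015convex}) together with the inversion formula $(\nabla\phi)^{-1}=\nabla\phi^*$ (via \Cref{lemma:fenchel_transform_invertible} and \citep[Corollary 16.24]{bauschke2017convex}), and then performs the identical algebraic cancellation. Your added remark justifying the equality case of Fenchel--Young through $y\in\partial\phi(x)$ is the standard argument and matches what the cited corollary provides.
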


\begin{proof}
    By \citep[Corollary 3.44]{peypouquet2015convex}, we have $\phi^*\big(\nabla\phi(\T)\big) = \langle \T, \nabla\phi(\T)\rangle_{L^2(\mu)} - \phi(\T)$ for all $\T\in L^2(\mu)$ since $\phi$ is convex and differentiable.
    By \Cref{lemma:fenchel_transform_invertible}, $\phi^*$ is invertible and by \citep[Corollary 16.24]{bauschke2017convex}%
    , since $\phi$ is proper, lower semicontinuous and convex, then $(\nabla\phi)^{-1} = \nabla\phi^*$.
    
    Thus, for all $\T,\sS\in L^2(\mu)$,
        \begin{align}
            \D_{\phi^*}\big(\nabla\phi(\T), \nabla\phi(\sS)\big) %
            &= \phi^*\big(\nabla\phi(\T)\big) - \phi^*\big(\nabla\phi(\sS)\big) - \langle \nabla\phi^*\big(\nabla\phi(\sS)\big), \nabla\phi(\T)-\nabla\phi(\sS)\rangle_{L^2(\mu)} \nonumber \\
            &= \phi^*\big(\nabla\phi(\T)\big) - \phi^*\big(\nabla\phi(\sS)\big) - \langle \sS, \nabla\phi(\T) - \nabla\phi(\sS)\rangle_{L^2(\mu)} \nonumber \\
            &= \langle \nabla \phi(\T), \T\rangle_{L^2(\mu)} - \phi(\T) - \langle \nabla\phi(\sS), \sS\rangle_{L^2(\mu)} + \phi(\sS)  \\
            & \quad - \langle \sS, \nabla\phi(\T) - \nabla\phi(\sS)\rangle_{L^2(\mu)} \nonumber \\
            &= \phi(\sS) - \phi(\T) - \langle \nabla \phi(\T), \sS-\T\rangle_{L^2(\mu)} \nonumber \\
            &= \D_\phi(\sS,\T). \nonumber \qedhere
        \end{align}
\end{proof}

Finally, we can relate the relative convexity of $\phi$ relative to $\psi^*$ by using an inequality between the Bregman divergences of $\phi$ and $\psi$. In particular, we recover the assumptions of \Cref{prop:descent_pgd,prop:bound_pgd} for $\phi_{\mu_k}^{\kp}$ that is $\sm$-smooth and $\stc$-convex relative to $\tF_{\mu_k}^*$.

\begin{proposition} \label{prop:relative_cvx_legendre}
    Let $\phi,\psi:L^2(\mu)\to \R$ proper, superlinear, strictly convex and differentiable. $\phi$ is $\sm$-smooth (resp. $\stc$-convex) relative to $\psi^*$ if and only if $\forall  \T,\sS\in L^2(\mu)$, $\D_\phi\big(\nabla \psi(\T), \nabla\psi(\sS)\big) \le \sm \D_\psi(\sS, \T)$ (resp. $\D_\phi\big(\nabla \psi(\T), \nabla\psi(\sS)\big) \ge \stc \D_\psi(\sS, \T)$). %
\end{proposition}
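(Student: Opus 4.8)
The plan is to reduce the statement to a change of variables governed by the bijection $\nabla\psi$, combined with the Bregman identity of \Cref{lemma:link_bregman_div_grad}. I would treat only the smoothness case explicitly, the convexity case being word-for-word identical with $\ge$ and $\stc$ in place of $\le$ and $\sm$.

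First I would check that all the objects are well-defined and that \Cref{def:smoothness_convexity} genuinely applies to the pair $(\phi,\psi^*)$. As a finite-valued differentiable convex function on a Hilbert space, $\psi$ is continuous, hence lower semicontinuous; together with superlinearity and strict convexity, \Cref{lemma:fenchel_transform_invertible} then guarantees that $\psi^*$ is Gâteaux differentiable, so $\D_{\psi^*}$ makes sense and \Cref{def:smoothness_convexity} can be read off for $(\phi,\psi^*)$. The same hypotheses give $(\nabla\psi)^{-1}=\nabla\psi^*$ as recalled after \Cref{lemma:fenchel_transform_invertible}, so $\nabla\psi:L^2(\mu)\to L^2(\mu)$ is a bijection with everywhere-defined inverse $\nabla\psi^*$.

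Next I would unfold the definition and perform the substitution. By \Cref{def:smoothness_convexity}, $\phi$ is $\sm$-smooth relative to $\psi^*$ if and only if $\D_\phi(g,f)\le \sm\,\D_{\psi^*}(g,f)$ for all $g,f\in L^2(\mu)$. Since $\nabla\psi$ is a bijection of $L^2(\mu)$, I set $g=\nabla\psi(\T)$ and $f=\nabla\psi(\sS)$: as $(\T,\sS)$ ranges over $L^2(\mu)^2$ the pair $(g,f)$ ranges over all of $L^2(\mu)^2$, so the universally quantified inequality is equivalent to $\D_\phi\big(\nabla\psi(\T),\nabla\psi(\sS)\big)\le \sm\,\D_{\psi^*}\big(\nabla\psi(\T),\nabla\psi(\sS)\big)$ for all $\T,\sS\in L^2(\mu)$. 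Finally \Cref{lemma:link_bregman_div_grad} rewrites the right-hand side exactly as $\sm\,\D_\psi(\sS,\T)$, yielding the claimed equivalence; the convexity statement follows by the same chain with the inequality reversed.

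The only substantive point, and the main obstacle, is the \emph{surjectivity} of $\nabla\psi$: this is what turns the change of variables into an equivalence rather than a one-sided implication, and it is precisely what superlinearity buys us, since superlinearity is what forces $\psi^*$ to be finite and everywhere differentiable (\Cref{lemma:fenchel_transform_invertible}) and hence $\nabla\psi$ to be onto. Everything else is bookkeeping: the substitution is merely a relabelling of the universal quantifier, and the identity $\D_{\psi^*}\big(\nabla\psi(\T),\nabla\psi(\sS)\big)=\D_\psi(\sS,\T)$ is supplied verbatim by \Cref{lemma:link_bregman_div_grad}.
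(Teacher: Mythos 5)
Your proof is correct and follows essentially the same route as the paper's: both reduce the statement to the identity $\D_{\psi^*}\big(\nabla\psi(\T),\nabla\psi(\sS)\big)=\D_\psi(\sS,\T)$ from \Cref{lemma:link_bregman_div_grad} together with the inverse relation $(\nabla\psi)^{-1}=\nabla\psi^*$ supplied by \Cref{lemma:fenchel_transform_invertible}. If anything, your version is slightly more careful than the paper's, since you make explicit the surjectivity of $\nabla\psi$ (needed so the substitution $g=\nabla\psi(\T)$, $f=\nabla\psi(\sS)$ exhausts all pairs and the converse direction yields full relative smoothness), a point the paper's two one-sided implications leave implicit.
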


\begin{proof}[Proof of \Cref{prop:relative_cvx_legendre}]
    First, suppose that $\phi$ is $\sm$-smooth relative to $\psi^*$. Then, by definition,
    \begin{equation}
        \forall \T,\sS\in L^2(\mu),\ \D_{\phi}(\T,\sS) \le \sm \D_{\psi^*}(\T,\sS).
    \end{equation}
    In particular,
    \begin{equation}
        \begin{aligned}
            \D_{\phi}\big(\nabla\psi(\T), \nabla\psi(\sS)\big) &\le \sm \D_{\psi^*}\big(\nabla\psi(\T), \nabla\psi(\sS)\big) = \sm \D_{\psi}(\sS, \T),
        \end{aligned}
    \end{equation}
    using \Cref{lemma:link_bregman_div_grad}.

    On the other hand, suppose for all $\T,\sS\in L^2(\mu)$, $\D_\phi\big(\nabla \psi(\T), \nabla\psi(\sS)\big) \le \sm \D_\psi(\sS, \T)$. Then, by first using \Cref{lemma:link_bregman_div_grad} and then the supposed inequality, we have for all $\T,\sS\in L^2(\mu),$
    \begin{equation}
        \sm \D_{\psi^*}\big(\nabla \psi(\T), \nabla\psi(\sS)\big) = \sm \D_{\psi}(\sS,\T) \ge \D_{\phi}\big(\nabla \psi(\T),\nabla\psi(\sS)\big).
    \end{equation}

    Likewise, we can show that $\phi$ is $\stc$-convex relative to $\psi$ if and only if $\D_\phi\big(\nabla\psi(\T),\nabla\psi(\sS)\big)\ge \stc \D_\psi(\sS,\T)$ for all $\T,\sS\in L^2(\mu)$.
\end{proof}

\paragraph{Links with the conditions of \Cref{prop:descent_pgd} and \Cref{prop:bound_pgd}.}

\Cref{prop:relative_cvx_legendre} allows to translate the inequality hypothesis of \Cref{prop:descent_pgd} and \Cref{prop:bound_pgd}. Assume that for all $k$, $\tF_{\mu_k}$ is strictly convex, differentiable and superlinear. We first  note that it implies that $\cF_{\mu_k}$ is convex along $t\mapsto \big((1-t)\T_{k+1}+t\id\big)_\#\mu_k$. Moreover, by \Cref{lemma:fenchel_transform_invertible}, $\nabla\tF_{\mu_k}^*$ is differentiable. 

Note that this assumption is satisfied, \emph{e.g.} by $\phi_\mu(\T)=\int V\circ \T\ \mathrm{d}\mu$ for $V$ $\eta$-strongly convex and differentiable. Indeed, in this case, $\phi_\mu$ is also $\eta$-strongly convex, and satisfies for all $\T,\sS\in L^2(\mu)$,
\begin{equation}
    \begin{aligned}
        &\D_{\phi_\mu}(\T,\sS) = \phi_\mu(\T)-\phi_\mu(\sS) - \langle \nabla\phi_\mu(\sS),\T-\sS\rangle_{L^2(\mu)} \ge \frac{\eta}{2}\|\T-\sS\|_{L^2(\mu)}^2  \\
        &\iff \phi_\mu(\T) \ge \phi_\mu(\sS) + \langle \nabla\phi_\mu(\sS),\T-\sS\rangle_{L^2(\mu)} + \frac{\eta}{2}\|\T-\sS\|_{L^2(\mu)}^2.
    \end{aligned}
\end{equation}
For $\sS=0$, and dividing by $\|\T\|_{L^2(\mu)}$ the right term diverges to $+\infty$ when $\|\T\|_{L^2(\mu)}\to +\infty$, and thus $\lim_{\|\T\|_{L^2(\mu)}\to\infty} \phi_\mu(\T)/\|\T\|_{L^2(\mu)} = +\infty$, and $\phi_\mu$ is superlinear.

This assumption is also satisfied for interaction energies $\phi_\mu^W(\T) =  \iint W\big(\T(x)-\T(y)\big)\ \mathrm{d}\mu(x)\mathrm{d}\mu(y)$ with $W$ $\eta$-strongly convex, even and differentiable. Indeed, by strong convexity of $W$ in 0, we have for all $x,y\in\R^d$,
\begin{equation}
    \begin{aligned}
            W\big(\T(x)-\T(y)\big) -  W(0) - \langle \nabla W(0), \T(x)-\T(y)\rangle &\ge \frac{\eta}{2} \|\T(x)-\T(y)\|_2^2 \\
            &\ge \frac{\eta}{2} \inf_{z\in\R^d}\ \|\T(x)-z\|_2^2.
    \end{aligned}
\end{equation}
Integrating \emph{w.r.t.} $\mu\otimes \mu$, we get
\begin{equation}
    \phi_\mu^W(\T) - W(0) \ge \frac{\eta}{2} \inf_{z\in\R^d}\ \int \|\T(x) -z\|_2^2\ \mathrm{d}\mu(x),
\end{equation}
and dividing by $\|\T\|_{L^2(\mu)}$, we get that $\phi_\mu^W$ is superlinear.

For a curve $t\mapsto \mu_t$, we define $\cF_\mu^*$ on $\mu_t$ as $\cF_\mu^*(\mu_t) := \tF_\mu^*(\T_t)$ with $\tF_\mu^*$ the convex conjugate of $\tF_\mu$ in the $L^2(\mu)$ sense. Then, we can apply \Cref{prop:relative_cvx_legendre}, and we obtain that the inequality hypothesis of \Cref{prop:descent_pgd} is implied by the $\sm$-smoothness of $\phi^{\kp}$ relative to $\cF_{\mu_k}^*$ along $t\mapsto \big((1-t) \gW\cF(\mu_k) + t\gW\cF(\mu_{k+1})\circ \T_{k+1}\big)_\#\mu_k$ since %
\begin{equation}
    \begin{aligned}
        \D_{\phi_{\mu_k}^{\kp}}\big(\gW\cF(\mu_{k+1})\circ \T_{k+1}, \gW\cF(\mu_k)\big) &\le \sm \D_{\tF_{\mu_k}}(\id, \T_{k+1})\\
        &=\sm \D_{\tF^*_{\mu_k}}\big(\gW\cF(\mu_{k+1})\circ \T_{k+1}, \gW\cF(\mu_k)\big),
    \end{aligned}
\end{equation}
where we used \Cref{prop:frechet_derivative} to compute the gradient $\nabla\tF_{\mu_k}(\T_{k+1}) = \gW\cF(\mu_{k+1})\circ \T_{k+1}$.

Similarly, the condition of \Cref{prop:bound_pgd}
\begin{equation} 
    \begin{aligned}
        \D_{\phi_{\mu_k}^{\kp}}\big(\gW\cF(\T_\#\mu_{k})\circ \T, \gW\cF(\mu_k)\big) &\ge \stc \D_{\tF_{\mu_k}}(\id, \T) \\
        &=\stc \D_{\tF^*_{\mu_k}}\big(\gW\cF(\T_\#\mu_{k})\circ \T, \gW\cF(\mu_k)\big)
    \end{aligned}
\end{equation}
is implied by the $\stc$-convexity of $\phi^{\kp}$ relative to $\cF_{\mu_k}^*$ along $t\mapsto \big((1-t)\gW\cF(\mu_k) + t\gW\cF(\T_\#\mu_k)\circ \T\big)_\#\mu_k$.

These results are summarized in \Cref{prop:sufficient_conditions_pgd} and shown formally in \Cref{proof:prop_sufficient_conditions_pgd}.

\paragraph{Convergence towards the minimizer in \Cref{prop:bound_pgd}.}

We add an additional result justifying the convergence towards the minimizer in \Cref{prop:bound_pgd}.

\begin{lemma} \label{lem:inversion_cv_iterees_gradient}
 Let $(X,\tau)$ be a metrizable topological space, and $f:X\to \R\cup \{+ \infty\}$ be strictly convex, $\tau$-lower semicontinuous and with one $\tau$-compact sublevel set. Let $x_0\in X$ be the minimizer of $f$ and take a sequence $(x_n)_{n\in \N}$ such that $f(x_n)\to f(x_0)$. Then, $(x_n)_{n\in \N}$ $\tau$-converges to $x_0$.

\end{lemma}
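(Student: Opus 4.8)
The plan is to combine compactness, lower semicontinuity, and the uniqueness of the minimizer in a standard subsequence argument. Since $\tau$ is metrizable, I would first fix a metric $d$ inducing $\tau$, so that both $\tau$-convergence and the compactness of the distinguished sublevel set can be phrased sequentially (in a metric space, compactness is equivalent to sequential compactness).

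Set $m := f(x_0) = \min_X f$ and let $S_c := \{x \in X : f(x) \le c\}$ denote the one $\tau$-compact sublevel set supplied by the hypothesis. This set is nonempty (it contains $x_0$, hence $m \le c$), and by $\tau$-lower semicontinuity of $f$ every sublevel set is $\tau$-closed; here I only need the compactness of $S_c$ itself. Because $f(x_n) \to m \le c$, for all $n$ large enough we have $f(x_n) \le c$, that is $x_n \in S_c$; discarding finitely many terms I may assume $x_n \in S_c$ for every $n$, which does not affect the limiting behaviour.

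Next I would argue by contradiction: suppose $x_n \not\to x_0$. Then there exist $\varepsilon > 0$ and a subsequence $(x_{n_k})_k$ with $d(x_{n_k}, x_0) \ge \varepsilon$ for all $k$. Since $(x_{n_k})_k$ lies in the compact metric set $S_c$, it admits a further subsequence $(x_{n_{k_j}})_j$ that $\tau$-converges to some $\bar x \in S_c$, and passing to the limit in $d(\cdot, x_0) \ge \varepsilon$ forces $\bar x \neq x_0$. Lower semicontinuity then yields
\begin{equation}
    f(\bar x) \le \liminf_{j\to\infty} f(x_{n_{k_j}}) = m,
\end{equation}
so $f(\bar x) = m$ and $\bar x$ is also a minimizer of $f$.

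Finally I invoke strict convexity: if $x_0 \neq \bar x$ were two distinct minimizers, then the midpoint (which lies in $X$, using that $X$ is convex) would satisfy $f\big(\tfrac12 x_0 + \tfrac12 \bar x\big) < \tfrac12 f(x_0) + \tfrac12 f(\bar x) = m$, contradicting that $m$ is the minimal value. Hence $\bar x = x_0$, contradicting $\bar x \neq x_0$, and therefore $x_n \to x_0$ in $\tau$. The only real obstacle is bookkeeping: checking that a \emph{single} compact sublevel set suffices (it does, because $f(x_n)\to m$ eventually confines the iterates to $S_c$) and that strict convexity is legitimately applied, which tacitly requires $X$ to carry the convex structure making $\tfrac12 x_0 + \tfrac12 \bar x$ an element of $X$.
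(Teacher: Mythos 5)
Your proof is correct and follows essentially the same route as the paper's: eventual confinement of the sequence in the compact sublevel set, extraction of a $\tau$-convergent subsequence, $\tau$-lower semicontinuity to show the cluster point is a minimizer, strict convexity to identify it with $x_0$, and metrizability to upgrade subsequential behaviour to convergence of the whole sequence (you phrase this as a contradiction, the paper as ``every cluster point is $x_0$'', which is the same argument). Your closing remark that strict convexity tacitly requires $X$ to carry a convex structure is a fair observation the paper leaves implicit.
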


\begin{proof}%
    The existence of the minimum is given by \citep[Theorem 3.2.2]{attouch14variational}. For $N$ large enough, 
    $(x_n)_{n\ge N}$ lives in the $\tau$-compact sublevel set of $f$, since $x_0$ belongs to it and $f(x_0)$ is minimal. We can then consider a subsequence $\tau$-converging to some $x^*$. By $\tau$-lower semicontinuity, we have $f(x_0)\le f(x^*)\le \liminf f(x_{\sigma(n)})=f(x_0)$, so $f(x_0)= f(x^*)$ and by strict convexity $x_0=x^*$. Since all subsequences of $(x_n)_{n\ge N}$ converge to $x^*$ and the space is metrizable, $(x_n)_{n\in \N}$ $\tau$-converges to $x_0$.
\end{proof}

The typical case is when $X$ is a Hilbert space and $\tau$ is the weak topology. One could wish to have strong convergence under a coercivity assumption, however ``In infinite dimensional spaces, the topologies which are directly related to coercivity are the weak topologies'' \citep[p86]{attouch14variational}. Nevertheless Gâteaux differentiability implies continuity, which paired with convexity gives weak lower semicontinuity \citep[Theorem 3.3.3]{attouch14variational}. We cannot hope for convergence of the norm of $x_n$ to come for free, as the weak convergence would then imply the strong convergence.

\subsection{Relative convexity and smoothness between functionals} \label{section:relative_convexity}

Let $U,V:\R^d\to \R$ be differentiable and convex functions. We recall that $V$ is $\stc$-convex relative to $U$ if \citep{lu2018relatively} 
\begin{equation}
    \forall x,y\in\R^d,\ \Dr_V(x,y) \ge \stc \Dr_U(x,y).
\end{equation}
Likewise, $V$ is $\sm$-smooth relative to $U$ if
\begin{equation}
    \Dr_V(x,y)\le \sm\Dr_U(x,y).
\end{equation}

\paragraph{Relative convexity and smoothness between potential energies.}

By \Cref{lemma:d_v}, for Bregman potentials of the form $\phi_\mu(\T)=\int V\circ \T\ \mathrm{d}\mu$, the Bregman divergence can be written as
\begin{equation}
    \forall \T,\sS\in L^2(\mu),\ \D_{\phi_\mu}(\T,\sS) = \int \Dr_V\big(\T(x), \sS(x)\big)\ \mathrm{d}\mu(x).
\end{equation}
Thus, leveraging this result, we can show that relative convexity and smoothness of $\phi_\mu^V$ relative to $\phi_\mu^U$ is inherited by the relative convexity and smoothness of $V$ relative to $U$.

\begin{proposition} \label{prop:convexity_V}
    Let $\mu\in\cP_2(\R^d)$, $\phi_\mu(\T)=\int V\circ \T\ \mathrm{d}\mu$ and $\psi_\mu(\T) = \int U\circ \T\ \mathrm{d}\mu$ where $V,U:\mathbb{R}^d\to\mathbb{R}$ are $C^1$. If $V$ is $\stc$-convex (resp. $\sm$-smooth) relative to $U:\R^d\to\R$, then $\phi_\mu$ is $\stc$-convex (resp $\sm$-smooth) relative to $\psi_\mu$.
\end{proposition}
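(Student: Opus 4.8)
The plan is to reduce the relative convexity/smoothness on $L^2(\mu)$ (in the sense of \Cref{def:smoothness_convexity}) to the Euclidean pointwise relative convexity/smoothness of $V$ with respect to $U$, using the integral representation of the Bregman divergence supplied by \Cref{lemma:d_v}. Concretely, I would first invoke \Cref{lemma:d_v} once for each Bregman potential to write, for all $\T,\sS\in L^2(\mu)$,
\begin{equation*}
    \D_{\phi_\mu}(\T,\sS) = \int \Dr_V\big(\T(x),\sS(x)\big)\ \mathrm{d}\mu(x), \qquad \D_{\psi_\mu}(\T,\sS) = \int \Dr_U\big(\T(x),\sS(x)\big)\ \mathrm{d}\mu(x).
\end{equation*}

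Next, I would exploit the hypothesis that $V$ is $\stc$-convex relative to $U$ on $\R^d$, which by definition means $\Dr_V(x,y) \ge \stc\, \Dr_U(x,y)$ for all $x,y\in\R^d$. Applying this with the arguments $x = \T(x)$ and $y = \sS(x)$ yields, for $\mu$-a.e.\ $x$, the inequality $\Dr_V\big(\T(x),\sS(x)\big) \ge \stc\, \Dr_U\big(\T(x),\sS(x)\big)$. Integrating against $\mu$ and using monotonicity of the integral gives
\begin{equation*}
    \D_{\phi_\mu}(\T,\sS) \ge \stc \int \Dr_U\big(\T(x),\sS(x)\big)\ \mathrm{d}\mu(x) = \stc\, \D_{\psi_\mu}(\T,\sS),
\end{equation*}
which is precisely the statement that $\phi_\mu$ is $\stc$-convex relative to $\psi_\mu$. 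The $\sm$-smoothness case is handled identically, replacing $\ge \stc$ by $\le \sm$ throughout and starting from $\Dr_V(x,y)\le \sm\, \Dr_U(x,y)$.

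The argument is essentially routine once \Cref{lemma:d_v} is available, so there is no genuine obstacle beyond bookkeeping. The only point requiring slight care is the passage from the Euclidean pointwise inequality to its integrated form: one must observe that the pointwise inequality holds for \emph{every} pair of arguments in $\R^d$, hence in particular $\mu$-a.e.\ along the evaluation maps $x\mapsto\big(\T(x),\sS(x)\big)$, and that the resulting integrals are finite (guaranteed since the Bregman divergences are well-defined under the standing convexity and differentiability assumptions on $V$ and $U$). I emphasize that no convexity along $\mu$-dependent curves is invoked here: \Cref{def:smoothness_convexity} compares the $L^2(\mu)$ Bregman divergences directly for fixed $\mu$, in contrast to the curve-based \Cref{def:rel_convexity_smoothness}, which is why the separable structure $\phi_\mu(\T)=\int V\circ\T\,\mathrm{d}\mu$ transfers the Euclidean relation cleanly.
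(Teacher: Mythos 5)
Your proof is correct and follows essentially the same route as the paper: both invoke \Cref{lemma:d_v} to write each Bregman divergence as the integral of the Euclidean divergence along the evaluation maps, then apply the pointwise inequality $\Dr_V \ge \stc\,\Dr_U$ (resp.\ $\le \sm\,\Dr_U$) and integrate against $\mu$. Your added remark distinguishing \Cref{def:smoothness_convexity} from the curve-based \Cref{def:rel_convexity_smoothness} is accurate but does not change the argument.
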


\begin{proof}
    First, observe (\Cref{lemma:d_v}) that
    \begin{equation}
        \forall \mu\in\cP_2(\R^d), \T,\sS\in L^2(\mu),\ \D_{\phi_\mu}(\T, \sS) = \int \Dr_V\big(\T(x), \sS(x)\big)\ \mathrm{d}\mu(x).
    \end{equation}
    Let $\mu\in \mathcal{P}_2(\mathbb{R}^d)$, $\T,\sS\in L^2(\mu)$.
    If $V$ is $\stc$-convex relatively to $U$, we have for all $x,y\in\mathbb{R}^d$,
    \begin{equation}
        \Dr_V\big(\T(x), \sS(y)\big) \ge \stc \Dr_{U}\big(\T(x), \sS(y)\big),
    \end{equation}
    and hence by integrating on both sides with respect to $\mu$,
    \begin{equation}
        \D_{\phi_\mu}(\T, \sS)\ge \stc \D_{\psi_\mu}(\T, \sS).
    \end{equation}
    Likewise, we have the result for the $\sm$-smoothness.
\end{proof}

\paragraph{Relative convexity and smoothness between interaction energies.}

Similarly, by \Cref{lemma:d_w}, for Bregman potentials obtained through interaction energies, \emph{i.e.} $\phi_\mu(\T) = \frac12 \iint W\big(\T(x)-\T(x')\big)\ \mathrm{d}\mu(x)\mathrm{d}\mu(x')$, then 
\begin{equation}
    \forall \T,\sS\in L^2(\mu),\ \D_{\phi_\mu}(\T,\sS) = \frac12 \iint \Dr_{W}\big(\T(x)-\T(x'), \sS(x)-\sS(x')\big)\ \mathrm{d}\mu(x)\mathrm{d}\mu(x').
\end{equation}
It also allows to inherit the relative convexity and smoothness results from $\R^d$.

\begin{proposition} \label{prop:convexity_W}
    Let $\mu\in\cP_2(\R^d)$, $W, K:\mathbb{R}^d\to\mathbb{R}$ be symmetric, $C^1$ and convex. Let $\phi_\mu(\T) = \frac12\iint W\big(\T(x)-\T(x')\big)\ \mathrm{d}\mu(x)\mathrm{d}\mu(x')$ and $\psi_\mu(\T) = \frac12\iint K\big(\T(x)-\T(x')\big)\ \mathrm{d}\mu(x)\mathrm{d}\mu(x')$. If $W$ is $\stc$-convex relative to $K$, then $\phi_\mu$ is $\stc$-convex relatively to $\psi_\mu$. Likewise, if $W$ is $\sm$-smooth relatively to $K$, then $\phi_\mu$ is $\sm$-smooth relatively to $\psi_\mu$.
\end{proposition}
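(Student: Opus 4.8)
The plan is to follow the proof of \Cref{prop:convexity_V} almost verbatim, simply replacing the potential-energy divergence formula by its interaction-energy analogue. The single structural input is \Cref{lemma:d_w}: the Bregman divergence of an interaction-energy Bregman potential $\phi_\mu$ with kernel $W$ is, for all $\T,\sS\in L^2(\mu)$, the double integral
\begin{equation*}
    \D_{\phi_\mu}(\T,\sS) = \frac12 \iint \Dr_W\big(\T(x)-\T(x'),\, \sS(x)-\sS(x')\big)\ \mathrm{d}\mu(x)\mathrm{d}\mu(x'),
\end{equation*}
and analogously for $\psi_\mu$ with $\Dr_K$ in place of $\Dr_W$ (the same normalising constant appearing in both). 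Here the evenness of $W$ and $K$ is exactly what is used to derive this representation, since it lets the cross-terms involving $\nabla W$ (resp.\ $\nabla K$) symmetrise into a single double integral of the Euclidean Bregman divergence; beyond producing this formula, symmetry plays no role in the lifting argument below. Once the representation is in hand, the relative convexity/smoothness on $\cP_2(\R^d)$ reduces to the pointwise relative convexity/smoothness of $W$ relative to $K$ on $\R^d$.

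The key steps, in order, are as follows. First I would fix $\mu\in\cP_2(\R^d)$ and $\T,\sS\in L^2(\mu)$, and invoke the Euclidean hypothesis: since $W$ is $\stc$-convex relative to $K$, one has $\Dr_W(z,w)\ge \stc\,\Dr_K(z,w)$ for all $z,w\in\R^d$. I would then apply this pointwise with $z=\T(x)-\T(x')$ and $w=\sS(x)-\sS(x')$ for every pair $(x,x')$, which gives an inequality between the two integrands. Integrating both sides against $\mu\otimes\mu$ and using the double-integral representation above yields $\D_{\phi_\mu}(\T,\sS)\ge \stc\,\D_{\psi_\mu}(\T,\sS)$, i.e.\ the claimed $\stc$-convexity of $\phi_\mu$ relative to $\psi_\mu$. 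The $\sm$-smoothness case is identical, replacing the inequality $\ge \stc$ by $\le \sm$ throughout.

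There is no substantial obstacle here beyond \Cref{lemma:d_w} itself, so the only point requiring care is that the integrals are well defined so that the pointwise inequality may be transported to the double integral by monotonicity of the integral. Since $W,K$ are $C^1$ and convex, the Euclidean Bregman divergences $\Dr_W,\Dr_K$ are nonnegative, and since $\phi_\mu,\psi_\mu$ are real-valued on $L^2(\mu)$ the corresponding double integrals are finite; thus integrating the pointwise bound is legitimate and no integrability pathology arises. In short, the whole proposition is a direct \emph{lifting} of the Euclidean relative convexity/smoothness of the kernel to the interaction-energy functional, entirely parallel to the potential-energy statement of \Cref{prop:convexity_V}.
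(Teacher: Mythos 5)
Your proposal is correct and matches the paper's proof essentially verbatim: both apply \Cref{lemma:d_w} to write $\D_{\phi_\mu}$ and $\D_{\psi_\mu}$ as double integrals of the Euclidean Bregman divergences, then integrate the pointwise inequality $\Dr_W \ge \stc\,\Dr_K$ (resp.\ $\le \sm\,\Dr_K$) against $\mu\otimes\mu$. Your added remarks on the role of evenness and on integrability are sound but not needed beyond what the paper's argument already uses.
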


\begin{proof}
    We use first \Cref{lemma:d_w} and then that $W$ is $\stc$-convex relatively to $K$:
    \begin{equation}
        \begin{aligned}
            \D_{\phi_\mu}(\T,\sS) &= \frac12\iint \Dr_W\big(\T(x)-\T(x'), \sS(x)-\sS(x')\big)\ \mathrm{d}\mu(x)\mathrm{d}\mu(x') \\
            &\ge \frac{\stc}{2} \iint \Dr_{K}\big(\T(x)-\T(x'), \sS(x)-\sS(x')\big)\ \mathrm{d}\mu(x)\mathrm{d}\mu(x') \\
            &= \stc \D_{\psi_\mu}(\T, \sS).
        \end{aligned}
    \end{equation}
    Likewise, we have the result for the $\sm$-smoothness.
\end{proof}

Thus, in situations where the objective functional and the Bregman potential are of the same type and either potential energies or interaction energies, we only need to show the convexity and smoothness of the underlying potentials or interaction kernels. For instance, let $V:\R^d\to \R$ be a twice-differentiable convex function, such that $\|\nabla^2 V\|_{\mathrm{op}} \le p_r(\|x\|_2)$ with $p_r$ a polynomial function of degree $r$ and $\|\cdot\|_{\mathrm{op}}$ the operator norm. Then, by \citep[Proposition 2.1]{lu2018relatively}, $V$ is $\sm$-smooth relative to $h$ where for all $x\in \R^d$, $h(x) = \frac{1}{r+2}\|x\|_2^{r+2} + \frac12 \|x\|_2^2$.

\paragraph{Relative convexity and smoothness between functionals of different types.}

When the functionals do not belong to the same type, comparing directly the Bregman divergences is less straightforward in general. In that case, one might instead leverage the equivalence relations given by \Cref{prop:equivalences_w_convex} and \Cref{prop:relative_convexity_wasserstein}, and show that $\sm\cG - \cF$ or $\cF - \stc \cG$ is convex in order to show respectively the $\sm$-smoothness and $\stc$-convexity of $\cF$ relative to $\cG$. For instance, we can use the characterization through Hessians, and thus we would aim at showing
\begin{equation}
    \frac{\mathrm{d}^2}{\mathrm{d}t^2}\cF(\mu_t) \le \sm \frac{\mathrm{d}^2}{\mathrm{d}t^2}\cG(\mu_t),\quad \frac{\mathrm{d}^2}{\mathrm{d}t^2}\cF(\mu_t) \ge \stc \frac{\mathrm{d}^2}{\mathrm{d}t^2}\cG(\mu_t),
\end{equation}
along the right curve $t\mapsto\mu_t$.

For instance, consider an objective functional  $\cF(\mu) = \frac12 \iint W(x-y)\ \mathrm{d}\mu(x)\mathrm{d}\mu(x')$ and another functional $\cG(\mu) = \int V\mathrm{d}\mu$. Then, by \Cref{example:hessian_potential} and \Cref{example:hessian_interaction}, we have, for $\mu_t=(\T_t)_\#\mu$ and $\T_t=\sS + t v$,
\begin{equation}
    \frac{\mathrm{d}^2}{\mathrm{d}t^2}\cG(\mu_t) = \int \langle \nabla^2 V\big(\T_t(x)\big) v(x), v(x) \rangle\ \mathrm{d}\mu(x),
\end{equation}
and
\begin{equation}
    \frac{\mathrm{d}^2}{\mathrm{d}t^2}\cF(\mu_t) = \iint \langle \nabla^2 W\big(\T_t(x)-\T_t(y)\big) \big(v(x)-v(y)\big), v(x)\rangle\ \mathrm{d}\mu(x)\mathrm{d}\mu(y).
\end{equation}

To show the conditions of \Cref{prop:descent_mirror}, we need to take $\sS=\id$ and $v=\T_{k+1} - \id$, and to verify for $t=s\in [0,1]$ the inequality, \emph{i.e.}
\begin{equation}
    \begin{aligned}
        &\frac{\mathrm{d}^2}{\mathrm{d}t^2}\cF(\mu_t)\Big|_{t=s} \le \sm \frac{\mathrm{d}^2}{\mathrm{d}t^2}\cG(\mu_t)\Big|_{t=s} \\ 
        & \iff \\
        & \iint \langle \nabla^2 W\big(\T_s(x)-\T_s(y)\big) \big(v(x)-v(y)\big), v(x) \rangle\ \mathrm{d}\mu_k(x)\mathrm{d}\mu_k(y) \\ &\le \sm \int \langle \nabla^2 V\big(\T_s(x)\big) v(x), v(x) \rangle\ \mathrm{d}\mu_k(x) \\
        &\iff \\
        & \int \Big\langle v(x), \int \Big(\big(\nabla^2 W\big(\T_s(x)-\T_s(y)\big)-\sm\nabla^2V\big(\T_s(x)\big)\big)v(x) \\ 
        &\quad  -\nabla^2 W\big(\T_s(x)-\T_s(y)\big)v(y) \Big)\mathrm{d}\ \mu_k(y) \Big\rangle \mathrm{d}\mu_k(x) \le 0.
    \end{aligned}
\end{equation}

For example, choosing $W(x)=\frac12 \|x\|_2^2$, then $\nabla^2W=I_d$ and $\cF$ is $\sm$-smooth relative to $\cG$ as long as $\nabla^2 V \circ \T_s \succeq \frac{1}{\sm}I_d$ for any $s\in [0,1]$.

\section{Bregman proximal gradient scheme} \label{sec:prox_grad}

In this section, we are interested into minimizing a functional $\cF$ of the form $\cF(\mu) = \cG(\mu) + \cH(\mu)$ where $\cG$ is smooth relative to some function $\phi$ and $\cH$ is convex on $L^2(\mu)$. Different strategies can be used to tackle this problem. For instance, \citet{jiang2023algorithms} restrict the space to particular directions along which $\cH$ is smooth while \citet{salim2020wasserstein, diao2023forward} use Proximal Gradient algorithms. We focus here on the latter and generalize the Bregman Proximal Gradient algorithm \citep{bauschke2017descent}, also known as the Forward-Backward scheme. It consists of alternating a forward step on $\cG$ and then a backward step on $\cH$, \emph{i.e.} for $k\ge 0$,
\begin{equation} \label{eq:forward_backward2}
    \left\{
    \begin{array}{ll}
        \sS_{k+1} = \argmin_{\sS\in L^2(\mu_k)}\ \D_{\phi_{\mu_k}}(\sS,\id) + \tau \langle \gW\cG(\mu_k), \sS-\id\rangle_{L^2(\mu_k)}, & \nu_{k+1} = (\sS_{k+1})_\#\mu_k \\
        \T_{k+1} = \argmin_{\T\in L^2(\nu_{k+1})}\ \D_{\phi_{\nu_{k+1}}}(\T,\id) + \tau \cH(\T_\#\nu_{k+1}), & \mu_{k+1} = (\T_{k+1})_\#\nu_{k+1}.
    \end{array}
    \right.
\end{equation}
The first step of our analysis is to show that this scheme is equivalent to 
\begin{equation} \label{eq:prox_grad_scheme2}
    \begin{cases}
        \Tilde{\T}_{k+1} = \argmin_{\T\in L^2(\mu_{k})}\ \D_{\phi_{\mu_k}}(\T,\id) + \tau\big(\langle\gW\cG(\mu_k),\T-\id\rangle_{L^2(\mu_k)} + \cH(\T_\#\mu_k)\big) \\
        \mu_{k+1} = (\Tilde{\T}_{k+1})_\#\mu_k.
    \end{cases}
\end{equation}

This is true under the condition that $\mu_k\in\cPa$ implies that $\nu_{k+1}\in \cPa$.

\begin{proposition} \label{prop:equivalence_schemes}
    Let $\phi_{\mu}$ be pushforward compatible, $\mu_0\in \cPa$ and assume that if $\mu_k\in\cPa$ then $\nu_{k+1}\in \cPa$. Then the schemes \eqref{eq:forward_backward2} and \eqref{eq:prox_grad_scheme2} are equivalent.
\end{proposition}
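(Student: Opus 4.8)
The plan is to identify each update through its first-order optimality condition and to show that the two-step map $\T_{k+1}\circ\sS_{k+1}$ of \eqref{eq:forward_backward2} solves the stationarity equation of the single-step scheme \eqref{eq:prox_grad_scheme2}. Strict convexity then yields uniqueness of the minimizer, forcing $\Tilde{\T}_{k+1}=\T_{k+1}\circ\sS_{k+1}$ $\mu_k$-a.e., whence $(\Tilde{\T}_{k+1})_\#\mu_k=(\T_{k+1})_\#(\sS_{k+1})_\#\mu_k=(\T_{k+1})_\#\nu_{k+1}$, so the two schemes produce the same $\mu_{k+1}$.

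First I would set up an induction on $k$ ensuring all iterates are well defined: starting from $\mu_0\in\cPa$, the hypothesis ``$\mu_k\in\cPa\Rightarrow\nu_{k+1}\in\cPa$'' guarantees that $\nu_{k+1}$ is absolutely continuous, so that the Wasserstein gradient $\gW\cH$ entering the backward and the single step exists along the iterates and each objective is a genuine strictly convex functional (sum of the strictly convex $\phi_\mu$ and the convex $\tH_\mu$ linearization). Then, using pushforward compatibility $\nabla\phi_\mu(\T)=\gW\phi(\T_\#\mu)\circ\T$ together with $\nabla\tH_\mu(\T)=\gW\cH(\T_\#\mu)\circ\T$ from \Cref{prop:frechet_derivative}, I would write the three stationarity conditions. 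The forward step yields
\begin{equation}
    \gW\phi(\nu_{k+1})\circ\sS_{k+1}=\gW\phi(\mu_k)-\tau\gW\cG(\mu_k),
\end{equation}
the backward step, with $\mu_{k+1}=(\T_{k+1})_\#\nu_{k+1}$, yields
\begin{equation}
    \gW\phi(\mu_{k+1})\circ\T_{k+1}+\tau\gW\cH(\mu_{k+1})\circ\T_{k+1}=\gW\phi(\nu_{k+1}),
\end{equation}
and the single step, with $\mu_{k+1}=(\Tilde{\T}_{k+1})_\#\mu_k$, yields
\begin{equation}
    \gW\phi(\mu_{k+1})\circ\Tilde{\T}_{k+1}+\tau\gW\cG(\mu_k)+\tau\gW\cH(\mu_{k+1})\circ\Tilde{\T}_{k+1}=\gW\phi(\mu_k).
\end{equation}

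The decisive step is to compose the backward condition on the right by $\sS_{k+1}$ and substitute the forward condition into its right-hand side: this replaces $\gW\phi(\nu_{k+1})\circ\sS_{k+1}$ by $\gW\phi(\mu_k)-\tau\gW\cG(\mu_k)$ and produces exactly the single-step stationarity equation with $\Tilde{\T}_{k+1}$ replaced by $\T_{k+1}\circ\sS_{k+1}$, once one reads $\mu_{k+1}=(\T_{k+1})_\#\nu_{k+1}=(\T_{k+1}\circ\sS_{k+1})_\#\mu_k$. Hence $\T_{k+1}\circ\sS_{k+1}$ is a stationary point of the strictly convex single-step objective, and by uniqueness it equals $\Tilde{\T}_{k+1}$ $\mu_k$-a.e. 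I expect the main obstacle to be bookkeeping rather than conceptual: one must track the base measures on which each Wasserstein gradient and map lives (the forward map in $L^2(\mu_k)$, the backward in $L^2(\nu_{k+1})$), verify that the composition lands in $L^2(\mu_k)$, and, crucially, invoke the absolute-continuity hypothesis at each stage so that $\gW\cH$ is defined and the stationarity conditions are both necessary and sufficient. A minor point to justify is that the entropy term contributes $\gW\cH(\mu_{k+1})\circ(\cdot)$ identically in the backward and single-step conditions precisely because $(\T_{k+1})_\#\nu_{k+1}$ and $(\Tilde{\T}_{k+1})_\#\mu_k$ coincide once $\Tilde{\T}_{k+1}=\T_{k+1}\circ\sS_{k+1}$ is posited, making the argument self-consistent.
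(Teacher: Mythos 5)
Your composition step is correct as far as it goes: composing the backward first-order condition with $\sS_{k+1}$ (the backward condition holds $\nu_{k+1}$-a.e.\ and $\nu_{k+1}=(\sS_{k+1})_\#\mu_k$, so the composed identity holds $\mu_k$-a.e.) and substituting the forward condition does show that $\T_{k+1}\circ\sS_{k+1}$, which lies in $L^2(\mu_k)$ by change of variables, is a stationary point of the one-step functional $\J$. The genuine gap is the last step, where you invoke strict convexity of $\J$ to force $\tT_{k+1}=\T_{k+1}\circ\sS_{k+1}$. The objective contains the term $\T\mapsto\cH(\T_\#\mu_k)$, and $\tH_{\mu_k}$ is \emph{not} convex on $L^2(\mu_k)$: the negative entropy is geodesically convex, and convex along segments $(1-t)\sS+t\T$ only under extra structure on the maps (\emph{e.g.}\ positive definite Jacobians, as recalled in \Cref{sec:prox_grad} from \citep[Theorem 4.2]{tanaka2023accelerated}); convexity of $\tH_{\mu_k}$ is a separate hypothesis of \Cref{prop:cv_prox_grad} and is not assumed in \Cref{prop:equivalence_schemes}. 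Without it, a stationary point of $\J$ need be neither unique nor a minimizer, so your argument only exhibits a critical point of the right form and does not establish the equivalence of the two schemes.

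The paper's proof takes a value-comparison route precisely to avoid this. After using the forward condition to rewrite the one-step optimality as $\tT_{k+1}\in\argmin_{\T\in L^2(\mu_k)}\ \D_{\phi_{\mu_k}}(\T,\sS_{k+1})+\tau\cH(\T_\#\mu_k)$, the real difficulty is the mismatch of base spaces: the backward problem minimizes over $L^2(\nu_{k+1})$, and the family $\{\T\circ\sS_{k+1}\ :\ \T\in L^2(\nu_{k+1})\}$ is in general a strict subset of $L^2(\mu_k)$ (unless $\sS_{k+1}$ is invertible), so the inequality
\begin{equation}
    \min_{\T\in L^2(\nu_{k+1})}\ \D_{\phi_{\nu_{k+1}}}(\T,\id) + \tau\cH(\T_\#\nu_{k+1}) \ \ge\ \min_{\T\in L^2(\mu_k)}\ \D_{\phi_{\mu_k}}(\T,\sS_{k+1}) + \tau\cH(\T_\#\mu_k)
\end{equation}
is immediate, while the reverse is not. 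The paper closes it by contradiction via \Cref{prop:ot_map}: since $\nu_{k+1}\in\cPa$, there exists a Brenier-type map $\T_{\phi_{\nu_{k+1}}}^{\nu_{k+1},\mu_{k+1}}$ attaining $\cW_\phi(\mu_{k+1},\nu_{k+1})$, and since $(\tT_{k+1},\sS_{k+1})_\#\mu_k\in\Pi(\mu_{k+1},\nu_{k+1})$ one has $\D_{\phi_{\mu_k}}(\tT_{k+1},\sS_{k+1})\ge \cW_\phi(\mu_{k+1},\nu_{k+1})$, the entropy terms agreeing because both maps push onto $\mu_{k+1}$. This is where the hypothesis ``$\mu_k\in\cPa\Rightarrow\nu_{k+1}\in\cPa$'' is actually consumed — not, as you suggest, merely to make $\gW\cH$ well defined. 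To repair your proof you would either have to add convexity of $\tH_{\mu_k}$ on $L^2(\mu_k)$ as an assumption (strengthening the proposition) or replace the uniqueness argument by a comparison of optimal values as above.
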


\begin{proof}
    See \Cref{proof:prop_equivalence_schemes}.
\end{proof}

We are now ready to state the convergence results for the proximal gradient scheme.

\begin{proposition} \label{prop:cv_prox_grad}
    Let $\mu_0\in\cPa$, $\tau\le \frac{1}{\sm}$ and $\cF(\mu)=\cG(\mu)+\cH(\mu)$. Consider the iterates of the Bregman proximal gradient scheme \eqref{eq:forward_backward2},  equivalently \eqref{eq:prox_grad_scheme2}. 
    Let $k\ge 0$. Assume  $\Tilde{\cH}_{\mu_k}$ is convex on $L^2(\mu_k)$ and $\cG$ $\sm$-smooth relative to $\phi$ along $t\mapsto\big((1-t)\id + t \Tilde{\T}_{k+1}\big)_\#\mu_k$. Then, for all $\T\in L^2(\mu_k)$,
    \begin{equation}
        \cF(\mu_{k+1}) \le \cH(\T_\#\mu_k) + \cG(\mu_{k}) + \langle \gW\cG(\mu_k), \T-\id\rangle_{L^2(\mu_k)} + \frac{1}{\tau}\D_{\phi_{\mu_k}}(\T,\id) - \frac{1}{\tau}\D_{\phi_{\mu_k}}(\T,\Tilde{\T}_{k+1}).
    \end{equation}
    Moreover, for $\T=\id$,
    \begin{equation}
        \cF(\mu_{k+1}) \le \cF(\mu_k) - \frac{1}{\tau} \D_{\phi_{\mu_k}}(\id, \Tilde{\T}_{k+1}),
    \end{equation}
    i.e., the scheme decreases the objective at each iteration. 
    Additionally, let $\stc\ge 0$, $\nu\in\cP_2(\R^d)$ and suppose that $\phi_\mu$ satisfies \Cref{assumption:min}. If $\cG$ is $\stc$-convex relative to $\phi$ along $t\mapsto \big((1-t)\id + t \T_{\phi_{\mu_k}}^{\mu_k,\nu}\big)_\#\mu_k$, then for all $k\ge 1$,
    \begin{equation}
        \cF(\mu_k)-\cF(\nu) \le \frac{\stc}{\left(1-\tau\stc\right)^{-k} - 1} \cW_\phi(\nu,\mu_0)  \le  \frac{1-\stc\tau}{k\tau} \cW_\phi(\nu, \mu_0).
    \end{equation}
\end{proposition}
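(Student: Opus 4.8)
The plan is to combine the standard Bregman proximal-gradient descent argument with the telescoping device already used for \Cref{prop:bound_md}, adapting it to the fact that each update lives in a different space $L^2(\mu_k)$. First I would establish the two descent inequalities. Set $\ell(\T) = \langle \gW\cG(\mu_k), \T-\id\rangle_{L^2(\mu_k)} + \cH(\T_\#\mu_k)$, which is convex on $L^2(\mu_k)$ since $\Tilde{\cH}_{\mu_k}$ is convex and the first term is affine. Applying the three-point inequality (\Cref{lemma:3pt_ineq}) to the minimization defining $\Tilde{\T}_{k+1}$ in \eqref{eq:prox_grad_scheme2}, with anchor $\id$, yields for every $\T \in L^2(\mu_k)$
\[
\tau\ell(\Tilde{\T}_{k+1}) + \D_{\phi_{\mu_k}}(\T, \Tilde{\T}_{k+1}) \le \tau\ell(\T) + \D_{\phi_{\mu_k}}(\T,\id) - \D_{\phi_{\mu_k}}(\Tilde{\T}_{k+1},\id).
\]
Independently, the $\sm$-smoothness of $\cG$ relative to $\phi$ along $t\mapsto \big((1-t)\id + t\Tilde{\T}_{k+1}\big)_\#\mu_k$ gives $\cG(\mu_{k+1}) \le \cG(\mu_k) + \langle \gW\cG(\mu_k), \Tilde{\T}_{k+1}-\id\rangle_{L^2(\mu_k)} + \sm\D_{\phi_{\mu_k}}(\Tilde{\T}_{k+1}, \id)$. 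Adding $\cH(\mu_{k+1})=\cH((\Tilde{\T}_{k+1})_\#\mu_k)$ so that the right-hand side contains $\ell(\Tilde{\T}_{k+1})$, substituting the three-point bound, and using $\sm \le 1/\tau$ to make the residual $(\sm - 1/\tau)\D_{\phi_{\mu_k}}(\Tilde{\T}_{k+1},\id)$ nonpositive, produces exactly the first claimed inequality; the specialization $\T = \id$ (so all $\D_{\phi_{\mu_k}}(\id,\cdot)$-terms but the last vanish) gives the second.

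For the global rate, I would invoke the first inequality at the transport map $\T = \T_{\phi_{\mu_k}}^{\mu_k,\nu}$, so that $\T_\#\mu_k = \nu$ and $\cH(\T_\#\mu_k) = \cH(\nu)$. The $\stc$-convexity of $\cG$ relative to $\phi$ along $t\mapsto \big((1-t)\id + t\T_{\phi_{\mu_k}}^{\mu_k,\nu}\big)_\#\mu_k$ rewrites $\cG(\mu_k) + \langle \gW\cG(\mu_k), \T_{\phi_{\mu_k}}^{\mu_k,\nu}-\id\rangle_{L^2(\mu_k)} \le \cG(\nu) - \stc\D_{\phi_{\mu_k}}(\T_{\phi_{\mu_k}}^{\mu_k,\nu}, \id)$, while \Cref{prop:ot_map} identifies $\D_{\phi_{\mu_k}}(\T_{\phi_{\mu_k}}^{\mu_k,\nu},\id) = \cW_\phi(\nu,\mu_k)$. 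These collapse the first inequality into
\[
\cF(\mu_{k+1}) - \cF(\nu) \le \Big(\tfrac{1}{\tau}-\stc\Big)\cW_\phi(\nu,\mu_k) - \tfrac{1}{\tau}\D_{\phi_{\mu_k}}(\T_{\phi_{\mu_k}}^{\mu_k,\nu}, \Tilde{\T}_{k+1}).
\]
Here \Cref{assumption:min}, applied with $\mu=\mu_k$, $\rho=\mu_{k+1}=(\Tilde{\T}_{k+1})_\#\mu_k$ (so the admissible $\sS=\Tilde{\T}_{k+1}$) and target $\nu$, lower-bounds $\D_{\phi_{\mu_k}}(\T_{\phi_{\mu_k}}^{\mu_k,\nu}, \Tilde{\T}_{k+1}) \ge \cW_\phi(\nu, \mu_{k+1})$, turning the display into the one-step recursion $\cF(\mu_{k+1})-\cF(\nu) \le (\tfrac1\tau - \stc)\cW_\phi(\nu,\mu_k) - \tfrac1\tau\cW_\phi(\nu,\mu_{k+1})$, exactly the form handled in the proof of \Cref{prop:bound_md}.

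Finally I would close the recursion by the same algebra as for \Cref{prop:bound_md}. Writing $a_k = \cW_\phi(\nu,\mu_k)$, $\delta_k = \cF(\mu_k)-\cF(\nu)$ and $r = 1-\tau\stc$, the recursion reads $a_{k+1} \le r\,a_k - \tau\delta_{k+1}$; dividing by $r^{k+1}$ and telescoping, together with $a_k \ge 0$ and the monotonicity $\delta_k \le \delta_j$ for $j\le k$ (from the second descent inequality, which makes $\cF(\mu_k)$ non-increasing), gives $\tau\delta_k \sum_{j=1}^{k} r^{-j} \le a_0$. Evaluating the geometric sum $\sum_{j=1}^k r^{-j} = \big((1-\tau\stc)^{-k}-1\big)/(\tau\stc)$ yields the first bound, and the Bernoulli inequality $(1-\tau\stc)^{-k} - 1 \ge k\tau\stc/(1-\tau\stc)$ yields the second; the case $\stc = 0$ follows by letting $\stc \to 0$. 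I expect the telescoping step to be the main obstacle: because the iterates live in distinct spaces $L^2(\mu_k)$, relating $\D_{\phi_{\mu_k}}(\T_{\phi_{\mu_k}}^{\mu_k,\nu}, \Tilde{\T}_{k+1})$ to $\cW_\phi(\nu, \mu_{k+1})$ at the next iterate is not automatic and is precisely what \Cref{assumption:min} supplies; one must also ensure the iterates remain in $\cPa$ so that the optimal maps and \Cref{prop:ot_map} stay available throughout.
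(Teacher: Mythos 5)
Your proposal is correct and follows essentially the same route as the paper's proof: the three-point inequality (\Cref{lemma:3pt_ineq}) applied to the convex composite objective, relative smoothness with $\tau\le\nicefrac{1}{\sm}$ to absorb the residual term, the choice $\T=\T_{\phi_{\mu_k}}^{\mu_k,\nu}$ combined with relative convexity, \Cref{prop:ot_map} and \Cref{assumption:min} to obtain the one-step recursion, and then the telescoping argument. The only cosmetic difference is that you re-derive the convergence lemma inline (your division by $r^{j}$ and geometric-sum computation is exactly the algebra in \Cref{lemma:ineq_lu}, which the paper simply invokes), and your closing remarks correctly identify the two genuine subtleties—\Cref{assumption:min} for telescoping across the changing spaces $L^2(\mu_k)$, and preservation of absolute continuity of the iterates.
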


\begin{proof}
    See \Cref{proof:prop_cv_prox_grad}.
\end{proof}

We verify now that \Cref{prop:equivalence_schemes} can be applied for mirror schemes of interest. \citet[Lemma 2]{salim2020wasserstein} showed that it holds for the Wasserstein proximal gradient when using some potential energies, more precisely with $\phi(\mu) = \int \frac12 \|\cdot\|_2^2\ \mathrm{d}\mu$ and $\cG(\mu) = \int U\ \mathrm{d}\mu$ with $U$ (strictly) convex. We extend their result for $\cG(\mu) = \int U\ \mathrm{d}\mu$ and $\phi(\mu)=\int V\ \mathrm{d}\mu$ for $V$ strictly convex and $U$ $\sm$-smooth relative to $V$.

\begin{lemma} \label{lemma:pushforward_ac}
    Let $\mu\in\cPa$, $\cG(\mu)=\int U\ \mathrm{d}\mu$, $\phi_\mu(\T) = \int V\circ \T\ \mathrm{d}\mu$ with $V$ strongly convex and $U$ $\sm$-smooth relative to $V$, and $\T=\nabla V^*\circ (\nabla V - \tau \nabla U)$. Assume $\tau<\frac{1}{\sm}$, then $\T_\#\mu\in \cPa$.
\end{lemma}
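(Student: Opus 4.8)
The plan is to show that the map $\T=\nabla V^*\circ(\nabla V-\tau\nabla U)$ is a bi-Lipschitz homeomorphism of $\R^d$, and then to invoke the elementary fact that pushing an absolutely continuous measure forward by a map whose inverse is Lipschitz yields an absolutely continuous measure. Throughout I work in the regime of \eqref{eq:scheme_V}, in which $V$ is both $\eta$-strongly convex \emph{and} $L$-smooth (so that $\nabla V$ and $\nabla V^*$ are everywhere defined, continuous, and mutually inverse); this smoothness of $V$ will turn out to be essential, not cosmetic.

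First I would rewrite the map. Setting $W:=V-\tau U$, linearity of the gradient gives $\nabla V-\tau\nabla U=\nabla W$, so $\T=\nabla V^*\circ\nabla W$. The key algebraic step is to check that $W$ is strongly convex. Since $U$ is $\sm$-smooth relative to $V$ on $\R^d$, the function $\sm V-U$ is convex (the $\R^d$ analogue of \ref{equivalence:a2}, see \citep{lu2018relatively}), and because $\tau<\tfrac{1}{\sm}$ we may write
\begin{equation*}
    W = V-\tau U = (1-\tau\sm)\,V + \tau\,(\sm V-U),
\end{equation*}
which exhibits $W$ as the sum of the $(1-\tau\sm)\eta$-strongly convex function $(1-\tau\sm)V$ (note $1-\tau\sm>0$) and the convex function $\tau(\sm V-U)$. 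Hence $W$ is $(1-\tau\sm)\eta$-strongly convex; it is therefore Legendre, $\nabla W:\R^d\to\R^d$ is a bijection with $(\nabla W)^{-1}=\nabla W^*$, and $\nabla W^*$ is $\big((1-\tau\sm)\eta\big)^{-1}$-Lipschitz.

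Next I would assemble the regularity of $\T$ and of $\T^{-1}$. As $\nabla V^*$ and $\nabla W$ are both bijections of $\R^d$, so is $\T$, with
\begin{equation*}
    \T^{-1}=(\nabla W)^{-1}\circ(\nabla V^*)^{-1}=\nabla W^*\circ\nabla V .
\end{equation*}
Here $\nabla V$ is $L$-Lipschitz (as $V$ is $L$-smooth) and $\nabla W^*$ is Lipschitz by the previous step, so $\T^{-1}=\nabla W^*\circ\nabla V$ is Lipschitz. I conclude as follows: for any Lebesgue-null set $A$, the set $\T^{-1}(A)=\nabla W^*\big(\nabla V(A)\big)$ is Lebesgue-null, being the image of the null set $\nabla V(A)$ under the Lipschitz map $\nabla W^*$; since $\mu\in\cPa$, this yields $(\T_\#\mu)(A)=\mu\big(\T^{-1}(A)\big)=0$, so $\T_\#\mu\ll\mathrm{Leb}$. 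Finiteness of the second moment is routine: $U$ is in turn $\sm L$-smooth, hence $\nabla W=\nabla V-\tau\nabla U$ is Lipschitz, $\T=\nabla V^*\circ\nabla W$ is Lipschitz, and $\int\|\T\|^2\,\diff\mu<\infty$. Therefore $\T_\#\mu\in\cPa$.

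The routine ingredients are the duality facts ($V$ $\eta$-strongly convex $\Leftrightarrow V^*$ $\tfrac1\eta$-smooth, and $V$ $L$-smooth $\Leftrightarrow V^*$ $\tfrac1L$-strongly convex) and the measure-theoretic statement that a Lipschitz map sends null sets to null sets. The main obstacle — and the reason I insist on $V$ being $L$-smooth and not merely strongly convex — is controlling $\T^{-1}$, whose inner factor is $\nabla V$: the gradient of a merely $C^1$ strongly convex function need not satisfy Lusin's condition (N), since a singular monotone map can send a null set to one of positive measure, which would break absolute continuity of the pushforward. With $\nabla V$ Lipschitz this pathology disappears and the bi-Lipschitz structure of $\T$ closes the argument.
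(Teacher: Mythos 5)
Your argument is correct under its hypotheses, but note that it proves a strictly weaker statement than \Cref{lemma:pushforward_ac}, and by a genuinely different route. The paper's proof never builds a Lipschitz inverse. It sets $u = V - \tau U$ (your $W$) and shows, from $\sm$-smoothness of $U$ relative to $V$, that $u$ is $(1-\tau\sm)$-convex relative to $V$ — the same convexity observation as your decomposition $W=(1-\tau\sm)V+\tau(\sm V-U)$ — whence $\nabla u$ is strictly monotone and $\T=\nabla V^*\circ\nabla u$ is injective. It then argues $|\det \nabla \T| = |\det\big(\nabla^2 V^* \circ \nabla u\big)\det\nabla^2 u|>0$ almost everywhere and invokes \citep[Lemma 5.5.3]{ambrosio2005gradient}, which delivers absolute continuity of $\T_\#\mu$ from $\mu$-a.e.\ injectivity plus a.e.\ (approximate) differentiability with nonvanishing Jacobian; no global Lipschitz control of $\T$ or $\T^{-1}$ is needed anywhere. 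By contrast, you import the additional hypothesis that $V$ is $L$-smooth, which is not in the statement of the lemma (it only appears in the surrounding discussion of \eqref{eq:scheme_V}), precisely so that $\T^{-1}=\nabla W^*\circ\nabla V$ is Lipschitz and null sets pull back to null sets. As a proof of the lemma as stated, this is a gap: you prove it only for $C^{1,1}$ potentials $V$.

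Your claim that the smoothness of $V$ is "essential" also overstates the case, though the underlying worry is substantive. If $V\in C^2$ with $\nabla^2 V\succeq \eta I_d$ but $\nabla V$ not globally Lipschitz, the paper's route still closes: by the inverse function theorem $\nabla^2 V^*(y)=\big(\nabla^2 V(\nabla V^*(y))\big)^{-1}\succ 0$ everywhere, so the Jacobian condition of \citep[Lemma 5.5.3]{ambrosio2005gradient} holds and the conclusion follows without your bi-Lipschitz structure — so $L$-smoothness is one sufficient fix, not a necessary one. Where your Cantor-type example does bite is for $V$ merely $C^1$ and strongly convex: there, exactly as you say, $\nabla^2 V^*$ can vanish on a set of positive Lebesgue measure (in $d=1$, take $V'=c+\eta\,\id$ with $c$ the Cantor function, so that $(V^*)''=0$ a.e.\ on the positive-measure set $V'(C)$), and then the paper's justification of $\det\nabla^2 V^*>0$ by "$V^*$ is strictly convex" is too quick, since strict convexity does not force a nondegenerate Hessian a.e.\ along $\nabla u$. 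So your proposal is best read as a clean, elementary proof in the $C^{1,1}$ regime that also flags a genuine soft spot in the paper's a.e.-Jacobian argument for nonsmooth $V$ — but it does not establish the lemma under the stated hypotheses, which is what the paper's approximate-differentiability route is designed to do. (Your final second-moment step is fine, and in the paper's context is even automatic, since the mirror iterates are taken in $L^2(\mu_k)$, giving $\int\|x\|^2\,\diff\T_\#\mu=\|\T\|^2_{L^2(\mu)}<\infty$ without any Lipschitz bound on $\T$.)
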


\begin{proof}[Sketch of the proof]
    The proof of the lemma is inspired from \citep[Lemma 2]{salim2020wasserstein}. We apply \citep[Lemma 5.5.3]{ambrosio2005gradient}, which requires to show that $\T$ is injective almost everywhere and that $|\det \nabla \T|>0$ almost everywhere. See \Cref{proof:lemma_pushforward_ac} for the full proof.
\end{proof}

To apply \Cref{prop:cv_prox_grad}, we need $\cH$ to be convex along some curve. We discuss here the convexity of the negative entropy along acceleration free curves. %
Let $\mu\in\cPa$, and denote $\rho$ its density \emph{w.r.t} the Lebesgue measure. For $\cH(\mu) = \int f\big(\rho(x)\big)\ \mathrm{d}x$ where $f:\R\to \R$ is $C^1$ and satisfies $f(0)=0$, $\lim_{x\to 0} x f'(x) = 0$ and $x\mapsto f(x^{-d})x^d$ is convex and non-increasing on $\R_+$, then by \citep[Theorem 4.2]{tanaka2023accelerated}, $\cH$ is convex along curves $\mu_t=\big((1-t)\sS + t\T)_\#\mu$ obtained with $\sS$ and $\T$ with positive definite Jacobians. This is the case \emph{e.g.} for $f(x)=x\log x$, for which $\cH$ corresponds to the negative entropy.

By \Cref{rk:3pt_ineq}, to be able to apply the three-point inequality (that is necessary to obtain the descent lemma), we actually only need $\cH$ to be convex along $\big((1-t)\Tilde{\T}_{k+1} + t\id\big)_\#\mu_k$ and along $\big((1-t)\Tilde{\T}_{k+1} + t\T_{\phi_{\mu_k}}^{\mu_k, \nu}\big)_\#\mu_k$ for the convergence.

\paragraph{Gaussian target.}
In what follows, we focus on $\cG(\mu)=\int U\mathrm{d}\mu$ with $U(x)=\frac12 (x-m)^T\Sigma^{-1}(x-m)$ for $\Sigma\in S_d^{++}(\R)$, $\cH$ the negative entropy and with a Bregman potential of the form $\phi(\mu)=\int V\mathrm{d}\mu$ with $V(x)=\frac12 x^T\Lambda^{-1} x$. Moreover, we suppose $\mu_0=\cN(m_0,\Sigma_0)$. In this situation, each distribution $\mu_k$ is also Gaussian, as the forward and backward steps are affine operations.

Assuming the covariances matrices are full rank, $\Tilde{\T}_{k+1}$ is affine and its gradient is invertible. Moreover, by \Cref{prop:ot_map}, $\T_{\phi_{\mu_k}}^{\mu_k,\nu} = \nabla u \circ \gW\phi(\mu_k)$ for $\nabla u$ an OT map between $\gW\phi(\mu_k)_\#\mu_k$  and $\nu$. Since each distribution is Gaussian, and $\gW\phi(\mu_k)(x) = \Lambda^{-1} x$ is affine, it has a positive definite Jacobian. Thus, using \citep[Theorem 4.2]{tanaka2023accelerated}, we can conclude that we can apply \Cref{prop:cv_prox_grad}.

\paragraph{Closed-form for Gaussians.} Let $\cG(\mu) = \int U\mathrm{d}\mu$ with $U(x)=\frac12 (x-m)^T\Sigma^{-1}(x-m)$, $\Sigma\in S_d^{++}(\R)$, $m\in\R^d$, and $\cH(\mu) = \int \log\big(\rho(x)\big)\ \mathrm{d}\mu(x)$ for $\mathrm{d}\mu = \rho(x)\mathrm{d}x$. For the Bregman potential, we will choose $\phi(\mu)=\int V\mathrm{d}\mu$ for $V(x)=\frac12\langle x, \Lambda^{-1}x\rangle$. Recall that the forward step reads as
\begin{equation}
    \sS_{k+1} = \nabla V^*\circ \big(\nabla V - \tau\gW\cG(\mu_k)\big), \quad \nu_{k+1} = (\sS_{k+1})_\#\mu_k.
\end{equation}
Since $\nabla V(x) = \Lambda^{-1}x$, and $\mu_k = \mathcal{N}(m_k,\Sigma_k)$, we obtain for $x_k\sim \mu_k$, 
\begin{equation}
    \sS_{k+1}(x_k) = \Lambda\big(\Lambda^{-1}x_k - \tau \Sigma^{-1}(x_k-m)\big) = x_k - \tau \Lambda \Sigma^{-1}(x_k-m).
\end{equation}
Thus, the output of the forward step is still a Gaussian of the form $\nu_{k+1}=\mathcal{N}(m_{k+\frac12}, \Sigma_{k+\frac12})$ with 
\begin{equation}
    \begin{cases}
        m_{k+\frac12} = (I_d - \tau \Lambda \Sigma^{-1})m_k + \tau\Lambda\Sigma^{-1}m \\
        \Sigma_{k+\frac12} = (I_d - \tau\Lambda\Sigma^{-1})^T\Sigma_k (I_d-\tau\Lambda\Sigma^{-1}).
    \end{cases}
\end{equation}

Since $\nabla V$ is linear, the output of the backward step stays Gaussian. Moreover, the first order conditions give
\begin{multline}
        \nabla V \circ \T_{k+1} + \tau \nabla\log(\rho_{k+1}\circ \T_{k+1}) = \nabla V \\ \iff \forall x,\ \Lambda^{-1}x  = \Lambda^{-1} \T_{k+1}(x) - \tau \Sigma_{k+1}^{-1}(\T_{k+1}(x)-m_{k+1}) \\
        \iff \forall x,\ x = \T_{k+1}(x) - \tau \Lambda\Sigma_{k+1}^{-1}(\T_{k+1}(x) - m_{k+1}).
\end{multline}
Thus, the output is a Gaussian $\mathcal{N}(m_{k+1}, \Sigma_{k+1})$ with $(m_{k+1}, \Sigma_{k+1})$ satisfying
\begin{equation}
    \begin{cases}
        m_{k+1} = m_{k+\frac12} \\
        \Sigma_{k+\frac12} = (I_d - \tau \Lambda\Sigma_{k+1}^{-1})^T\Sigma_{k+1}(I_d-\tau\Lambda\Sigma_{k+1}^{-1}).
    \end{cases}
\end{equation}

Moreover, if $\Lambda$ and $\Sigma_{k+1}$ commute, this is equivalent to 
\begin{equation}
    \Sigma_{k+1}^2 -(2\tau\Lambda + \Sigma_{k+\frac12})\Sigma_{k+1} + \tau^2\Lambda^2 = 0,
\end{equation}
which solution is given by
\begin{equation}
    \Sigma_{k+1} = \frac12\big( \Sigma_{k+\frac12} + 2\tau\Lambda + (\Sigma_{k+\frac12}(4\tau \Lambda + \Sigma_{k+\frac12}))^{\frac12}\big).
\end{equation}
To sum up, the update is
\begin{equation} \label{eq:closed_form_pfb}
    \begin{cases}
        \nu_{k+1} = \mathcal{N}\big((I_d-\tau \Lambda \Sigma^{-1})m_k + \tau\Lambda\Sigma^{-1}m, (I_d - \tau\Lambda\Sigma^{-1})^T\Sigma_k(I_d-\tau\Lambda\Sigma^{-1})\big) \\
        \mu_{k+1} = \mathcal{N}\big(m_{k+\frac12}, \frac12 (\Sigma_{k+\frac12} + 2\tau \Lambda + (\Sigma_{k+\frac12}(4\tau\Lambda+\Sigma_{k+\frac12}))^{\frac12})\big).
    \end{cases}
\end{equation}
For $\Lambda=\Sigma$, we call it the ideally preconditioned Forward-Backward scheme (PFB).

\section{Additional details on experiments} \label{appendix:xps}

\subsection{Implementing the schemes} \label{appendix:newton_step}

In this subsection, we sum up how to implement the different schemes in practice, given a finite number of particles. In all cases, we first sample $x_1^{(0)},\dots,x_n^{(0)}\sim \mu_0$, then we apply the scheme to $\hat{\mu}_n^{(k)} = \frac{1}{n}\sum_{i=1}^n \delta_{x_i^{(k)}}$.

\paragraph{Mirror descent.}

In general, for $\phi$ pushforward compatible, one needs to solve at each iteration $k\ge 0$,
\begin{equation}
    \gW\phi(\mu_{k+1})\circ \T_{k+1} = \gW\phi(\mu_k) - \tau\gW\cF(\mu_k).
\end{equation}

If $\phi(\mu)=\int V\ \mathrm{d}\mu$ with $\nabla V$ having an analytical inverse, the scheme can be implemented as
\begin{equation}
    \forall k\ge 0,\ \forall i\in \{1,\dots,n\},\ x_i^{(k+1)} = \nabla V^*\big( \nabla V(x_i^{(k)}) - \tau \gW\cF(\hat{\mu}_n^{(k)})(x_i^{(k)})\big).
\end{equation}
Except for this case, one cannot in general invert $\gW\phi(\mu_{k+1})\circ \T_{k+1}$ directly.
A practical workaround is to solve an implicit problem, see \emph{e.g.} \citep{zolter2020tutorial}. Here, we use the Newton-Raphson algorithm. Suppose we have $\mu_{k}=\frac{1}{n}\sum_{i=1}^n \delta_{x_i^{(k)}}$ and we are looking for $\mu_{k+1} = \frac{1}{n}\sum_{i=1}^n \delta_{x_i}$. Then, the scheme is equivalent to
\begin{equation}
    \forall j\in\{1,\dots,n\},\ G_j(x_1,\dots,x_n) = 0,
\end{equation}
for
\begin{equation}
    G_j(x_1,\dots,x_n) =\gW\phi\left(\frac{1}{n}\sum_{i=1}^n \delta_{x_i}\right)(x_j) - \gW\phi(\mu_k)(x_j^{(k)}) + \tau \gW\cF(\mu_k)(x_j^{(k)}).
\end{equation}
Write $\mathcal{G}(x_1,\dots,x_n) = \big(G_1(x_1,\dots,x_n),\dots,G_n(x_1,\dots,x_n)\big)$. Then, at each step $k$, we perform the following Newton iterations, starting from $(x_1^{(k)},\dots,x_n^{(k)})$:
\begin{equation}
    (x_1^{(k_{\ell+1})},\dots,x_{n}^{(k_{\ell+1})}) = (x_1^{(k_\ell)},\dots,x_n^{(k_\ell)}) - \gamma \big(J_\mathcal{G}(x_1^{(k_\ell)},\dots,x_n^{(k_\ell)})\big)^{-1} \mathcal{G}(x_1^{(k_\ell)},\dots,x_n^{(k_\ell)}).
\end{equation}
The Jacobian is of size $nd\times nd$, which does not scale well with the dimension and the number of samples. We can reduce the complexity of the algorithm by relying on inverse Hessian vector products, see \emph{e.g.} \citep{dagréou2024howtocompute}.

\paragraph{Preconditioned gradient descent.} Plugging the empirical measure in \eqref{eq:pgd}, the preconditioned scheme can be implemented as 
\begin{equation}
    \forall k\ge 0, \forall i\in\{1,\dots,n\},\ x_i^{(k+1)} = x_i^{(k)} - \tau \nabla h^* \big(\gW\cF(\hat{\mu}_n^{(k)})(x_i^{(k)})\big).
\end{equation}

\subsection{Mirror descent of interaction energies} \label{sec:details_xp_interaction}

\begin{figure}[t]
    \centering
    \begin{minipage}{0.48\linewidth}
        \centering
        \includegraphics[width=\linewidth]{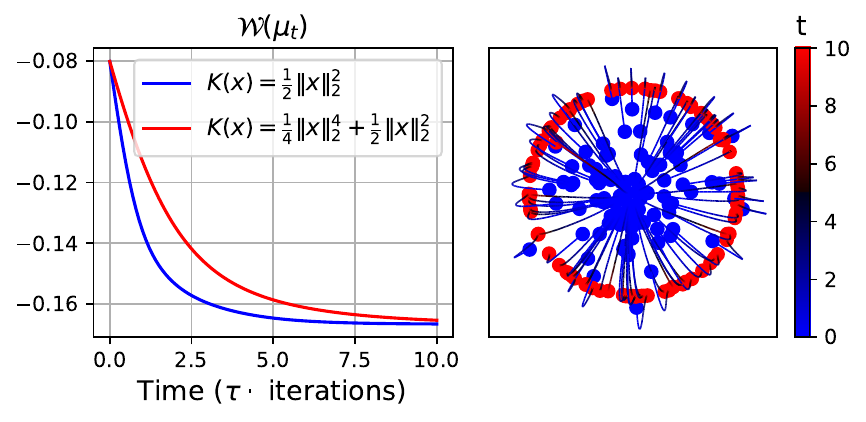}
        \caption{\textbf{(Left)} Value of $\mathcal{W}$ along the flow for two difference interaction Bregman potentials, \textbf{(Right)} Trajectories of particles to minimize $\mathcal{W}$.}
        \label{fig:conv_ring_}
    \end{minipage}
    \hfill
    \begin{minipage}{0.48\linewidth}
        \centering
        \includegraphics[width=\linewidth]{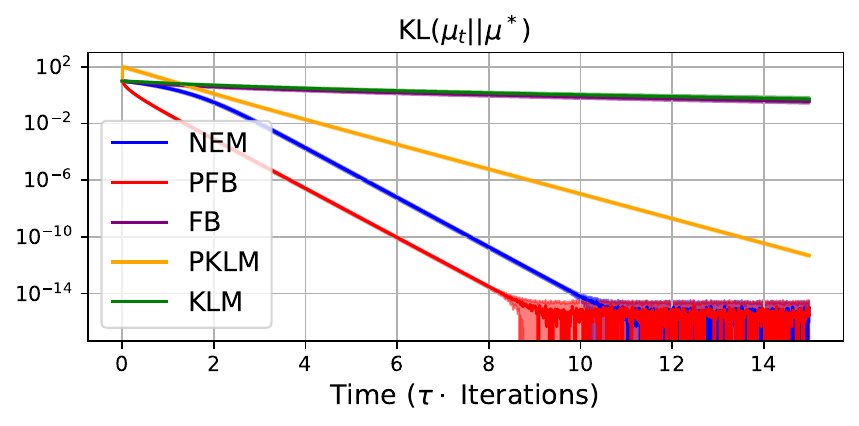}
        \caption{Convergence towards Gaussian $\cN(0,D)$ with $D$ diagonal and uniformly sampled on $[0,50]^{10}$.}%
        \label{fig:conv_gaussian_diag}
    \end{minipage}
\end{figure}

\paragraph{Details of \Cref{section:xps}.}

We detail in this Section the first experiment of \Cref{section:xps}. We aim at minimizing the interaction energy $\mathcal{W}(\mu) = \frac12\iint W(x-y)\ \mathrm{d}\mu(x)\mathrm{d}\mu(y)$ for $W(z)=\frac14 \|z\|_2^4 - \frac12 \|z\|_2^2$. It is well-known that the stationary solution of its gradient flow is a Dirac ring \citep{carrillo2022primal}. Since the stationary solution is translation invariant, we project the measures to be centered.

We study here two Bregman potentials which are also interaction energies. First, observing that $\nabla^2W(z) = 2zz^T + \big(\|z\|_2^2 - 1\big) I_d$, we have for all $z$,
\begin{equation}
    \|\nabla^2 W\|_{\mathrm{op}} \le 2\|z\|_2^2 + \|z\|_2^2 + 1 = 3 \|z\|_2^2 + 1 = p_2(\|z\|_2),
\end{equation}
with $p_2(t) = 3 t^2 + 1$. Thus, by \citep[Remark 2]{lu2018relatively}, $W$ is $\sm$-smooth relative to $K_4(z) = \frac14 \|z\|_2^4 + \frac12 \|z\|_2^2$ with $\sm = 4$. Thus, using \Cref{prop:convexity_W}, $\Tilde{\mathcal{W}}_\mu$ is $\sm$-smooth relative to $\phi_\mu(\T) = \frac12\iint K\big(\T(x)-\T(x')\big)\ \mathrm{d}\mu(x)\mathrm{d}\mu(x')$ for all $\mu$, and we can apply \Cref{prop:descent_mirror}.

Under the additional hypothesis that the measures are compactly supported, and thus there exists $M>0$ such that $\|x\|_2^2 \le M$ for $\mu$-almost every $x$, we can also show that $W$ is $\sm$-smooth relative to $K_2(z)=\frac12\|z\|_2^2$. Indeed, on one hand, $\nabla^2 K = I_d$ and $\nabla^2 W(z)  =  2zz^T + \big(\|z\|_2^2 - 1\big) I_d$. Thus, for all $v,z\in \R^d$,
\begin{equation}
    v^T \nabla^2 W(z) v = 2 \langle z,v\rangle^2 + (\|z\|_2^2 - 1) \|v\|_2^2 \le 3 \|z\|_2^2 \|v\|_2^2 \le 3 M \|v\|_2^2 = 3 M v^T \nabla^2 K(z) v.
\end{equation}

In \Cref{fig:conv_ring_}, we plot the evolution of $\mathcal{W}$ along the flows obtained with these two Bregman potential, starting from $\mu_0 = \cN(0, 0.25^2 I_2)$ for $n=100$ particles, with a step size of $\tau=0.1$ for 120 epochs.

\paragraph{Ill-conditioned interaction energy.}

We also study the minimization of an interaction energy with an ill-conditioned kernel $W(z) = \frac14 (z^T \Sigma^{-1} z)^2 - \frac12 z^T\Sigma^{-1}z$ where $\Sigma\in S_d^{++}(\R)$ but is possibly badly conditioned, \emph{i.e.} the ratio between the largest and smallest eigenvalues is large. In this case, the stationary solution becomes an ellipsoid instead of a ring. In our experiments, we take $\Sigma=\mathrm{diag}(100, 0.1)$. For each scheme, we use $\mu_0 = \cN(0, 0.25^2 I_2)$, $n=100$ particles and a step size of $\tau=0.1$.

On \Cref{fig:conv_ring}, we use Bregman potentials which take into account this conditioning, namely we use $K_2^\Sigma(z) = \frac12 z^T\Sigma^{-1} z$ and $K_4^\Sigma(z) = \frac14 (z^T\Sigma^{-1} z)^2 - \frac12 (z^T\Sigma^{-1} z)$, and we observe that the convergence is much faster compared to the same kernels without preconditioning. For $K_2^\Sigma(z) = \frac12 z^T\Sigma^{-1} z$, the scheme becomes
\begin{equation}
    \begin{aligned}
        &(\nabla K \star \mu_{k+1})\circ \T_{k+1} = \nabla K \star \mu_k - \gamma \gW \cF(\mu_k) \\ 
        &\iff \Sigma^{-1}\big(\T_{k+1} - m(\mu_{k+1})\big) = \Sigma^{-1}\big(\id - m(\mu_k)\big) - \gamma \Sigma^{-1}(\id^T \Sigma^{-1} \id - 1) \id \\
        &\iff \T_{k+1} - m(\mu_{k+1}) = \id - m(\mu_k) - \gamma (\id^T \Sigma^{-1} \id - 1)\id.
    \end{aligned}
\end{equation}
Thus, we see that $\Sigma^{-1}$ has less influence which might explain the faster convergence.

Similarly as in the case without preconditioning, using that $\nabla^2 W(z) = 2\Sigma^{-1} z z^T \Sigma^{-1} + (z^T\Sigma^{-1} z - 1) \Sigma^{-1}$, we can show that 
\begin{equation}
    v^T \nabla^2 W(z) v = 2 \langle z,v\rangle_{\Sigma^{-1}}^2 + (\|z\|_{\Sigma^{-1}}^2 - 1) \|v\|_{\Sigma^{-1}}^2 \le 3 M \|v\|_{\Sigma^{-1}}^2 = 3 M v^T \nabla^2 K(z) v.
\end{equation}

For the sake of comparison, we also report on \Cref{fig:w_gd_vs_pgd} the trajectories of particles for the use of $K_4$ and $K_4^\Sigma$, as well as of the usual Wasserstein gradient descent and the preconditioned Wasserstein gradient descent obtained with $h^*(x)=\frac12 x^T\Sigma x$ (which is equivalent to the Mirror Descent with $\phi_\mu^V$ as Bregman potential and $V(x)=\frac12 x^T \Sigma^{-1} x$). We observe almost the same trajectories as $K_2$, which would indicate that the target is also smooth compared to $\phi_\mu^V$.

\paragraph{Runtime.} These experiments were run on a personal Laptop with a CPU Intel Core i5-9300H. For the interaction energy as Bregman potential, running the algorithm with Newton's method for $n=100$ particles in dimension $d=2$ for $120$ epochs took about 5mn for $K_2$ and $K_2^\Sigma$, and about 1h for $K_4$ and $K_4^\Sigma$.

\subsection{Mirror descent on Gaussians}

\begin{figure}
    \centering
    \includegraphics[width=0.48\linewidth]{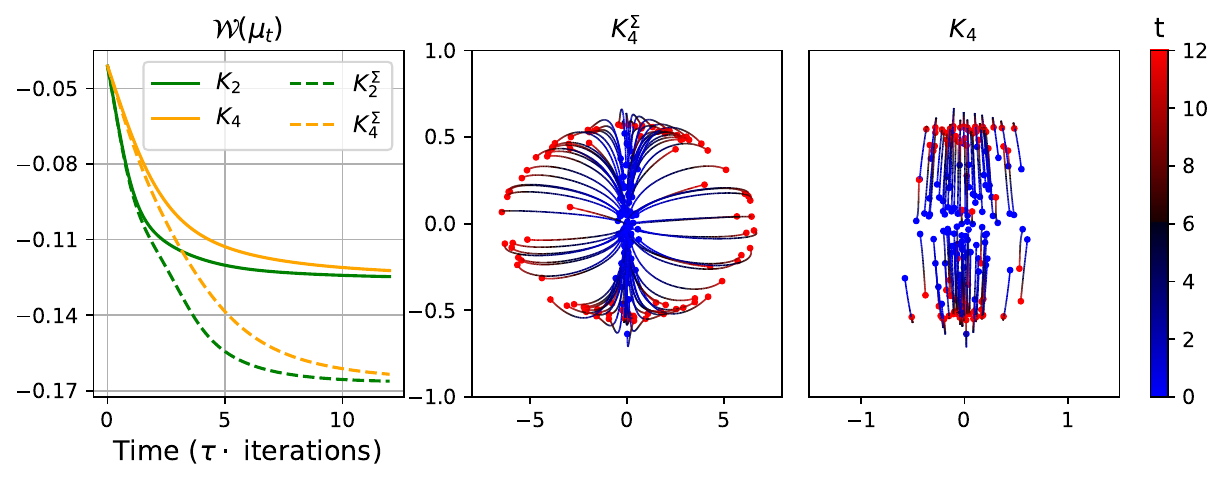}
    \hfill
    \includegraphics[width=0.48\linewidth]{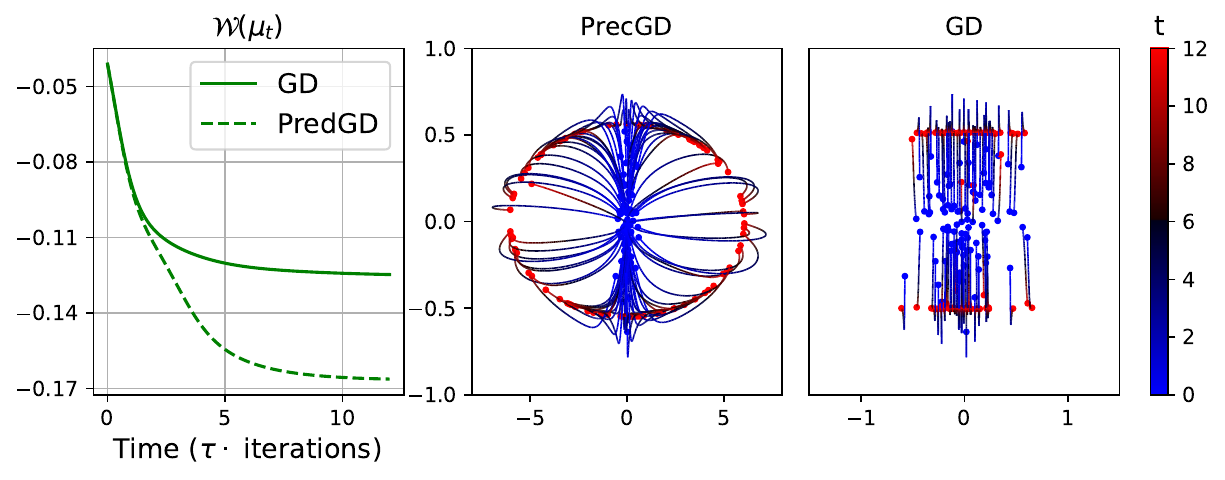}
    \caption{\textbf{(Left)} Value of $\mathcal{W}$ over time and trajectory of particles using $K_4$ and $K_4^\Sigma$ as interaction kernels. \textbf{(Right)} Value of $\mathcal{W}$ over time and  trajectory of particles for the Wasserstein gradient descent and preconditioned Wasserstein gradient descent (with the ideal preconditioner $h^*(x)=\frac12 x^T \Sigma x$).}
    \label{fig:w_gd_vs_pgd}
\end{figure}

As the mirror descent scheme cannot be computed in closed-form for Bregman potentials which are not potential energies, and thus are computationally costly, we propose here to restrain ourselves to the Gaussian setting. 

We choose as target distribution $\nu=\cN(0,\Sigma)$ for $\Sigma$ a symmetric positive definite matrix in $\R^{10\times 10}$, and the functional to be optimized is $\cF(\mu)=\int V\mathrm{d}\mu + \cH(\mu)$ with $V(x)=\frac12 x^T \Sigma^{-1} x$. The initial distribution is always chosen as $\mu_0=\cN(0,I_d)$. In all cases, the step size is chosen as $\tau=0.01$, and we run the scheme for 1500 iterations. For the target distributions, we sample 20 random covariances of the form $\Sigma=UDU^T$ with $D$ evenly spaced in log scale between $1$ and $100$, and $U\in \R^{10 \times 10}$ chosen as a uniformly random orthogonal matrix, as in \citep{diao2023forward}, and we report the averaged KL divergence over iterations in \Cref{fig:bw_gaussians}. We add on \Cref{fig:conv_gaussian_diag} the same experiments with targets of the form $\cN(0,D)$ where $D$ is a diagonal matrix on $\R^{10\times 10}$ sampled uniformly over $[0,50]^{10}$. We compare here the Forward-Backward (FB) scheme of \citep{diao2023forward}, the ideally preconditioned Forward-Backward scheme (PFB), which uses the closed-form \eqref{eq:closed_form_pfb} derived in \Cref{sec:prox_grad} with $\Lambda=\Sigma$, and the Mirror Descent with negative entropy Bregman potential (NEM), whose closed-form was derived in \Cref{sec:mirror_gaussian}, and which we recall:
\begin{equation}
    \forall k\ge 0,\ \Sigma_{k+1}^{-1} = \big((1-\tau)\Sigma_k^{-1} + \tau \Sigma^{-1}\big)^T \Sigma_k \big((1-\tau)\Sigma_k^{-1} + \tau \Sigma^{-1}\big).
\end{equation}
We also experiment with the KL divergence as Bregman potential (KLM) and the ideally preconditioned KL divergence (PKLM). We observe that, even though the objective is convex relative to the Bregman potential, this scheme does not always converge. It might be due to its gradient which might not always be invertible. We leave further investigations for future works. %

\begin{remark}
    We note that using as Bregman potential $\phi_\mu(\T)=\int \psi\circ \T\mathrm{d}\mu$ for $\psi(x)=\frac12 x^T\Lambda^{-1} x$ is equivalent to using a preconditioner with $\kp(x) = \frac12 x^T \Lambda x$.
\end{remark}

\paragraph{Analysis of the convergence.}

It is well-known that along the Wasserstein gradient flow of the KL divergence starting from a Gaussian and with a Gaussian target (Ornstein-Uhlenbeck process), the measures stay Gaussian \citep{wibisono2018sampling}. Thus, the Forward-Backward scheme has Gaussian iterates at each step \citep{salim2020wasserstein, diao2023forward}. In this work, we also use a linearly preconditioned Forward-Backward scheme, whose closed-form is derived in  \eqref{eq:closed_form_pfb} (\Cref{sec:prox_grad}). For the Bregman potential, we choose $\phi_\mu(\T) = \int \psi\circ\T\ \mathrm{d}\mu $ for $\psi(x) = \frac12 x^T \Sigma x$. In this situation, $\cG(\mu) = \int V\mathrm{d}\mu$ is 1-smooth and 1-convex relative to $\phi$. Thus, we can apply \Cref{prop:cv_prox_grad}. We refer to \Cref{sec:prox_grad} for more details on the convexity of $\cH$.

For Bregman potentials whose gradient is not affine, the distributions do not necessarily stay Gaussian along the flows. Thus, we work on the Bures-Wasserstein space and use the Bures-Wasserstein gradient, \emph{i.e.} we project the gradient on the space of affine maps with symmetric linear term, \emph{i.e.} of the form $\T(x)=b + S(x-m)$ with $S\in S_d(\R)$ \citep{diao2023forward}. We refer to \citep{lambert2022variational, diao2023forward} for more details on this submanifold. This can be seen as performing Variational Inference. We derive the closed-form of the different schemes in \Cref{sec:mirror_gaussian}.

Even though these procedures do not fit exactly the theory developed in this work, we show the relative smoothness of $\cF$ relative to $\cH$ along the curve $\mu_t = \big((1-t)\id+t\T_{k+1}\big)_\#\mu_k$ under the hypothesis that the covariances matrices have bounded eigenvalues. Moreover, since $\D_{\tF_\mu}=\D_{\phi_\mu^V} + \D_{\tH_\mu} \ge \D_{\tH_\mu}$, $\cF$ is also 1-convex relative to $\cH$.

\begin{proposition} \label{prop:relative_smoothness_f_to_H}
    Let $\lambda>0$, $\cF(\mu) = \int V\mathrm{d}\mu + \cH(\mu)$ with $V(x)=\frac12 x^T \Sigma^{-1} x$ where $\Sigma \in S_d^{++}(\R)$ and $\Sigma \preceq \lambda I_d$. Suppose that for all $k\ge 0$, $(1-\tau) \Sigma_{k+1}\Sigma_k^{-1} + \tau \Sigma_{k+1}\Sigma^{-1} \succeq 0$. Then, $\cF$ is smooth relative to $\cH$ along $\mu_t = \big((1-t)\id + t \T_{k+1})_\#\mu_k$ where %
    $\mu_k=\mathcal{N}(0,\Sigma_k)$ with $\Sigma_k \in S_d^{++}(\R)$, $\Sigma_k\preceq \lambda I_d$.
\end{proposition}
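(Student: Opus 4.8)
The plan is to convert the claim into a second-order comparison and then diagonalize after whitening. By \Cref{prop:relative_convexity_wasserstein}, $\cF$ is $\sm$-smooth relative to $\cH$ along $t\mapsto\mu_t$ if and only if $\sm\cH-\cF$ is convex along this curve, which by \Cref{prop:equivalences_w_convex}\,\ref{equivalence:c4} is equivalent to $\frac{\mathrm{d}^2}{\mathrm{d}t^2}\cF(\mu_t)\le\sm\,\frac{\mathrm{d}^2}{\mathrm{d}t^2}\cH(\mu_t)$ for every $t\in[0,1]$. Writing $\cF=\cV+\cH$ with $\cV(\mu)=\int V\,\mathrm{d}\mu$, it suffices to exhibit a finite constant $\sm$, uniform in $k$, satisfying $\frac{\mathrm{d}^2}{\mathrm{d}t^2}\cV(\mu_t)\le(\sm-1)\,\frac{\mathrm{d}^2}{\mathrm{d}t^2}\cH(\mu_t)$.

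Next I would compute the two Hessians in the Gaussian regime. Here $\mu_k=\cN(0,\Sigma_k)$ and $\T_{k+1}$ is the affine NEM map $\T_{k+1}(x)=Bx$ derived in \Cref{sec:mirror_gaussian}, with $B=(1-\tau)\Sigma_{k+1}\Sigma_k^{-1}+\tau\Sigma_{k+1}\Sigma^{-1}$; hence $\mu_t=(A_t)_\#\mu_k$ with Jacobian $A_t=(1-t)I_d+tB$ and velocity $v=\T_{k+1}-\id$, $\nabla v=B-I_d=:N$. For the potential term, \Cref{example:hessian_potential} with $\nabla^2V=\Sigma^{-1}$ gives $\frac{\mathrm{d}^2}{\mathrm{d}t^2}\cV(\mu_t)=\int (Nx)^T\Sigma^{-1}(Nx)\,\mathrm{d}\mu_k(x)=\mathrm{tr}(N^T\Sigma^{-1}N\Sigma_k)$. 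For the entropy term, since $\cH$ is not $\cW_2$-differentiable I cannot invoke \Cref{prop:hessian}; instead I would use the change of variables $\cH(\mu_t)=\cH(\mu_k)-\log\det A_t$ and differentiate $\log\det$ twice via Jacobi's formula (with $\ddot A_t=0$), obtaining $\frac{\mathrm{d}^2}{\mathrm{d}t^2}\cH(\mu_t)=\mathrm{tr}\big((A_t^{-1}N)^2\big)$. The hypothesis $(1-\tau)\Sigma_{k+1}\Sigma_k^{-1}+\tau\Sigma_{k+1}\Sigma^{-1}\succeq0$, i.e.\ $B\succeq0$, is exactly what guarantees $A_t\succ0$ for all $t\in[0,1]$, so that the curve stays in $\cPa$ and both expressions are well defined.

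The key step is to whiten and diagonalize simultaneously. Conjugating by $\Sigma_k^{1/2}$ maps $\mu_k$ to $\cN(0,I_d)$, leaves the entropy Hessian unchanged (it only shifts $\cH$ by a constant along the curve), and turns $B$ into $\tilde B=\Sigma_k^{-1/2}B\Sigma_k^{1/2}$. Using $B^{-1}=\Sigma_k\big((1-\tau)\Sigma_k^{-1}+\tau\Sigma^{-1}\big)$ one finds $\tilde B^{-1}=(1-\tau)I_d+\tau\Gamma$ with $\Gamma:=\Sigma_k^{1/2}\Sigma^{-1}\Sigma_k^{1/2}\succ0$; in particular $\tilde B$, $\tilde N:=\tilde B-I_d$ and $\tilde A_t:=I_d+t\tilde N$ are all functions of the symmetric matrix $\Gamma$ and are simultaneously diagonalizable with it. Writing $\gamma_i>0$ for the eigenvalues of $\Gamma$ and $\nu_i=\tilde B_{ii}-1$ in the common eigenbasis, the two Hessians collapse to $\frac{\mathrm{d}^2}{\mathrm{d}t^2}\cV(\mu_t)=\sum_i\gamma_i\nu_i^2$ and $\frac{\mathrm{d}^2}{\mathrm{d}t^2}\cH(\mu_t)=\sum_i\nu_i^2/(1+t\nu_i)^2$, so the required inequality holds term by term as soon as $\gamma_i(1+t\nu_i)^2\le\sm-1$.

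Finally I would close the uniform bound. Since $1+t\nu_i=(1-t)+t\tilde B_{ii}$ lies between $\min(1,\tilde B_{ii})$ and $\max(1,\tilde B_{ii})$, one has $(1+t\nu_i)^2\le\max(1,\tilde B_{ii}^2)$, and $\tilde B_{ii}=\big((1-\tau)+\tau\gamma_i\big)^{-1}$ together with $\gamma_i\le\|\Gamma\|_{\mathrm{op}}\le\lambda\,\|\Sigma^{-1}\|_{\mathrm{op}}$ (using $\Sigma_k\preceq\lambda I_d$) shows that $\sm:=1+\lambda\,\|\Sigma^{-1}\|_{\mathrm{op}}\,\max_i\max(1,\tilde B_{ii}^2)$ is finite and independent of $k$. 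I expect the entropy-Hessian step to be the main obstacle: entropy is not $\cW_2$-differentiable, the curve is not a geodesic, and $B$ is not symmetric, so a direct comparison of $\mathrm{tr}(N^T\Sigma^{-1}N\Sigma_k)$ with $\mathrm{tr}\big((A_t^{-1}N)^2\big)$ fails, because the former probes the full, non-normal $N$ in the $\Sigma_k$-geometry while the latter sees only its eigenvalues. The whitening that renders $\tilde B^{-1}$ affine in $\Gamma$, producing a genuine simultaneous diagonalization, is the device that overcomes this.
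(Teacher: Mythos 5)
Your proposal is correct in substance, but it takes a genuinely different route from the paper's proof. After the same reduction (peel off $\cH$, which is trivially $1$-smooth relative to itself, and compare second derivatives along the curve via \Cref{prop:equivalences_w_convex}), the paper does \emph{not} compute the entropy Hessian in closed form. Instead it reads the hypothesis $(1-\tau)\Sigma_{k+1}\Sigma_k^{-1}+\tau\Sigma_{k+1}\Sigma^{-1}\succeq 0$ in the Loewner sense, i.e.\ the linear part of $\T_{k+1}$ is \emph{symmetric} positive semi-definite, so that $\T_{k+1}$ is the gradient of a convex function and $t\mapsto\mu_t$ is a Wasserstein geodesic; it then imports two external facts: the geodesic lower bound $\frac{\mathrm{d}^2}{\mathrm{d}t^2}\cH(\mu_t)\ge \|\Sigma_{\mu_t}\|_{\mathrm{op}}^{-1}\,\|\T_{k+1}-\id\|_{L^2(\mu_k)}^2$ from \citet{diao2023forward}, and convexity of $\mu\mapsto\|\Sigma_\mu\|_{\mathrm{op}}$ along generalized geodesics \citep{chewi2020gradient} to propagate $\|\Sigma_{\mu_t}\|_{\mathrm{op}}\le\lambda$ along the curve. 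Integrating against $(1-t)$ then yields the Bregman inequality with the clean constant $1+\frac{\lambda\,\sigma_{\mathrm{max}}(\nabla^2 V)}{2}$, independent of $k$ and $\tau$. Your whitening argument instead computes both Hessians exactly, $\mathrm{tr}(\Gamma\Tilde{N}^2)$ versus $\sum_i \nu_i^2/(1+t\nu_i)^2$, and closes with a scalar inequality; your computations check out (in particular $\Tilde{B}^{-1}=(1-\tau)I_d+\tau\Gamma$ follows from the NEM recursion \eqref{eq:nem}, which your $B^{-1}=\Sigma_k\big((1-\tau)\Sigma_k^{-1}+\tau\Sigma^{-1}\big)$ implicitly uses). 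What your route buys is independence from the symmetry of $B$: since $\Tilde{B}$ is automatically symmetric positive definite, the eigenvalues $1+t\nu_i$ of $A_t$ are positive for free, so the stated positivity hypothesis is essentially unused in your argument, whereas it is load-bearing in the paper's (no symmetry, no geodesic, no entropy-Hessian bound). In the commuting regime where the paper's hypothesis actually holds, the paper's proof gives the better, uniform constant.

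Two blemishes to repair, neither fatal. First, your constant $\sm=1+\lambda\|\Sigma^{-1}\|_{\mathrm{op}}\max_i\max(1,\Tilde{B}_{ii}^2)$ is \emph{not} independent of $k$ as claimed, since $\Tilde{B}$ depends on $\Sigma_k$ through $\Gamma$; it is finite for each fixed $k$, which suffices for the proposition as stated, and for $\tau<1$ you can make it uniform via $\Tilde{B}_{ii}=\big((1-\tau)+\tau\gamma_i\big)^{-1}\le (1-\tau)^{-1}$, or more sharply $\sup_{\gamma>0}\gamma\big((1-\tau)+\tau\gamma\big)^{-2}=\frac{1}{4\tau(1-\tau)}$. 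Second, your closing diagnosis slightly misreads the paper: under its hypothesis $B$ \emph{is} symmetric (that is exactly how "gradient of a convex function" is obtained), so the "non-normal $N$" obstruction you describe concerns the general non-commuting case, which the paper's proposition does not cover but your argument does.
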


\begin{proof}
    See \Cref{proof:prop_relative_smoothness_f_to_H}.
\end{proof}

\subsection{Single-cell experiments}
\label{sec:additional-single-cell}

\begin{figure}[t]
    \includegraphics[width=1\linewidth]{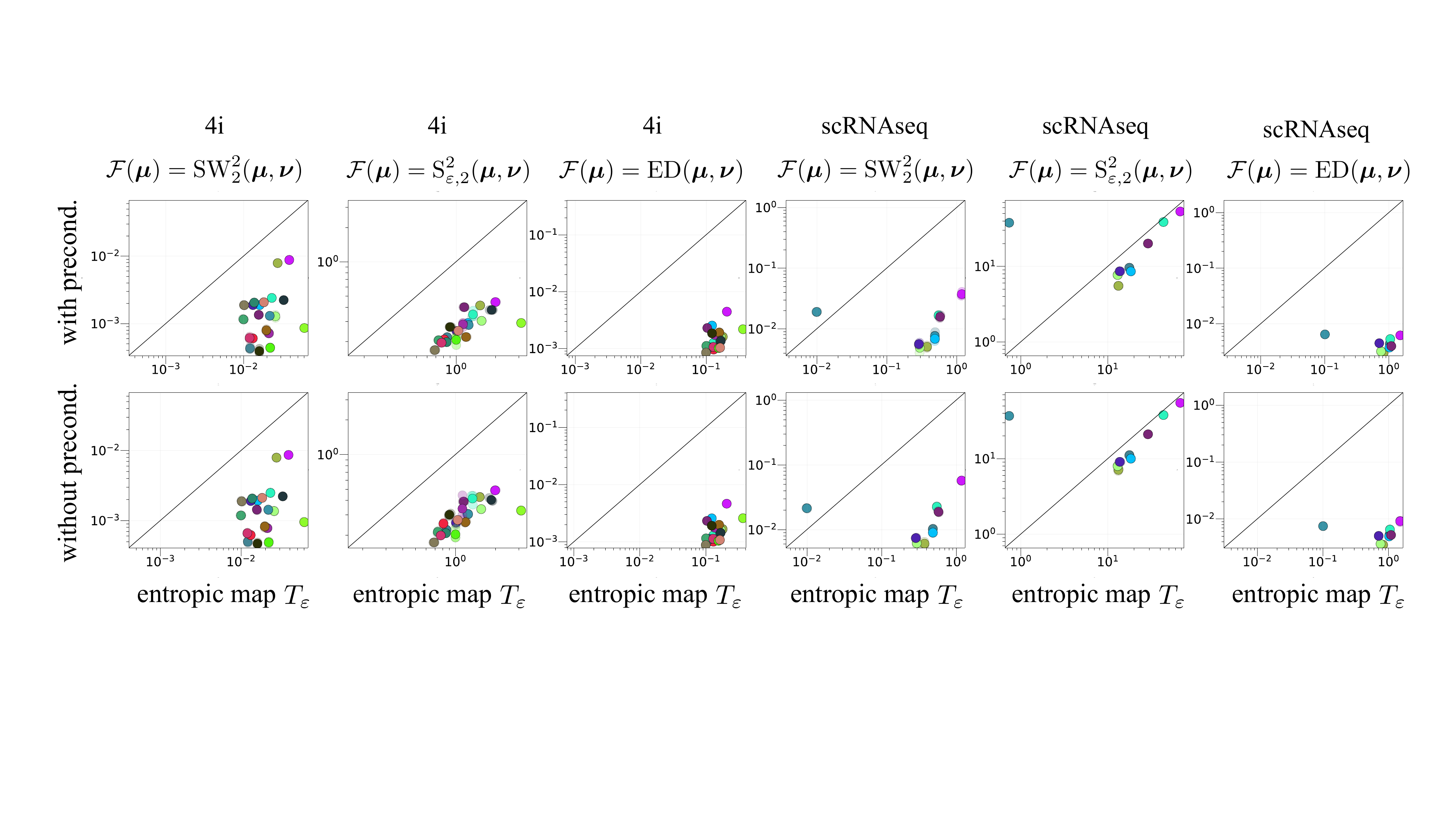}
    \vspace{-21mm}
    \caption{
    Preconditioned GD and (vanilla) GD vs. the entropic map $T_\varepsilon$~\citep{pooladian2024entropic} to predict the responses of cell populations to cancer treatments on 4i and scRNAseq datasets, providing respectively 34 and 9 treatment responses. For each profiling technology and each treatment, we have a pair $(\mu_i, \nu_i)$ of source (untreated) cells and target (treated) cells. For each pair $(\mu_i, \nu_i)$, with both preconditioned GD and vanilla GD, we minimize the functional $\mathcal{F}(\mu) = D(\mu, \nu_i)$–with D a metric–to recover the effect of the perturbation. In both cases, the prediction is obtained by $\hat{\mu_i} = \min_\mu \cF(\mu)$. We then fit an entropic map $T_\varepsilon$ and predict $T_\varepsilon \sharp \mu_i$. We then compare the objective function values $\mathcal{F}(\hat{\mu_i})$ and $\mathcal{F}(T_\varepsilon\sharp\hat{\mu_i})$. A point below the diagonal $y=x$ then refers to an experiment in which (preconditioned) WGD provides a better estimate of the perturbed population.}
    \label{fig:entropic-map}
\end{figure}

First, we provide more details on the experiment on single cells of \Cref{section:xps}. Then, we detail a second experiment comparing the method with using a static map.

\paragraph{Details on the metrics.}

We show the benefits of using the polynomial preconditioner over the single-cell datasets for different metrics. 

$\bullet$ The first one considered is the Sliced-Wasserstein distance \citep{rabin2012wasserstein, bonneel_sliced_2015}, defined as
\begin{equation}
    \forall \mu,\nu\in \cP_2(\R^d),\ \mathrm{SW}_2^2(\mu,\nu) = \int_{S^{d-1}} \cW_2^2(P^\theta_\#\mu, P^\theta_\#\nu)\ \mathrm{d}\lambda(\theta),
\end{equation}
where $S^{d-1}=\{\theta\in \R^d,\ \|\theta\|_2=1\}$, $\lambda$ denotes the uniform distribution on $S^{d-1}$ and for all $\theta\in S^{d-1}$, $x\in \R^d$, $P^\theta(x)=\langle x,\theta\rangle$. For $\cF(\mu)=\frac12\mathrm{SW}_2^2(\mu,\nu)$, the Wasserstein gradient can be computed as \citep{bonnotte2013unidimensional}
\begin{equation}
    \gW\cF(\mu) = \int_{S^{d-1}} \psi_\theta'\big(P^\theta(x)\big)\theta\ \mathrm{d}\lambda(\theta),
\end{equation}
where, for $t\in\R$, $\psi_\theta'(t) = t - F_{P^\theta_\#\nu}^{-1}\big(F_{P^\theta_\#\mu}(t)\big)$ with $F_{P^\theta_\#\mu}$ the cumulative distribution function of $P^\theta_\#\mu$. In practice, we compute SW and its gradient using a Monte-Carlo approximation by first drawing $L$ uniform random directions $\theta_1,\dots,\theta_L$.

$\bullet$ The second one considered is the Sinkhorn divergence \citep{feydy2018interpolating} defined as
\begin{equation}
    \forall\mu,\nu\in\cP_2(\R^d),\ \sS_{\varepsilon,2}^2(\mu,\nu) = \mathrm{OT}_\varepsilon(\mu,\nu) - \frac12\mathrm{OT}_\varepsilon(\mu,\mu) - \frac12\mathrm{OT}_\varepsilon(\nu,\nu),
\end{equation}
with
\begin{equation}
    \mathrm{OT}_\varepsilon(\mu,\nu) = \inf_{\gamma\in\Pi(\mu,\nu)}\ \int \|x-y\|_2^2\ \mathrm{d}\gamma(x,y) + \varepsilon \mathrm{KL}(\gamma||\mu\otimes\nu),
\end{equation}
the entropic regularized OT. The Wasserstein gradient of $\sS_{\varepsilon,2}^2$ is simply obtained as the potential \citep{feydy2018interpolating}.

$\bullet$ Finally, we also consider the energy distance, defined as
\begin{equation}
    \forall \mu,\nu\in\cP_2(\R^d),\ \mathrm{ED}(\mu,\nu) = - \iint \|x-y\|_2\ \mathrm{d}(\mu-\nu)(x)\mathrm{d}(\mu-\nu)(y).
\end{equation}
To compute its Wasserstein gradient, we use the sliced procedure of \citep{hertrich2024generative}.

\paragraph{Parameters chosen.} For all the metrics, we fixed the step size at $\tau=1$. To choose the parameter $a$ of the preconditioner $h^*(x) = (\|x\|_2^a + 1)^{1/a}-1$, we ran a grid search over $a\in\{1.25,1.5,1.75\}$ for a random treatment, and used it for all the others. In particular, we used for the dataset 4i $a=1.5$ for the Sinkhorn divergence and for SW, and $a=1.75$ for the energy distance. For the scRNAseq dataset, we used $a=1.25$ for the Sinkhorn divergence and SW, and $a=1.5$ for the energy distance. We note that for the dataset 4i, the data lie in dimension $d=48$ and $d=50$ for scRNAseq. 
For all the metrics, we first sampled 4096 particles from the source (untreated) dataset, and used in average between 2000 and 3000 samples from the target dataset. For the test value, we also added 40\% of unseen cells following \citep{bunne2021learning}. Note that we reported the results in \Cref{fig:single-cell} for 3 different initializations for each treatment, and reported these results with their mean.
We report the results using a fixed relative tolerance $\mathrm{tol}=10^{-3}$, \emph{i.e.} at the first iteration where $|\cF(\mu_k)-\cF(\mu_{k-1})|/\cF(\mu_{k-1}) \le \mathrm{tol}$, with a maximum value of iterations of $10^4$. For the Sinkhorn divergence, we chose $\varepsilon$ as $10^{-1}$ time the variance of the target. Finally, for SW and the computation of the gradient of the energy distance, we used a Monte-Carlo approximation with $L=1024$ projections.

\paragraph{Comparison to an OT static map.}

We now compare the prediction of the response of cells to a perturbation using Wasserstein gradient descent, with and without preconditioning, to the one provided by a static estimator, the entropic map $T_\varepsilon$~\citep{pooladian2024entropic}. This experiment motivates the use of a dynamic procedure, iterating multiple steps to map the unperturbed population $\mu$ to the perturbed population $\nu$, instead of a unique static step. We use the proteomic dataset~\citep{bunne2021learning} as the one considered in~\ref{fig:single-cell}. We use the default OTT-JAX~\citep{cuturi2022optimal} of $T_\varepsilon$. The results are shown in Figure~\ref{fig:entropic-map}.

\paragraph{Runtime.} For this experiment, we used a GPU Tesla P100-PCIE-16GB. Depending on the convergence and on the metric considered, each run took in between 30s and 10mn. So in total, it took a few hundred of hours of computation time.

\subsection{Mirror descent on the simplex} \label{sections:xp_simplex}

\begin{figure}[t]
    \centering
    \hspace*{\fill}
    \subfloat{\label{fig:samples_dirichlet}\includegraphics[width=0.7\linewidth]{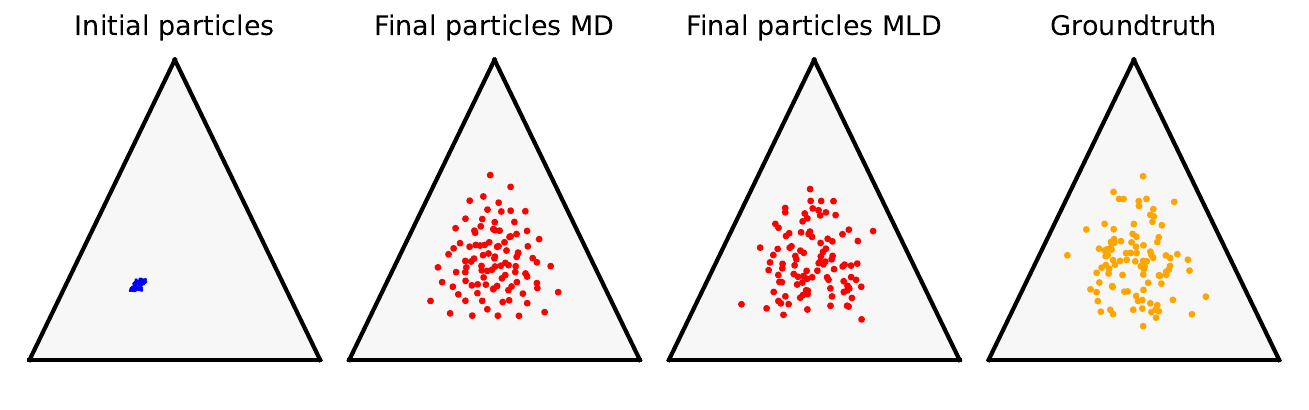}} \hfill
    \subfloat{\label{fig:kl_dirichlet}\includegraphics[width=0.25\linewidth]{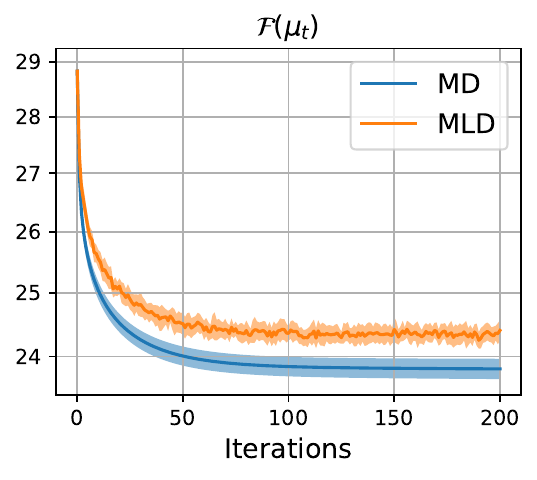}} 
    \hspace*{\fill}
    \caption{\textbf{(Left)} Samples from a Dirichlet posterior distribution for Mirror Descent (MD) and Mirror Langevin (MLD). \textbf{(Right)} Evolution of the objective averaged over 20 different initialisations.}
\end{figure}

We can also leverage the mirror map to perform sampling in constrained spaces. This has received a lot of attention recently either through mirror Langevin methods \citep{ahn2021efficient, chewi2020exponential, srinivasan2023fast}, diffusion methods \citep{liu2023mirror, fishman2023diffusion}, mirror SVGD \citep{shi2022sampling, sharrock2024learning} or other MCMC algorithms \citep{noble2023unbiased, fishman2024metropolis}.

The goal here is to sample from a Dirichlet distribution, \emph{i.e.} from a distribution $\nu \propto e^{-V}$ where $V(x) = - \sum_{i=1}^d a_i \log(x_i) - a_{d+1} \log\left(1-\sum_{i=1}^d x_i\right)$. To sample from such a distribution, we minimize the Kullback-Leibler divergence, \emph{i.e.} $\cF(\mu) = \KL(\mu || \nu)= \int V\ \mathrm{d}\mu + \cH(\mu)$. To stay on the (open) simplex $\Delta_d = \{x\in \mathbb{R}^{d+1}, x_i>0, \sum_{i=1}^{d+1} x_i < 1\}$, we use the mirror map $\phi(\mu) = \int \psi\mathrm{d}\mu$ with $\psi(x) = \sum_{i=1}^d x_i \log(x_i) + (1-\sum_i x_i) \log (1-\sum_i x_i)$ for which
\begin{equation}
    \nabla \psi(x) = \left(\log x_i - \log \big(1 - \sum_j x_j\big) \right)_i, \quad %
    \nabla \psi^*(y) = \left(\frac{e^{y_i}}{1+\sum_j e^{y_j}}\right)_i.
\end{equation}

The scheme here is given by $\T_{k+1} = \nabla \psi^*\circ (\nabla\psi - \gamma \gW\cF(\mu_k))$, where $\gW\cF(\mu_k) = \nabla V + \nabla \log \mu_k$, with the density of $\mu_k$ estimated through a Kernel Density Estimator (KDE). We plot on \Cref{fig:samples_dirichlet} the results obtained for $d=2$, $a_1=a_2=a_3=6$ and 100 samples. We also report the results for the Mirror Langevin Dynamic (MLD) algorithm, which provide iid samples, which are thus less ordered. We plot the evolution of the $\mathrm{KL}$ over iterations on \Cref{fig:kl_dirichlet} (where the entropy is estimated using the Kozachenko-Leonenko estimator \citep{delattre2017kozachenko}).

The KDE used here will not scale well with the dimension, however, different methods have been recently propose to overcome this issue, such as using projection on lower dimensional subspaces \citep{wang2022projected}, or using neural networks to learn ratio density estimators \citep{ansari2021refining, wang2022optimal, feng2024relative}.

\section{Proofs} \label{appendix:proofs}

\subsection{Proof of \Cref{prop:frechet_derivative}} \label{proof:prop_frechet_derivative}

Let $\mu\in\cP_2(\R^d)$, $\sS,\T\in D(\tF_\mu)$, $\epsilon>0$. Since $\cF$ is Wasserstein differentiable at $\sS_\#\mu$, applying \Cref{prop:strong_diff_w} at $\sS_\#\mu$ with $\nu=\big(\sS+\epsilon(\T-\sS)\big)_\#\mu$ and $\gamma = \big(\sS, \sS+\epsilon(\T-\sS)\big)_\#\mu\in \Pi(\sS_\#\mu, \nu)$, we obtain,
\begin{align}
    \tF_\mu\big(\sS + \epsilon(\T-\sS)\big) &= \cF\big((\sS+\epsilon (\T-\sS))_\#\mu\big) \nonumber \\
    &= \cF(\sS_\#\mu) + \int \langle \gW\cF(\sS_\#\mu)(x), y-x\rangle \ \diff\gamma(x,y) \nonumber \\
    & \quad +  o\left(\sqrt{\int \|x-y\|_2^2\ \diff\gamma(x,y)}\right) \nonumber \\
    &= \tF_\mu(\sS) + \epsilon \int \langle \gW\cF(\sS_\#\mu)\big(\sS(x)\big), \T(x)-\sS(x)\rangle\ \diff\mu(x) \nonumber \\
    & \quad + o\left(\epsilon \sqrt{\int \|\T(x)-\sS(x)\|_2^2\ \diff\mu(x)}\right) \nonumber \\
    &= \tF_\mu(\sS) + \epsilon \langle \gW\cF(\sS_\#\mu)\circ \sS, \T-\sS\rangle_{L^2(\mu)} + \epsilon o(\|\T-\sS\|_{L^2(\mu)}).
\end{align}
Thus, $\delta\tF_\mu(\sS, \T-\sS) = \langle \gW\cF(\sS_\#\mu)\circ \sS, \T-\sS\rangle_{L^2(\mu)}$. Note that in the third equality we used that $\gW\cF(\mu)\in L^2(\mu)$. \qedhere

\subsection{Proof of \Cref{prop:pushforward_compatible}} \label{proof:prop_pushforward_compatible}

Let $\mu,\rho \in \cPa$ and $\nu\in\cP_2(\R^d)$. Define $\T_{\phi_{\mu}}^{\mu,\nu}=\argmin_{\T_\#\mu=\nu}\ \D_{\phi_\mu}(\T,\id)$, $\U_{\phi_{\rho}}^{\rho,\nu}=\argmin_{\U_\#\rho =\nu}\ \D_{\phi_\rho }(\U,\id)$ and let $\sS\in L^2(\mu)$ such that $\sS_\#\mu=\rho$. Then, noticing that $\gamma=(\T_{\phi_\mu}^{\mu,\nu}, \sS)_\#\mu \in \Pi(\nu,\rho)$, we have
    \begin{align}
        \D_{\phi_\mu}(\T_{\phi_\mu}^{\mu,\nu}, \sS) &= \phi\big((\T_{\phi_\mu}^{\mu,\nu})_\#\mu\big) - \phi(\sS_\#\nu) - \int \langle \gW\phi(\sS_\#\mu)(y), x-y\rangle\ \mathrm{d}(\T_{\phi_\mu}^{\mu,\nu},\sS)_\#\mu(x,y) \nonumber \\
        &= \phi(\nu) - \phi(\rho) - \int \langle \gW\phi(\rho)(y), x-y\rangle\ \mathrm{d}\gamma(x,y) \nonumber \\
        &\ge \cW_\phi(\nu,\rho) = \D_{\phi_\rho}(\U_{\phi_\rho}^{\rho, \nu}, \id). 
    \end{align}
In the last line, we used \Cref{prop:ot_map}, \emph{i.e.} that the optimal coupling is of the form $(\U_{\phi_\rho}^{\rho,\nu}, \id)_\#\rho$. \qedhere

\subsection{Proof of \Cref{prop:descent_mirror}} \label{proof:prop_descent_mirror}

Let $\T_{k+1}=\argmin_{\T\in L^2(\mu_k)}\ \tau\langle \nabla_{\cW_2}\cF(\mu_k), \T-\id\rangle_{L^2(\mu_k)} +\D_{\phi_{\mu_k}}(\T, \id)$. 
Applying the three-point inequality (\Cref{lemma:3pt_ineq}) with $\psi(\T) = \tau \langle\nabla_{\cW_2}\cF(\mu_k), \T-\id\rangle_{L^2(\mu_k)}$ which is convex, $\T_0=\id$ and $\T^*=\T_{k+1}$, we get for all $\T\in L^2(\mu_k)$,
\begin{equation}
    \begin{aligned}
        &\tau\langle \nabla_{\cW_2}\cF(\mu_k), \T - \id\rangle_{L^2(\mu_k)} + \D_{\phi_{\mu_k}}(\T,\id) \\ &\ge \tau\langle \nabla_{\cW_2}\cF(\mu_k), \T_{k+1}-\id\rangle_{L^2(\mu_k)} + \D_{\phi_{\mu_k}}(\T_{k+1},\id) + \D_{\phi_{\mu_k}}(\T,\T_{k+1}),
    \end{aligned}
\end{equation}
which is equivalent to
\begin{equation}  \label{eq:ineq_lemma}
    \begin{aligned}
        &\langle \nabla_{\cW_2}\cF(\mu_k), \T_{k+1}-\id\rangle_{L^2(\mu_k)} + \frac{1}{\tau} \D_{\phi_{\mu_k}}(\T_{k+1}, \id) \\ &\le \langle \nabla_{\cW_2}\cF(\mu_k), \T-\id\rangle_{L^2(\mu_k)} + \frac{1}{\tau} \D_{\phi_{\mu_k}}(\T,\id) - \frac{1}{\tau} \D_{\phi_{\mu_k}}(\T,\T_{k+1}).
    \end{aligned}
\end{equation}

By the $\sm$-smoothness of $\tF_{\mu_k}$ relative to $\phi_{\mu_k}$, we also have
    \begin{align}
        &\D_{\tF_{\mu_k}}(\T_{k+1}, \id) = \tF_{\mu_k}(\T_{k+1}) - \tF_{\mu_k}(\id) - \langle \gW\cF(\mu_k), \T_{k+1}-\id\rangle_{L^2(\mu_k)} \le \sm \D_{\phi_{\mu_k}}(\T_{k+1},\id) \nonumber \\ &\iff \tF_{\mu_k}(\T_{k+1}) \le \tF_{\mu_k}(\id) + \langle \gW\cF(\mu_k), \T_{k+1}-\id\rangle_{L^2(\mu_k)} + \sm \D_{\phi_{\mu_k}}(\T_{k+1},\id).
    \end{align}
Moreover, since $\sm\le\frac{1}{\tau}$, this inequality implies (by non-negativity of $\D_{\phi_{\mu_k}}$),
\begin{equation}
    \tF_{\mu_k}(\T_{k+1}) \le \tF_{\mu_k}(\id) + \langle \gW\cF(\mu_k), \T_{k+1}-\id\rangle_{L^2(\mu_k)} + \frac{1}{\tau} \D_{\phi_{\mu_k}}(\T_{k+1},\id).
\end{equation}

Then, using the inequality \eqref{eq:ineq_lemma}, we obtain for all $\T\in L^2(\mu_k)$,
\begin{equation} \label{eq:descent_md_v0}
    \begin{aligned}
        \tF_{\mu_k}(\T_{k+1}) &\le \tF_{\mu_k}(\id) + \langle \nabla_{\cW_2}\cF(\mu_k), \T- \id\rangle_{L^2(\mu_k)} + \frac{1}{\tau} \D_{\phi_{\mu_k}}(\T,\id) - \frac{1}{\tau} \D_{\phi_{\mu_k}}(\T, \T_{k+1}). %
    \end{aligned}
\end{equation}
Observing that $\tF_{\mu_k}(\T_{k+1}) = \cF(\mu_{k+1})$ and $\tF_{\mu_k}(\id) = \cF(\mu_k)$, we get
\begin{equation} \label{eq:descent_md}
    \begin{aligned}
        \cF(\mu_{k+1}) \le \cF(\mu_k) + \langle \gW\cF(\mu_k), \T-\id\rangle_{L^2(\mu_k)} + \frac{1}{\tau} \D_{\phi_{\mu_k}}(\T,\id) - \frac{1}{\tau} \D_{\phi_{\mu_k}}(\T, \T_{k+1}).
    \end{aligned}   
\end{equation}

Finally, setting $\T=\id$, we obtain the result:
\begin{equation}
    \cF(\mu_{k+1}) \le \cF(\mu_k) - \frac{1}{\tau} \D_{\phi_{\mu_k}}(\id, \T_{k+1}). \qedhere
\end{equation}

\subsection{Proof of \Cref{prop:bound_md}} \label{proof:prop_bound_md}

Let $\nu\in\cP_2(\R^d)$, and $\T=\argmin_{\T, \T_\#\mu_k=\nu}\ \D_{\phi_{\mu_k}}(\T,\id)$.
From the relative convexity hypothesis, we have
\begin{equation}
    \begin{aligned}
        \D_{\tF_{\mu_k}}(\T,\id) &\ge \stc \D_{\phi_{\mu_k}}(\T,\id) \\
        &\iff \tF_{\mu_k}(\T)-\tF_{\mu_k}(\id)-\langle \nabla_{\cW_2}\cF(\mu_k), \T-\id\rangle_{L^2(\mu_k)} \ge \stc \D_{\phi_{\mu_k}}(\T,\id) \\
        &\iff \tF_{\mu_k}(\T) - \stc \D_{\phi_{\mu_k}}(\T,\id) \ge \tF_{\mu_k}(\id) + \langle \nabla_{\cW_2}\cF(\mu_k), \T-\id\rangle_{L^2(\mu_k)} \\
        &\iff \cF(\nu) - \stc \D_{\phi_{\mu_k}}(\T,\id) \ge \cF(\mu_k) + \langle \nabla_{\cW_2}\cF(\mu_k), \T-\id\rangle_{L^2(\mu_k)}.
    \end{aligned}
\end{equation}
Plugging this into \eqref{eq:descent_md_v0}, we get
\begin{equation}
    \cF(\mu_{k+1}) \le \cF(\nu) + \frac{1}{\tau} \big(\D_{\phi_{\mu_k}}(\T,\id)-\D_{\phi_{\mu_k}}(\T, \T_{k+1})\big) - \stc\D_{\phi_{\mu_k}}(\T, \id).
\end{equation}
Then, by definition of $\T$, note that $\D_{\phi_{\mu_k}}(\T,\id) = \cW_\phi(\nu,\mu_k)$, and by \Cref{assumption:min}, we have $\D_{\phi_{\mu_k}}(\T,\T_{k+1}) \ge \cW_\phi(\nu, \mu_{k+1})$, since $\T_\#\mu_k=\nu$ and $(\T_{k+1})_\#\mu_k = \mu_{k+1}$. Thus,
\begin{equation} \label{eq:bound_F}
    \cF(\mu_{k+1}) - \cF(\nu) \le \left(\frac{1}{\tau}-\stc\right) \cW_\phi(\nu,\mu_k) - \frac{1}{\tau} \cW_\phi(\nu,\mu_{k+1}).
\end{equation}

Observing that $\cF(\mu_k)\le\cF(\mu_\ell)$ for all $\ell\le k$ (by \Cref{prop:descent_mirror} and non-negativity of $\D_\phi$ for $\phi$ convex) and that $\cW_\phi(\nu,\mu) \ge 0$, we can apply \Cref{lemma:ineq_lu} with $f=\cF$, $c=\cF(\nu)$ and $g=\cW_\phi(\nu,\cdot)$, and we obtain
\begin{equation}
    \forall k\ge 1,\ \cF(\mu_k) - \cF(\nu) \le \frac{\stc}{\left(\frac{\frac{1}{\tau}}{\frac{1}{\tau}-\stc}\right)^k - 1} \cW_\phi(\nu,\mu_0) \le \frac{\frac{1}{\tau} - \stc}{k} \cW_{\phi}(\nu, \mu_0).
\end{equation}

For the second result, from \eqref{eq:bound_F}, we get for $\nu=\mu^*$ the minimizer of $\cF$, since $\cF(\mu_{k+1})-\cF(\mu^*)\ge 0$, 
\begin{equation}
    \cW_\phi(\mu^*, \mu_{k+1}) \le \left(1-\stc\tau\right) \cW_{\phi}(\mu^*, \mu_k) \le \left(1-\stc\tau\right)^{k+1} \cW_{\phi}(\mu^*, \mu_0). \qedhere
\end{equation}

\subsection{Proof of \Cref{prop:descent_pgd}} \label{proof:prop_descent_pgd}

Let $k\ge 0$, by the definition of $\D_{\phi_{\mu_k}^{\kp}}$ and the hypothesis $\D_{\phi_{\mu_k}^{\kp}}\big(\gW\cF(\mu_{k+1})\circ \T_{k+1}, \gW\cF(\mu_k)\big) \le \sm \D_{\tF_{\mu_k}}(\id, \T_{k+1})$
, we have
    \begin{align} \label{eq:bound_pgd}
        \phi_{\mu_{k+1}}^{\kp}\big(\gW\cF(\mu_{k+1})\big) &= \phi_{\mu_k}^{\kp}\big(\gW\cF(\mu_k)\big) \nonumber \\ 
        &\quad + \langle \nabla {\kp} \circ \gW\cF(\mu_k), \gW\cF(\mu_{k+1})\circ \T_{k+1}-\gW\cF(\mu_k)\rangle_{L^2(\mu_k)} \nonumber \\ 
        &\quad+ \D_{\phi_{\mu_k}^{\kp}}\big(\gW\cF((\T_{k+1})_\#\mu_k)\circ \T_{k+1}, \gW\cF(\mu_k)\big) \nonumber\\ 
        &\le \phi_{\mu_k}^{\kp}\big(\gW\cF(\mu_k)\big) \nonumber \\ 
        &\quad + \langle \nabla {\kp} \circ \gW\cF(\mu_k), \gW\cF(\mu_{k+1})\circ \T_{k+1}-\gW\cF(\mu_k)\rangle_{L^2(\mu_k)} \nonumber \\ 
        &\quad+ \sm \D_{\tF_{\mu_k}}(\id, \T_{k+1}) \nonumber \\
        &\le \phi_{\mu_k}^{\kp}\big(\gW\cF(\mu_k)\big) \nonumber\\ 
        &\quad + \langle \nabla {\kp} \circ \gW\cF(\mu_k), \gW\cF(\mu_{k+1})\circ \T_{k+1}-\gW\cF(\mu_k)\rangle_{L^2(\mu_k)} \nonumber \\ 
        &\quad+ \frac{1}{\tau} \D_{\tF_{\mu_k}}(\id, \T_{k+1}),
    \end{align}
where we used in the last line that $\tau\le \frac{1}{\sm}$ and the non-negativity of the Bregman divergence since $\cF$ is convex along $t\mapsto \big((1-t)\T_{k+1} + t \id\big)_\#\mu_k$ and thus by \Cref{prop:equivalences_w_convex}, $\D_{\tF_{\mu_k}}(\id, \T_{k+1}) \ge 0$.

Let $\T\in L^2(\mu_k)$. Then, using the three-point identity (\Cref{lemma:3pt_id}) (with $\sS=\id$, $\U=\T$ and $\T=\T_{k+1}$), and remembering that $\T_{k+1} = \id - \tau \nabla{\kp}\circ \gW\cF(\mu_k)$, we get
\begin{equation}
    \begin{aligned}
        \D_{\tF_{\mu_k}}(\id, \T_{k+1}) &=  \D_{\tF_{\mu_k}}(\id, \T) - \D_{\tF_{\mu_k}}(\T_{k+1},\T) \\ 
            &\quad - \langle \gW\cF\big((\T_{k+1})_\#\mu_k\big)\circ \T_{k+1}, \id - \T_{k+1}\rangle_{L^2(\mu_k)} \\ 
            &\quad+ \langle \gW\cF(\T_\#\mu_k)\circ\T,\id-\T_{k+1}\rangle_{L^2(\mu_k)} \\
        &= \D_{\tF_{\mu_k}}(\id, \T) - \D_{\tF_{\mu_k}}(\T_{k+1},\T) \\ &\quad+ \langle \gW\cF(\T_\#\mu_k)\circ \T - \gW\cF(\mu_{k+1})\circ \T_{k+1}, \id-\T_{k+1}\rangle_{L^2(\mu_k)} \\
        &= \D_{\tF_{\mu_k}}(\id, \T) - \D_{\tF_{\mu_k}}(\T_{k+1},\T) \\ &\quad+ \tau \langle \gW\cF(\T_\#\mu_k)\circ \T - \gW\cF(\mu_{k+1})\circ \T_{k+1}, \nabla{\kp}\circ\gW\cF(\mu_k)\rangle_{L^2(\mu_k)}.
    \end{aligned}
\end{equation}
This is equivalent to
\begin{align}
    \langle \nabla {\kp} \circ & \gW\cF(\mu_k), \gW\cF(\mu_{k+1})\circ \T_{k+1}-\gW\cF(\mu_k)\rangle_{L^2(\mu_k)} + \frac{1}{\tau} \D_{\tF_{\mu_k}}(\id, \T_{k+1}) \nonumber \\ 
    &=  \frac{1}{\tau}\D_{\tF_{\mu_k}}(\id, \T) - \frac{1}{\tau}\D_{\tF_{\mu_k}}(\T_{k+1},\T) \nonumber \\
    & \quad + \langle \gW\cF(\T_\#\mu_k)\circ \T - \gW\cF(\mu_{k}), \nabla{\kp}\circ\gW\cF(\mu_k)\rangle_{L^2(\mu_k)}.
\end{align}  
Then, using the definition of $\D_{\phi_{\mu_k}^{\kp}}\big(\gW\cF(\T_\#\mu_k)\circ \T, \gW\cF(\mu_k)\big)$, we obtain 
\begin{equation}
    \begin{aligned}
        &\langle \nabla {\kp} \circ \gW\cF(\mu_k), \gW\cF(\mu_{k+1})\circ \T_{k+1}-\gW\cF(\mu_k)\rangle_{L^2(\mu_k)} + \frac{1}{\tau} \D_{\tF_{\mu_k}}(\id, \T_{k+1}) \\ &=  \frac{1}{\tau}\D_{\tF_{\mu_k}}(\id, \T) - \frac{1}{\tau}\D_{\tF_{\mu_k}}(\T_{k+1},\T) \\ &\quad- \D_{\phi_{\mu_k}^{\kp}}\big(\gW\cF(\T_\#\mu_k)\circ \T, \gW\cF(\mu_k)\big) + \phi_{\mu_k}^{{\kp}}\big(\gW\cF(\T_\#\mu_{k})\circ \T\big) - \phi_{\mu_k}^{\kp}\big(\gW\cF(\mu_k)\big).
    \end{aligned}
\end{equation}

Plugging this into \eqref{eq:bound_pgd}, we get
\begin{multline} \label{eq:descent_pgd_v0}
    \phi_{\mu_{k+1}}^{\kp}\big(\gW\cF(\mu_{k+1})\big) \le \phi_{\mu_k}^{\kp}\big(\gW\cF(\T_\#\mu_k)\circ \T\big) + \frac{1}{\tau}\D_{\tF_{\mu_k}}(\id, \T) - \frac{1}{\tau}\D_{\tF_{\mu_k}}(\T_{k+1},\T) \\ - \D_{\phi_{\mu_k}^{\kp}}\big(\gW\cF(\T_\#\mu_k)\circ \T, \gW\cF(\mu_k)\big).
\end{multline}

For $\T=\id$, we get 
\begin{equation}
    \phi_{\mu_{k+1}}^{\kp} \big(\gW\cF(\mu_{k+1})\big) \le \phi_{\mu_k}^{\kp}\big(\gW\cF(\mu_k)\big) - \frac{1}{\tau} \D_{\tF_{\mu_k}}(\T_{k+1},\id). \qedhere
\end{equation}

\subsection{Proof of \Cref{prop:bound_pgd}} \label{proof:prop_bound_pgd}

Let $\mu^*\in\cP_2(\R^d)$ be the minimizer of $\cF$, $k\ge 0$ and $\T=\argmin_{\T\in L^2(\mu_k), \T_\#\mu_k=\mu^*}\ \D_{\tF_{\mu_k}}(\id,\T)$. %
First, observe that since $\mu^*$ is the minimizer of $\cF$, then $\gW\cF(\mu^*) = 0$ (see \emph{e.g.} \citep[Theorem 3.1]{lanzetti2022first}), and thus $\phi_{\mu_k}^{\kp}(0)={\kp}(0)$. Moreover, it induces that $\D_{\tF_{\mu_k}}(\id,\T) = \cF(\mu_k)-\cF(\mu^*)$ and $\D_{\tF_{\mu_k}}(\T_{k+1},\T) = \cF(\mu_{k+1}) - \cF(\mu^*)$.

Therefore, using \eqref{eq:descent_pgd_v0} and the hypothesis $\stc \D_{\tF_{\mu_k}}(\id,\T)\le \D_{\phi_{\mu_k}^{\kp}}\big(0, \gW\cF(\mu_k)\big)$, we get
\begin{equation}
    \begin{aligned}
        \phi_{\mu_{k+1}}^{\kp}\big(\gW\cF(\mu_{k+1})\big) - {\kp}(0) 
        &\le \frac{1}{\tau} \D_{\tF_{\mu_k}}(\id,\T)- \frac{1}{\tau} \D_{\tF_{\mu_k}}(\T_{k+1}, \T) - \D_{\phi_{\mu_k}^{\kp}}\big(0, \gW\cF(\mu_k)\big) \\
        &\le \frac{1}{\tau} \D_{\tF_{\mu_k}}(\id,\T)- \frac{1}{\tau} \D_{\tF_{\mu_k}}(\T_{k+1}, \T) -\stc \D_{\tF_{\mu_k}}(\id, \T) \\
        &= \left(\frac{1}{\tau}-\stc\right)\D_{\tF_{\mu_k}}(\id,\T) - \frac{1}{\tau} \D_{\tF_{\mu_k}}(\T_{k+1},\T) \\
        &= \left(\frac{1}{\tau}-\stc\right) \big(\cF(\mu_k)-\cF(\mu^*)\big) - \frac{1}{\tau} \big(\cF(\mu_{k+1})-\cF(\mu^*)\big).
    \end{aligned}
\end{equation}

Then, applying \Cref{lemma:ineq_lu} with $f=\phi_{\cdot}^{\kp} \circ \gW\cF$ (which satisfies $\phi_{\mu_{k+1}}^{\kp}\big(\gW\cF(\mu_{k+1})\big) \le \phi_{\mu_k}^{\kp}\big(\gW\cF(\mu_k)\big)$ by \Cref{prop:descent_pgd}), $c=\kp(0)$ and $g = \cF(\cdot) - \cF(\mu^*)\ge 0$, we get
\begin{equation}
    \phi_{\mu_{k}}^{\kp}\big(\gW\cF(\mu_{k})\big) - {\kp}(0) \le \frac{\stc}{\left(\frac{\frac{1}{\tau}}{\frac{1}{\tau}-\stc}\right)^k - 1}  \big(\cF(\mu_0)-\cF(\mu^*)\big)  \le \frac{\frac{1}{\tau}-\stc}{k}\big(\cF(\mu_0)-\cF(\mu^*)\big).
\end{equation}

Concerning the convergence of $\cF(\mu_k)$, if $\stc>0$ and ${\kp}$ attains its minimum in $0$, then necessarily $\phi_\mu^{\kp}(\T)\ge {\kp}(0)$ for all $\mu\in \cP_2(\R^d)$ and $\T\in L^2(\mu)$. Thus, using \eqref{eq:descent_pgd_v0}, we get
\begin{align}
        0 \le \phi_{\mu_{k+1}}^{\kp}\big(\gW\cF(\mu_{k+1})\big) - {\kp}(0) 
        &\le \frac{1}{\tau} \D_{\tF_{\mu_k}}(\id,\T)- \frac{1}{\tau} \D_{\tF_{\mu_k}}(\T_{k+1}, \T) - \D_{\phi_{\mu_k}^{\kp}}\big(0, \gW\cF(\mu_k)\big) \nonumber \\
        &\le \frac{1}{\tau} \big(\cF(\mu_k) - \cF(\mu^*)\big) - \frac{1}{\tau} \big(\cF(\mu_{k+1})-\cF(\mu^*)\big) \nonumber \\
            & \quad - \stc \D_{\tF_{\mu_k}}(\id,\T) \nonumber \\
        &= \left(\frac{1}{\tau} - \stc\right) \big(\cF(\mu_k) - \cF(\mu^*)\big) - \frac{1}{\tau} \big(\cF(\mu_{k+1})-\cF(\mu^*)\big).
    \end{align}
Thus, for all $k\ge 0$,
\begin{equation}
    \begin{aligned}
        \cF(\mu_{k+1}) - \cF(\mu^*) %
        &= \left(1-\tau\stc\right) \big(\cF(\mu_k)-\cF(\mu^*)\big) \\
        &\le \left(1-\tau\stc\right)^{k+1} \big(\cF(\mu_0)-\cF(\mu^*)\big). 
    \end{aligned} \qedhere
\end{equation}

\subsection{Proof of \Cref{prop:sufficient_conditions_pgd}} \label{proof:prop_sufficient_conditions_pgd}

Let $\mu\in\cP_2(\R^d)$. Since $\tF_{\mu}^*$ is Gâteaux differentiable, we can define its Bregman divergence. 

For the first point, $\phi^{\kp}$ is $\sm$-smooth relative to $\cF_{\mu}^*$ along $t\mapsto \big((1-t)\gW\cF(\mu) + t \gW\cF(\T_\#\mu)\circ \T \big)_\#\mu$. Thus, by applying \Cref{def:rel_convexity_smoothness} for $s=1$ and $t=0$, we have
\begin{equation}
    \D_{\phi_{\mu}^{\kp}}\big(\gW\cF(\T_\#\mu)\circ\T, \gW\cF(\mu)\big) \le \sm \D_{\tF_{\mu}^*}\big(\gW\cF(\T_\#\mu)\circ \T, \gW\cF(\mu)\big).
\end{equation}
Using \Cref{lemma:link_bregman_div_grad}, we finally obtain
\begin{equation}
    \begin{aligned}
        \D_{\phi_{\mu}^{\kp}}\big(\gW\cF(\T_\#\mu)\circ\T, \gW\cF(\mu_k)\big) &\le \sm \D_{\tF_{\mu}^*}\big(\gW\cF(\T_\#\mu)\circ \T, \gW\cF(\mu)\big) \\
        &= \sm \D_{\tF_{\mu}}\big(\id,\T),
    \end{aligned}
\end{equation}
which is the desired inequality.

The second point follows similarly. \qedhere

\subsection{Proof of \Cref{lemma:derivative_grad}} \label{proof:lemma_derivative_grad}

Let us define $\tG_\mu:L^2(\mu)\times\R^d \to \R^d$ as for all $\T\in L^2(\mu)$, $x\in\R^d$, 
\begin{equation}
    \tG_\mu(\T, x) = \gW\cF(\T_\#\mu)(x) = \begin{pmatrix}
        \frac{\partial}{\partial x_1}\frac{\delta\cF}{\delta\mu}(\T_\#\mu)(x) \\
        \vdots \\
        \frac{\partial}{\partial x_d}\frac{\delta\cF}{\delta\mu}(\T_\#\mu)(x)
    \end{pmatrix} = \begin{pmatrix}
        \Tilde{G}_\mu^{1}(\T, x) \\
        \vdots \\
        \Tilde{G}_\mu^{d}(\T, x)
    \end{pmatrix},
\end{equation}
with for all $i$, $\Tilde{G}_\mu^i:L^2(\mu)\times\R^d\to\R$, $\Tilde{G}_\mu^i(\T,x)=\frac{\partial}{\partial x_i}\frac{\delta\cF}{\delta\mu}(\T_\#\mu)(x)$.
Using the chain rule, for all $x\in\R^d$, 
\begin{equation}
    \begin{aligned}
        \frac{\mathrm{d}\Tilde{G}_\mu^i}{\mathrm{d}s}\big(\T_s, \T_s(x)\big) &= \left\langle \nabla_1\Tilde{G}_\mu^i\big(\T_s, \T_s(x)\big), \frac{\mathrm{d}\T_s}{\mathrm{d}s} \right\rangle_{L^2(\mu)} + \left\langle \nabla_2\Tilde{G}_\mu^i\big(\T_s, \T_s(x)\big), \frac{\mathrm{d}\T_s}{\mathrm{d}s}(x)\right\rangle.
    \end{aligned}
\end{equation}
On one hand, we have $\nabla_2\Tilde{G}_\mu^i\big(\T_s, \T_s(x)\big) = \nabla\frac{\partial}{\partial x_i}\frac{\delta\cF}{\delta \mu}\big((\T_s)_\#\mu\big)\big(\T_s(x)\big)$. On the other hand, let us compute $\nabla_1\Tilde{G}_\mu^i(\T,x)$. First, we define the shorthands $\Tilde{g}_\mu^{x,i}(\T) = \Tilde{G}_\mu^i(\T,x) = \frac{\partial}{\partial x_i}\frac{\delta\cF}{\delta\mu}(\T_\#\mu)(x)$ and $g^{x,i}(\nu) = \frac{\partial}{\partial x_i}\frac{\delta\cF}{\delta\mu}(\nu)(x)$. Since $\Tilde{g}_\mu^{x,i}(\T) = g^{x,i}(\T_\#\mu)$, applying \Cref{prop:frechet_derivative}, we know that $\nabla_1\Tilde{G}_\mu(\T,x) = \nabla \Tilde{g}_\mu^{x,i}(\T) = \gW g^{x,i}(\T_\#\mu)\circ \T$. 

Now, let us compute $\gW g^{x,i}(\nu) = \nabla \frac{\delta g^{x,i}}{\delta\mu}(\nu)$. 
Let $\chi$ be such that $\int \mathrm{d}\chi = 0$, then using the hypothesis that $\frac{\delta}{\delta\mu}\nabla\frac{\delta\cF}{\delta\mu} = \nabla\frac{\delta^2\cF}{\delta\mu^2}$ and the definition of $g^{x,i}$,
\begin{equation}
    \begin{aligned}
        \int \frac{\delta g^{x,i}}{\delta \mu}(\nu)\ \mathrm{d}\chi &= \int \frac{\partial}{\partial x_i} \frac{\delta^2\cF}{\delta\mu^2}(\nu)(x,y)\ \mathrm{d}\chi(y).
    \end{aligned}
\end{equation}
Thus, $\gW g^{x,i}(\nu) = \nabla_y\frac{\partial}{\partial x_i}\frac{\delta^2\cF}{\delta\mu^2}(\nu)(x,y)$.

Putting everything together, we obtain
\begin{equation}
    \begin{aligned}
        \frac{\mathrm{d}\Tilde{G}_\mu^i}{\mathrm{d}s}\big(\T_s,\T_s(x)\big) &= \left\langle \nabla_y \frac{\partial}{\partial x_i}\frac{\delta^2\cF}{\delta\mu^2}\big((\T_s)_\#\mu\big)\big(\T_s(x), \T_s(\cdot)\big), \frac{\mathrm{d}\T_s}{\mathrm{d}s} \right\rangle_{L^2(\mu)} \\ &\quad + \left\langle \nabla\frac{\partial }{\partial x_i}\frac{\delta\cF}{\delta\mu}\big((\T_s)_\#\mu\big)\big(\T_s(x)\big), \frac{\mathrm{d}\T_s}{\mathrm{d}s}(x) \right\rangle \\
        &= \int \left\langle \nabla_y \frac{\partial}{\partial x_i}\frac{\delta^2\cF}{\delta\mu^2}\big((\T_s)_\#\mu\big)\big(\T_s(x), \T_s(y)\big), \frac{\mathrm{d}\T_s}{\mathrm{d}s}(y) \right\rangle\ \mathrm{d}\mu(y) \\ & \quad + \left\langle \nabla\frac{\partial }{\partial x_i}\frac{\delta\cF}{\delta\mu}\big((\T_s)_\#\mu\big)\big(\T_s(x)\big), \frac{\mathrm{d}\T_s}{\mathrm{d}s}(x) \right\rangle,
    \end{aligned}
\end{equation}
and thus
\begin{multline}
    \frac{\mathrm{d}}{\mathrm{d}s}\tG_\mu\big(\T_s, \T_s(x)\big) = \int \nabla_y\nabla_x\frac{\delta^2\cF}{\delta\mu^2}\big((\T_s)_\#\mu\big)\big(\T_s(x), \T_s(y)\big) \frac{\mathrm{d}\T_s}{\mathrm{d}s}(y)\ \mathrm{d}\mu(y) \\ + \nabla^2\frac{\delta\cF}{\delta\mu}\big((\T_s)_\#\mu\big)\big(\T_s(x)\big) \frac{\mathrm{d}\T_s}{\mathrm{d}s}(x).
\end{multline} \qedhere

\subsection{Proof of \Cref{prop:hessian}} \label{proof:prop_hessian}

First, recall that by using the chain rule and \Cref{prop:frechet_derivative}, $\frac{\mathrm{d}}{\mathrm{d}t}\cF(\mu_t) = \langle \gW\cF(\mu_t)\circ \T_t, \frac{\mathrm{d}\T_t}{\mathrm{d}t}\rangle_{L^2(\mu)}$. Thus, since $\frac{\mathrm{d}^2\T_t}{\mathrm{d}t^2}=0$,
\begin{equation}
    \begin{aligned}
        \frac{\mathrm{d}^2}{\mathrm{d}t^2}\cF(\mu_t) &= \frac{\mathrm{d}}{\mathrm{d}t} \left\langle \gW\cF(\mu_t)\circ \T_t, \frac{\mathrm{d}\T_t}{\mathrm{d}t}\right\rangle_{L^2(\mu)} \\
        &= \left\langle \frac{\mathrm{d}}{\mathrm{d}t}\left(\gW\cF(\mu_t)\circ \T_t\right), \frac{\mathrm{d}\T_t}{\mathrm{d}t}\right\rangle_{L^2(\mu)}.
    \end{aligned}
\end{equation}
By \Cref{lemma:derivative_grad}, 
\begin{equation}
    \begin{aligned}
        \frac{\mathrm{d}^2}{\mathrm{d}t^2}\cF(\mu_t) &= \iint \left\langle \nabla_y\nabla_x\frac{\delta^2\cF}{\delta\mu^2}\big((\T_t)_\#\mu\big)\big(\T_t(x), \T_t(y)\big) \frac{\mathrm{d}\T_t}{\mathrm{d}t}(y), \frac{\mathrm{d}\T_t}{\mathrm{d}t}(x)\right\rangle\ \mathrm{d}\mu(y)\mathrm{d}\mu(x) \\ &\quad + \int \left\langle \nabla^2\frac{\delta\cF}{\delta\mu}\big((\T_t)_\#\mu\big)\big(\T_t(x)\big) \frac{\mathrm{d}\T_t}{\mathrm{d}t}(x), \frac{\mathrm{d}\T_t}{\mathrm{d}t}(x)\right\rangle\ \mathrm{d}\mu(x) \\
        &= \iint \left\langle \nabla_y\nabla_x\frac{\delta^2\cF}{\delta\mu^2}\big((\T_t)_\#\mu\big)\big(\T_t(x), \T_t(y)\big) v(y), v(x)\right\rangle\ \mathrm{d}\mu(y)\mathrm{d}\mu(x) \\ &\quad + \int \left\langle \nabla^2\frac{\delta\cF}{\delta\mu}\big((\T_t)_\#\mu\big)\big(\T_t(x)\big) v(x), v(x)\right\rangle\ \mathrm{d}\mu(x) \\
        &= \int \Big\langle \int \nabla_y\nabla_x\frac{\delta^2\cF}{\delta\mu^2}\big((\T_t)_\#\mu\big)\big(\T_t(x), \T_t(y)\big) v(y)\ \mathrm{d}\mu(y) \\ &\quad +  \nabla^2\frac{\delta\cF}{\delta\mu}\big((\T_t)_\#\mu\big)\big(\T_t(x)\big) v(x), v(x)\Big\rangle\ \mathrm{d}\mu(x).
    \end{aligned}
\end{equation}
\qedhere

\subsection{Proof of \Cref{prop:equivalences_w_convex}} \label{proof:prop_equivalences_w_convex}

\begin{enumerate}
    \item \ref{equivalence:c1} $\implies$ \ref{equivalence:c2}. Let $t>0$, $t_1,t_2\in [0,1]$,
    \begin{multline}
        \mathcal{F}(\Tilde{\mu}_t^{t_1\to t_2}) \le (1-t)\mathcal{F}\big((\T_{t_1})_\#\mu\big) + t\mathcal{F}\big((\T_{t_2})_\#\mu\big) \\ \iff \frac{\mathcal{F}\big(\Tilde{\mu}_t^{t_1\to t_2}\big)-\mathcal{F}\big((\T_{t_1})_\#\mu\big)}{t} \le \mathcal{F}\big((\T_{t_2})_\#\mu\big)-\mathcal{F}\big((\T_{t_1})_\#\mu\big).
    \end{multline}
    Passing to the limit $t\to 0$ and using \Cref{prop:frechet_derivative}, we get $\langle \nabla_{W_2}\mathcal{F}\big((\T_{t_1})_\#\mu\big)\circ \T_{t_1}, \T_{t_2}-\T_{t_1}\rangle_{L^2(\mu)} \le \mathcal{F}\big((\T_{t_2})_\#\mu\big) - \mathcal{F}\big((\T_{t_1})_\#\mu\big)$.
    
    \item \ref{equivalence:c2} $\implies \ref{equivalence:c3}$. Let $t_1, t_2 \in [0,1]$, then by hypothesis,
    \begin{equation}
        \begin{cases}
            \langle \nabla_{W_2}\mathcal{F}\big((\T_{t_1})_\#\mu\big)\circ \T_{t_1}, \T_{t_2}-\T_{t_1}\rangle_{L^2(\mu)} \le \mathcal{F}\big((\T_{t_2})_\#\mu\big) - \mathcal{F}\big((\T_{t_1})_\#\mu\big) \\
            \langle \nabla_{W_2}\mathcal{F}\big((\T_{t_2})_\#\mu\big)\circ \T_{t_2}, \T_{t_1}-\T_{t_2}\rangle_{L^2(\mu)} \le \mathcal{F}\big((\T_{t_1})_\#\mu\big) - \mathcal{F}\big((\T_{t_2})_\#\mu\big).
        \end{cases}
    \end{equation}
    Summing the two inequalities, we get
    \begin{equation}
        \langle \nabla_{W_2}\mathcal{F}\big((\T_{t_2})_\#\mu\big)\circ \T_{t_2} - \nabla_{W_2}\mathcal{F}\big((\T_{t_1})_\#\mu\big)\circ \T_{t_1}, \T_{t_2}-\T_{t_1}\rangle_{L^2(\mu)} \ge 0.
    \end{equation}
    
    \item \ref{equivalence:c3} $\implies$ \ref{equivalence:c4}. 
    Let $t_1, t_2\in [0,1]$. First, we have,
    \begin{equation}
        \begin{aligned}
            \int_0^1 \frac{\mathrm{d}^2}{\mathrm{d}t^2}\mathcal{F}(\Tilde{\mu}_t^{t_1\to t_2})\ \mathrm{d}t &= \frac{\mathrm{d}}{\mathrm{d}t}\mathcal{F}(\Tilde{\mu}_t^{t_1\to t_2})\Big|_{t=1} - \frac{\mathrm{d}}{\mathrm{d}t}\mathcal{F}(\Tilde{\mu}_t^{t_1\to t_2})\Big|_{t=0} \\ 
            &= \langle \nabla_{W_2}\mathcal{F}\big((\T_{t_2})_\#\mu\big)\circ \T_{t_2} - \nabla_{W_2}\mathcal{F}\big((\T_{t_1})_\#\mu\big)\circ \T_{t_1}, \\ &\quad\quad \T_{t_2}-\T_{t_1}\rangle_{L^2(\mu)} \\
            &\ge 0.
        \end{aligned}
    \end{equation}    
    Let $\epsilon \in (0,1)$ and define $t\mapsto \nu_t^\epsilon = \Tilde{\mu}_{\epsilon t}^{t_1\to 1}$ the interpolation curve between $(\T_{t_1})_\#\mu$ and $\big(\T_{t_1} + \epsilon (\T-\T_{t_1})\big)_\#\mu$. Then, noting that $\T_{t_1} + \epsilon (\T-\T_{t_1}) = \T_{t_1 + \epsilon (1-t_1)}$, so $ \nu_t^\epsilon=\Tilde{\mu}_{\epsilon t}^{t_1\to 1}=\Tilde{\mu}_{t}^{t_1\to t_1 + \epsilon (1-t_1)}$ and we have that
    \begin{equation}
        \int_0^1 \frac{\mathrm{d}^2}{\mathrm{d}t^2}\mathcal{F}(\nu_t^\epsilon)\ \mathrm{d}t \ge 0.
    \end{equation}
    Moreover, by continuity, $\frac{\mathrm{d}^2}{\mathrm{d}t^2}\mathcal{F}(\nu_t^\epsilon) \xrightarrow[\epsilon\to 0]{} \frac{\mathrm{d}^2}{\mathrm{d}t^2}\mathcal{F}\big((\T_{t_1})_\#\mu\big) = \frac{\mathrm{d}^2}{\mathrm{d}t^2}\mathcal{F}(\mu_{t_1})$. Then, since $t\mapsto \frac{\diff^2}{\diff t^2}\cF(\nu_t^\epsilon)$ is continuous on $[0,1]$, it is bounded, and we can apply the dominated convergence theorem. This implies that for all $t_1\in [0,1]$, 
    \begin{equation}
        \mathrm{Hess}_{\mu_{t_1}}\mathcal{F} = \frac{\mathrm{d}^2}{\mathrm{d}t^2}\mathcal{F}(\mu_t)\Big|_{t=t_1} = \lim_{\epsilon\to 0} \int_0^1 \frac{\diff^2}{\diff t^2}\cF(\nu_t^\epsilon) \ \mathrm{d}t \ge 0.
    \end{equation}
    
    \item \ref{equivalence:c4} $\implies$ \ref{equivalence:c1}. Let $t_1,t_2\in [0,1]$ and $\varphi(t) = \mathcal{F}(\Tilde{\mu}_t^{t_1\to t_2})$ for all $t\in [0,1]$. From \citep[Equation 16.5]{villani2009optimal}, 
    \begin{equation}
        \forall t\in [0,1],\ \varphi(t) = (1-t)\varphi(0) + t \varphi(1) - \int_0^1 \frac{\mathrm{d}^2}{\mathrm{d}t^2}\varphi(s) G(s,t)\ \mathrm{d}s,
    \end{equation}
    where $G$ is the Green function defined as $G(s,t) = s(1-t)\mathbbm{1}_{\{s\le t\}} + t(1-s)\mathbbm{1}_{\{t\le s\}} \ge 0$ \citep[Equation 16.6]{villani2009optimal}. Then, $\frac{\mathrm{d}^2}{\mathrm{d}t^2}\mathcal{F}(\mu_t) \ge 0$ implies that $\int_0^1 \frac{\mathrm{d}^2}{\mathrm{d}t^2}\varphi(s) G(s,t)\ \mathrm{d}s \ge 0$, and thus
    \begin{equation}
        \varphi(t) = \mathcal{F}(\Tilde{\mu}_t^{t_1\to t_2}) \le (1-t)\varphi(0) + t\varphi(1) = (1-t)\mathcal{F}\big((\T_{t_1})_\#\mu\big) + t\mathcal{F}\big((\T_{t_2})_\#\mu\big). \qedhere
    \end{equation}
\end{enumerate}

\subsection{Proof of \Cref{prop:equivalence_schemes}} \label{proof:prop_equivalence_schemes}

Let $\J(\T) = \D_{\phi_{\mu_k}}(\T,\id) + \tau \big(\gW\cG(\mu_k), \T-\id\rangle_{L^2(\mu_k)} + \cH(\T_\#\mu_k)\big)$. Taking the first variation, we get
    \begin{align}
        \nabla\J(\tT_{k+1}) &= \nabla\phi_{\mu_k}(\tT_{k+1}) - \nabla\phi_{\mu_k}(\id) + \tau \big(\gW\cG(\mu_k) + \gW\cH\big((\tT_{k+1})_\#\mu_k\big)\circ \tT_{k+1}\big) \nonumber \\
        &= \nabla\phi_{\mu_k}(\tT_{k+1}) + \tau \gW\cH\big((\tT_{k+1})_\#\mu_k\big)\circ \tT_{k+1} - \big(\nabla\phi_{\mu_k}(\id) - \tau\gW\cG(\mu_k)\big) \nonumber \\
        &= \nabla\phi_{\mu_k}(\tT_{k+1}) + \tau \gW\cH\big((\tT_{k+1})_\#\mu_k\big)\circ \tT_{k+1} - \nabla\phi_{\mu_k}(\sS_{k+1}).
    \end{align}
Thus, 
\begin{equation}
    \nabla\J(\tT_{k+1})=0 \iff \tT_{k+1} \in \argmin_{\T\in L^2(\mu_k)}\ \D_{\phi_{\mu_k}}(\T,\sS_{k+1}) + \tau\cH(\T_\#\mu_k).
\end{equation}
Now, we aim at showing that $\tT_{k+1} = \T_{k+1}\circ \sS_{k+1}$ or 
\begin{equation}
    \min_{\T\in L^2(\mu_k)}\ \D_{\phi_{\mu_k}}(\T,\sS_{k+1}) + \tau\cH(\T_\#\mu_k) = \min_{\T\in L^2(\nu_{k+1})}\ \D_{\phi_{\nu_{k+1}}}(\T,\id) + \tau\cH(\T_\#\nu_{k+1}).
\end{equation}

First, by the change of variable formula, since $\phi_\mu$ is pushforward compatible, observe that for $\T\in L^2(\nu_{k+1})$, $\D_{\phi_{\nu_{k+1}}}(\T,\id) + \tau \cH(\T_\#\nu_{k+1}) = \D_{\phi_{\mu_k}}(\T\circ \sS_{k+1}, \sS_{k+1}) + \tau \cH\big((\T\circ \sS_{k+1})_\#\mu_k\big)$.

Since $\{\T\circ \sS_{k+1} \ | \ \T\in L^2(\nu_{k+1})\} \subset L^2(\mu_k)$, we have
\begin{equation}
    \min_{\T\in L^2(\nu_{k+1})}\ \D_{\phi_{\nu_{k+1}}}(\T,\id) + \tau\cH(\T_\#\nu_{k+1}) \ge \min_{\T\in L^2(\mu_k)}\ \D_{\phi_{\mu_k}}(\T,\sS_{k+1}) + \tau\cH(\T_\#\mu_k).
\end{equation}

By assumption, $\nu_{k+1}\in\cPa$. Thus, applying \Cref{prop:ot_map}, there exists $\T_{\phi_{\nu_{k+1}}}^{\nu_{k+1}, \mu_{k+1}}$ such that $(\T_{\phi_{\nu_{k+1}}}^{\nu_{k+1}, \mu_{k+1}})_\#\nu_{k+1} = \mu_{k+1}$ and $\T_{\phi_{\nu_{k+1}}}^{\nu_{k+1}, \mu_{k+1}} = \argmin_{\T,\T_\#\nu_{k+1} = \mu_{k+1}}\ \D_{\phi_{\nu_{k+1}}}(\T,\id)$, and thus $\D_{\phi_{\nu_{k+1}}}(\T_{\phi_{\nu_{k+1}}}^{\nu_{k+1}, \mu_{k+1}}, \id) = \cW_\phi(\mu_{k+1}, \nu_{k+1})$.

By contradiction, we suppose that
\begin{equation}
    \min_{\T\in L^2(\nu_{k+1})}\ \D_{\phi_{\nu_{k+1}}}(\T,\id) + \tau\cH(\T_\#\nu_{k+1}) > \D_{\phi_{\mu_k}}(\tT_{k+1}, \sS_{k+1}) + \tau\cH\big((\tT_{k+1})_\#\mu_k\big).
\end{equation}
On one hand, we have $(\T_{\phi_{\nu_{k+1}}}^{\nu_{k+1}, \mu_{k+1}} \circ \sS_{k+1})_\#\mu_k = (\T_{\phi_{\nu_{k+1}}}^{\nu_{k+1}, \mu_{k+1}})_\#\nu_{k+1} = \mu_{k+1}$, and therefore $\cH\big((\T_{\phi_{\nu_{k+1}}}^{\nu_{k+1}, \mu_{k+1}}\circ \sS_{k+1})_\#\mu_k\big) = \cH(\mu_{k+1}) = \cH\big((\tT_{k+1})_\#\mu_k\big)$. On the other hand, $(\tT_{k+1}, \sS_{k+1})_\#\mu_k \in \Pi(\mu_{k+1}, \nu_{k+1})$, and thus
\begin{equation}
    \D_{\phi_{\mu_k}}(\tT_{k+1}, \sS_{k+1}) \ge \cW_\phi(\mu_{k+1}, \nu_{k+1}) = \D_{\phi_{\nu_{k+1}}}(\T_{\phi_{\nu_{k+1}}}^{\nu_{k+1}, \mu_{k+1}}, \id).
\end{equation}
Thus,  
    \begin{align}
        \min_{\T\in L^2(\nu_{k+1})}\ \D_{\phi_{\nu_{k+1}}}(\T,\id) + \tau\cH(\T_\#\nu_{k+1}) &> \D_{\phi_{\mu_k}}(\tT_{k+1}, \sS_{k+1}) + \tau\cH\big((\tT_{k+1})_\#\mu_k\big) \\
        &\ge \D_{\phi_{\nu_{k+1}}}(\T_{\phi_{\nu_{k+1}}}^{\nu_{k+1}, \mu_{k+1}},\id) + \tau \cH\big((\T_{\phi_{\nu_{k+1}}}^{\nu_{k+1}, \mu_{k+1}})_\#\nu_{k+1}\big). \nonumber
    \end{align}
But $\T_{\phi_{\nu_{k+1}}}^{\nu_{k+1}, \mu_{k+1}}\in L^2(\nu_{k+1})$, so this is a contradiction. So, we can conclude that the two schemes are equivalent, and moreover, $\tT_{k+1} = \T_{\phi_{\nu_{k+1}}}^{\nu_{k+1}, \mu_{k+1}} \circ \sS_{k+1}$. \qedhere

\subsection{Proof of \Cref{prop:cv_prox_grad}} \label{proof:prop_cv_prox_grad}

Let $\psi(\T) = \tau \big(\langle \gW\cG(\mu_k),\T-\id\rangle_{L^2(\mu_k)} + \cH(\T_\#\mu_k)\big)$. Since $\Tilde{\cH}_{\mu_k}$ is convex on $L^2(\mu_k)$, $\psi$ is convex, and we can apply the three-point inequality (\Cref{lemma:3pt_ineq}) and for all $\T\in L^2(\mu_k)$,
\begin{multline}
    \tau\big(\cH(\T_\#\mu_k) + \langle \gW\cG(\mu_k), \T-\id\rangle_{L^2(\mu_k)}\big) + \D_{\phi_{\mu_k}}(\T,\id) \\ \ge \tau \big(\cH(\mu_{k+1}) + \langle \gW\cG(\mu_k), \Tilde{\T}_{k+1}-\id\rangle_{L^2(\mu_k)}\big) + \D_{\phi_{\mu_k}}(\Tilde{\T}_{k+1}, \id) + \D_{\phi_{\mu_k}}(\T,\Tilde{\T}_{k+1}),
\end{multline}
which is equivalent  to
\begin{multline}
    \cH(\mu_{k+1}) + \langle \gW\cG(\mu_k), \Tilde{\T}_{k+1}-\id\rangle_{L^2(\mu_k)} + \frac{1}{\tau}\D_{\phi_\mu}(\Tilde{\T}_{k+1}, \id) \\ \le \cH(\T_\#\mu_k) + \langle \gW\cG(\mu_k), \T-\id\rangle_{L^2(\mu_k)} + \frac{1}{\tau}\D_{\phi_{\mu_k}}(\T,\id) - \frac{1}{\tau} \D_{\phi_{\mu_k}}(\T,\Tilde{\T}_{k+1}).
\end{multline}

Since $\tG_{\mu_k}$ is $\sm$-smooth relatively to $\phi_{\mu_k}$ along $t\mapsto \big((1-t)\id + t \Tilde{\T}_{k+1}\big)_\#\mu_k$, and $\tau \le \frac{1}{\sm}$, we also have
\begin{equation}
    \begin{aligned}
        \cG(\mu_{k+1}) &\le \cG(\mu_k) + \langle \gW\cG(\mu_k), \Tilde{\T}_{k+1}-\id\rangle_{L^2(\mu_k)} + \sm \D_{\phi_{\mu_k}}(\Tilde{\T}_{k+1},\id) \\
        &\le \cG(\mu_k) + \langle \gW\cG(\mu_k), \Tilde{\T}_{k+1}-\id\rangle_{L^2(\mu_k)} + \frac{1}{\tau} \D_{\phi_{\mu_k}}(\Tilde{\T}_{k+1},\id).
    \end{aligned}
\end{equation}
Thus, applying first the smoothness of $\cG$ and then the three-point inequality, we get for all $\T\in L^2(\mu_k)$,
    \begin{align}\label{eq:descent_forward_backward}
        \cH(\mu_{k+1}) + \cG(\mu_{k+1}) &\le \cH(\mu_{k+1}) + \cG(\mu_k) + \langle \gW\cG(\mu_k), \Tilde{\T}_{k+1}-\id\rangle_{L^2(\mu_k)} + \frac{1}{\tau} \D_{\phi_{\mu_k}}(\Tilde{\T}_{k+1},\id) \nonumber \\
        &\le \cH(\T_\#\mu_k) + \cG(\mu_{k}) + \langle \gW\cG(\mu_k), \T-\id\rangle_{L^2(\mu_k)} + \frac{1}{\tau} \D_{\phi_{\mu_k}}(\T,\id) \nonumber\\ &\quad - \frac{1}{\tau}\D_{\phi_{\mu_k}}(\T,\Tilde{\T}_{k+1}).
    \end{align}

Now, let $\nu\in \cP_2(\R^d)$ and $\T_{\phi_{\mu_k}}^{\mu_k,\nu} = \argmin_{\T,\T_\#\mu_k=\nu}\ \D_{\phi_{\mu_k}}(\T,\id)$, and suppose that $\Tilde{\cG}_{\mu_k}$ is $\stc$-convex relative to $\phi_{\mu_k}$ along $t\mapsto \big((1-t)\id + t T_{\phi_{\mu_k}}^{\mu_k,\nu}\big)_\#\mu_k$. Thus,
\begin{multline}
    \D_{\tG_{\mu_k}}(\T_{\phi_{\mu_k}}^{\mu_k,\nu},\id)\ge\stc\D_{\phi_{\mu_k}}(\T_{\phi_{\mu_k}}^{\mu_k,\nu},\id) \\ \iff \cG(\nu) - \stc \D_{\phi_{\mu_k}}(\T_{\phi_{\mu_k}}^{\mu_k,\nu}, \id) \ge \cG(\mu_k) + \langle \gW\cG(\mu_k), \T_{\phi_{\mu_k}}^{\mu_k,\nu} - \id\rangle_{L^2(\mu_k)}.
\end{multline}
Plugging this into \eqref{eq:descent_forward_backward}, we get
\begin{equation}
    \cF(\mu_{k+1}) \le \cH(\nu) + \cG(\nu) - \stc \D_{\phi_{\mu_k}}(\T_{\phi_{\mu_k}}^{\mu_k,\nu}, \id) + \frac{1}{\tau}\D_{\phi_{\mu_k}}(\T_{\phi_{\mu_k}}^{\mu_k,\nu}, \id) - \frac{1}{\tau} \D_{\phi_{\mu_k}}(\T_{\phi_{\mu_k}}^{\mu_k,\nu}, \Tilde{\T}_{k+1}).
\end{equation}
Now, note that $\D_{\phi_{\mu_k}}(\T_{\phi_{\mu_k}}^{\mu_k,\nu}, \id) = \cW_\phi(\nu, \mu_k)$ and by \Cref{assumption:min}, $\D_{\phi_{\mu_k}}(\T_{\phi_{\mu_k}}^{\mu_k,\nu}, \Tilde{\T}_{k+1}) \ge \cW_\phi(\nu, \mu_{k+1})$. Thus,
\begin{equation}
    \cF(\mu_{k+1}) - \cF(\nu) \le \left(\frac{1}{\tau} - \stc\right) \cW_\phi(\nu, \mu_k) - \frac{1}{\tau}\cW_\phi(\nu,\mu_{k+1}).
\end{equation}
Using $\T=\id$ in \eqref{eq:descent_forward_backward}, we observe that $\cF(\mu_k) \le \cF(\mu_\ell)$ for all $\ell\le k$. Moreover, $\cW_\phi(\nu,\mu_k)\ge 0$. Thus, applying \Cref{lemma:ineq_lu} with $f=\cF$, $c=\cF(\nu)$ and $g=\cW_\phi(\nu,\cdot)$, we obtain
\begin{equation}
    \forall k\ge 1,\ \cF(\mu_k) - \cF(\nu) \le \frac{\stc}{\left(\frac{\frac{1}{\tau}}{\frac{1}{\tau}-\stc}\right)^k - 1} \cW_\phi(\nu,\mu_0) \le \frac{\frac{1}{\tau}-\stc}{k} \cW_\phi(\nu,\mu_0). \qedhere
\end{equation}

\subsection{Proof of \Cref{lemma:pushforward_ac}} \label{proof:lemma_pushforward_ac}

First, $\nabla V^*$ is bijective. Thus, we only need to show that $h = \nabla V - \tau \nabla U$ is injective. Take $u = V - \tau U$.%

Since $U$ is $\sm$-smooth relative to $V$, we have for all $x,y$,
\begin{equation}
    U(x) \le U(y) + \langle \nabla U(y), x-y\rangle + \sm \Dr_V(x,y),
\end{equation}
which is equivalent to
\begin{equation}
    -U(y) \le -U(x) + \langle \nabla U(y), x-y\rangle + \sm \Dr_V(x,y).
\end{equation}

Moreover, by definition of $\Dr_V$,
\begin{equation}
    V(y) = V(x)-\langle\nabla V(y),x-y\rangle - \Dr_V(x,y).
\end{equation}

Summing the two inequalities, we get
\begin{equation}
    \begin{aligned}
        V(y) - \tau U(y) &\le V(x) - \langle \nabla V(y), x-y\rangle - \Dr_V(x,y) - \tau U(x) + \tau\langle \nabla U(y),x-y\rangle \\&\quad + \tau\sm \Dr_V(x,y) \\
        &= V(x)-\tau U(x) - \langle \nabla V(y) - \tau \nabla U(y), x-y\rangle - (1-\tau\sm) \Dr_V(x,y).
    \end{aligned}
\end{equation}
This is equivalent to
\begin{equation}
    u(y) \le u(x) - \langle \nabla u(y), x-y\rangle - (1-\tau\sm) \Dr_V(x,y),
\end{equation}
and thus with $u$ being $(1-\tau\sm)$-convex relative to $V$ (for $\tau\sm \le 1)$. For $\tau\sm<1$, it is equivalent to $u-(1-\tau\sm)V$ convex, \emph{i.e.} $\langle \nabla u(x)-\nabla u(y), x-y\rangle \ge (1-\tau\sm) \langle \nabla V(x)-\nabla V(y), x-y\rangle \ge 0$. Since $V$ is strictly convex, $\nabla u$ is injective.

Moreover, $|\det \nabla \T| = |\det\big(\nabla^2 V^* \circ (\nabla V-\tau\nabla U)\big) \det \nabla^2 u| > 0$ because on one hand $u$ is $(1-\sm\tau)$-convex relative to $V$ which is strictly convex, and on the other hand, $V^*$ is also strictly convex.

To conclude, applying \citep[Lemma 5.5.3]{ambrosio2005gradient}, $\T_\#\mu$ is absolutely continuous with respect to the Lebesgue measure. \qedhere

\subsection{Proof of \Cref{prop:relative_smoothness_f_to_H}} \label{proof:prop_relative_smoothness_f_to_H}

On one hand, $\cH$ is 1-smooth relative to $\cH$, thus we only need to show that $\mu\mapsto \int V\mathrm{d}\mu$ is smooth relative to $\cH$. Using \Cref{prop:equivalences_w_convex}, we need to show that 
\begin{equation}
     \frac{\mathrm{d}^2}{\mathrm{d}t^2}\cV(\mu_t) = \frac12 \int (\T_{k+1} - \id)^T \nabla^2 V (\T_{k+1} - \id)\ \mathrm{d}\mu_k \le \sm \frac{\mathrm{d}^2}{\mathrm{d}t^2} \cH(\mu_t).
\end{equation}
Recall from \eqref{eq:md_nem_bw} that $\T_{k+1}(x) =\big((1-\tau) \Sigma_{k+1}\Sigma_k^{-1} + \tau \Sigma_{k+1}\Sigma^{-1}\big) x + cst$, thus $\nabla\T_{k+1}$ is a constant. Using the computations of \citep[Appendix B.2]{diao2023forward},
\begin{equation}
    \frac{\mathrm{d}^2}{\mathrm{d}t^2}\cH(\mu_t) = \langle [\nabla \T_t]^{-2}, \nabla \T_{k+1} - I_d\rangle.
\end{equation}
Assuming $(1-\tau) \Sigma_{k+1}\Sigma_k^{-1} + \tau \Sigma_{k+1}\Sigma^{-1} \succeq 0$, $\T_{k+1}$ is the gradient of a convex function and $\mu_t$ is a Wasserstein geodesic. Thus, by \citep{diao2023forward},
\begin{equation}
    \frac{\mathrm{d}^2}{\mathrm{d}t^2}\cH(\mu_t) \ge \frac{1}{\|\Sigma_{\mu_t}\|_{\mathrm{op}}} \|\T_{k+1}-\id\|^2_{L^2(\mu_k)}.
\end{equation}
Moreover, by \citep[Lemma 10]{chewi2020gradient}, $\mu\mapsto \|\Sigma_{\mu}\|_{\mathrm{op}}$ is convex along generalized geodesics, and thus $\Sigma_{\mu_t} \preceq \lambda I_d$ and $\|\Sigma_{\mu_t}\|_{\mathrm{op}} \le \lambda$ \citep{diao2023forward}. Hence, noting $\sigma_\mathrm{max}(M)$ the largest eigenvalue of some matrix $M$,
    \begin{align}
        \frac{\mathrm{d}^2}{\mathrm{d}t^2}\cH(\mu_t) \ge \frac{1}{\lambda} \|\T_{k+1} - \id\|_{L^2(\mu_k)}^2 &\ge \frac{1}{\lambda \sigma_{\mathrm{max}}(\nabla^2V)} \int (\T_{k+1}-\id)^T\nabla^2V(T_{k+1}-\id)\mathrm{d}\mu_k \nonumber \\
        &= \frac{2}{\lambda \sigma_{\mathrm{max}}(\nabla^2 V)} \frac{\mathrm{d}^2}{\mathrm{d}t^2}\cV(\mu_t).
    \end{align}
From this inequality, we deduce that
\begin{equation}
    \begin{aligned}
        \frac{\lambda \sigma_{\mathrm{max}}(\nabla^2 V)}{2} \D_{\Tilde{\cH}_{\mu_k}}(\T_{k+1}, \id) &= \frac{\lambda \sigma_{\mathrm{max}}(\nabla^2 V)}{2} \big( \cH(\mu_{k+1}) - \cH(\mu_k) \\ &\quad- \langle \gW\cH(\mu_k), \T_{k+1}-\id\rangle_{L^2(\mu_k)} \big) \\
        &= \frac{\lambda \sigma_{\mathrm{max}}(\nabla^2 V)}{2} \int (1-t) \frac{\mathrm{d}^2}{\mathrm{d}t^2}\cH(\mu_t)\ \mathrm{d}t \\
        &\ge \int \frac{\mathrm{d^2}}{\mathrm{d}t^2}\cV(\mu_t) (1-t)\ \mathrm{d}t \\
        &= \D_{\Tilde{\cV}_{\mu_k}}(\T_{k+1}, \id).
    \end{aligned}
\end{equation}
So,
\begin{equation}
    \begin{aligned}
        \D_{\tF_{\mu_k}}(\T_{k+1},\id) &= \D_{\Tilde{\cV}_{\mu_k}}(\T_{k+1},\id) + \D_{\Tilde{\mathcal{H}}_{\mu_k}}(\T_{k+1}, \id) \\ &\le \left(1 + \frac{\lambda \sigma_{\mathrm{max}}(\nabla^2 V)}{2}\right) \D_{\Tilde{\mathcal{H}}_{\mu_k}}(\T_{k+1}, \id).
    \end{aligned} \qedhere
\end{equation}

\section{Additional results} \label{sec:add_results}

\subsection{Three-point identity and inequality} \label{section:3pt_lemmas}

In this Section, we derive results which are useful to show the convergence of mirror descent or preconditioned schemes. Namely, we first derive the three-point identity which we use to show the convergence of the preconditioned scheme in \Cref{prop:descent_pgd} as well as the three-point inequality, which we use for the convergence of the mirror descent scheme in \Cref{prop:descent_mirror}.

\begin{lemma}[Three-Point Identity] \label{lemma:3pt_id}
    Let $\phi:L^2(\mu)\to\mathbb{R}$ be Gâteaux differentiable. For all $\sS,\T,\U \in L^2(\mu)$, we have
    \begin{equation}
        \begin{aligned}
            \D_\phi(\sS,\U) %
            &= \D_\phi(\sS, \T) + \D_\phi(\T, \U) + \langle \nabla\phi(\T), \sS-\T\rangle_{L^2(\mu)} - \langle \nabla\phi(\U),\sS-\T\rangle_{L^2(\mu)}.
        \end{aligned}
    \end{equation}
\end{lemma}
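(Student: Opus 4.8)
The plan is to prove this purely by algebraic expansion, with no analytic input beyond Definition~\ref{def:bregman_div}; the statement is the $L^2(\mu)$ transcription of the classical three-point identity for Bregman divergences. First I would write out the two divergences appearing on the right-hand side using the definition (recalling that in the paper's convention the \emph{second} argument of $\D_\phi$ is the base point at which $\nabla\phi$ is evaluated):
\begin{align}
    \D_\phi(\sS,\T) &= \phi(\sS) - \phi(\T) - \langle \nabla\phi(\T), \sS-\T\rangle_{L^2(\mu)}, \\
    \D_\phi(\T,\U) &= \phi(\T) - \phi(\U) - \langle \nabla\phi(\U), \T-\U\rangle_{L^2(\mu)},
\end{align}
so that the full right-hand side becomes the sum of these two plus the explicit correction $\langle \nabla\phi(\T), \sS-\T\rangle_{L^2(\mu)} - \langle \nabla\phi(\U), \sS-\T\rangle_{L^2(\mu)}$.

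Next I would collect terms and exploit two cancellations. The scalar values $-\phi(\T)$ and $+\phi(\T)$ cancel, and the inner product $-\langle \nabla\phi(\T), \sS-\T\rangle_{L^2(\mu)}$ arising from $\D_\phi(\sS,\T)$ cancels against the explicit $+\langle \nabla\phi(\T), \sS-\T\rangle_{L^2(\mu)}$. What survives is
\begin{equation}
    \phi(\sS) - \phi(\U) - \langle \nabla\phi(\U), \T-\U\rangle_{L^2(\mu)} - \langle \nabla\phi(\U), \sS-\T\rangle_{L^2(\mu)}.
\end{equation}
Using linearity of the $L^2(\mu)$ inner product in its second slot, the last two terms combine as $-\langle \nabla\phi(\U),\, (\T-\U)+(\sS-\T)\rangle_{L^2(\mu)} = -\langle \nabla\phi(\U), \sS-\U\rangle_{L^2(\mu)}$, and the resulting expression $\phi(\sS) - \phi(\U) - \langle \nabla\phi(\U), \sS-\U\rangle_{L^2(\mu)}$ is precisely $\D_\phi(\sS,\U)$ by definition, which closes the argument.

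There is no genuine obstacle here: this is a bookkeeping identity that holds verbatim in any Hilbert space, and the only point requiring attention is consistency of the argument ordering in $\D_\phi$ together with the induced signs of the inner products. Gâteaux differentiability of $\phi$ enters solely to guarantee, via the Riesz representation theorem (see \Cref{sec:diff_calculus}), that $\nabla\phi(\T)$ and $\nabla\phi(\U)$ exist as elements of $L^2(\mu)$, so that every inner product written above is well defined; no convexity of $\phi$ is needed.
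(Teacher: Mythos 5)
Your proposal is correct and follows essentially the same route as the paper: both proofs are pure algebraic expansions of the Bregman divergences via \Cref{def:bregman_div}, using the linearity of the $L^2(\mu)$ inner product to cancel and regroup terms (the paper computes $\D_\phi(\sS,\U)-\D_\phi(\sS,\T)-\D_\phi(\T,\U)$ while you reassemble the right-hand side into $\D_\phi(\sS,\U)$, which is the same computation read in the opposite direction). Your closing remark that Gâteaux differentiability serves only to make $\nabla\phi(\T)$ and $\nabla\phi(\U)$ well defined, with no convexity needed, is a correct observation consistent with the paper's hypotheses.
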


\begin{proof}
    Let $\sS,\T,\U\in L^2(\mu)$, then using the linearity of the Gâteaux differential,
    \begin{equation}
        \begin{aligned}
            \D_\phi(\sS,\U) - \D_\phi(\sS,\T) - \D_\phi(\T,\U) &= \phi(\sS) - \phi(\U) - \langle \nabla\phi(\U), \sS-\U\rangle_{L^2(\mu)} \\
            &\quad -\phi(\sS) + \phi(\T) + \langle \nabla\phi(\T), \sS-\T\rangle_{L^2(\mu)} \\
            &\quad -\phi(\T) + \phi(\U) + \langle \nabla\phi(\U), \T-\U\rangle_{L^2(\mu)} \\
            &= -\langle \nabla\phi(\U), \sS-\U\rangle_{L^2(\mu)} + \langle \nabla\phi(\T), \sS-\T\rangle_{L^2(\mu)} \\ &\quad + \langle \nabla\phi(\U), \T-\U\rangle_{L^2(\mu)} \\
            &= \langle \nabla\phi(\T), \sS-\T\rangle_{L^2(\mu)} - \langle \nabla\phi(\U), \sS-\T\rangle_{L^2(\mu)}.
        \end{aligned} 
    \end{equation} 
\end{proof}

\begin{lemma}[Three-Point Inequality] \label{lemma:3pt_ineq}
    Let $\mu\in\cP_2(\R^d)$, $\T_0\in L^2(\mu)$ and $\phi_\mu:L^2(\mu)\to \R$ convex, and Gâteaux differentiable. Let $\psi:L^2(\mu)\to\R$ be convex, proper and lower semicontinuous. Assume there exists $\T^*=\argmin_{\T\in L^2(\mu)}\ \D_{\phi_\mu}(\T,\T_0) + \psi(\T)$. Then, for all $\T\in L^2(\mu)$,
    \begin{equation}
        \psi(\T) + \D_{\phi_\mu}(\T,\T_0) \ge \psi(\T^*) + \D_{\phi_\mu}(\T^*, \T_0) + \D_{\phi_\mu}(\T,\T^*).
    \end{equation}
\end{lemma}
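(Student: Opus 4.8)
The plan is to combine the first-order optimality condition satisfied by $\T^*$ with the three-point identity of \Cref{lemma:3pt_id} and the subgradient inequality for $\psi$. First I would record that $\T\mapsto \J(\T):=\D_{\phi_\mu}(\T,\T_0)+\psi(\T)$ is convex: indeed $\D_{\phi_\mu}(\cdot,\T_0)$ is convex (since $\phi_\mu$ is) and Gâteaux differentiable with gradient $\nabla\phi_\mu(\cdot)-\nabla\phi_\mu(\T_0)$, while $\psi$ is convex by assumption. As $\T^*$ minimizes $\J$, we have $0\in\partial\J(\T^*)$, and because $\D_{\phi_\mu}(\cdot,\T_0)$ is finite and continuous the subdifferential sum rule gives $\partial\J(\T^*)=\big(\nabla\phi_\mu(\T^*)-\nabla\phi_\mu(\T_0)\big)+\partial\psi(\T^*)$. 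Hence there exists $g\in\partial\psi(\T^*)$ with $g=\nabla\phi_\mu(\T_0)-\nabla\phi_\mu(\T^*)$, so the subgradient inequality for $\psi$ at $\T^*$ reads, for every $\T\in L^2(\mu)$,
\begin{equation}
    \psi(\T)-\psi(\T^*)\ge \langle \nabla\phi_\mu(\T_0)-\nabla\phi_\mu(\T^*),\ \T-\T^*\rangle_{L^2(\mu)}.
\end{equation}

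Next I would apply \Cref{lemma:3pt_id} with its $(\sS,\T,\U)$ taken to be $(\T,\T^*,\T_0)$, which yields
\begin{equation}
    \D_{\phi_\mu}(\T,\T_0)-\D_{\phi_\mu}(\T,\T^*)-\D_{\phi_\mu}(\T^*,\T_0)=\langle \nabla\phi_\mu(\T^*)-\nabla\phi_\mu(\T_0),\ \T-\T^*\rangle_{L^2(\mu)}.
\end{equation}
Adding the two displays, the inner-product terms are exactly opposite and cancel, leaving
\begin{equation}
    \psi(\T)+\D_{\phi_\mu}(\T,\T_0)-\psi(\T^*)-\D_{\phi_\mu}(\T^*,\T_0)-\D_{\phi_\mu}(\T,\T^*)\ge 0,
\end{equation}
which is precisely the claimed inequality after rearranging.

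The only delicate point is the use of the subdifferential sum rule in $L^2(\mu)$ when $\psi$ is allowed to take the value $+\infty$: it is valid here because $\D_{\phi_\mu}(\cdot,\T_0)$ is everywhere finite and continuous (indeed Gâteaux differentiable), so no constraint qualification beyond properness of $\psi$ is required. Everything else is algebra, so I expect the substantive step to be this identification of $g=\nabla\phi_\mu(\T_0)-\nabla\phi_\mu(\T^*)$ from optimality; once it is in hand, the three-point identity does the bookkeeping automatically.
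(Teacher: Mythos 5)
Your proposal is correct and is essentially the paper's own proof: both arguments use the first-order optimality condition $0\in\partial\J(\T^*)$ together with the subdifferential sum rule (justified, as you note, by the everywhere-finiteness and continuity of $\D_{\phi_\mu}(\cdot,\T_0)$, so only properness of $\psi$ is needed) to identify $\nabla\phi_\mu(\T_0)-\nabla\phi_\mu(\T^*)\in\partial\psi(\T^*)$, and then invoke the three-point identity of \Cref{lemma:3pt_id} to rewrite the resulting inner product as a combination of Bregman divergences. Your bookkeeping of the cancellation matches the paper's final display, so there is nothing to add.
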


\begin{proof}
    Denote $\J(\T) = \D_{\phi_\mu}(\T,\T_0) + \psi(\T)$. Let $\T^* = \argmin_{\T\in L^2(\mu)}\ \J(\T)$, hence $0\in \partial \J(\T^*)$. %

    Since $\phi$  and $\psi$ are proper, convex and lower semicontinuous, and $\T\mapsto \D_{\phi_\mu}(\T,\T_0)$ is continuous (since $\phi_\mu$ is continuous), thus by \citep[Theorem 3.30]{peypouquet2015convex}, $\partial \J(\T^*) = \partial \psi(\T^*) + \partial \D_{\phi_\mu}(\cdot, \T_0)(\T^*)$.

    Moreover, since $\phi_\mu$ is differentiable, $\partial\D_{\phi_\mu}(\cdot,\T_0)(\T^*) = \{\nabla_\T \D_{\phi_\mu}(\T^*,\T_0)\} = \{\nabla \phi_\mu(\T^*)-\nabla \phi_\mu(\T_0)\}$, and thus $\nabla \phi_\mu(\T_0)-\nabla \phi_\mu(\T^*)\in \partial \psi(\T^*)$ %

    Finally, by definition of subgradients %
    and by applying \Cref{lemma:3pt_id}, we get for all $\T\in L^2(\mu)$,
    \begin{equation}
        \begin{aligned}
            \psi(\T) %
            &\ge \psi(\T^*) - \big(\langle \nabla\phi_\mu(\T^*), \T-\T^*\rangle_{L^2(\mu)} - \langle \nabla\phi_\mu(\T_0), \T-\T^*\rangle_{L^2(\mu)}\big) \\
            &= \psi(\T^*) - \D_{\phi_\mu}(\T,\T_0) + \D_{\phi_\mu}(\T,\T^*) + \D_{\phi_\mu}(\T^*,\T_0).
        \end{aligned} \qedhere
    \end{equation} 
\end{proof}

\begin{remark} \label{rk:3pt_ineq}
    Actually we can restrict $\psi$ to be convex along  $\big((1-t)\T^* + t\T\big)_\#\mu$. In that case, $\D_\psi(\T,\T^*) = \psi(\T) - \psi(\T^*) - \langle \varphi, \T-\T^*\rangle_{L^2(\mu)} \ge 0$ for $\varphi \in \partial \psi(\T^*)$ (by \Cref{prop:equivalences_w_convex}) and we still have $\partial \psi(\T^*) + \partial \D_{\phi_\mu}(\cdot, \T_0)(\T^*) \subset \partial \J(\T^*)$ (see \citep[Theorem 3.30]{peypouquet2015convex}) so that we can conclude.
\end{remark}

\subsection{Convergence lemma}

We first provide a Lemma which follows from \citep[Theorem 3.1]{lu2018relatively}, and which is useful for the proofs of \Cref{prop:bound_md,prop:bound_pgd,prop:cv_prox_grad}.

\begin{lemma} \label{lemma:ineq_lu}
    Let $f:X\to \mathbb{R}$, $g:X\to \mathbb{R}_+$ and $(x_k)_{k\in\mathbb{N}}$ a sequence in $X$ such that for all $k\ge 1$, $f(x_k) \le f(x_{k-1})$. Assume that there exists $\sm> \stc\ge 0$, $c\in \mathbb{R}$ such that for all $k\ge 0$, $f(x_{k+1})-c\le (\sm-\stc)g(x_k)-\sm g(x_{k+1})$, then
    \begin{equation}
        \forall k\ge 1,\ f(x_k) - c \le \frac{\stc}{\Big(\frac{\sm}{\sm - \stc }\Big)^k - 1} g(x_0)  \le \frac{\sm-\stc}{k} g(x_0).
    \end{equation}
\end{lemma}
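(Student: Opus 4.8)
The statement is a purely real-analytic recursion bound, so the plan is to reduce the telescoping inequality $f(x_{k+1})-c \le (\sm-\stc)g(x_k) - \sm g(x_{k+1})$ to a geometric-type recursion on the errors and then sum. First I would introduce the shorthand $e_k := f(x_k)-c$ and observe that the monotonicity hypothesis $f(x_k)\le f(x_{k-1})$ means $(e_k)_k$ is non-increasing, while $g\ge 0$ gives $g(x_k)\ge 0$ throughout. The key algebraic step is to rewrite the per-step inequality, dividing by $\sm>0$, as
\begin{equation}
    g(x_{k+1}) \le \Big(1-\frac{\stc}{\sm}\Big) g(x_k) - \frac{1}{\sm} e_{k+1}.
\end{equation}

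Next I would iterate this bound. Setting $q := 1-\frac{\stc}{\sm} = \frac{\sm-\stc}{\sm}\in[0,1)$ (using $\sm>\stc\ge 0$), unrolling from $0$ to $k-1$ gives
\begin{equation}
    0 \le g(x_k) \le q^k g(x_0) - \frac{1}{\sm}\sum_{j=1}^{k} q^{k-j} e_{j}.
\end{equation}
Since $(e_j)_j$ is non-increasing, we have $e_j \ge e_k$ for all $j\le k$, so I would lower-bound the sum $\sum_{j=1}^k q^{k-j}e_j \ge e_k \sum_{j=1}^k q^{k-j} = e_k \frac{1-q^k}{1-q}$ (when $e_k\ge 0$; the case $e_k<0$ makes the claimed bound trivial since the right-hand side is non-negative). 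Substituting and using $1-q = \stc/\sm$ yields
\begin{equation}
    0 \le q^k g(x_0) - \frac{1}{\sm}\, e_k\, \frac{1-q^k}{\stc/\sm} = q^k g(x_0) - \frac{e_k(1-q^k)}{\stc}.
\end{equation}
Rearranging gives $e_k \le \frac{\stc\, q^k}{1-q^k}\, g(x_0) = \frac{\stc}{q^{-k}-1}\, g(x_0)$, and since $q^{-1} = \frac{\sm}{\sm-\stc}$ this is exactly the first claimed bound.

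For the second, looser bound I would use the elementary inequality $q^{-k} - 1 \ge k(q^{-1}-1)$, which follows from convexity of $t\mapsto q^{-t}$ (or from Bernoulli: $(1+s)^k \ge 1+ks$ with $s = q^{-1}-1\ge 0$). Since $q^{-1}-1 = \frac{\stc}{\sm-\stc}$, this gives $q^{-k}-1 \ge k\frac{\stc}{\sm-\stc}$, hence $\frac{\stc}{q^{-k}-1} \le \frac{\sm-\stc}{k}$, completing the chain. The main obstacle, though minor, is handling the sign of $e_k$ cleanly: the monotonicity-based lower bound on the summation is only useful when $e_k\ge 0$, so I would dispatch the degenerate case $e_k<0$ at the outset by noting the target right-hand side is non-negative (as $q\in[0,1)$, $g(x_0)\ge 0$, $\stc\ge 0$). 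I would also flag the boundary case $\stc=0$, where the first expression is read as its limit $\frac{g(x_0)}{k}\cdot\sm/\sm$ — more precisely the bound $e_k\le \frac{\sm}{k}\cdot 0$ degenerates and one reads the recursion directly to get $e_k\le 0$, consistent with both stated expressions interpreted in the limit $\stc\to 0^+$.
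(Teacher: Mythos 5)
Your proof is correct and is essentially the paper's own argument in a different order: unrolling the recursion $g(x_{k+1})\le q\,g(x_k)-e_{k+1}/\sm$ with $q=\frac{\sm-\stc}{\sm}$ and then invoking monotonicity of $(e_j)_j$ is algebraically the same as the paper's weighted summation of $\left(\frac{\sm}{\sm-\stc}\right)^{\ell}\big(f(x_\ell)-c\big)$ followed by telescoping of the $g$-terms, and both conclude with the same Bernoulli/convexity estimate for the looser rate. One small correction to your side remarks: the case split on the sign of $e_k$ is unnecessary, since the weights $q^{k-j}\ge 0$ make $\sum_{j=1}^k q^{k-j}e_j \ge e_k\sum_{j=1}^k q^{k-j}$ valid regardless of sign, and in the boundary case $\stc=0$ (so $q=1$) the same unrolled recursion directly yields $e_k\le \frac{\sm}{k}\,g(x_0)$ — the common limit of both stated expressions — rather than $e_k\le 0$ as you wrote.
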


\begin{proof}
    First, observe the $f(x_k)\le f(x_\ell)$ for all $\ell\le k$. Thus, for all $k\ge 1$,
    \begin{equation}
        \begin{aligned}
            \sum_{\ell=1}^k \left(\frac{\sm}{\sm-\stc}\right)^\ell \cdot \big(f(x_k)-c\big) &\le \sum_{\ell=1}^k \left(\frac{\sm}{\sm-\stc}\right)^\ell \big(f(x_\ell)-c)\big) \\
            &\le \sum_{\ell=1}^k \left(\frac{\sm}{\sm-\stc}\right)^\ell \big((\sm-\stc) g(x_{\ell-1}) - \beta g(x_\ell)\big) \\
            &= \sm \sum_{\ell=0}^{k-1} \left(\frac{\sm}{\sm-\stc}\right)^\ell g(x_\ell) - \sm \sum_{\ell=1}^k \left(\frac{\sm}{\sm-\stc}\right)^\ell g(x_\ell) \\
            &= \sm g(x_0) - \sm \left(\frac{\sm}{\sm-\stc}\right)^k g(x_k) \\
            &\le \sm g(x_0) \quad \text{since $g\ge 0$}.
        \end{aligned}
    \end{equation}
    Now, note that $\frac{\sm}{\sum_{\ell=1}^k \left(\frac{\sm}{\sm-\stc}\right)^\ell} = \frac{\stc}{\left(\frac{\sm}{\sm-\stc}\right)^k - 1} = \frac{\stc}{\left( 1 + \frac{\stc}{\sm-\stc} \right)^k -1} \le \frac{\sm-\stc}{k}$ since $\left(1+\frac{\stc}{\sm-\stc}\right)^k \ge 1 + k\frac{\stc}{\sm-\stc}$ (by convexity on $\R_+$ of $x\mapsto (1+x)^k$).  Thus,
    \begin{equation}
        f(x_k) - c \le \frac{\sm}{\sum_{\ell=1}^k \left(\frac{\sm}{\sm-\stc}\right)^\ell} g(x_0) = \frac{\stc}{\Big(\frac{\sm}{\sm - \stc }\Big)^k - 1} g(x_0) \le \frac{\sm - \stc}{k} g(x_0).\qedhere
    \end{equation}
\end{proof}

\subsection{Some properties of Bregman divergences} \label{section:computation_bregman}

We provide in this Section additional results on the Bregman divergences introduced in \Cref{section:mirror_descent}. First, we focus on $\phi_\mu(\T)=\int V\circ \T\ \mathrm{d}\mu$. The following Lemma is akin to \citep[Proposition 4]{li2021transport} which shows it only for OT maps.

\begin{lemma} \label{lemma:d_v}
    Let $V:\R^d\to\R$ convex and $\phi_\mu(\T)=\int V\circ \T\ \mathrm{d}\mu$. Then,
    \begin{equation}
        \forall \T,\sS\in L^2(\mu),\ \D_{\phi_\mu}(\T, \sS) = \int \Dr_V\big(\T(x), \sS(x)\big)\ \mathrm{d}\mu(x).
    \end{equation}
\end{lemma}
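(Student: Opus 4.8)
The plan is to reduce the identity to the pointwise Euclidean Bregman divergence by first identifying the Gâteaux gradient of $\phi_\mu$. Concretely, I would show that for every $\sS\in L^2(\mu)$ the gradient is the pointwise composition $\nabla\phi_\mu(\sS) = \nabla V\circ \sS$, i.e.\ that for all directions $h\in L^2(\mu)$,
\begin{equation*}
    \phi_\mu'(\sS)(h) = \int \langle \nabla V(\sS(x)), h(x)\rangle\ \mathrm{d}\mu(x),
\end{equation*}
so that by the Riesz representation theorem the $L^2(\mu)$-gradient is represented by $x\mapsto \nabla V(\sS(x))$. Once this is established, the claim follows immediately: writing out \Cref{def:bregman_div} and using linearity of the integral,
\begin{equation*}
    \D_{\phi_\mu}(\T,\sS) = \int V(\T(x))\,\mathrm{d}\mu(x) - \int V(\sS(x))\,\mathrm{d}\mu(x) - \int \langle \nabla V(\sS(x)), \T(x)-\sS(x)\rangle\,\mathrm{d}\mu(x),
\end{equation*}
and the integrand equals $V(\T(x)) - V(\sS(x)) - \langle \nabla V(\sS(x)), \T(x)-\sS(x)\rangle = \Dr_V(\T(x),\sS(x))$.

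For the gradient computation I would start from the difference quotient
\begin{equation*}
    \frac{\phi_\mu(\sS+th) - \phi_\mu(\sS)}{t} = \int \frac{V(\sS(x)+th(x)) - V(\sS(x))}{t}\ \mathrm{d}\mu(x),
\end{equation*}
observe that for $\mu$-a.e.\ $x$ the integrand converges to $\langle\nabla V(\sS(x)), h(x)\rangle$ as $t\to 0$ by differentiability of $V$, and then justify exchanging the limit with the integral. The convexity of $V$ is what makes this clean: the subgradient inequality applied at $\sS(x)$ and at $\sS(x)+th(x)$ sandwiches the difference quotient, for $t>0$, between $\langle \nabla V(\sS(x)),h(x)\rangle$ and $\langle \nabla V(\sS(x)+th(x)),h(x)\rangle$, which provides an integrable envelope for small $t$ and licenses dominated convergence. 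I would then note that $\phi_\mu'(\sS)(\cdot)$ is linear and continuous, so Riesz yields the representative $\nabla V\circ\sS$.

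The main obstacle — and the only point requiring care beyond bookkeeping — is this interchange of limit and integral, together with checking $\nabla V\circ\sS\in L^2(\mu)$ so that the inner product $\langle \nabla\phi_\mu(\sS),\T-\sS\rangle_{L^2(\mu)}$ is well-defined. Both are controlled under the standing hypotheses used elsewhere in the paper (e.g.\ $V$ differentiable and $L$-smooth, giving linear growth of $\nabla V$ and hence $\nabla V\circ\sS\in L^2(\mu)$ whenever $\sS\in L^2(\mu)$); under such assumptions the dominating function is $\mu$-integrable and the exchange is valid. Everything else is a direct substitution into \Cref{def:bregman_div}.
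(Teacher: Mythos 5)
Your proposal is correct, and its second half — substituting $\nabla\phi_\mu(\sS)=\nabla V\circ\sS$ into \Cref{def:bregman_div} and using linearity of the integral — is exactly the paper's proof. The only divergence is in how that gradient formula is justified: the paper simply cites $\gW\cV(\mu)=\nabla V$ together with \Cref{prop:frechet_derivative} (the general fact that $\nabla\tF_\mu(\sS)=\gW\cF(\sS_\#\mu)\circ\sS$ for pushforward functionals), whereas you re-derive it from first principles, sandwiching the difference quotient between $\langle\nabla V(\sS(x)),h(x)\rangle$ and $\langle\nabla V(\sS(x)+th(x)),h(x)\rangle$ via convexity and invoking dominated convergence. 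Your route is more self-contained and has the merit of making explicit the integrability hypotheses the paper leaves implicit — $L$-smoothness of $V$ gives linear growth of $\nabla V$, hence both the integrable envelope for the limit interchange and $\nabla V\circ\sS\in L^2(\mu)$ so the pairing $\langle\nabla\phi_\mu(\sS),\T-\sS\rangle_{L^2(\mu)}$ is well-defined (note the lemma's bare hypothesis ``$V$ convex'' does not even guarantee $\phi_\mu$ is finite on $L^2(\mu)$); the paper's route buys brevity by delegating all of this to its general Wasserstein-differentiability machinery. Either way the argument is sound.
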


\begin{proof}
    Let $\T,\sS\in L^2(\mu)$, then remembering that $\nabla_{\cW_2}\cV(\mu) = \nabla V$, we have
        \begin{align}
            \D_{\phi_\mu}(\T, \sS) &= \phi_\mu(\T) - \phi_\mu(\sS) - \langle \nabla V\circ \sS, \T-\sS\rangle_{L^2(\mu)} \nonumber \\
            &= \int V\circ \T - V\circ \sS - \langle \nabla V\circ \sS, \T-\sS\rangle\ \mathrm{d}\mu \\
            &= \int \Dr_V\big(\T(x), \sS(x)\big)\ \mathrm{d}\mu(x). \nonumber \qedhere
        \end{align}
\end{proof}

Next, we focus on $\phi_\mu(\T) = \frac12\iint W\big(\T(x)-\T(x')\big)\ \diff\mu(x)\diff\mu(x')$, and we generalize the result from \citep[Proposition 4]{li2021transport}.

\begin{lemma} \label{lemma:d_w}
    Let $W:\R^d\to \R$ even ($W(x)=W(-x)$), convex and differentiable. Let $\phi_\mu(\T) = \frac12 \iint W\big(\T(x)-\T(x')\big)\ \mathrm{d}\mu(x)\mathrm{d}\mu(x')$. Then,
    \begin{equation}
        \forall \T,\sS\in L^2(\mu),\ \D_{\phi_\mu}(\T,\sS) = \frac12 \iint \Dr_W\big(\T(x)-\T(x'), \sS(x)-\sS(x')\big)\ \mathrm{d}\mu(x)\mathrm{d}\mu(x').
    \end{equation}
\end{lemma}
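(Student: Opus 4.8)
The plan is to expand both sides directly from \Cref{def:bregman_div} and match them term by term; the only ingredient beyond bookkeeping is the Gâteaux derivative of $\phi_\mu$, which I would compute first. Throughout, the crucial structural fact is that $W$ is even, hence $\nabla W$ is odd.

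First I would differentiate $t\mapsto \phi_\mu(\sS+th)$ at $t=0$ for an arbitrary direction $h\in L^2(\mu)$. The chain rule gives
\[
    \frac{\mathrm{d}}{\mathrm{d}t}\phi_\mu(\sS+th)\Big|_{t=0} = \frac12 \iint \langle \nabla W\big(\sS(x)-\sS(x')\big),\, h(x)-h(x')\rangle\ \mathrm{d}\mu(x)\mathrm{d}\mu(x').
\]
Splitting $h(x)-h(x')$ into its two pieces and relabelling $x\leftrightarrow x'$ in the second one, the oddness $\nabla W(-z)=-\nabla W(z)$ makes the two contributions coincide, so they add up to twice a single integral and the prefactor $\tfrac12$ disappears. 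This identifies $\nabla\phi_\mu(\sS)(x) = \int \nabla W\big(\sS(x)-\sS(x')\big)\ \mathrm{d}\mu(x')$, consistent with $\gW\mathcal{W}(\mu)=\nabla W\star\mu$ recalled in \Cref{section:background}.

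Next I would expand the claimed right-hand side using $\Dr_W(a,b)=W(a)-W(b)-\langle\nabla W(b),a-b\rangle$ with $a=\T(x)-\T(x')$ and $b=\sS(x)-\sS(x')$. The $W(a)$ and $W(b)$ double integrals are exactly $\phi_\mu(\T)$ and $\phi_\mu(\sS)$. For the remaining linear term, the same symmetrization (relabel $x\leftrightarrow x'$, use oddness of $\nabla W$) collapses $\tfrac12\iint\langle\nabla W(b),(\T(x)-\T(x'))-(\sS(x)-\sS(x'))\rangle$ down to $\iint\langle\nabla W(\sS(x)-\sS(x')),\T(x)-\sS(x)\rangle\,\mathrm{d}\mu(x)\mathrm{d}\mu(x')$, which by the previous paragraph is precisely $\langle\nabla\phi_\mu(\sS),\T-\sS\rangle_{L^2(\mu)}$. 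Assembling the three pieces yields $\phi_\mu(\T)-\phi_\mu(\sS)-\langle\nabla\phi_\mu(\sS),\T-\sS\rangle_{L^2(\mu)}=\D_{\phi_\mu}(\T,\sS)$, which is the claim.

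The only genuinely delicate step is this repeated symmetrization: which half survives and how the evenness of $W$ disposes of the factor $\tfrac12$ must be tracked consistently in both the derivative computation and the expansion of $\Dr_W$. Everything else is a direct substitution into \Cref{def:bregman_div}. I would also note that differentiation under the integral sign and finiteness of the integrals are justified by $W$ being $C^1$ together with $\T,\sS\in L^2(\mu)$, exactly as in the computation of $\gW\mathcal{W}$ referenced earlier.
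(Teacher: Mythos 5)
Your proposal is correct and takes essentially the same route as the paper's proof: both hinge on identifying $\nabla\phi_\mu(\sS) = (\nabla W \star \sS_\#\mu)\circ \sS$ and on the identical symmetrization of the linear term (relabelling $x\leftrightarrow x'$ and using the oddness of $\nabla W$ inherited from the evenness of $W$) to collapse the double integral. The only cosmetic differences are that you derive the gradient from the Gâteaux definition and expand the right-hand side, whereas the paper invokes the known formula $\gW\mathcal{W}(\mu)=\nabla W\star\mu$ and expands the left-hand side; the computations coincide.
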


\begin{proof}
    Let $\T,\sS\in L^2(\mu)$, remember that $\nabla_{\cW_2}\mathcal{W}(\mu) = \nabla W \star \mu$, and thus $\nabla_{\cW_2}\mathcal{W}(\sS_\#\mu)\circ \sS = (\nabla W \star \sS_\#\mu)\circ \sS$. Thus,
        \begin{align}
            \D_{\phi_\mu}(\T, \sS) &= \phi_\mu(\T) - \phi_\mu(\sS) - \langle (\nabla W \star \sS_\#\mu)\circ \sS, \T-\sS\rangle_{L^2(\mu)} \nonumber \\
            &= \frac12 \iint W\big(\T(x)-\T(x')\big)\ \mathrm{d}\mu(x)\mathrm{d}\mu(x') - \frac12 \iint W\big(\sS(x)-\sS(x')\big)\ \mathrm{d}\mu(x)\mathrm{d}\mu(x') \nonumber\\ &\quad - \int \langle (\nabla W \star \sS_\#\mu)(\sS(x)), \T(x) - \sS(x)\rangle \ \mathrm{d}\mu(x).
        \end{align}
    Then, note that $\nabla W(-x) = - \nabla W(x)$ and thus the last term can be written as:
    \begin{equation}
        \begin{aligned}
            &\int \langle (\nabla W \star \sS_\#\mu)(\sS(x)), \T(x) - \sS(x)\rangle \ \mathrm{d}\mu(x) \\
            &= \iint \langle \nabla W\big( \sS(x)-\sS(x')\big), \T(x)-\sS(x)\rangle\ \mathrm{d}\mu(x)\mathrm{d}\mu(x') \\
            &= \frac12 \iint \langle \nabla W\big( \sS(x)-\sS(x')\big), \T(x)-\sS(x)\rangle\ \mathrm{d}\mu(x)\mathrm{d}\mu(x') \\ &\quad + \frac12 \langle \nabla W\big(\sS(x')-\sS(x)\big), \T(y)-\sS(y)\rangle\ \mathrm{d}\mu(x)\mathrm{d}\mu(x') \\
            &= \frac12 \iint \langle \nabla W\big( \sS(x)-\sS(x')\big), \T(x)-\sS(x)\rangle\ \mathrm{d}\mu(x)\mathrm{d}\mu(x') \\ & \quad - \frac12 \langle \nabla W\big(\sS(x)-\sS(x')\big), \T(x')-\sS(x')\rangle\ \mathrm{d}\mu(x)\mathrm{d}\mu(x') \\ 
            &= \frac12 \iint \langle \nabla W\big(\sS(x)-\sS(x')\big), \T(x)-\T(x')-\big(\sS(x)-\sS(x')\big)\rangle\ \mathrm{d}\mu(x)\mathrm{d}\mu(x').
        \end{aligned}
    \end{equation}
    Finally, we get
        \begin{align}
            \D_{\phi_\mu}(\T,\sS) &= \frac12 \iint \Big(W\big(\T(x)-\T(x')\big) - W\big(\sS(x)-\sS(x')\big)  \\ &\quad -\langle \nabla W\big(\sS(x)-\sS(x')\big), \T(x)-\T(x')-\big(\sS(x)-\sS(x')\big)\rangle\Big)\ \mathrm{d}\mu(x)\mathrm{d}\mu(x') \nonumber \\
            &= \frac12 \iint \Dr_W\big(\T(x)-\T(x'), \sS(x)-\sS(x')\big)\ \mathrm{d}\mu(x)\mathrm{d}\mu(x'). \nonumber \qedhere
        \end{align}  
\end{proof}

Now, we make the connection with the mirror map used by \citet{deb2023wasserstein} and derive the related Bregman divergence.

\begin{lemma}
    Let $\phi_\mu(\T) = \frac12 \cW_2^2(\T_\#\mu, \rho)$ for $\mu,\rho\in\cPa$. Then, for all $\T,\sS\in L^2(\mu)$, such that $\T_\#\mu, \sS_\#\mu\in \cPa$,
    \begin{equation}
        \D_{\phi_\mu}(\T, \sS) = \frac12 \|\T_{\T_\#\mu}^\rho\circ \T - \T_{\sS_\#\mu}^\rho\circ\sS - (\T-\sS)\|_{L^2(\mu)}^2 + \langle \T_{\sS_\#\mu}^\rho\circ \sS - \sS, \T_{\T_\#\mu}^\rho \circ \T - \T_{\sS_\#\mu}^\rho\circ\sS\rangle_{L^2(\mu)},
    \end{equation}
    where $\T_{\T_\#\mu}^\rho$ denotes the OT map between $\T_\#\mu$ and $\rho$. 
\end{lemma}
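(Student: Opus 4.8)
The plan is to express all three pieces of the Bregman divergence $\D_{\phi_\mu}(\T,\sS)=\phi_\mu(\T)-\phi_\mu(\sS)-\langle\nabla\phi_\mu(\sS),\T-\sS\rangle_{L^2(\mu)}$ in terms of the optimal transport maps to $\rho$, and then rearrange. The key input is the Wasserstein gradient of $\phi(\eta)=\tfrac12\cW_2^2(\eta,\rho)$. For $\eta\in\cPa$, Brenier's theorem yields a unique OT map $\T_\eta^\rho$, the functional $\phi$ is $\cW_2$-differentiable (it is a $c$-Wasserstein cost, cf.\ Section~\ref{section:background}), and its Wasserstein gradient is the gradient of the associated Kantorovich potential, namely $\gW\phi(\eta)=\id-\T_\eta^\rho$. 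Since by hypothesis $\T_\#\mu,\sS_\#\mu\in\cPa$, the maps $\T_{\T_\#\mu}^\rho$ and $\T_{\sS_\#\mu}^\rho$ are well defined, which is exactly what makes the statement meaningful.

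First I would compute the gradient term. As $\phi_\mu(\cdot)=\phi\big((\cdot)_\#\mu\big)$ is pushforward compatible, Proposition~\ref{prop:frechet_derivative} gives $\nabla\phi_\mu(\sS)=\gW\phi(\sS_\#\mu)\circ\sS=\sS-\T_{\sS_\#\mu}^\rho\circ\sS$. Next I would rewrite the values of $\phi_\mu$: writing $\cW_2^2(\T_\#\mu,\rho)=\int\|y-\T_{\T_\#\mu}^\rho(y)\|^2\,\diff(\T_\#\mu)(y)$ and changing variables through $\T$ (and likewise for $\sS$) gives the clean expressions
\[
\phi_\mu(\T)=\tfrac12\|\T-\T_{\T_\#\mu}^\rho\circ\T\|_{L^2(\mu)}^2,\qquad \phi_\mu(\sS)=\tfrac12\|\sS-\T_{\sS_\#\mu}^\rho\circ\sS\|_{L^2(\mu)}^2.
\]

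Finally I would substitute these into the definition and close with a purely algebraic identity. Setting $a:=\T_{\T_\#\mu}^\rho\circ\T$ and $b:=\T_{\sS_\#\mu}^\rho\circ\sS$ (both in $L^2(\mu)$), the claim reduces to
\[
\tfrac12\|\T-a\|^2-\tfrac12\|\sS-b\|^2-\langle\sS-b,\T-\sS\rangle=\tfrac12\|a-b-(\T-\sS)\|^2+\langle b-\sS,a-b\rangle,
\]
with all norms and inner products taken in $L^2(\mu)$. Expanding both sides into the monomials $\|\cdot\|^2$ and pairwise inner products, the cross terms $\langle a,b\rangle$, $\langle a,\sS\rangle$ and $\langle b,\sS\rangle$ cancel in pairs, and the surviving terms match, proving the identity. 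The only genuinely delicate step is the justification of $\gW\phi(\eta)=\id-\T_\eta^\rho$ together with the applicability of Proposition~\ref{prop:frechet_derivative}, which both hinge on the absolute-continuity hypotheses guaranteeing uniqueness of the transport maps; once these are in place, the remainder is bookkeeping.
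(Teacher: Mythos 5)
Your proposal is correct and follows essentially the same route as the paper's proof: both use $\gW\tfrac12\cW_2^2(\cdot,\rho)=\id-\T_\cdot^\rho$ composed with $\sS$ (via \Cref{prop:frechet_derivative}), rewrite $\cW_2^2(\T_\#\mu,\rho)$ and $\cW_2^2(\sS_\#\mu,\rho)$ as $L^2(\mu)$ norms by a change of variables, and then conclude by expanding inner products. Your shorthand $a=\T_{\T_\#\mu}^\rho\circ\T$, $b=\T_{\sS_\#\mu}^\rho\circ\sS$ merely packages more cleanly the chain of add-and-subtract manipulations the paper carries out explicitly; I verified the algebraic identity holds.
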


\begin{proof}
    Let $\T,\sS\in L^2(\mu)$  such that $\T_\#\mu, \sS_\#\mu\in \cPa$. Remember that $\nabla_{\cW_2}\cW_2^2(\cdot, \rho)=\id - \T_{\cdot}^\rho$, then
        \begin{align}
            \D_{\phi_\mu}(\T,\sS) &= \phi_\mu(\T) - \phi_\mu(\sS) - \langle \nabla_{\cW_2}\phi(\sS_\#\mu)\circ \sS, \T-\sS\rangle_{L^2(\mu)} \nonumber\\
            &= \frac12 \cW_2^2(\T_\#\mu, \rho) - \frac12 \cW_2^2(\sS_\#\mu, \rho) - \langle (\id - \T_{\sS_\#\mu}^\rho)\circ \sS, \T-\sS\rangle_{L^2(\mu)} \nonumber \\
            &= \frac12 \|\T_{\T_\#\mu}^\rho \circ \T - \T\|_{L^2(\mu)}^2 - \frac12 \|\T_{\sS_\#\mu}^\rho\circ \sS - \sS\|_{L^2(\mu)}^2 + \langle \T_{\sS_\#\mu}^\rho\circ \sS - \sS, \T-\sS\rangle_{L^2(\mu)} \nonumber \\
            &= \frac12 \|\T_{\T_\#\mu}^\rho \circ \T - \T\|_{L^2(\mu)}^2 - \frac12 \|\T_{\sS_\#\mu}^\rho\circ \sS - \sS\|_{L^2(\mu)}^2 \nonumber \\ &\quad + \langle \T_{\sS_\#\mu}^\rho \circ \sS - \sS, \T-\T_{\sS_\#\mu}^\rho \circ \sS\rangle_{L^2(\mu)} + \langle \T_{\sS_\#\mu}^\rho\circ \sS - \sS, \T_{\sS_\#\mu}^\rho \circ \sS - \sS\rangle_{L^2(\mu)} \nonumber \\
            &= \frac12 \|\T_{\T_\#\mu}^\rho \circ \T - \T\|_{L^2(\mu)}^2 + \frac12 \|\T_{\sS_\#\mu}^\rho\circ \sS - \sS\|_{L^2(\mu)}^2 \nonumber \\ &\quad - \langle \T_{\sS_\#\mu}^\rho \circ \sS - \sS, \T_{\sS_\#\mu}^\rho \circ \sS - \T\rangle_{L^2(\mu)} \nonumber \\
            &= \frac12 \|\T_{\T_\#\mu}^\rho \circ \T - \T\|_{L^2(\mu)}^2 + \frac12 \|\T_{\sS_\#\mu}^\rho\circ \sS - \sS\|_{L^2(\mu)}^2 \nonumber \\ &\quad - \langle \T_{\sS_\#\mu}^\rho \circ \sS - \sS, \T_{\sS_\#\mu}^\rho \circ \sS - \T_{\T_\#\mu}^\rho\circ \T\rangle_{L^2(\mu)} \\ &\quad - \langle \T_{\sS_\#\mu}^\rho \circ \sS - \sS, \T_{\T_\#\mu}^\rho\circ \T - \T\rangle_{L^2(\mu)} \nonumber \\
            &= \frac12 \|\T_{\T_\#\mu}^\rho\circ \T - \T_{\sS_\#\mu}^\rho\circ\sS - (\T-\sS)\|_{L^2(\mu)}^2 \nonumber \\ &\quad + \langle \T_{\sS_\#\mu}^\rho\circ \sS - \sS, \T_{\T_\#\mu}^\rho \circ \T - \T_{\sS_\#\mu}^\rho\circ\sS\rangle_{L^2(\mu)}.\qedhere
        \end{align}
\end{proof}

% \ifbool{preprint}{}{
%     \newpage
%     \input{checklist}
% }

\end{document}